\author{Quentin Mérigot \and Boris Thibert}
\title{Optimal transport: discretization and algorithms}
\begin{document}
\normalem

\begin{abstract}
This chapter describes techniques for the numerical resolution of
optimal transport problems. We will consider several discretizations of
these problems, and we will put a strong focus on the mathematical
analysis of the algorithms to solve the discretized problems. We will
describe in detail the following discretizations and corresponding
algorithms: the assignment problem and Bertsekas auction's algorithm;
the entropic regularization and Sinkhorn-Knopp's algorithm;
semi-discrete optimal transport and Oliker-Prussner or damped Newton's
algorithm, and finally semi-discrete entropic regularization. Our
presentation highlights the similarity between these algorithms and
their connection with the theory of Kantorovich duality.
\end{abstract}

\maketitle
\tableofcontents
\clearpage

\section{Introduction}

The problem of optimal transport, introduced by Gaspard Monge in 1871
\cite{monge1781memoire}, was motivated by military applications. The
goal was to find the most economical way to transport a certain amount
of sand from a quarry to a construction site. The source and target
distributions of sand are seen as probability measures, denoted $\mu$
and $\nu$, and $c(x,y)$ denotes the cost of transporting a grain of
sand from the position $x$ to the position $y$, and the goal is to
solve the non-convex optimization problem
\begin{equation}
  \MP = \min_{T_{\#}\mu = \nu} \int c(x,T(x)) \dd \mu,
\end{equation}
where $T_{\#}\mu = \nu$ means that $\nu$ is the push-forward of $\mu$
under the transport map $T$.  The modern theory of optimal transport
has been initiated by Lenoid Kantorovich in the 1940s, via a convex
relaxation of Monge's problem. Given two probability measures $\mu$
and $\nu$, it consists in minimizing 
\begin{equation} \label{eq:kp0}
  \KP = \min_{\gamma \in \Gamma(\mu,\nu)} \int c(x,y)\dd\gamma(x,y),
\end{equation}
over the set $\Gamma(\mu,\nu)$ of \emph{transport plans}\footnote{A
  probability measure $\gamma$ is a transport plan between $\mu$ and
  $\nu$ if its marginals are $\mu$ and $\nu$.} between $\mu$ and
$\nu$. Kantorovich's theory has been used and revisited by many
authors from the 1980s, allowing a complete solution to Monge's
problem in particular for $c(x,y) = \nr{x - y}^p$.  Since then,
optimal transport has been connected to various domains of mathematics
(geometry, probabilities, partial differential equations) but also to
more applied domains.  Current applications of optimal transport
include machine learning \cite{peyre2019computational}, computer
graphics \cite{rubner2000earth}, quantum chemistry
\cite{buttazzo2012optimal,cotar2013density}, fluid dynamics
\cite{brenier1999minimal,de2015power,merigot2016minimal}, optics
\cite{oliker2003mathematical,caffarelli1999problem,wang2004design,caffarelli2008weak},
economy \cite{galichon2018optimal}, statistics
\cite{carlier2016vector,chernozhukov2017monge,hutter2019minimax}. The
selection of citation above is certainly quite arbitrary, as optimal
transport is now more than ever a vivid topic, with more than several
hundreds (perhaps even thousands) of articles published every year and
containing the words <<optimal transport>>.

There exist many books on the theory of optimal transport, e.g. by
Rachev-R\"uschendorf \cite{rachev1998mass,rachev2006mass}, by Villani
\cite{villani2003topics,villani2008optimal} and by Santambrogio
\cite{santambrogio2015optimal}. However, there exist fewer books
dealing with the numerical aspects, by Galichon
\cite{galichon2018optimal}, by Cuturi-Peyré
\cite{peyre2019computational} and one chapter of Santambrogio
\cite{santambrogio2015optimal}. The books by Galichon and Cuturi-Peyré
are targeted toward applications (in economy and machine learning,
respectively) and do not deal in full detail with the mathematical
analysis of algorithms for optimal transport.  In this chapter, we
concentrate on numerical methods for optimal transport relying on
Kantorovich duality. Our aim in particular is to provide a
self-contained mathematical analysis of several popular algorithms to
solve the discretized optimal transport problems.

\vspace{.5cm}\noindent\textbf{Kantorovich duality.}  In
the 2000s, the theory of optimal transport was already mature and was
used within mathematics, but also in theoretical physics or in
economy.  However, numerical applications were essentially limited to
one-dimensional problems because of the prohibitive cost of existing
algorithms for higher dimensional problems, whose complexity was in
general more than quadratic in the size of the data.  Numerous
numerical methods have been introduced since then. Most of them rely
the dual problem associated to Kantorovich's problem \eqref{eq:kp0}, namely
\begin{equation} \label{eq:dp0}
  \DP = \max_{\phi \ominus \psi \leq c} \int \phi\dd \mu -\int \psi\dd\nu,
\end{equation}
where the maximum is taken over pairs $(\phi,\psi)$ of functions
satisfying $\phi\ominus \psi\leq c$, meaning that $\phi(x)
-\psi(y)\leq c(x,y)$ for all $x,y$.  Equivalently, the dual problem
can be written as the unconstrained maximization problem
\begin{equation}\label{eq:dpbis0}
  \DP = \max_{\psi} \Kant(\psi)\
  \hbox{ where }\ \Kant(\psi) = \int \psi^c \dd \mu -\int \psi\dd\nu
\end{equation}
and $\psi^c(x) := \min_{y} c(x,y)+ \psi(y)$ is the $c$-transform of
$\psi$, a notion closely related to the Legendre-Fenchel transform in
convex analysis. The function $\Kant$ is called the \emph{Kantorovitch
  functional}. Kantorovich's duality theorem asserts that the values
of \eqref{eq:kp0} and \eqref{eq:dp0} (or \eqref{eq:dpbis0}) agree
under mild assumptions.

\vspace{.5cm}\noindent\textbf{Overview of numerical methods.}  We now
briefly review the most used numerical methods for optimal
transport. Note that there is no <<free lunch>> in the sense that
there exists no method able to deal efficiently with arbitrary cost
function; the computational complexity of most methods depend on the
complexity of computing $c$-transforms or smoothed $c$-transforms.  In
this overview, we skip linear programming methods such as the network
simplex, for which we refer to \cite{peyre2019computational}.

\vspace{.5cm}\noindent\textbf{A. Assignment problem.} When the two
measures are uniformly supported on two finite sets with cardinal $N$,
the optimal transport problem coincides with the \emph{assignment
  problem}, described in \cite{burkard2009assignment}. The assignment
problem can be solved using various techniques, but in this chapter we
will concentrate on a dual ascent method called Bertsekas' auction
algorithm \cite{bertsekas1981new}, whose complexity is also
$\mathrm{O}(N^3)$, but which is very simple to implement and
analyze. In \S\ref{subsec:auction-implementation} we note that
\emph{the complexity can be improved when it is possible to compute
  discrete $c$-transforms efficiently}, namely
  \begin{equation}
    \psi^c(x_i) = \min_{j} c(x_i,y_j) + \psi(y_j).
  \end{equation}

\vspace{.5cm}\noindent\textbf{B. Entropic regularization.}
  In this approach, one does not solve the original optimal transport
  problem \eqref{eq:kp0} exactly, but instead replaces it with a
  regularized problem involving the entropy of the transport plan. In
  the discrete case, it consists in minimizing
  \begin{equation} \label{eq:ent0}
    \min_{\gamma} \sum_{i,j} \gamma_{i,j} c(x_i,y_j) + \eta \sum_{i,j} h(\gamma_{i,j}),
  \end{equation}
  where $h(r) = r(\log r - 1)$  and $\eta>0$ is a small parameter, under the constraints
  \begin{equation} \forall i,~ \sum_{j} \gamma_{i,j} = \mu_i, \qquad \forall j,~
  \sum_i \gamma_{i,j} = \nu_j
  \end{equation}
  This idea has been introduced in the field of optimal transport by
  Galichon and Salanié \cite{galichon2010matching} and by Cuturi
  \cite{cuturi2013sinkhorn}, see \cite[Remark
    4.5]{peyre2019computational} for a brief historical
  account. Adding the entropy of the transport plan makes the problem
  \eqref{eq:ent0} strongly convex and smooth. The dual problem can be
  solved efficiently using Sinkhorn-Knopp's algorithm, which involves
  computing repeatedly the \emph{smoothed $c$-transform}
  \begin{equation}\label{eq:psi-c-eta} \psi^{c,\eta}(x_i) = \eta\log(\mu_i) - \eta\log\left(\sum_{j}
    e^{\frac{1}{\eta}(-c(x_i,y_j) - \psi(y_j))}\right).
  \end{equation}
  Sinkhorn-Knopp's algorithm can be very efficient, \emph{provided
    that the smoo\-thed $c$-transform can be computed efficiently
    (e.g. in near-linear time).}
    
\vspace{.5cm}\noindent\textbf{C. Distance costs.}
When the cost $c$ satisfies the triangle inequality, the dual problem
\eqref{eq:dp0} can be further simplified:
  \begin{equation}\label{eq:dpbislip}
  \max_{\Lip_c(\psi)\leq 1} \int \psi \dd \mu -\int \psi\dd\nu,
  \end{equation}
  where the maximum is taken over functions satisfying $\abs{\psi(x) -
    \psi(y)}\leq c(x,y)$ for all $x,y$. The equality between the
  values of \eqref{eq:kp0} and \eqref{eq:dpbislip} is called
  Kantorovich-Rubinstein's theorem. This leads to very efficient
  algorithms when the $1$-Lipschitz constraint can be enforced using only
  \emph{local} information, thus reducing the number of constraints. This is
  possible when the space is discrete and the distance is induced by a
  graph, or when $c$ is the Euclidean norm or more generally a
  Riemannian metric. In the latter case, the maximum in
  \eqref{eq:dpbislip} can be replaced by a supremum over $\Class^1$
  functions $\psi$ satisfying $\nr{\nabla \psi}_\infty \leq 1$
  \cite{solomon2014earth,benamou2015augmented}. Note that \emph{the
    case of distance costs is particularly easy because the
    $c$-transform of a $1$-Lipschitz function is trivial: $\psi^c =
    -\psi$.} 

\vspace{.5cm}\noindent\textbf{D. Monge-Ampère equation.}
  When the cost is the Euclidean scalar product, $c(x,y) =
  -\sca{x}{y}$, the dual problem \eqref{eq:dp0} can be reformulated as
  \begin{equation} \label{eq:dp0ma}
    \max_{\psi} - \int \psi^* \dd\mu -\int \psi\dd \nu,
    \end{equation} where
  $\psi^*(x) = \max_y \sca{x}{y} - \psi(y)$ is the Legendre-Fenchel
  transform of $\psi$. 
If the maximizer $\psi$ is smooth and strongly convex and $\mu$, $\nu$
are probability densities, the optimality condition associated to the dual
problem is the \emph{Monge-Ampère equation,}
  \begin{equation} \label{eq:ma0}
    \begin{cases}
    \mu(\nabla \psi(y)) \det(\D^2\psi(y)) = \nu(y),\\
    \nabla \psi(\spt(\nu)) \subseteq \spt(\mu).
    \end{cases}
  \end{equation}
  Note the non-standard boundary conditions appearing on the second
  line of the equation. The first methods able to deal with these
  boundary conditions use a ``wide-stencil'' finite difference
  discretization
  \cite{froese2012numerical,benamou2014numerical,benamou2019minimal}.
  These methods are able to solve optimal transport problems provided
  that the maximizer of \eqref{eq:dp0ma} is a viscosity solution to
  the Monge-Ampère equation~\eqref{eq:ma0}, imposing restrictions on its
  regularity.  For the Monge-Ampère equation with Dirichlet
  conditions, we refer to the recent survey by Neilan, Salgado and
  Zhang \cite{neilan2019monge}.

\vspace{.5cm}\noindent\textbf{E. Semi-discrete formulation.} The semi-discrete formulation of
  optimal transport involves a source measure that is a probability
  density $\mu$ and a target measure $\nu$ which is finitely
  supported, i.e. $\nu = \sum_i \nu_i \delta_{y_i}$. It was introduced
  by Cullen in 1984 \cite{cullen1984extended}, without reference to
  optimal transport, and much refined since then
  \cite{aurenhammer1998minkowski,merigot2011multiscale,de2012blue,gu2013variational,kitagawa2014iterative,levy2015numerical,kitagawa2016newton}. In
  this setting, the dual problem \eqref{eq:dpbis0} amounts to maximizing the Kantorovitch functional given by
  \begin{equation}\label{eq:sd0}
  \Kant(\psi) =  \int \psi^c \dd\mu - \int \psi \dd\nu = 
    \sum_i \int_{\Lag_{y_i}(\psi)} c(x,y_i) + \psi(y_i) \dd\mu(x) - \int \psi \dd\nu ,
  \end{equation}
  where
  the Laguerre cells are defined by
  \begin{equation}\label{eq:lag0}
    \Lag_{y_i}(\psi) = \{ x \mid \forall j, c(x,y_i) + \psi(y_i) \leq
    c(x,y_j) + \psi(y_j) \}.
  \end{equation}
  The optimality condition for \eqref{eq:sd0} is the following
  non-linear system of equations,
  \begin{equation}\label{eq:masd0}
    \forall i, \mu(\Lag_{y_i}(\psi)) = \nu_i.
  \end{equation}
  In the case $c(x,y)= -\sca{x}{y}$, this system of equations can see
  as a weak formulation (in the sense of Alexandrov, see \cite[Chapter
    1]{gutierrez2001monge}) of the Monge-Ampère equation
  \eqref{eq:ma0}.  This ``semi-discrete'' approach can also be used to
  solve Monge-Ampère equations with Dirichlet boundary conditions, and
  has been originally introduced for this purpose
  \cite{oliker1989numerical,mirebeau2015discretization}. \emph{Again,
    the possibility to solve \eqref{eq:masd0} efficiently requires one
    to be able to compute the Laguerre tessellation \eqref{eq:lag0},
    and thus the $c$-transform, efficiently.}

\vspace{.5cm}\noindent\textbf{F. Dynamic formulation.}
  This formulation relies on the dynamic formulation of optimal
  transport, which holds when the cost is $c(x,y) = \nr{x-y}^2$ on $\Rsp^d$ (or
  more generally induced by a Riemannian metric), and is known as the
  \emph{Benamou-Brenier formulation}:
  $$ \min_{(\rho,v)} \int_{0}^1 \int_{\Rsp^d} \rho_t \nr{v_t}^2 \dd x \dd t \hbox{\quad with\quad}
  \begin{cases}
    \partial_t \rho_t + \div(\rho_t v_t) = 0, \\
    \rho_0  = \mu, \rho_1 = \nu.
  \end{cases}$$
Introducing the momentum $m_t = \rho_t v_t$, the problem can be
rewritten as
$$ \min_{(\rho_t,m_t)} \int_{0}^1 \int_{\Rsp^d} \frac{\nr{m_t}^2}{\rho_t} \dd x \dd t \hbox{\quad with\quad}
  \begin{cases}
    \partial_t \rho_t + \div(m_t) = 0, \\
    \rho_0  = \mu, \rho_1 = \nu.
  \end{cases}$$
  This optimization problem can be discretized using finite elements
  \cite{benamou2002monge}, finite differences
  \cite{papadakis2014optimal} or finite volumes
  \cite{erbar2017computation}, and the discrete problem is then
  usually solved using a primal-dual augmented Lagrangian method
  \cite{benamou2002monge} (see also
  \cite{papadakis2014optimal,hug2016analyse}).  In practice, the
  convergence is very costly in terms of number of iterations; note
  also that each iteration requires the resolution of a
  $(d+1)$-dimensional Poisson problem to project on the admissible set
  $\{ (\rho,m) \mid \partial_t \rho + \div(m) = 0 \}$. Another
  possibility is to use divergence-free wavelets
  \cite{henry2015optimal}.  One advantage of the Benamou-Brenier
  approach is that it is very flexible, easily allowing (Riemannian)
  cost functions \cite{papadakis2014optimal}, additional quadratic
  terms \cite{benamou2001mixed}, penalization of congestion
  \cite{buttazzo2009optimization}, partial transport
  \cite{lombardi2015eulerian,chizat2018interpolating}, etc. Finally,
  the convergence from the discretized problem to the continuous one
  is subtle and depends on the choice of the discretization, see
  \cite{lavenant2019unconditional,carrillo2019primal}.
\medskip  
  
%
%



In this chapter, we will describe in detail the following
discretizations for optimal transport and corresponding algorithms to
solve the discretized problems : the assignment problem (A.) through
Bertsekas auction's algorithm, the entropic regularization (B.)
through Sinkhorn-Knopp's algorithm, semi-discrete optimal transport
(E.) through Oliker-Prussner or Newton's methods. These algorithms
share a common feature, in that they are all derived from Kantorovich
duality.  Some of them have been adapted to variants of optimal
transport problems, such as multi-marginal optimal transport problems
problems \cite{pass2015multi}, barycenters with respect to optimal
transport metrics \cite{agueh2011barycenters}, partial
\cite{caffarelli2010free} and unbalanced optimal
transport~\cite{chizat2018interpolating,kondratyev2016new}, gradient
flows in the Wasserstein space
\cite{jordan1998variational,ambrosio2008gradient}, generated Jacobian
equations \cite{guillen2019primer,trudinger2014local}.  However, we
consider these extensions to be out of the scope of this chapter.


\section{Optimal transport theory}
This part contains a self-contained introduction to the theory of
optimal transport, putting a strong emphasis on Kantorovich
Kantorovich duality. Kantorovich duality is at the heart of the most
important theorems of optimal transport, such as Brenier and
Gangbo-McCann's theorems on the existence and uniqueness of solution
to Monge's problems and the stability of optimal transport plans and
optimal transport maps maps. Kantorovich's duality is also used in all
the numerical methods presented in this chapter.


\subsubsection*{Background on measure theory.}
In the following, we assume that $X$ is a compact metric space, and we
denote $\Class^0(X)$ the space of continuous functions over $X$. We
denote $\Meas(X)$ the space of \emph{finite (Radon) measures} over
$X$, identified with the set of continuous linear forms over
$\Class^0(X)$. Given $\phi\in\Class^0(X)$ and $\mu\in\Meas(X)$, we
will often denote $\sca{\phi}{\mu} = \int_X\phi \dd\mu$.  The spaces
of \emph{non-negative measures} and \emph{probability measures} are
defined by
$$\Meas^+(X) := \{ \mu \in \Meas(X) \mid \mu\geq 0 \},$$
$$\Prob(X) := \{ \mu \in \Meas^+(X) \mid \mu(X) = 1 \}, $$ where
$\mu\geq 0$ means $\sca{\mu}{\phi}\geq 0$ for all
$\phi\in\Class^0(X,\Rsp^+)$.  The three spaces $\Meas(X)$, $\Meas^+(X)$ and
$\Prob(X)$ are endowed with the weak topology induced by duality with
$\Class^0(X)$, namely $\mu_n \to \mu$ weakly if
$$ \forall \phi\in\Class^0(X),~ \sca{\phi}{\mu_n} 
\xrightarrow{n\to\infty} \sca{\phi}{\mu}.$$ A point $x$ belongs to the \emph{support} of a
non-negative measure $\mu$ iff for every $r>0$ one has
$\mu(\B(x,r))>0$. The support of $\mu$ is denoted $\spt(\mu)$.  We
recall that by Banach-Alaoglu theorem, the set of probability measures
$\Prob(X)$ is weakly compact, a fact which will be useful to prove
existence and convergence results in optimal transport.


\paragraph{Notation.} Given two functions
$\phi\in \Class^0(X)$ and $\psi \in \Class^0(Y)$ we will define
$\phi\oplus\psi \in \Class^0(X\times Y)$ by $\phi \oplus \psi(x,y) =
\phi(x) + \psi(y)$. We define $\phi\ominus\psi$ and $\phi\otimes\psi$
similarly. 


\subsection{The problems of Monge and Kantorovich}

\subsubsection{Monge's problem}
Before introducing Monge's problem, we recall the definition
of \emph{push-forward} or \emph{image} measure.
\begin{definition}[Push-forward and transport map]
  Let $X,Y$ be compact metric spaces, $\mu \in \Meas(X)$ and $T:X\to
  Y$ be a measurable map. The \emph{push-forward} of $\mu$ by $T$ is
  the measure $T_\#\mu$ on $Y$ defined by
  $$ \forall \phi\in\Class^0(Y), \sca{\phi}{T_\#\mu} := \sca{\phi\circ
    T}{\mu},$$ or equivalently if for every Borel subset $B\subseteq
  Y,~~ \T_\#\mu(B) = \mu(T^{-1}(B))$.  A measurable map $T:X\to Y$
  such that $T_\#\mu = \nu$ is also called a \emph{transport map}
  between $\mu$ and $\nu$.
\end{definition}

\begin{example}
  If $Y = \{y_1,\hdots,y_n\}$, then $T_\#\mu = \sum_{1\leq i\leq n}
  \mu(T^{-1}(\{y_i\})) \delta_{y_i}$.
\end{example}

\begin{example}
  Assume that $T$ is a $\Class^1$ diffeomorphism between compact
  domains $X,Y$ of $\Rsp^d$, and assume also that the probability
  measures $\mu,\nu$ have continuous densities $\rho,\sigma$ with
  respect to the Lebesgue measure. Then,
  $$ \int_Y \phi(y) \sigma(y) \dd y = \int_{X} \phi(T(x)) \sigma(T(x)) \det(\D T(x)) \dd x.$$
Hence, $T$ is a transport map between $\mu$ and $\nu$ iff 
$$ \forall \phi\in \Class^0(X), \int_{X} \phi(T(x)) \sigma(T(x))
\det(\D T(x)) \dd x = \int_X \phi(T(x)) \rho(x) \dd x, $$ or equivalently if  the (non-linear) Jacobian equation holds
$$\rho(x) =
\sigma(T(x))\det(\D T(x)).$$
\end{example}

\begin{definition}[Monge's problem]
  Consider two compact metric spaces $X,Y$, two probability measures
  $\mu \in \Prob(X)$, $\nu \in \Prob(Y)$ and a \emph{cost function} $c
  \in \Class^0(X\times Y)$. \emph{Monge's problem} is the following
  optimization problem
  \begin{equation}
    \label{eq:monge}
    \MP := \inf \left\{ \int_{X} c(x,T(x)) \dd \mu(x) \mid T:X\to Y \hbox{ and } T_\# \mu = \nu \right\}
  \end{equation}
\end{definition}

Monge's problem exhibits several difficulties, one of which is that both
the transport constraint ($T_\#\mu = \nu$) and the functional are
non-convex. Note also that there might exist no transport map between
$\mu$ and $\nu$. For instance, if $\mu = \delta_x$ for some $x\in X$,
then, $\T_\#\mu(B) = \mu(T^{-1}(B)) = \delta_{T(x)}$. In particular,
if $\card(\spt(\nu))>1$, there exists no transport map between $\mu$
and $\nu$.

\subsubsection{Kantorovich's problem}

\begin{definition}[Marginals]
    The \emph{marginals} of a measure $\gamma$ on a product space
    $X\times Y$ are the measures $\Pi_{X\#} \gamma$ and
    $\Pi_{Y\#}\gamma$, where $\Pi_X: X\times Y\to X$ and $\Pi_Y:
    X\times Y\to Y$ are their projection maps.
\end{definition}

\begin{definition}[Transport plan]
  A transport plan between two probability measures $\mu,\nu$ on two
  metric spaces $X$ and $Y$ is a probability measure $\gamma$ on the
  product space $X\times Y$ whose marginals are $\mu$ and $\nu$. The
  space of transport plans is denoted $\Gamma(\mu,\nu)$, i.e.
  $$ \Gamma(\mu,\nu) = \left\{ \gamma \in \Prob(X\times Y) \mid
  \Pi_{X\#} \gamma = \mu,~~\Pi_{Y\#}\gamma= \nu \right\}. $$
  Note that $\Gamma(\mu,\nu)$ is a convex set.
\end{definition}

\begin{example}[Product measure]
Note that the set of transport plans $\Gamma(\mu,\nu)$ is never empty,
as it contains the measure $\mu\otimes \nu$.
\end{example}

\begin{definition}[Kantorovich's problem] 
  Consider two compact metric spaces $X,Y$, two probability measures
  $\mu \in \Prob(X)$, $\nu \in \Prob(Y)$ and a \emph{cost function}
  $c\in\Class^0(X\times Y)$. \emph{Kantorovich's problem} is the
  following optimization problem
  \begin{equation}
    \label{eq:kantorovich}
    \KP := \inf \left\{ \int_{X\times Y} c(x,y) \dd \gamma(x,y) \mid \gamma \in \Gamma(\mu,\nu) \right\}
  \end{equation}
\end{definition}

\begin{remark}
  The infimum in Kantorovich's problem is less than the infimum in
  Monge's problem. Indeed, to any transport map $T$ between $\mu$ and
  $\nu$ one can associate a transport plan, by letting $\gamma_T =
  (\id, T)_{\#} \mu$. One can easily check that $\Pi_{X\#}\gamma_T =
  \mu$ and $\Pi_{Y\#}\gamma_T = \nu$ so that $\gamma_T \in
  \Gamma(\mu,\nu)$ is a transport plan between $\mu$ and
  $\nu$. Moreover, by the definition of push-forward,
$$ \sca{c}{\gamma_T} = \sca{c}{(\id,T)_\#\mu} =
\sca{c\circ(\id,T)}{\mu} = \int_{X} c(x,T(x))\dd\mu$$ thus showing
that $\KP\leq \MP$.
\end{remark}
  
\begin{proposition} \label{prop:existence-kp}
  Kantorovich's problem $\KP$ admits a minimizer.
\end{proposition}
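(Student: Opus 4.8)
The plan is to use the direct method of the calculus of variations: exhibit a minimizing sequence, extract a weakly convergent subsequence using compactness, and verify that the limit is admissible and achieves the infimum. The key facts that make this work are that $X \times Y$ is itself a compact metric space (a product of two compact metric spaces), so that $\Prob(X\times Y)$ is weakly compact by Banach–Alaoglu, and that the objective functional $\gamma \mapsto \sca{c}{\gamma} = \int_{X\times Y} c\dd\gamma$ is weakly continuous because $c \in \Class^0(X\times Y)$. Note that the infimum is finite and not $-\infty$: since $c$ is continuous on a compact set it is bounded, so $\sca{c}{\gamma}$ is bounded (uniformly in $\gamma$) below by $\min c$; moreover $\Gamma(\mu,\nu)$ is nonempty since it contains $\mu \otimes \nu$, so the infimum is a real number.

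First I would pick a minimizing sequence $(\gamma_n)_{n}$ in $\Gamma(\mu,\nu)$, meaning $\sca{c}{\gamma_n} \to \KP$. Since each $\gamma_n$ lies in the weakly compact set $\Prob(X\times Y)$, after passing to a subsequence (not relabelled) there is $\gamma_\infty \in \Prob(X\times Y)$ with $\gamma_n \to \gamma_\infty$ weakly. Next I would check that $\gamma_\infty \in \Gamma(\mu,\nu)$, i.e. that its marginals are still $\mu$ and $\nu$. This is where a small argument is needed: for any $\phi \in \Class^0(X)$, the function $\phi \circ \Pi_X$ belongs to $\Class^0(X\times Y)$, so $\sca{\phi}{\Pi_{X\#}\gamma_n} = \sca{\phi\circ\Pi_X}{\gamma_n} \to \sca{\phi\circ\Pi_X}{\gamma_\infty} = \sca{\phi}{\Pi_{X\#}\gamma_\infty}$; since $\Pi_{X\#}\gamma_n = \mu$ for all $n$, this gives $\sca{\phi}{\Pi_{X\#}\gamma_\infty} = \sca{\phi}{\mu}$ for all $\phi$, hence $\Pi_{X\#}\gamma_\infty = \mu$, and symmetrically $\Pi_{Y\#}\gamma_\infty = \nu$. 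So $\gamma_\infty$ is an admissible transport plan.

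Finally, since $c \in \Class^0(X\times Y)$, weak convergence gives $\sca{c}{\gamma_n} \to \sca{c}{\gamma_\infty}$ directly (here we even have continuity, not merely lower semicontinuity, since the cost is continuous and bounded on the compact product space). Therefore $\sca{c}{\gamma_\infty} = \lim_n \sca{c}{\gamma_n} = \KP$, so $\gamma_\infty$ is a minimizer of Kantorovich's problem.

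The main obstacle, such as it is, is the stability of the marginal constraint under weak convergence — i.e. verifying that $\Gamma(\mu,\nu)$ is weakly closed in $\Prob(X\times Y)$ — but this reduces to the elementary observation above that pulling back test functions from $X$ (or $Y$) to $X\times Y$ via the continuous projections $\Pi_X$, $\Pi_Y$ turns marginal convergence into a special case of weak convergence on the product. Apart from that, every step is a routine application of the compactness of $\Prob(X\times Y)$ and the continuity of the linear functional $\gamma\mapsto\sca{c}{\gamma}$.
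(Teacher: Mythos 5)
Your proof is correct and uses exactly the same ingredients as the paper: weak compactness of $\Prob(X\times Y)$ via Banach--Alaoglu, weak closedness of $\Gamma(\mu,\nu)$ obtained by testing the marginals against $\phi\circ\Pi_X=\phi\otimes 1$, and weak continuity of $\gamma\mapsto\sca{c}{\gamma}$. The paper simply phrases it as ``a weakly continuous functional on a weakly compact set attains its minimum,'' whereas you unwind this into a minimizing-sequence argument; the content is the same.
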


\begin{proof} The definition of
  $\Pi_{X\#} \gamma = \mu$ can be expanded into
  $$ \forall \phi\in \Class^0(X), \sca{\phi\otimes 1}{\gamma}
  = \sca{\phi}{\mu},$$ from which it is easy to see that the set
  $\Gamma(\mu,\nu)$ is weakly closed, and therefore weakly compact as
  a subset of $\Prob(X\times Y)$, which is weakly compact by
  Banach-Alaoglu's theorem. We conclude the existence proof by
  remarking that the functional that is minimized in $\KP$, namely
  $\mu \mapsto \sca{c}{\mu}$, is weakly continuous by definition.
\end{proof}

\subsection{Kantorovich duality}\label{sec:kantorovitchduality}
\subsubsection{Derivation of the dual problem}
The primal Kantorovich problem $\KP$ can be reformulated by
introducing Lagrange multipliers for the constraints. Namely, we use
 that for any $\gamma\in \Meas^+(X\times Y)$,
$$ \sup_{\phi\in\Class^0(X)} -\sca{\phi\otimes 1}{\gamma} + \sca{\phi}{\mu} = \begin{cases} 0 & \hbox{ if } \Pi_{X\#}\gamma = \mu \\
+\infty & \hbox{ if not}\end{cases}$$
$$ \sup_{\phi\in\Class^0(X)} \sca{1\otimes \psi}{\gamma}
- \sca{\psi}{\mu} = \begin{cases} 0 & \hbox{ if } \Pi_{X\#}\gamma
= \mu \\ +\infty & \hbox{ if not}\end{cases}$$ to deduce
  $$ \sup_{\phi\in\Class^0(X),\psi\in\Class^0(Y)} \sca{\phi}{\mu} -
\sca{\psi}{\nu} - \sca{\phi \ominus\psi}{\gamma} = \begin{cases} 0 &
  \hbox{ if } \gamma\in\Gamma(\mu,\nu)\\ +\infty & \hbox{ if not}.\end{cases}$$
This leads to the following
formulation of the Kantorovich problem
\begin{equation*}  \KP = \inf_{\gamma \in \Meas^+(X\times Y)} \sup_{(\phi,\psi)\in\Class^0(X)\times \Class^0(Y)} \sca{c - (\phi \ominus \psi)}{\gamma} + \sca{\phi}{\mu} - \sca{\psi}{\nu}
\end{equation*}
Kantorovich dual problem is simply obtained by inverting the infimum and the
supremum:\begin{align*} \DP := \sup_{\phi,\psi} \inf_{\gamma\geq 0}
  \sca{c - (\phi \ominus\psi)}{\gamma} + \sca{\phi}{\mu} -
  \sca{\psi}{\nu}.
\end{align*}
Note that we will often omit the assumptions that
$\gamma\in\Meas(X\times Y)$ and $\phi,\psi$ are continuous, when the
context is clear.  The dual problem can further be simplified by
remarking that
$$ \inf_{\gamma\geq 0}  \sca{c - \phi \ominus\psi}{\gamma} =
 \begin{cases}
0 &\hbox{ if }  \phi \ominus \psi \leq c  \\
-\infty & \hbox{ if not. }
 \end{cases}$$
 \begin{definition}[Kantorovich's dual problem]
Given $\mu\in \Prob(X)$ and $\nu\in\Prob(Y)$ with $X,Y$ compact metric
spaces and $c\in\Class^0(X\times Y)$, we define Kantorovich's dual
problem by
\begin{equation}
\DP = \sup  \left\{ \int_{X} \phi \dd \mu - \int_Y \psi\dd\nu \mid (\phi,\psi) \in \Class^0(X)\times\Class^0(Y), \phi \ominus \psi \leq c \right\} 
\end{equation}
\end{definition}

\begin{proposition} Weak duality  holds, i.e. $\KP\geq \DP$.
\end{proposition}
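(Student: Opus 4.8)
The idea is the standard one: pair an arbitrary feasible transport plan against an arbitrary feasible dual pair and integrate the pointwise constraint. First I would fix any $\gamma \in \Gamma(\mu,\nu)$ and any admissible pair $(\phi,\psi) \in \Class^0(X)\times\Class^0(Y)$ with $\phi\ominus\psi \leq c$. Since $\gamma$ is a non-negative measure and $(\phi\ominus\psi)(x,y) = \phi(x)-\psi(y) \leq c(x,y)$ holds everywhere on $X\times Y$, integrating this inequality against $\gamma$ gives $\sca{\phi\ominus\psi}{\gamma} \leq \sca{c}{\gamma}$.

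Next I would rewrite the left-hand side using the marginal conditions defining $\Gamma(\mu,\nu)$. By definition $\Pi_{X\#}\gamma = \mu$ means $\sca{\phi\otimes 1}{\gamma} = \sca{\phi}{\mu}$, and $\Pi_{Y\#}\gamma = \nu$ means $\sca{1\otimes\psi}{\gamma} = \sca{\psi}{\nu}$; subtracting, $\sca{\phi\ominus\psi}{\gamma} = \sca{\phi}{\mu} - \sca{\psi}{\nu}$. Combining with the previous inequality yields
\begin{equation*}
  \int_X \phi\dd\mu - \int_Y \psi\dd\nu \leq \int_{X\times Y} c\dd\gamma
\end{equation*}
for every admissible $(\phi,\psi)$ and every $\gamma \in \Gamma(\mu,\nu)$.

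Finally I would take the supremum over admissible pairs $(\phi,\psi)$ on the left (which is $\DP$) and the infimum over $\gamma\in\Gamma(\mu,\nu)$ on the right (which is $\KP$, and the infimum is over a non-empty set since $\mu\otimes\nu\in\Gamma(\mu,\nu)$), obtaining $\DP \leq \KP$. There is no real obstacle here: the only points to be slightly careful about are that all the integrals are finite because $X\times Y$ is compact and $c,\phi,\psi$ are continuous, and that one should not forget the right-hand side infimum is taken over a non-empty set so that the final comparison of the two extremal values is meaningful.
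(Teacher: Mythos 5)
Your proof is correct and follows essentially the same route as the paper: pair an arbitrary admissible $(\phi,\psi)$ with an arbitrary $\gamma\in\Gamma(\mu,\nu)$, use the marginal conditions to write $\sca{\phi}{\mu}-\sca{\psi}{\nu}=\sca{\phi\ominus\psi}{\gamma}$, bound this by $\sca{c}{\gamma}$ via the constraint, and pass to the supremum and infimum. The extra remarks about finiteness of the integrals and non-emptiness of $\Gamma(\mu,\nu)$ are fine but not needed beyond what the paper already assumes.
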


\begin{proof} Given $(\phi,\psi,\gamma) \in \Class^0(X)\times \Class^0(Y)\times
\Gamma(\mu,\nu)$ satisfying the constraint $\phi\ominus\psi \leq c$,
one has
$$ \sca{\phi}{\mu} - \sca{\psi}{\nu} = \sca{\phi\ominus\psi}{\gamma}
\leq \sca{c}{\gamma}, $$ where we used $ \gamma \in \Gamma(\mu,\nu)$ to get the
equality and $\phi\ominus\psi\leq c$ to get the inequality. As a conclusion,
\begin{equation*}
  \DP = \min_{\phi \ominus \psi \leq c} \sca{\phi}{\mu} - \sca{\psi}{\nu} \leq \max_{\gamma \in\Gamma(\mu,\nu)} \sca{c}{\gamma} = \KP \qedhere
\end{equation*}
\end{proof}


\subsubsection{Existence of solution for the dual problem}
Kantorovich's dual problem $\DP$ consists in maximizing a concave
(actually linear) functional under linear inequality constraints. It
can also also easily be turned into an unconstrained minimization
problem. The idea is quite simple: given a certain $\psi\in
\Class^0(Y)$, one wishes to select $\phi$ on $X$ which is as large as
possible (to maximize the term $\sca{\phi}{\mu}$ in $\DP$) while
satisfying the constraint $\phi \ominus \psi \leq c$. This constraint
can be rewritten as $$\forall x\in X,~~\phi(x) \leq \min_{y\in Y} c(x,y) + \psi(y).$$
The largest function $\phi$ satisfying it is
$\phi(x) = \min_{y\in Y} c(x,y) + \psi(y)$. Thus,
\begin{align*}
  \KP &= \sup_{\phi \ominus \psi \leq c} \sca{\phi}{\mu} - \sca{\psi}{\nu} \\
  &= \sup_{\psi \in \Class^0(Y)} \int_{X} \left(\min_{y\in Y} c(x,y) + \psi(y)\right)\dd\mu(x)
  - \int \psi(y)\dd\nu(y).
\end{align*}
This idea is at the basis of many algorithms to solve discrete
instances of optimal transport, but also useful in theory. It also
suggests to introduce the notion of $c$-transform.  .

\begin{definition}[$c$-Transform] \label{def:ctransform}
  The $c$-transform (resp. $\bar{c}$-transform) of a function $\psi:Y\to\Rsp \cup\{+\infty\}$
  (resp. $\phi:X\to\Rsp\cup\{+\infty\}$) is defined as
\begin{align}
  &\psi^{c}: x\in X\mapsto \inf_{y\in Y} c(x,y) + \psi(y)\\
    &\phi^{\bar{c}}: y\in Y\mapsto \sup_{x\in X} - c(x,y) + \phi(x) 
\end{align}
\end{definition}
Thanks to this notion of $c$-transform, one can reformulate the dual
problem $\DP$ as an unconstrained maximization problem:
\begin{equation}
  \label{eq:KD-unconstrained}
  \DP = \sup_{\psi\in\Class^0(Y)} \int_{X} \psi^c \dd \mu - \int_Y \psi\dd\nu.
\end{equation}

\begin{remark}[$c$-concavity, $\bar{c}$-convexity and $c$-subdifferential]
One can call a function $\phi$ on $X$ \emph{$c$-concave} if $\phi =
\psi^c$ for some $\psi:Y\to\Rsp\cup\{+\infty\}$ on $Y$. Note that we
use the word \emph{concave} because $\psi^c$ is defined through an
infimum. Conversely, a function $\psi$ on $Y$ is called
\emph{$\bar{c}$-convex} if $\psi = \phi^{\bar{c}}$ for some
$\phi:X\to\Rsp\cup\{+\infty\}$. Note the asymetry between the two
notions, which is due to the choice of the sign in the constraint in
Kantorovich's problem: in the two equivalent formulations
  $$ \KP = \sup_{\phi\oplus \psi \leq c} \sca{\phi}{\mu} +
\sca{\psi}{\nu} = \sup_{\phi\ominus \psi \leq c} \sca{\phi}{\mu} -
\sca{\psi}{\nu}, $$ we chose the second one, involving two minus
signs.  This choice will make it easier to explain some of the
algorithms we will present later in the chapter.  The
\textit{$c$-subdifferential} of a function $\psi$ on $Y$ is a subset
of $X\times Y$ defined by
\begin{equation} \label{eq:csubdiff}
\partial^c \psi := \left\{(x,y)\in X\times Y\mid \psi^c(x)-\psi(y) = c(x,y)  \right\},\\
\end{equation}
while the \textit{$c$-subdifferential} at a point  $y$ in $Y$ is given by 
\begin{equation} \label{eq:csubdiffy}
\partial^c \psi (y):= \left\{x\in X,\quad (x,y)\in \partial^c\psi\right\}.
\end{equation}
\end{remark}
\begin{remark}[Bilinear cost] When $c(x,y) = -\sca{x}{y}$, a function is $\bar{c}$-convex if
  and only if it is convex, and $\phi^{\bar{c}}$ is  the
  Legendre-Fenchel transform of $-\phi$.
\end{remark}

\begin{proposition}[Existence of dual potentials] \label{prop:dp-maximizer}
  $\DP$ admits a maximizer, which one can assume to be of the form
  $(\phi,\psi)$ such that $\phi = \psi^c$ and $\psi = \phi^{\bar{c}}$.
\end{proposition}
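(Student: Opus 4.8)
The plan is to combine three ingredients: a \emph{compactness} result (Banach--Alaoglu on $\Prob(X\times Y)$, already used in Proposition~\ref{prop:existence-kp}), a \emph{semicontinuity} argument for the dual functional, and a \emph{double-$c$-transform reduction} that simultaneously restricts the candidate potentials to a nice class and provides the needed equicontinuity. First I would observe that, by the discussion preceding Definition~\ref{def:ctransform}, for any admissible pair $(\phi,\psi)$ with $\phi\ominus\psi\le c$ we may replace $\phi$ by $\psi^c\ge\phi$ without decreasing the objective $\sca{\phi}{\mu}-\sca{\psi}{\nu}$, since $\mu\ge0$; the pair $(\psi^c,\psi)$ is still admissible because $\psi^c\ominus\psi\le c$ by definition. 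Symmetrically, replacing $\psi$ by $(\psi^c)^{\bar c}\le\psi$ does not decrease $-\sca{\psi}{\nu}$ (as $\nu\ge0$) and keeps admissibility since $\psi^c\ominus(\psi^c)^{\bar c}\le c$ follows from the definition of $\bar c$-transform. So it suffices to take a maximizing sequence of pairs of the special form $(\phi_n,\psi_n)=(\chi_n^c,\chi_n^{\bar c})$ with $\chi_n:=\psi_n$; one checks that then $\phi_n=\psi_n^c$ and, by a standard ``triple transform equals single transform'' identity ($\chi^{c\bar c c}=\chi^c$), also $\psi_n=\phi_n^{\bar c}$, so the pair is of the claimed form.

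Next I would establish \textbf{equicontinuity and uniform bounds}. Since $X,Y$ are compact and $c\in\Class^0(X\times Y)$, $c$ is uniformly continuous with some modulus $\omega$ and bounded, say $\abs{c}\le M$. For $\psi=\phi^{\bar c}$ with $\phi$ arbitrary, $\psi(y)=\sup_x \phi(x)-c(x,y)$ is a supremum of functions $y\mapsto \phi(x)-c(x,y)$ that are all $\omega$-uniformly continuous (in $y$), hence $\psi$ itself inherits modulus $\omega$; likewise $\phi=\psi^c$ inherits the modulus of $c$ in the $x$-variable. For the bounds: the objective is invariant under the shift $(\phi,\psi)\mapsto(\phi+t,\psi+t)$, so I normalize $\min_Y\psi_n=0$; then $0\le\psi_n\le\mathrm{diam\text{-}type\ bound}$ coming from uniform continuity on the compact $Y$, and $\abs{\phi_n}=\abs{\psi_n^c}\le M+\sup\psi_n$ is uniformly bounded as well. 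Thus $(\phi_n)$ and $(\psi_n)$ are uniformly bounded and equicontinuous.

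Then \textbf{Arzel\`a--Ascoli} gives a subsequence with $\phi_n\to\phi_\infty$ and $\psi_n\to\psi_\infty$ uniformly. Passing to the limit in the (closed) constraint $\phi_n\ominus\psi_n\le c$ yields $\phi_\infty\ominus\psi_\infty\le c$, and uniform convergence lets me pass to the limit in the linear objective $\sca{\phi_n}{\mu}-\sca{\psi_n}{\nu}\to\sca{\phi_\infty}{\mu}-\sca{\psi_\infty}{\nu}$, which therefore equals $\DP$: a maximizer exists. Finally, applying the first paragraph's reduction once more to $(\phi_\infty,\psi_\infty)$ produces a maximizer with $\phi=\psi^c$ and $\psi=\phi^{\bar c}$ exactly. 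I expect the \textbf{main obstacle} to be purely bookkeeping: getting the equicontinuity/boundedness estimates clean (the right normalization and the care that $c$-transforms of uniformly continuous functions are uniformly continuous with a \emph{controlled, uniform} modulus), plus verifying the elementary but slightly fiddly identity $\chi^{c\bar c c}=\chi^c$ used to guarantee the pair $(\psi^c,(\psi^c)^{\bar c})$ is a genuine fixed pair of the two transforms.
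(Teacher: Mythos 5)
Your proposal is correct and follows essentially the same route as the paper's proof: replace a maximizing sequence by its $c$/$\bar c$-transformed pair (using $\psi^{c\bar c c}=\psi^c$ and the monotonicity $\psi^{c\bar c}\le\psi$, $\phi\le\psi^c$), normalize by an additive constant, obtain equicontinuity and uniform bounds from the modulus of continuity of $c$, and conclude with Arzel\`a--Ascoli. The only cosmetic difference is that the Banach--Alaoglu compactness you announce at the start is never actually needed for the dual problem.
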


The existence of maximizers follows from the fact that a
$c$-concave/$\bar{c}$-convex function has the same modulus of
continuity as $c$.

(Recall that $\omega: \Rsp^+ \to \Rsp$ is a modulus
of continuity of a function $f:Z\to \Rsp$ on a metric space $(Z,d_Z)$
if it satisfies $\lim_{t\to 0} \omega(t) = 0$ and for every $z,z'\in
Z$, $\abs{f(z) - f(z')}\leq \omega(\dd_Z(z,z'))$.)
\begin{lemma}[Properties of $c$-transforms]\label{lem:cconc-modulus}
  Let $\omega: \Rsp^+\to\Rsp^+$ be a modulus of continuity for $c\in
  \Class^0(X\times Y)$ for the distance $$\dd_{X\times
    Y}((x,y),(x',y'))=\dd_X(x,x')+\dd_Y(y,y').$$ Then for every $\phi
  \in \Class^0(X)$ and every $\psi\in\Class^0(Y)$,
  \begin{itemize}
\item $\phi^{\bar{c}}$ and $\psi^c$ also admits $\omega$ as modulus of continuity.
\item $\psi^{c\bar{c}} \leq \psi$ and $\psi^{c\bar{c}c}=\psi^c$.
\item $\phi^{\bar{c}c} \geq \phi$ and $\phi^{\bar{c}c\bar{c}}=\phi^{\bar{c}}$.
  \end{itemize}
\end{lemma}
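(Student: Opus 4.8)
The plan is to establish the modulus-of-continuity claim first, since it is the crucial ingredient, and then derive the remaining monotonicity and idempotency identities by elementary manipulations of infima and suprema. For the first bullet, I would fix $\psi \in \Class^0(Y)$ and two points $x, x' \in X$. Pick any $y \in Y$; then $\psi^c(x) \le c(x,y) + \psi(y) \le c(x',y) + \omega(\dd_X(x,x')) + \psi(y)$, where I used that $\omega$ is a modulus of continuity for $c$ together with $\dd_{X\times Y}((x,y),(x',y)) = \dd_X(x,x')$. Taking the infimum over $y$ on the right-hand side gives $\psi^c(x) \le \psi^c(x') + \omega(\dd_X(x,x'))$, and by symmetry of the argument $\abs{\psi^c(x) - \psi^c(x')} \le \omega(\dd_X(x,x'))$. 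In particular $\psi^c$ is continuous (so the statement makes sense, and the sup/inf in the $c$-transforms are attained by compactness). The argument for $\phi^{\bar c}$ is identical up to signs.

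**Next I would prove the inequality** $\psi^{c\bar c} \le \psi$. Unwinding the definitions, $\psi^{c\bar c}(y) = \sup_{x \in X} \bigl( -c(x,y) + \psi^c(x)\bigr) = \sup_{x\in X}\bigl(-c(x,y) + \inf_{y' \in Y}(c(x,y') + \psi(y'))\bigr)$. For each $x$, choosing $y' = y$ in the inner infimum shows $-c(x,y) + \inf_{y'}(c(x,y') + \psi(y')) \le -c(x,y) + c(x,y) + \psi(y) = \psi(y)$, and taking the supremum over $x$ preserves this bound, so $\psi^{c\bar c} \le \psi$. Applying the $\bar c$-transform analogue of the same one-line argument (or rather, exchanging the roles of $X$ and $Y$ and of $c(x,y)$ with $-c(x,y)$) gives $\phi^{\bar c c} \ge \phi$; note the inequality flips because $\phi^{\bar c}$ is defined by a supremum while $\phi^{\bar c c}$ reintroduces an infimum.

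**For the idempotency identities** $\psi^{c\bar c c} = \psi^c$, I would use the two monotonicity facts just proved. On one hand, applying $\psi^{c\bar c} \le \psi$ and noting that $\phi \mapsto \phi^c$ is order-reversing (a larger function has a larger, not smaller... rather: if $f \le g$ then $f^c \le g^c$, since the infimum defining the $c$-transform is monotone in the added function), we get $\psi^{c\bar c c} \ge \psi^{c}$ — wait, $\psi^{c\bar c} \le \psi$ and the $c$-transform is \emph{monotone} (order-preserving) in its argument, hence $\psi^{c\bar c c} \le \psi^c$. On the other hand, applying $\phi^{\bar c c} \ge \phi$ with $\phi = \psi^c$ gives $\psi^{c \bar c c} \ge \psi^c$. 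Combining, $\psi^{c\bar c c} = \psi^c$. The identity $\phi^{\bar c c \bar c} = \phi^{\bar c}$ follows symmetrically: $\phi^{\bar c c} \ge \phi$ composed with the (monotone) $\bar c$-transform gives $\phi^{\bar c c \bar c} \ge \phi^{\bar c}$, while $\psi^{c \bar c} \le \psi$ with $\psi = \phi^{\bar c}$ gives $\phi^{\bar c c \bar c} \le \phi^{\bar c}$.

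**The main subtlety**, rather than any genuine obstacle, is keeping the direction of the inequalities straight: the $c$-transform and $\bar c$-transform are each order-preserving as operations on functions, but $\psi \mapsto \psi^c$ involves an infimum and $\phi \mapsto \phi^{\bar c}$ involves a supremum, which is why $\psi^{c\bar c} \le \psi$ but $\phi^{\bar c c} \ge \phi$ — the composition is a closure operation from one side and an interior-type operation from the other. Once the two monotonicity bullets and the order-preservation of a single transform are in hand, the idempotency statements are immediate and require no compactness beyond what is already used to make the transforms well-defined.
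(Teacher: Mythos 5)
Your proof is correct. The first two bullets follow the paper's own argument almost verbatim: for the modulus of continuity you bound $\psi^c(x)$ by $c(x,y)+\psi(y)$ for arbitrary $y$ and then pass to the infimum (the paper instead evaluates at a minimizer $y_x$, which is the same estimate), and the inequality $\psi^{c\bar c}\leq\psi$ comes from the same choice $\tilde y=y$ inside the nested extremum. Where you diverge slightly is the idempotency step: the paper proves $\psi^{c\bar c c}=\psi^c$ by expanding the triple transform and substituting $\tilde x=x$ (for $\geq$) and $\tilde y=y$ (for $\leq$), whereas you derive it from the order-preservation of the transforms combined with the two one-sided bounds $\psi^{c\bar c}\leq\psi$ and $\phi^{\bar c c}\geq\phi$ — the standard closure-operator argument. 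The two are essentially equivalent (the paper's choice $\tilde x=x$ is exactly your $\phi^{\bar c c}\geq\phi$ applied to $\phi=\psi^c$, unrolled inline), but your packaging makes the monotonicity structure explicit and transfers verbatim to $\phi^{\bar c c\bar c}=\phi^{\bar c}$; the directions of all the inequalities in your final version are right, including the one you corrected mid-argument.
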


\begin{proof}
  Let us first prove the first point. Let $\psi\in\Class^0(Y)$ and for
  $x\in X$, let $y_x\in Y$ be a point realizing the minimum in the
  definition of $\psi^c$. Then,
  $$\psi^c(x') \leq c(x',y_x) + \psi(y_x) = \psi^c(x) + c(x',y_x) -
  c(x,y_x) \leq \psi^c(x) + \omega(\dd_X(x,x')). $$ Exchanging the
  role of $x$ and $x'$ we get $\abs{\psi^c(x') - \psi^c(x)}\leq
  \omega(\dd_X(x,x'))$ as desired.  The proof that $\phi^{\bar{c}}$ has
  the $\omega$ as modulus of continuity is similar. We prove now the
  second point. By definition, one has
  $$ \psi^{c\bar{c}}(y)=\max_{x\in X} \left( -c(x,y) +
  \min_{\tilde{y}\in Y} c(x,\tilde{y})+ \psi(\tilde{y})\right).
  $$
  By taking $\tilde{y}=y$, one gets $\psi^{c\bar{c}}(y)\leq \psi(y)$. Again, by definition, we have
  $$
  \psi^{c\bar{c}c}(x)=\min_{y\in Y}\left(c(x,y)+\max_{\tilde{x}\in X} \left( -c(\tilde{x},y) + \min_{\tilde{y}\in Y} c(\tilde{x},\tilde{y}) + \psi(\tilde{y})\right)\right).
  $$ By taking $\tilde{x}=x$ , one gets $\psi^{c\bar{c}c}(x)\geq
  \psi^c(x)$, while taking $\tilde{y}=y$ gives us $\psi^{c\bar{c}c}(x)\leq
  \psi^c(x)$.  The last point is obtained similarly.
\end{proof}

\begin{proof}[Proof of Proposition~\ref{prop:dp-maximizer}]
  Let $(\phi_n,\psi_n)$ be a maximizing sequence for $\DP$, i.e.
  $\phi_n \ominus \psi_n \leq c$ and $\lim_{n\to +\infty}
  \sca{\phi_n}{\mu} - \sca{\psi_n}{\nu} = \DP.$ Define $\hat{\phi}_n =
  \psi_n^c$ and $\hat{\psi}_n = \hat{\phi_n}^{\bar{c}}$. Then $\hat{\phi}_n
  \ominus\hat{\psi}_n \leq c$, $\phi_n \leq \hat{\phi_n}$ and $\psi_n \geq \hat{\psi_n}$, 
  which implies
    $$ \sca{\phi_n}{\mu} - \sca{\psi_n}{\nu} \leq
  - \sca{\psi_n}{\nu} \leq \sca{\hat{\phi}_n}{\mu}
  - \sca{\hat{\psi}_n}{\nu}, $$ implying that
  $(\hat{\phi}_n,\hat{\psi}_n)$ is also a maximizing sequence. Our
  goal is now to show that this sequence admits a converging
  subsequence. We first note that we can assume that
  $\hat{\phi}_n(x_0) = 0$ for all $n$, where $x_0$ is a given point in
  $X$: if this is not the case, we replace
  $(\hat{\phi}_n,\hat{\psi}_n)$ by
  $(\hat{\phi}_n-\hat{\phi}_n(x_0),\hat{\psi}_n+\hat{\phi}_n(x_0))$),
  which is also admissible and has the same dual value. In addition,
  by Lemma~\ref{lem:cconc-modulus}, the sequences $(\hat{\phi}_n)_{n}$
  and $(\hat{\psi}_n)_{n}$ are equicontinuous. By Arzel\`a-Ascoli's
  theorem, we deduce that they admit subsequences converging
  respectively to $\phi\in\Class^0(x)$ and $\psi\in\Class^0(Y)$, which
  are then maximizers for $\DP$.
\end{proof}


\subsubsection{Strong duality and stability of optimal transport plans}

We will prove strong duality first in the case where $\mu,\nu$ are
finitely supported, and will then use a density argument to deduce the
general case. As a byproduct of this theorem, we get a stability
result for optimal transport plans (i.e. a limit of optimal transport
plans is also optimal).

\begin{theorem}[Strong duality] \label{th:Kantorovich}
Let $X, Y$ be compact metric spaces and $c \in \Class^0(X\times
Y)$. Then the maximum is attained in $\DP$ and $\KP = \DP$.
\end{theorem}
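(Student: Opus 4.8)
The plan is to follow the two-step strategy announced before the statement: first establish $\KP=\DP$, with the maximum attained in $\DP$, when $\mu$ and $\nu$ are finitely supported (a situation in which Kantorovich's problem reduces to a finite-dimensional linear program), and then obtain the general case by a density and compactness argument. For \textbf{Step~1}, write $\mu=\sum_i\mu_i\delta_{x_i}$, $\nu=\sum_j\nu_j\delta_{y_j}$, and $c_{ij}:=c(x_i,y_j)$. Since any $\gamma\in\Gamma(\mu,\nu)$ is supported on $\{x_i\}\times\{y_j\}$, Kantorovich's problem becomes the linear program $\min\{\sum_{ij}c_{ij}\gamma_{ij}\mid \gamma_{ij}\geq 0,\ \sum_j\gamma_{ij}=\mu_i,\ \sum_i\gamma_{ij}=\nu_j\}$, whose feasible set is nonempty (it contains $\gamma_{ij}=\mu_i\nu_j$) and compact, and whose LP dual is $\max\{\sum_i\mu_i\phi_i-\sum_j\nu_j\psi_j\mid \phi_i-\psi_j\leq c_{ij}\}$, which is feasible as well. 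By strong linear programming duality these two values coincide and both are attained. It remains to identify the dual LP value with $\DP$: restricting any admissible pair $(\phi,\psi)\in\Class^0(X)\times\Class^0(Y)$ to the support points shows $\DP$ is at most the dual LP value; conversely, from an optimal pair $(\phi_i^*,\psi_j^*)$ of the dual LP one builds the continuous functions $\psi^*(y):=\max_i(\phi_i^*-c(x_i,y))$ and $\phi^*:=(\psi^*)^c$, which satisfy $\phi^*\ominus\psi^*\leq c$ together with $\psi^*(y_j)\leq\psi_j^*$ and $\phi^*(x_i)\geq\phi_i^*$, hence realize at least the dual LP value. So $\DP$ equals the dual LP value, and $\KP=\DP$ with attainment whenever $\mu,\nu$ are finitely supported.

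For \textbf{Step~2}, pick finitely supported probability measures $\mu_n\to\mu$ and $\nu_n\to\nu$ weakly (finitely supported measures are weakly dense in $\Prob(X)$ and $\Prob(Y)$, since $X,Y$ are compact metric spaces). By Step~1, $\KP(\mu_n,\nu_n)=\DP(\mu_n,\nu_n)$, and by Proposition~\ref{prop:dp-maximizer} we may choose an optimal dual pair $(\phi_n,\psi_n)$ for $\DP(\mu_n,\nu_n)$ with $\phi_n=\psi_n^c$ and $\psi_n=\phi_n^{\bar{c}}$; as in the proof of that proposition, after subtracting a suitable constant from both potentials (which changes neither the constraint $\phi_n\ominus\psi_n\leq c$ nor the dual value, since $\mu_n,\nu_n$ are probability measures) we may assume $\phi_n(x_0)=0$ for a fixed $x_0\in X$. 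By Lemma~\ref{lem:cconc-modulus} the sequences $(\phi_n)_n$ and $(\psi_n)_n$ are equicontinuous with the modulus of continuity $\omega$ of $c$, and the normalization makes them uniformly bounded, so Arzel\`a-Ascoli provides a subsequence along which $\phi_n\to\phi$ and $\psi_n\to\psi$ uniformly, with $\phi\in\Class^0(X)$, $\psi\in\Class^0(Y)$ and $\phi\ominus\psi\leq c$. Uniform convergence of the potentials and weak convergence of the measures give $\sca{\phi_n}{\mu_n}-\sca{\psi_n}{\nu_n}\to\sca{\phi}{\mu}-\sca{\psi}{\nu}$, so, passing to a further subsequence along which $\KP(\mu_n,\nu_n)$ converges (it stays in the bounded interval $[\min c,\max c]$), we get $\DP\geq\sca{\phi}{\mu}-\sca{\psi}{\nu}=\lim_n\DP(\mu_n,\nu_n)=\lim_n\KP(\mu_n,\nu_n)$. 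On the primal side, choose optimal plans $\gamma_n\in\Gamma(\mu_n,\nu_n)$ (Proposition~\ref{prop:existence-kp}); by Banach-Alaoglu a further subsequence converges weakly to some $\gamma$, whose marginals are $\mu$ and $\nu$ by weak continuity of the projection maps, so $\gamma\in\Gamma(\mu,\nu)$, and $\sca{c}{\gamma}=\lim_n\sca{c}{\gamma_n}=\lim_n\KP(\mu_n,\nu_n)$, whence $\KP\leq\lim_n\KP(\mu_n,\nu_n)$. Combined with weak duality ($\DP\leq\KP$) this yields $\KP\leq\lim_n\KP(\mu_n,\nu_n)\leq\DP\leq\KP$, so all these quantities are equal and the pair $(\phi,\psi)$ attains the maximum in $\DP$.

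\textbf{Main difficulty.} The real content is in Step~1: strong linear programming duality (a finite-dimensional separation fact) is what produces the equality $\KP=\DP$, and the only delicate point there is the bookkeeping that identifies the continuous dual problem $\DP$ with the finite-dimensional dual through the $c$-transform extension $(\phi_i^*,\psi_j^*)\mapsto(\phi^*,\psi^*)$. In Step~2 the one genuine subtlety is that the potentials and the marginals vary with $n$ simultaneously: one must first normalize and use the equicontinuity supplied by Lemma~\ref{lem:cconc-modulus} to extract a uniform limit of the potentials before passing to the limit in the bilinear pairings $\sca{\cdot}{\mu_n}$ and $\sca{\cdot}{\nu_n}$.
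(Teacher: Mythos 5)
Your proof is correct, and it follows the same two-step architecture as the paper (finite case, then density), but the two steps are executed somewhat differently. In the finite case, you invoke strong LP duality and identify the continuous problem $\DP$ with the finite-dimensional dual via the $c$-transform extension $\psi^*(y)=\max_i(\phi_i^*-c(x_i,y))$, $\phi^*=(\psi^*)^c$; the paper's Lemma~\ref{lemma:finite} does essentially the same thing, except that it produces the discrete multipliers through the KKT conditions and concludes through the complementary-slackness criterion of Proposition~\ref{prop:complementary-slackness} rather than by matching optimal values — a cosmetic difference. The genuine divergence is in the limiting step: the paper keeps track of \emph{where} the approximating plans $\gamma_k$ live, uses Lemma~\ref{lemma:Support-hausdorff} to show that the limit plan is concentrated on the contact set $\{\phi\ominus\psi=c\}$, and concludes again via Proposition~\ref{prop:complementary-slackness}; you instead pass to the limit in the \emph{values} $\KP(\mu_n,\nu_n)=\DP(\mu_n,\nu_n)$ on both the primal side (weak limit of $\gamma_n$) and the dual side (uniform limit of the normalized conjugate potentials), and squeeze with weak duality. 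Your route is slightly more economical for strong duality itself, since it dispenses with Lemma~\ref{lemma:Support-hausdorff} and with complementary slackness in the limit, while still yielding attainment of both the primal minimum (at $\gamma$) and the dual maximum (at $(\phi,\psi)$) once all quantities coincide; the paper's support-based argument costs a little more here but delivers, essentially for free, the concentration of the limit plan on $\partial^c\psi$, which is exactly what it reuses for Corollary~\ref{coro:support} and the stability statement of Theorem~\ref{th:stab}. One small point worth making explicit in your write-up: after subtracting the common constant $\phi_n(x_0)$ from both potentials, the pair remains $c$-conjugate (both transforms commute with adding constants), so Lemma~\ref{lem:cconc-modulus} indeed applies and gives both equicontinuity and, together with the boundedness of $c$, the uniform bound on $\psi_n$ as well.
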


\begin{corollary}[Support of OT plans] \label{coro:support}
  Let $\psi$ be a maximizer of \eqref{eq:KD-unconstrained} and $\gamma \in \Gamma(\mu,\nu)$ a transport plan. Then the
  two assertions are equivalent
 \begin{itemize}
\item $\gamma$ is an optimal transport plan  
\item 
  $\spt(\gamma) \subset \partial^c\psi := \{(x,y)\in X \times Y \mid \psi^c(x) - \psi(y) = c(x,y)\}$.
  \end{itemize}
\end{corollary}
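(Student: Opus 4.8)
The plan is to deduce both implications from strong duality (Theorem~\ref{th:Kantorovich}) by means of a single nonnegative ``gap'' function. Define $g \in \Class^0(X\times Y)$ by $g(x,y) := c(x,y) - \psi^c(x) + \psi(y)$. By the very definition of the $c$-transform, $\psi^c(x) = \inf_{y'} c(x,y') + \psi(y') \le c(x,y) + \psi(y)$, so $g \ge 0$ everywhere; moreover $\{g = 0\}$ is exactly $\partial^c\psi$, and it is a closed set since $g$ is continuous.

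Next I would establish the key identity: for every $\gamma \in \Gamma(\mu,\nu)$,
$$\int_{X\times Y} g\,\dd\gamma = \int_{X\times Y} c\,\dd\gamma - \int_X \psi^c \,\dd\mu + \int_Y \psi\,\dd\nu = \int_{X\times Y} c\,\dd\gamma - \KP,$$
where the first equality uses that $\gamma$ has marginals $\mu$ and $\nu$, and the second uses that $\psi$ is a maximizer of~\eqref{eq:KD-unconstrained} (so the two integrals against $\mu,\nu$ sum to $\DP$) together with the equality $\DP = \KP$ from Theorem~\ref{th:Kantorovich}.

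With this identity the two implications become immediate. If $\gamma$ is optimal then $\int c\,\dd\gamma = \KP$, so $\int g\,\dd\gamma = 0$; since $g$ is continuous and nonnegative, it must vanish on $\spt(\gamma)$, hence $\spt(\gamma)\subseteq\{g=0\}=\partial^c\psi$. Conversely, if $\spt(\gamma)\subseteq\partial^c\psi$ then $g=0$ on $\spt(\gamma)$, hence $g = 0$ $\gamma$-almost everywhere (a Radon measure gives no mass to the complement of its support), so $\int g\,\dd\gamma=0$ and thus $\int c\,\dd\gamma = \KP$; as $\gamma\in\Gamma(\mu,\nu)$, this exactly says that $\gamma$ is a minimizer.

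The only step that deserves care is the passage from $\int g\,\dd\gamma = 0$ (with $g$ continuous and $g\ge 0$) to the vanishing of $g$ on the support of $\gamma$: if $g(x_0,y_0)>0$ for some $(x_0,y_0)\in\spt(\gamma)$, continuity yields an open neighborhood $U$ of $(x_0,y_0)$ on which $g \ge g(x_0,y_0)/2$, and $\gamma(U)>0$ by the definition of the support, so $\int g\,\dd\gamma \ge \tfrac12 g(x_0,y_0)\,\gamma(U) > 0$, a contradiction. I do not anticipate any other obstacle; no integrability issues arise since $X\times Y$ is compact and $g$ is continuous, and no measurability subtleties arise since $\partial^c\psi$ is closed.
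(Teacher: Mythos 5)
Your proof is correct and follows essentially the same route as the paper, which obtains the corollary in one line from Proposition~\ref{prop:complementary-slackness} (complementary slackness) together with strong duality $\KP=\DP$: your gap function $g = c - \psi^c\ominus\psi \geq 0$ and the identity $\int g\,\dd\gamma = \int c\,\dd\gamma - \KP$ are precisely the content of that proposition instantiated at $\phi=\psi^c$. The only added value is that you spell out the passage between ``$g=0$ $\gamma$-a.e.'' and ``$g=0$ on $\spt(\gamma)$'', a point the paper leaves implicit, and your continuity argument for it is correct.
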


As a consequence of Kantorovich duality, we can prove stability of
optimal transport plans and optimal transport maps. 
\begin{theorem}[Stability of OT plans] \label{th:stab}
  Let $X,Y$ be compact metric spaces and let $c\in \Class^0(X\times
 Y)$.  Consider $(\mu_k)_{k\in \Nsp}$ and $(\nu_k)_{k\in \Nsp}$ in
 $\Prob(X)$ and $\Prob(Y)$ converging weakly to $\mu$ and $\nu$
 respectively.  \begin{itemize} \item If
 $\gamma_k \in \Gamma(\mu_k,\nu_k)$ is optimal then, up to subsequences,
 $(\gamma_k)$ converges weakly to an optimal transport plan
 $\gamma \in \Gamma(\mu,\nu)$.  \item Let $(\phi_k,\psi_k)$ be optimal
 Kantorovich potentials in the dual problem between $\mu_k$ and
 $\nu_k$, satisfying $\psi_k = \phi_k^{\bar{c}}$ and $\phi_k
 = \psi_k^c$. Given a point $x_0 \in X$, define $\tilde{\psi}_k
 = \psi_k - \psi_k(x_0)$ and $\tilde{\phi}_k = \phi_k
 + \psi_k(x_0)$. Then, up to subsequences,
 $(\tilde{\psi}_k,\tilde{\phi_k})$ converges uniformly to
 $(\phi,\psi)$ a maximizing pair for $\DP$ satisfying $\phi=\psi^c$
 and $\psi=\phi^{\bar{c}}$.  \end{itemize}
\end{theorem}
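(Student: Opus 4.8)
The plan is to prove the two items together, obtaining the optimality of the limit plan as a byproduct of the uniform convergence of the dual potentials. \textbf{Extracting the limits.} Since $X\times Y$ is a compact metric space, $\Prob(X\times Y)$ is weakly compact by Banach--Alaoglu, so after passing to a subsequence $\gamma_k\to\gamma$ weakly for some $\gamma\in\Prob(X\times Y)$. Testing against functions of the form $\phi\otimes 1$ and using $\sca{\phi\otimes 1}{\gamma_k}=\sca{\phi}{\mu_k}\to\sca{\phi}{\mu}$ together with $\gamma_k\to\gamma$ shows $\Pi_{X\#}\gamma=\mu$, and likewise $\Pi_{Y\#}\gamma=\nu$, so $\gamma\in\Gamma(\mu,\nu)$. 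For the potentials, I would take for each $k$ an optimal pair $(\phi_k,\psi_k)$ with $\phi_k=\psi_k^c$ and $\psi_k=\phi_k^{\bar{c}}$, which exists by Proposition~\ref{prop:dp-maximizer}. By Lemma~\ref{lem:cconc-modulus} both $\phi_k$ and $\psi_k$ admit the modulus of continuity $\omega$ of $c$, hence $(\phi_k)_k$ and $(\psi_k)_k$ are equicontinuous; after the additive normalization of the statement — which pins one value at a base point without changing the dual value — the equicontinuity makes $(\phi_k)_k$ uniformly bounded, and then $(\psi_k)_k$ is uniformly bounded through $\psi_k=\phi_k^{\bar{c}}$ (bounded by $\nr{c}_\infty$ plus the bound on $\phi_k$). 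Arzel\`a--Ascoli then yields a further subsequence along which $\phi_k\to\phi$ and $\psi_k\to\psi$ uniformly.

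\textbf{Passing the relations to the limit.} Uniform convergence $\psi_k\to\psi$ forces $\psi_k^c\to\psi^c$ and $\phi_k^{\bar{c}}\to\phi^{\bar{c}}$ uniformly, because an infimum (resp. supremum) of $c(x,\cdot)+\psi_k$ moves by at most $\nr{\psi_k-\psi}_\infty$; combining with $\phi_k\to\phi$, $\psi_k\to\psi$ uniformly gives $\phi=\psi^c$ and $\psi=\phi^{\bar{c}}$, so in particular $\phi\ominus\psi\leq c$ and $(\phi,\psi)$ is admissible for the dual problem between $\mu$ and $\nu$. The one elementary fact I would isolate is: if $f_k\to f$ uniformly on a compact space and $(\sigma_k)$ are probability measures with $\sigma_k\to\sigma$ weakly, then $\sca{f_k}{\sigma_k}\to\sca{f}{\sigma}$, which follows from $\abs{\sca{f_k}{\sigma_k}-\sca{f}{\sigma}}\leq\nr{f_k-f}_\infty+\abs{\sca{f}{\sigma_k-\sigma}}$. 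Applied to $(\phi_k,\mu_k)$, $(\psi_k,\nu_k)$ and $(c,\gamma_k)$ this gives $\sca{\phi_k}{\mu_k}-\sca{\psi_k}{\nu_k}\to\sca{\phi}{\mu}-\sca{\psi}{\nu}$ and $\sca{c}{\gamma_k}\to\sca{c}{\gamma}$.

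\textbf{Closing the loop.} By strong duality (Theorem~\ref{th:Kantorovich}) for $(\mu_k,\nu_k)$, together with the optimality of $\gamma_k$ and of $(\phi_k,\psi_k)$, one has $\sca{\phi_k}{\mu_k}-\sca{\psi_k}{\nu_k}=\sca{c}{\gamma_k}$ for every $k$; letting $k\to\infty$ yields $\sca{\phi}{\mu}-\sca{\psi}{\nu}=\sca{c}{\gamma}$. On the other hand, since $(\phi,\psi)$ is admissible and $\gamma\in\Gamma(\mu,\nu)$, weak duality gives $\sca{\phi}{\mu}-\sca{\psi}{\nu}\leq\DP\leq\KP\leq\sca{c}{\gamma}$ (all for the pair $(\mu,\nu)$). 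Comparing the two conclusions forces equality throughout, so $\gamma$ is an optimal transport plan and $(\phi,\psi)$ is a maximizing pair for $\DP$ between $\mu$ and $\nu$ with $\phi=\psi^c$, $\psi=\phi^{\bar{c}}$, which is exactly the two claims. I do not expect a genuine obstacle: the only delicate point is the bookkeeping — choosing the additive normalization so that the potentials are uniformly bounded (so Arzel\`a--Ascoli applies) while leaving the dual value unchanged, exactly as in the proof of Proposition~\ref{prop:dp-maximizer}, and checking that the three uniform/weak limit passages are legitimate.
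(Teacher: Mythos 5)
Your proof is correct, but it closes the argument by a different mechanism than the paper. The paper's proof of this stability statement is the one embedded in its proof of Theorem~\ref{th:Kantorovich}: after the same compactness extractions (Banach--Alaoglu for $\gamma_k$, modulus-of-continuity plus Arzel\`a--Ascoli for the normalized conjugate potentials), it uses Lemma~\ref{lemma:Support-hausdorff} to approximate each $(x,y)\in\spt(\gamma)$ by points $(x_k,y_k)\in\spt(\gamma_k)$, passes the pointwise identity $\phi_k(x_k)-\psi_k(y_k)=c(x_k,y_k)$ to the limit, and concludes via the complementary-slackness equivalence of Proposition~\ref{prop:complementary-slackness}. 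You instead pass to the limit in the \emph{values}: strong duality at level $k$ gives $\sca{\phi_k}{\mu_k}-\sca{\psi_k}{\nu_k}=\sca{c}{\gamma_k}$, your ``uniform $\times$ weak'' convergence lemma transports this equality to the limit, and the weak-duality sandwich $\sca{\phi}{\mu}-\sca{\psi}{\nu}\leq\DP\leq\KP\leq\sca{c}{\gamma}$ forces optimality of both $\gamma$ and $(\phi,\psi)$. Both routes are legitimate here, since Theorem~\ref{th:Kantorovich} is already available for arbitrary compact marginals when the stability theorem is invoked. What each buys: your argument is softer and avoids both Lemma~\ref{lemma:Support-hausdorff} and Proposition~\ref{prop:complementary-slackness}, needing only weak convergence of measures against uniformly convergent integrands; the paper's support argument yields in passing the sharper structural fact $\spt(\gamma)\subseteq\partial^c\psi$ (the content of Corollary~\ref{coro:support}) rather than just equality of optimal values. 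One small point you handled implicitly but correctly: the normalization in the statement should be read as adding the \emph{same} constant to both potentials (the paper's displayed signs contain a typo), which is exactly what is needed to preserve admissibility, conjugacy and the dual value, and your boundedness-plus-Arzel\`a--Ascoli step then goes through as you wrote it.
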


The proof of Theorem~\ref{th:Kantorovich} relies on a simple
reformulation of strong duality -- similar to the Karush-Kuhn-Tucker
optimality conditions for optimization problems with inequality
constraints:

\begin{proposition}\label{prop:complementary-slackness} 
Let $\gamma \in \Gamma(\mu,\nu)$ and let $(\phi,\psi) \in
\Class^0(X)\times \Class^0(Y)$ such that $\phi \ominus \psi\leq
c$. Then, the following statements are equivalent:
\begin{itemize}
  \item  $\phi \ominus \psi = c$ $\gamma$-a.e.
  \item $\gamma$ minimizes  $\KP$, $(\phi,\psi)$ maximizes
     $\DP$ and $\KP = \DP$.
\end{itemize}
\end{proposition}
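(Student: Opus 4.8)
The plan is to exploit a "sandwich" between the primal value $\KP$ and the dual value $\DP$; the only nontrivial ingredient is weak duality $\DP \leq \KP$, already established, and everything else reduces to one elementary identity plus a positivity argument. Throughout, recall that $X\times Y$ is compact and all functions in sight are continuous, so every integral is finite and no $\infty-\infty$ ambiguity can occur.

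First I would record the identity valid for any $\gamma\in\Gamma(\mu,\nu)$ and any $(\phi,\psi)\in\Class^0(X)\times\Class^0(Y)$: since the marginals of $\gamma$ are $\mu$ and $\nu$,
\[
  \int_{X\times Y} (\phi\ominus\psi)\dd\gamma = \int_X \phi\dd\mu - \int_Y \psi\dd\nu =: V(\phi,\psi).
\]
Note that $V(\phi,\psi) \leq \DP$ whenever $\phi\ominus\psi \leq c$, and $\int c\dd\gamma \geq \KP$ for every $\gamma\in\Gamma(\mu,\nu)$. For the implication from the first statement to the second, assume $\phi\ominus\psi = c$ holds $\gamma$-a.e.; then the identity gives $\int c\dd\gamma = V(\phi,\psi)$, and chaining feasibility with weak duality yields
\[
  \KP \leq \int_{X\times Y} c\dd\gamma = V(\phi,\psi) \leq \DP \leq \KP,
\]
so all four quantities coincide. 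In particular $\gamma$ attains $\KP$, the pair $(\phi,\psi)$ attains $\DP$, and $\KP=\DP$.

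For the converse, from $\gamma$ minimizing $\KP$, $(\phi,\psi)$ maximizing $\DP$, and $\KP=\DP$, we get $\int c\dd\gamma = \KP = \DP = V(\phi,\psi)$, hence by the identity $\int_{X\times Y}\big(c - (\phi\ominus\psi)\big)\dd\gamma = 0$. The integrand is non-negative by the constraint $\phi\ominus\psi\leq c$ and $\gamma$ is a non-negative measure, so a non-negative continuous function with vanishing $\gamma$-integral must vanish $\gamma$-almost everywhere (equivalently, on $\spt(\gamma)$); that is, $\phi\ominus\psi = c$ $\gamma$-a.e. The only step demanding any care — the "main obstacle", modest as it is — is precisely this last measure-theoretic deduction, together with making sure one invokes weak duality in the correct direction; no existence result for primal or dual optimizers is needed, since the proposition is a purely conditional equivalence.
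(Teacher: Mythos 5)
Your proof is correct and follows essentially the same route as the paper's: both directions rest on the identity $\int(\phi\ominus\psi)\dd\gamma=\int\phi\dd\mu-\int\psi\dd\nu$ for $\gamma\in\Gamma(\mu,\nu)$, the chain of inequalities closed by weak duality for the first implication, and the vanishing of the non-negative integrand $c-(\phi\ominus\psi)$ $\gamma$-a.e. for the converse. No gaps.
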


\begin{proof}  Assume that $\phi\ominus\psi = c$ $\gamma$-a.e. Then,
\begin{align*}
  \KP \leq \sca{c}{\gamma} = \sca{\phi\ominus\psi}{\gamma} =
  \sca{\phi}{\mu} - \sca{\psi}{\nu} \leq \DP
\end{align*}
Since in addition $\KP \geq \DP$, all inequalities are equalities,
which implies that $\KP = \DP$, $\gamma$ miminizes $\KP$ and
$(\phi,\psi)$ maximizes $\DP$.
Conversely, if $\KP = \DP$, $\gamma$ miminizes $\KP$ and $(\phi,\psi)$
maximizes $\DP$, then
$$  \sca{\phi}{\mu} - \sca{\psi}{\nu} = \DP  =   \KP = \sca{c}{\gamma} \geq \sca{\phi\ominus \psi}{\gamma} = 
\sca{\phi}{\mu} - \sca{\psi}{\nu} ,$$ implying that
$\phi\ominus\psi =c$ $\gamma$ a.e.
\end{proof}

The proof of Theorem~\ref{th:Kantorovich} also relies on a few
elementary lemmas from measure theory.
\begin{lemma}  \label{lemma:Support-hausdorff}
  If $\mu_N$ converges weakly to $\mu$, then for any point
  $x \in \spt(\mu)$ there exists a sequence $x_N\in\spt(\mu_{N})$
  converging to $x$.
\end{lemma}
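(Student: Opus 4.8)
The plan is to prove the statement by a direct diagonal extraction argument, exploiting the fact that weak convergence of $\mu_N$ to $\mu$ gives us a rich supply of ``test balls'' on which the measures $\mu_N$ must eventually carry positive mass. Fix $x \in \spt(\mu)$. The key elementary observation is the following: for every $r > 0$, we have $\mu(\B(x,r)) > 0$ by definition of the support, and I would like to deduce that $\mu_N(\B(x,r)) > 0$ for all $N$ large enough. This does \emph{not} follow immediately from weak convergence applied to the indicator of $\B(x,r)$ (which is not continuous), so the actual first step is to pass to a slightly smaller open ball $\B(x,r/2)$: choose a continuous function $\varphi$ with $0 \leq \varphi \leq 1$, supported in $\B(x,r)$, and equal to $1$ on $\overline{\B(x,r/2)}$ (such a function exists, e.g. $\varphi(z) = \max(0, \min(1, 2 - \tfrac{2}{r}\dd_X(x,z)))$). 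Then $\sca{\varphi}{\mu} \geq \mu(\overline{\B(x,r/2)}) \geq \mu(\B(x,r/2)) > 0$, and since $\sca{\varphi}{\mu_N} \to \sca{\varphi}{\mu}$, we get $\sca{\varphi}{\mu_N} > 0$ for $N$ large; but $\sca{\varphi}{\mu_N} \leq \mu_N(\B(x,r))$, so indeed $\mu_N(\B(x,r)) > 0$ for $N$ large. In particular $\spt(\mu_N) \cap \B(x,r)$ is nonempty for $N$ large.

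With this at hand, I would run a diagonal argument over the sequence of radii $r_k = 1/k$. For each $k$, let $N_k$ be an index beyond which $\mu_N(\B(x,1/k)) > 0$ for all $N \geq N_k$; we may clearly take $N_1 \leq N_2 \leq \cdots$ and $N_k \to \infty$. Now define the sequence $x_N \in \spt(\mu_N)$ as follows: for $N < N_1$, pick $x_N \in \spt(\mu_N)$ arbitrarily (the support of a nonzero finite measure is nonempty; if $\mu_N = 0$ for small $N$ one simply ignores those terms or notes the statement is about the tail); for $N_k \leq N < N_{k+1}$, pick $x_N \in \spt(\mu_N) \cap \B(x,1/k)$, which is nonempty by the previous paragraph. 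By construction, for $N \geq N_k$ we have $\dd_X(x_N, x) < 1/k$, so $x_N \to x$, as desired.

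The main (and essentially only) obstacle is the one already flagged: one cannot test weak convergence directly against the discontinuous indicator $\mathbf{1}_{\B(x,r)}$, and conversely the closed ball $\overline{\B(x,r)}$ is the ``wrong direction'' for a Portmanteau-type upper bound. The clean fix is the bump function $\varphi$ sandwiched between the indicators of $\B(x,r/2)$ and $\B(x,r)$; this is where the argument actually uses that $X$ is a metric space rather than a general compact Hausdorff space. Everything else is bookkeeping: the monotone choice of the thresholds $N_k$, nonemptiness of supports of nonzero measures, and the trivial estimate $\sca{\varphi}{\mu_N} \leq \mu_N(\spt \varphi) \leq \mu_N(\B(x,r))$. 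I would present the proof in two short paragraphs mirroring the two above, with the bump-function construction isolated as the crux.
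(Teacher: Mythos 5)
Your proof is correct and follows essentially the same route as the paper: test weak convergence against a continuous bump function supported in a small ball around $x$ (the paper uses the tent function $\phi_k(z)=\max(1-k\,d(x,z),0)$ where you use a trapezoidal cutoff), conclude that $\mu_N$ eventually charges each ball $\B(x,1/k)$ and hence that $\spt(\mu_N)$ meets it, and finish with the same diagonal selection of $x_N$.
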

\begin{proof} 
Consider $x\in \spt(\mu)$. For any $k\in\mathbb{N}$, consider the function
$\phi_k(z) = \max(1 - k d(x,z), 0),$ in
$\Class^0(X)$. Then,
$$ \lim_{N\to\infty} \sca{\phi_k}{\mu_N} = \sca{\phi_k}{\mu} > 0,$$
where the last inequality holds because $x$ belongs to the support of
$\mu$. Then, there exists $N_k$ such that for any $N\geq N_k$,
$\sca{\phi_{k}}{\mu_{N}}>0$, implying the existence of $x_N \in X$ such
that $x_N\in\spt(\mu_N)$ and $d(x_N,x) \leq 1/k$. By a diagonal argument, this allows to
construct a sequence of points $(x_N)_{N\in \Nsp}$ such that
$x_N\in\spt(\mu_N)$ and $\lim_{N\to +\infty} x_N = x$.
\end{proof}

\begin{lemma} \label{lemma:density-finite}
 Let $X$ be a compact space and  $\mu \in \Prob(X)$.  Then, there exists a
 sequence of finitely supported probability measures weakly converging
 to $\mu$.
\end{lemma}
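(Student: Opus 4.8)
The plan is to discretize $\mu$ by pushing its mass onto a finite $1/n$-net of $X$, and then to verify weak convergence using the uniform continuity of test functions on the compact space $X$.

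First I would fix $n\in\Nsp$ and use compactness of $X$ to extract a finite cover by open balls $\B(z_1,1/n),\dots,\B(z_{k_n},1/n)$. To turn this cover into a \emph{partition} into Borel sets of small diameter, I set $B_1=\B(z_1,1/n)$ and $B_i=\B(z_i,1/n)\setminus(B_1\cup\cdots\cup B_{i-1})$ for $i\geq 2$, discard the empty pieces, and choose for each remaining index a point $x_i^n\in B_i$. The sets $B_i$ are then Borel, pairwise disjoint, cover $X$, and each has diameter at most $2/n$. I then define the finitely supported probability measure
\[
  \mu_n := \sum_i \mu(B_i)\,\delta_{x_i^n},
\]
which is indeed in $\Prob(X)$ since $\sum_i \mu(B_i) = \mu(X) = 1$.

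Next I would prove $\mu_n\to\mu$ weakly. Fix $\phi\in\Class^0(X)$; since $X$ is compact, $\phi$ is uniformly continuous and admits a modulus of continuity $\omega_\phi$ with $\omega_\phi(t)\to 0$ as $t\to 0$. Using that the $B_i$ partition $X$, I write $\sca{\phi}{\mu}=\sum_i \int_{B_i}\phi\,\dd\mu$ and $\sca{\phi}{\mu_n}=\sum_i \int_{B_i}\phi(x_i^n)\,\dd\mu$, so that
\[
  \abs{\sca{\phi}{\mu_n}-\sca{\phi}{\mu}}
  = \left| \sum_i \int_{B_i}\bigl(\phi(x_i^n)-\phi(x)\bigr)\,\dd\mu(x)\right|
  \leq \sum_i \int_{B_i}\omega_\phi(2/n)\,\dd\mu
  = \omega_\phi(2/n),
\]
because $x,x_i^n\in B_i$ forces $\dd_X(x,x_i^n)\leq 2/n$. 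Letting $n\to\infty$ gives $\sca{\phi}{\mu_n}\to\sca{\phi}{\mu}$ for every $\phi\in\Class^0(X)$, which is exactly weak convergence.

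The only mildly delicate point is the combinatorial bookkeeping in passing from a finite open cover to a finite Borel partition into sets of diameter $\leq 2/n$ (and handling the empty pieces); once that is in place, the convergence estimate is an immediate consequence of uniform continuity, so I do not expect any genuine obstacle here.
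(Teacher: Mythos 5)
Your proposal is correct and follows essentially the same route as the paper's proof: disjointify a finite cover by small balls into a Borel partition, push the mass of each piece onto a single atom, and control $\abs{\sca{\phi}{\mu_n}-\sca{\phi}{\mu}}$ by the modulus of continuity of $\phi$ on the compact space $X$. The only cosmetic difference is that you place the atoms at arbitrary points of the partition pieces (hence the bound $\omega_\phi(2/n)$) while the paper uses the ball centers; both choices yield the same conclusion.
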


\begin{proof}
  For any $\eps>0$, by compactness there exists $N$ points
  $x_1,\hdots,x_N$ such that $X \subseteq \bigcup_{i}
  \B(x_i,\eps)$. We define a partition $K_1,\hdots,K_N$ of $X$ recursively by
  $K_i = \B(x_i,\eps) \setminus (K_1 \cup ... \cup K_{i-1})$ and we introduce
  $$\mu_\eps := \sum_{1\leq i\leq N} \mu(K_i) \delta_{x_i}.$$ To prove
  weak convergence of $\mu_\eps$ to $\mu$ as $\eps\to 0$, take
  $\phi\in \Class^0(X)$. By compactness of $X$, $\phi$ admits a
  modulus of continuity $\omega$, i.e. $\lim_{t\to 0}\omega(t) = 0$
  and $\abs{\phi(x) - \phi(y)}\leq \omega(d(x,y))$. Using that
  $\diam(K_i)\leq \eps$, we get
  \begin{align*}
    \abs{\int \phi \dd \mu - \int \phi\dd\mu_\eps}
    = \abs{\sum_{1\leq i\leq N} \int_{K_i} \phi(x)-\phi(x_i) \dd \mu}  
    \leq \omega(\eps),
  \end{align*}
 We deduce $\lim_{\eps \to 0} \sca{\phi}{\mu_\eps} = \sca{\phi}{\mu}$,
 so that $\mu_\eps$ weakly converges to $\mu$.
\end{proof}

\begin{lemma} \label{lemma:finite}
  If $(\mu,\nu)\in\Prob(X)\times\Prob(Y)$ are finitely
  supported, $\KP = \DP$.
\end{lemma}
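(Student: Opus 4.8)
The plan is to prove strong duality in the finitely supported case by reducing it to finite-dimensional linear programming duality, for which strong duality is classical. First I would write $\mu = \sum_{i=1}^m \mu_i \delta_{x_i}$ and $\nu = \sum_{j=1}^n \nu_j \delta_{y_j}$ with $\mu_i, \nu_j \geq 0$ and $\sum_i \mu_i = \sum_j \nu_j = 1$. A transport plan $\gamma \in \Gamma(\mu,\nu)$ is then supported on the finite set $\{x_i\} \times \{y_j\}$ and is determined by a matrix $(\gamma_{ij})_{i,j}$ with $\gamma_{ij} \geq 0$, $\sum_j \gamma_{ij} = \mu_i$ for all $i$, and $\sum_i \gamma_{ij} = \nu_j$ for all $j$. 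In these coordinates, $\KP$ becomes the linear program $\min \sum_{ij} c_{ij}\gamma_{ij}$ over this polytope, where $c_{ij} = c(x_i,y_j)$. Likewise, since any pair $(\phi,\psi)$ in the dual is only evaluated at the $x_i$ and $y_j$, the dual problem $\DP$ becomes $\max \sum_i \phi_i \mu_i - \sum_j \psi_j \nu_j$ over $\phi_i - \psi_j \leq c_{ij}$ for all $i,j$, which is exactly the linear programming dual of the primal LP (after the sign bookkeeping for the $\psi$ variables).

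Next I would invoke finite-dimensional LP duality. Since the primal feasible set $\Gamma(\mu,\nu)$ is nonempty (it contains $\mu \otimes \nu$, as noted in the excerpt) and the objective is bounded below (the cost $c$ is continuous on a compact set, hence bounded), LP strong duality gives that the dual is feasible, its optimum is attained, and the two optimal values coincide: $\KP = \DP$. Alternatively, and perhaps more in the spirit of the chapter, one can avoid quoting LP duality as a black box: since $\KP$ is a minimum over a nonempty compact polytope, it is attained at some $\gamma^\star$; one then produces optimal dual variables by a complementary-slackness / vertex argument, or simply observes that the feasible polytope and the objective are such that there is no duality gap. But the cleanest route is to cite the standard LP duality theorem, and then translate back: the optimal dual vector $(\phi_i),(\psi_j)$ extends to continuous functions on $X$ and $Y$ (trivially, since $X,Y$ need only be evaluated at finitely many points — extend arbitrarily, e.g. by the $c$-transform using Lemma~\ref{lem:cconc-modulus}, or just by any continuous extension), giving an admissible pair for $\DP$ with value $\KP$. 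Combined with weak duality $\DP \leq \KP$ already proved, this yields $\KP = \DP$ with the max attained.

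I expect the main obstacle to be purely bookkeeping rather than conceptual: one must be careful that the dual of the transport LP, written with the constraint $\phi \ominus \psi \leq c$ (two minus signs, as the excerpt emphasizes), really matches the standard form of LP duality — the $\psi$ variables are free (equality marginal constraints dualize to free variables) and appear with a minus sign in the objective, so one should double-check the signs so that the LP dual genuinely reads $\max \sum_i \mu_i \phi_i - \sum_j \nu_j \psi_j$ subject to $\phi_i - \psi_j \le c_{ij}$. A secondary, even more minor point is to note explicitly why the optimal dual vector, a priori just a finite list of numbers, can be regarded as a pair of continuous functions: this is immediate because $X$ and $Y$ are metric spaces and any function defined on a finite subset extends continuously (e.g., replace $\phi$ by $\psi^c$ and $\psi$ by $\phi^{\bar c}$, which only increases the objective and automatically produces genuinely continuous functions by Lemma~\ref{lem:cconc-modulus}). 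Once these two trivialities are dispatched, the lemma follows directly from finite LP duality.
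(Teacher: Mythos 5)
Your main route is essentially the paper's own proof: the paper also reduces to the finite linear program, extracts dual multipliers via the Karush--Kuhn--Tucker conditions (which is just LP duality in disguise), and then builds genuine continuous potentials by the $c$-transform construction $\hat\phi=\psi^c$, $\hat\psi=\hat\phi^{\bar c}$ with $\psi$ set to $+\infty$ off the atoms; the only cosmetic difference is that the paper concludes via complementary slackness and Proposition~\ref{prop:complementary-slackness} (exhibiting a plan on which $\hat\phi\ominus\hat\psi=c$), whereas you compare values directly and invoke weak duality, which is equally fine.

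One point in your write-up is genuinely wrong, though, and it is exactly the point the paper's construction is designed to handle: you cannot extend the optimal LP dual vector ``by any continuous extension.'' Admissibility for $\DP$ requires $\phi(x)-\psi(y)\leq c(x,y)$ for \emph{all} $(x,y)\in X\times Y$, not merely at the atoms of $\mu$ and $\nu$, so the difficulty is not continuity (which, as you say, is trivial) but preservation of the global constraint; an arbitrary continuous extension will in general violate it and the resulting pair is simply not feasible for $\DP$. The $c$-transform pair is not optional bookkeeping but the step that secures feasibility everywhere, and one should check the inequalities $\hat\phi(x_i)\geq\phi_i$ and $\hat\psi(y_j)\leq\psi_j$ (immediate from $\phi_i-\psi_j\leq c_{ij}$ and the definition of the transforms) so that the dual objective does not decrease; with that, your value comparison plus weak duality closes the argument without even needing the exact equalities $\hat\phi(x_i)=\phi_i$, $\hat\psi(y_j)=\psi_j$ that the paper derives from complementary slackness and the positivity of the weights.
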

\begin{proof} Assume that 
  $ \mu = \sum_{1\leq i\leq N} \mu_i \delta_{x_i}, \nu = \sum_{1\leq j
    \leq M} \nu_j\delta_{y_j}, $ where all the $\mu_i$ and $\mu_j$ are
  strictly positive, and consider the linear programming problem
  $$ \KP' = \min \left\{ \sum_{i,j} \gamma_{ij} c(x_i,y_j) \mid
  \gamma_{ij}\geq 0, \sum_j \gamma_{ij} = \mu_i, \sum_i \gamma_{ij} =
  \nu_j \right\}, $$ which admits a solution which we denote $\gamma$.
  By Karush-Kuhn-Tucker theorem, there exists Lagrange multipliers
  $(\phi_i)_{1\leq i\leq N}, (\psi_j)_{1\leq j\leq M}$ and
  $(\pi_{ij})_{1 \leq i\leq N, 1\leq j\leq M}$ such that
  $$ \begin{cases} \phi_i - \psi_j - c(x_i,y_j)
    = \pi_{ij} \\ \gamma_{ij} \pi_{ij} = 0 \\ \pi_{ij} \leq
    0 \end{cases} $$ In particular, $\phi_i - \psi_j \leq c(x_i,y_j)$
    with equality if $\gamma_{ij}> 0$.  To prove strong duality
    between the original problems $\KP$ and $\DP$, we construct two
    functions $\hat{\phi},\hat{\psi}$ such that
    $\hat{\phi}\ominus\hat{\psi}\leq c$ with equality on the set
    $\{(x_i,y_j) \mid \gamma_{ij}>0\}$. For this purpose, we first introduce
  $$ \psi(y) = \begin{cases} \psi_i &\hbox{ if } y = y_i \\
    + \infty &\hbox{ if not } \end{cases}
  $$ and let $\hat{\phi} = \psi^c$, $\hat{\psi} = \hat{\phi}^c$. Let $i\in \{1,\cdots, M\}$. Since $\mu_i = \sum_j \gamma_{ij}\neq 0$, 
  there exists $j\in\{1,\hdots,M\}$ such that
  $\gamma_{ij} > 0$. Using $\gamma_{ij}\pi_{ij} =0$, we deduce that so
  that $\phi_i - \psi_j = c(x_i,y_j)$, giving
  $$ \hat{\phi}(x_i) = \min_{k \in \{1,\hdots,N\}} c(x_i,y_k) + \psi_k
  = c(x_i,y_j) + \psi_j = \phi_i.$$ Similarly, one can show that
  $\hat{\psi}(y_j) = \psi_j$ for all $j\in\{1,\hdots,M\}$. Finally,
  define $\gamma = \sum_{ij} \gamma_{ij}\delta_{(x_i,y_j)} \in
  \Gamma(\mu,\nu)$. Then one can check that $\hat{\phi}\ominus\hat{\psi}\leq c$
  with equality $\gamma$-a.e., so that $\KP = \DP$ by
  Proposition~\ref{prop:complementary-slackness}.
\end{proof}

\begin{proof}[Proof of Theorem~\ref{th:Kantorovich}]
  By Lemma~\ref{lemma:density-finite}, there exists a sequence $\mu_k
  \in \Prob(X)$ (resp. $\nu_k \in \Prob(Y)$) of finitely supported
  measures which converge weakly to $\mu$ (resp. $\nu$). We denote
  $\KP_k$ and $\DP_k$ the primal and dual Kantorovich problems between
  $\mu_k$ and $\nu_k$. By Proposition~\ref{prop:dp-maximizer}, there
  exists a solution $(\phi_k,\psi_k)$ of $\DP_k$, such that $\phi_k =
  \psi_k^c$ and $\psi_k = \phi_k^c$. Moreover, since strong duality
  holds for finitely supported measures (Lemma~\ref{lemma:finite}), we
  see (Proposition~\ref{prop:complementary-slackness}) that $\gamma_k$
  is supported on the set
  $$S_k = \{ (x,y)\in X\times Y\mid \phi_k(x) - \psi_k(y) = c(x,y)
  \}.$$ Adding a constant if necessary, we can also assume that
  $\phi_k(x_0) = 0$ for some point $x_0\in X$.  As $c$-concave
  functions, $\phi_k$ and $\psi_k$ have the same modulus of continuity
  as the cost function $c$ (see Lemma~\ref{lem:cconc-modulus}), and they are uniformly bounded (using
  $\phi_k(x_0) = 0$). Using Arzelà-Ascoli theorem, we can therefore
  assume that up to subsequences, $(\phi_k)$ (resp. $(\psi_k)$)
  converges to some $\phi$ (resp $\psi$) uniformly. Then, one easily
  sees that $\phi \ominus \psi \leq c$ so that $(\phi,\psi)$ are
  admissible for the dual problem $\DP$.
  
  By compactness of $\Prob(X\times Y)$, we can assume that the
  sequence $\gamma_k \in\Gamma(\mu_k,\nu_k)$ converges to some
  $\gamma\in\Gamma(\mu,\nu)$. Moreover, by
  Lemma~\ref{lemma:Support-hausdorff}, every pair
  $(x,y)\in\spt(\gamma)$ can be approximated by a sequence of pairs
  $(x_k,y_k)\in\spt(\gamma_k)$ i.e. $\lim_{k\to\infty} (x_k,y_k) =
  (x,y)$. Since $\gamma_k$ is supported on $S_k$ one has $c(x_k,y_k) =
  \phi_k(x_k) - \psi_k(x_k)$, which gives at the limit $c(x,y) =
  \phi(x) - \psi(y)$. We have just shown that for every point pair
  $(x,y)$ in $\spt(\gamma)$, $c(x,y) = \phi(x) - \psi(y)$ where
  $\phi,\psi$ is admissible. By
  Proposition~\ref{prop:complementary-slackness}, this shows that
  $\gamma$ and $(\phi,\psi)$ are optimal for their respective problems
  and that $\KP = \DP$.
  \end{proof}
  Corollary~\ref{coro:support} is a direct consequence of
  Proposition~\ref{prop:complementary-slackness} and of the strong
  duality $\KP = \DP$.
  

\subsubsection{Solution of Monge's problem for Twisted costs}
We now show how to use Kantorovich duality to prove the existence of
optimal transport maps when the source measure is absolutely
continuous on a compact subset of $\Rsp^d$ and when the cost function
satisfies the following condition:


\begin{definition}[Twisted cost]\label{def:twist}
  Let $\Omega_X,\Omega_Y \subseteq \Rsp^d$ be open subsets, and
  $c \in\Class^1(\Omega_X\times \Omega_Y)$. The cost function
  satisfies the twist condition
  if \begin{equation} \label{eq:twist} \forall
  x_0 \in \Omega_X,~~ \hbox{ the map } y\in \Omega_Y\mapsto
  v:=\nabla_x c(x_0,y) \in \Rsp^d \hbox{ is injective,
  } \end{equation} where $\nabla_x c(x_0,y)$ denotes the gradient of
  $x\mapsto c(\cdot,y)$ at $x=x_0$.  Given $x_0 \in \Omega_X$ and
  $v\in\Rsp^d$, we denote $y_c(x_0, v)$ the unique point (if it
  exists) such that $\nabla_x c(x_0,y_c(x_0,v)) = v$.  The map
  $v\mapsto y_c(x_0, v)$ is often called the $c$-exponential map at
  $x_0$.
\end{definition}

\begin{example}[Quadratic cost] Let $c(x,y) = \nr{x-y}^2$. Then, for any $x_0\in X$, the map
  $y \mapsto \nabla_x c(x_0,y) = 2(x_0-y)$ is injective, so that $c$
  satisfies the twist condition. Moreover, given $v\in \Rsp^d$, the
  unique $y$ such that $\nabla_x c(x_0,y) = 2(x_0-y) = v$ is $y = x_0
  - \frac{1}{2} v$, implying that $y_c(x_0,v) = x_0 - \frac{1}{2}v$.
\end{example}
The following theorem is due to
Brenier~\cite{brenier1991polar} in the case of the quadratic cost
(i.e. $c(x,y) = \nr{x-y}^2$) and Gangbo-McCann in the general case of
twisted costs~\cite{gangbo1996geometry}.

Given $X\subseteq \Omega_X \subset  \Rsp^d$, we define $\Probac(X)$ as the set of
probability measures on $\Omega_X$ that are absolutely continuous with
respect to the Lebesgue measure, and with support included in $X$.

\begin{theorem}[Brenier \cite{brenier1991polar}, Gangbo-McCann \cite{gangbo1996geometry}]
  \label{th:brenier} Let $c\in\Class^1(\Omega_X\times\Omega_Y)$ be a
  twisted cost, let $X \subseteq \Omega_X, Y\subseteq \Omega_Y$ be
  compact sets, and let $(\mu,\nu)\in\Probac(X)\times \Prob(Y)$. Then,
  there exists a $c$-concave function $\phi\in\Lip(X)$ such that $\nu
  = T_\#\mu$ where $T(x) = y_c(x,\nabla \phi(x))$. Moreover, the only
  optimal transport plan between $\mu$ and $\nu$ is $\gamma_T$.
\end{theorem}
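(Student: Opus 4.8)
The plan is to extract an optimal transport plan from Kantorovich duality, and then show that the twist condition forces this plan to be concentrated on the graph of a map. By Proposition~\ref{prop:existence-kp} and Theorem~\ref{th:Kantorovich}, there is an optimal plan $\gamma\in\Gamma(\mu,\nu)$ and a maximizing pair $(\phi,\psi)$ for $\DP$ with $\phi=\psi^c$, hence $\phi$ is $c$-concave; by Lemma~\ref{lem:cconc-modulus} it shares the modulus of continuity of $c$, so $\phi\in\Lip(X)$ since $c\in\Class^1$ on a neighborhood of the compact $X\times Y$. By Corollary~\ref{coro:support}, $\spt(\gamma)\subseteq\partial^c\psi$, i.e. for $(x,y)\in\spt(\gamma)$ one has $\phi(x)-\psi(y)=c(x,y)$, while $\phi(x')-\psi(y)\le c(x',y)$ for all $x'$. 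Thus for fixed $y$, the function $x'\mapsto c(x',y)-\phi(x')$ attains its minimum at $x=x$.

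The key differentiation step: I would show that at every point $x$ where $\phi$ is differentiable (Rademacher's theorem gives this $\mu$-almost everywhere, since $\mu\in\Probac(X)$ and $\phi$ is Lipschitz), and for every $y$ with $(x,y)\in\spt(\gamma)$, the first-order optimality condition $\nabla_x c(x,y)=\nabla\phi(x)$ holds — with the usual care that $x$ may lie on the boundary of $X$, which is handled because $c$ is $\Class^1$ on the larger open set $\Omega_X$ and the inequality $\phi(x')-\psi(y)\le c(x',y)$ holds for all $x'\in X$ while $\phi$ is differentiable at $x$ in the interior sense coming from its Lipschitz extension; more carefully, one argues that any $y$ in the fibre over a differentiability point must satisfy this gradient identity by comparing along segments. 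By the twist condition~\eqref{eq:twist}, the equation $\nabla_x c(x,y)=\nabla\phi(x)$ has at most one solution $y=y_c(x,\nabla\phi(x))$. Hence the fibre of $\spt(\gamma)$ over $\mu$-a.e. $x$ is the single point $T(x):=y_c(x,\nabla\phi(x))$, so $\gamma$ is concentrated on the graph of $T$, i.e. $\gamma=(\mathrm{id},T)_\#\mu=\gamma_T$, and consequently $\nu=\Pi_{Y\#}\gamma=T_\#\mu$.

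Finally, for uniqueness: if $\gamma'$ is any other optimal plan, by Corollary~\ref{coro:support} it is also supported in $\partial^c\psi$ for the \emph{same} maximizer $\psi$ (here one uses that $\psi$ need not be the unique maximizer, but the argument above applies verbatim to $\gamma'$ using whichever optimal potential witnesses its optimality — so one should instead fix the optimal $(\phi,\psi)$ first and note that \emph{every} optimal $\gamma'$ satisfies $\spt(\gamma')\subseteq\partial^c\psi$ by the equivalence in Corollary~\ref{coro:support}); then the same fibre argument forces $\gamma'=\gamma_T$. I expect the main obstacle to be the differentiation step at boundary points of $X$ and the rigorous justification that $\mu$-a.e. $x$ is both a differentiability point of $\phi$ \emph{and} belongs to the projection of $\spt(\gamma)$ with the optimality condition valid — this requires combining Rademacher's theorem, the fact that $\mu(\partial X)=0$ would be convenient but is not assumed, so instead one works with the one-sided/interior differentiability of the Lipschitz function on $\Omega_X\supseteq X$ and the variational inequality along admissible directions, which is the only genuinely delicate point; everything else is a direct assembly of the duality results already proved.
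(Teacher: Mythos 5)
Your overall architecture is exactly the paper's: take an optimal plan and a maximizing pair $(\phi,\psi)$ with $\phi=\psi^c$, use Corollary~\ref{coro:support} to put the support of \emph{any} optimal plan inside $\partial^c\psi$, differentiate $\phi$ $\mu$-a.e.\ via Rademacher, invoke the twist condition to identify the fibre over each differentiability point with the single point $y_c(x,\nabla\phi(x))$, conclude $\gamma=\gamma_T$ by Lemma~\ref{lemma:tmaps-are-tplans}, and get uniqueness by fixing $(\phi,\psi)$ once and running the same argument for every optimal plan (this last point you state correctly). The point where your write-up does not close is the one you yourself flag: the first-order condition at points of $\partial X$.

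The workaround you sketch (one-sided differentiability of a Lipschitz extension, variational inequalities along admissible directions) would not work in general: no regularity of $\partial X$ is assumed, so at a boundary point $x_0$ the maximum of $\phi-c(\cdot,y_0)$ over $X$ only yields directional inequalities on whatever cone of admissible directions $X$ happens to have at $x_0$ (possibly very thin), which is not enough to conclude the full identity $\nabla\phi(x_0)=\nabla_x c(x_0,y_0)$ that the twist condition requires; and differentiability of an arbitrary Lipschitz extension of $\phi$ at $x_0$ is of no help as long as the inequality $\phi(x')-\psi(y_0)\le c(x',y_0)$ is only known for $x'\in X$. The paper's fix is a one-line preliminary step that you should adopt: since $\mu\in\Probac(X)$ means $\mu$ is absolutely continuous on $\Omega_X$ with $\spt(\mu)\subseteq X$ and $X$ is compact inside the open set $\Omega_X$, one may enlarge $X$ to a compact $X'\subset\Omega_X$ with $X\subseteq\inte{X'}$ and run the whole argument on $X'$ — equivalently, define $\phi=\psi^c$ by minimizing over $Y$ for every $x\in X'$, so that the constraint $\phi\ominus\psi\le c$ holds on all of $X'\times Y$. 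Then every differentiability point $x_0\in\spt(\mu)$ is an interior, hence unconstrained, maximizer of $\phi-c(\cdot,y_0)$, the gradient identity follows, and the rest of your argument goes through verbatim.
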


\begin{example}
  If $h \in \Class^1(\Rsp^d)$ is strictly convex, in particular if
  $h(x) = \nr{x}^p$, then the map $x\mapsto \nabla h(x)$ is
  injective. Take $c(x,y) = h(x - y)$, so that $y\mapsto \nabla_x
  c(x,y) = \nabla_x h(x-y) = \nabla h(x-y)$ is also
  injective. Moreover, given $x_0\in\Rsp^d$ and $v \in \Rsp^d$, the
  unique solution $y$ to $v = \nabla h(x_0 - y)$ is $y = y_c(x_0,v) :=
  x_0 - (\nabla h)^{-1}(v)$. As a consequence, under the hypothesis of
  the theorem above, the transport map is of the form
  $$ T(x) = x - (\nabla h)^{-1}(\nabla \phi(x))$$ where $\phi$ is a
  $c$-convex function. 
\end{example}

The following lemma shows that a transport plan is induced by a
transport map if it is concentrated on the graph of a map.
\begin{lemma}\label{lemma:tmaps-are-tplans} 
  Let $\gamma\in\Gamma(\mu,\nu)$ and $T:X\to Y$ measurable be such
  that $\gamma(\{ (x,y) \in X\times Y\mid T(x)\neq y\}) = 0$. Then,
  $\gamma = \gamma_T$.
\end{lemma}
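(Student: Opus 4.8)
The plan is to show that the hypothesis forces $\gamma$ to be concentrated on the graph of $T$, and that the graph-concentrated plan having marginals $\mu$ and $\nu$ is necessarily $(\mathrm{id},T)_\#\mu = \gamma_T$. The key observation is that the map $F := (\mathrm{id},T) : X \to X\times Y$, $x\mapsto (x,T(x))$, is measurable, and its graph $G := \{(x,T(x)) \mid x\in X\} = F(X)$ satisfies $\gamma((X\times Y)\setminus G) = 0$ by hypothesis, since $(X\times Y)\setminus G \subseteq \{(x,y)\mid T(x)\neq y\}$. So $\gamma$ is supported (up to a null set) on $G$, and on $G$ the second coordinate is a deterministic function of the first.

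First I would verify that $\gamma_T := F_\#\mu$ indeed lies in $\Gamma(\mu,\nu)$: one has $\Pi_{X\#}\gamma_T = (\Pi_X\circ F)_\#\mu = \mathrm{id}_\#\mu = \mu$ and $\Pi_{Y\#}\gamma_T = (\Pi_Y\circ F)_\#\mu = T_\#\mu = \nu$, the latter by the hypothesis $\gamma\in\Gamma(\mu,\nu)$ that fixes $\nu = \Pi_{Y\#}\gamma$ together with the assumption $T_\#\mu=\nu$ implicit in $\gamma_T$ being well-defined (in fact $T_\#\mu = \nu$ is forced, see below). Next, to identify $\gamma$ with $\gamma_T$, I would test both measures against an arbitrary $f\in\Class^0(X\times Y)$. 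Since $\gamma$ gives full mass to $G$ and on $G$ one has $y = T(x)$, we get $\int_{X\times Y} f(x,y)\dd\gamma(x,y) = \int_{X\times Y} f(x,T(x))\dd\gamma(x,y) = \int_X f(x,T(x))\dd(\Pi_{X\#}\gamma)(x) = \int_X f(x,T(x))\dd\mu(x) = \int_{X\times Y} f\dd\gamma_T$, where the second equality uses that $(x,y)\mapsto f(x,T(x))$ depends only on the first coordinate so its $\gamma$-integral is the $\Pi_{X\#}\gamma$-integral of $x\mapsto f(x,T(x))$, and the third uses $\Pi_{X\#}\gamma=\mu$. Hence $\gamma=\gamma_T$.

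The one genuinely delicate point — the main obstacle — is the replacement of $f(x,y)$ by $f(x,T(x))$ inside the $\gamma$-integral. This is justified because the two functions agree $\gamma$-almost everywhere: they coincide on $G$, and $\gamma((X\times Y)\setminus G)=0$. One should be slightly careful that $G$ is a $\gamma$-measurable set: since $T$ is Borel measurable, $F=(\mathrm{id},T)$ is Borel measurable, and although $F(X)$ need not be Borel in general, the hypothesis already asserts that the (Borel) set $\{(x,y)\mid T(x)\neq y\}$ is $\gamma$-null, so its complement, which contains $G$ and differs from $G$ only on pairs $(x,T(x))$ with — vacuously — nothing excluded, is in fact exactly $G\cup\{(x,y): T(x)=y\}$... more simply: $\{(x,y)\mid T(x)=y\}$ is Borel (it is $F^{-1}$ of the diagonal pulled back appropriately, or the preimage of $\{0\}$ under the Borel map $(x,y)\mapsto \mathbf{1}[T(x)\neq y]$), it has full $\gamma$-measure, and on it $y=T(x)$; this is all that the integral manipulation needs. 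Thus no appeal to measurability of $F(X)$ itself is required, and the argument is complete. Finally, $T_\#\mu=\nu$ follows a posteriori (or is part of the standing setup): $\nu = \Pi_{Y\#}\gamma = \Pi_{Y\#}\gamma_T = T_\#\mu$.
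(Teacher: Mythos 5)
Your proof is correct and follows essentially the same route as the paper: both arguments use the full-$\gamma$-measure of the set $\{(x,y)\mid y=T(x)\}$ to substitute $T(x)$ for $y$, and then invoke the marginal condition $\Pi_{X\#}\gamma=\mu$ to land on $\gamma_T=(\id,T)_\#\mu$. The only difference is the choice of test objects — the paper checks equality of $\gamma$ and $\gamma_T$ on product sets $A\times B$, whereas you test against functions in $\Class^0(X\times Y)$ and use the identification of measures with linear forms — which is an inessential variation.
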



\begin{proof} By definition of $\gamma_T$ one has  $\gamma_T(A\times B) = \mu(T^{-1}(B)\cap A)$ for all Borel sets $A\subseteq X$ and $B\subseteq Y$. On the other hand,
\begin{align*}
  \gamma(A\times B) &= \gamma(\{ (x,y) \mid x\in A, \hbox{ and } y\in
  B \})\\ &= \gamma(\{ (x,y) \mid x\in A, y\in B \hbox{ and } y=T(x)
  \}) \\ &= \gamma(\{ (x,y) \mid x\in A \cap T^{-1}(B), y=T(x) \}
  \\ &= \mu(A\cap T^{-1}(B)),
\end{align*}
thus proving the claim.
\end{proof}

\begin{proof}[Proof of Theorem~\ref{th:brenier}]
Enlarging $X$ if necessary (while keeping it compact and inside
$\Omega_X$), we may assume that $\spt(\mu)$ is contained in the
interior of $X$. First note that by compactness of $X\times Y$ and
since $c$ is $\Class^1$, the cost $c$ is Lipschitz on $X\times Y$.
Take $(\phi,\phi^{\bar{c}})$ a maximizing pair for $\DP$ with $\phi$
$c$-concave. By the formula
$\phi(x) = \min_{y \in Y} c(x,y) + \phi^{\bar{c}}(y)$ one can see that
$\phi$ is Lipschitz. By Rademacher theorem, $\phi$ is differentiable
Lebesgue almost everywhere, and by the hypothesis $\mu\in\Probac(X)$,
it is therefore differentiable on a set $B \subseteq \spt(\mu)$ with
$\mu(B)=1$. Consider an optimal transport plan $\gamma
\in\Gamma(\mu,\nu)$. For every pair of points
$(x_0,y_0)\in\spt(\gamma) \cap B\times Y$, we have
  $$ \forall x\in X, \phi(x) - c(x,y_0) \leq \phi^{\bar{c}}(y_0)$$ with
equality at $x = x_0$, so that $x_0$ maximizes the function $\phi -
c(\cdot,y_0)$. Since $x_0 \in\spt(\mu)$, $x_0$ belongs to the interior
of $X$, one necessarily has $\nabla\phi(x_0) = \nabla_x c(x_0,y_0)$. Then, by the
twist condition, one necessarily has $y_0 = y_c(x_0, \nabla
\phi(x_0))$. This shows that any optimal transport plan $\gamma$ is
supported on the graph of the map $T: x\in B\mapsto y_c(x_0,\nabla
\phi(x_0))$, and $\gamma = \gamma_T$ by the previous lemma.
\end{proof}

We finish this section with a stability result for optimal transport
maps (a more general result can be found in \cite[Chapter
  5]{villani2008optimal}).

\begin{proposition}[Stability of OT  maps] \label{prop:stab-maps}
  Let $X\subseteq \Omega_X$ and $Y\subseteq \Omega_Y$ be compact
  subsets of open sets $\Omega_X,\Omega_Y\subseteq \Rsp^d$, and
  $c\in\Class^1(\Omega_X\times\Omega_Y)$ be a twisted cost.  Let $\rho
  \in \Probac(X)$, and let $(\mu_k) \in \Prob(Y)$ be a sequence of
  measures converging weakly to $\mu\in\Prob(Y)$. Define $T_k$
  (resp. $T$) as the unique optimal transport map between $\rho$ and
  $\mu_k$ (resp. $\rho$ and $\mu$). Then,
  $\lim_{k\to+\infty}\nr{T_k-T}_{\LL^1(\rho)} = 0$.
\end{proposition}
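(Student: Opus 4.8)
The plan is to deduce the statement from the stability of optimal transport plans (Theorem~\ref{th:stab}) together with the uniqueness part of Theorem~\ref{th:brenier}, and then to promote weak convergence of the associated transport plans into $\LL^1(\rho)$ convergence of the maps. Recall first that, since $c$ is twisted and $\rho\in\Probac(X)$, Theorem~\ref{th:brenier} guarantees that $T_k$ and $T$ are well defined and that $\gamma_{T_k}:=(\id,T_k)_\#\rho\in\Gamma(\rho,\mu_k)$ and $\gamma_T:=(\id,T)_\#\rho\in\Gamma(\rho,\mu)$ are the \emph{unique} optimal transport plans for their respective problems.

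I would argue via the subsequence principle: to prove $T_k\to T$ in $\LL^1(\rho)$ it suffices to show that every subsequence of $(T_k)_k$ has a further subsequence converging to $T$ in $\LL^1(\rho)$. Fix such a subsequence. Applying Theorem~\ref{th:stab} with the (constant) source sequence equal to $\rho$ and the target sequence $(\mu_k)$, we may extract a further subsequence, not relabelled, such that $\gamma_{T_k}$ converges weakly to some optimal plan $\gamma'\in\Gamma(\rho,\mu)$; by uniqueness in Theorem~\ref{th:brenier}, $\gamma'=\gamma_T$.

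It remains to show that $\gamma_{T_k}\to\gamma_T$ weakly forces $T_k\to T$ in $\LL^1(\rho)$. The only subtlety is that $T$ is merely Borel measurable, so I would first regularize it using Lusin's theorem: given $\eps>0$, there is a compact set $K\subseteq X$ with $\rho(X\setminus K)<\eps$ such that $T|_K$ is continuous, and by Tietze's extension theorem $T|_K$ extends to a map $\tilde T\in\Class^0(X;\Rsp^d)$. Set $f(x,y):=\min(\nr{y-\tilde T(x)},1)$, a bounded continuous function on the compact space $X\times Y$. Testing $f$ against $\gamma_{T_k}\to\gamma_T$ gives
$$\int_X \min(\nr{T_k(x)-\tilde T(x)},1)\dd\rho(x) = \sca{f}{\gamma_{T_k}} \xrightarrow[k\to\infty]{} \sca{f}{\gamma_T} = \int_X \min(\nr{T(x)-\tilde T(x)},1)\dd\rho(x),$$
and the right-hand side is $\leq \rho(X\setminus K) < \eps$ since $T=\tilde T$ on $K$. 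Using $\min(a+b,1)\leq\min(a,1)+\min(b,1)$ and the elementary bound $\int_X\min(\nr{T-\tilde T},1)\dd\rho\leq\rho(X\setminus K)<\eps$, we obtain $\limsup_k\int_X\min(\nr{T_k-T},1)\dd\rho\leq 2\eps$; letting $\eps\to 0$ shows that $T_k\to T$ in $\rho$-probability. Finally, since $T_k$ and $T$ take values in the compact set $Y$, we have $\nr{T_k-T}\leq\diam(Y)$ $\rho$-a.e., and this uniform bound upgrades convergence in probability to convergence in $\LL^1(\rho)$. This closes the subsequence argument.

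The main obstacle is precisely this last step — converting weak convergence of the graph-supported plans $(\id,T_k)_\#\rho$ into strong $\LL^1(\rho)$ convergence of the maps $T_k$ — which is delicate only because $T$ need not be continuous, whence the Lusin/Tietze regularization. All the other ingredients (weak compactness of transport plans, stability, and uniqueness of the optimal plan for twisted costs) are already available from the material above.
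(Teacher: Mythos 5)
Your proof is correct, but it follows a genuinely different route from the paper's. The paper works at the level of Kantorovich potentials: it invokes the second bullet of Theorem~\ref{th:stab} to get uniform convergence (up to a subsequence) of the $c$-conjugate potentials $(\phi_k,\psi_k)$ to a limiting pair $(\phi,\psi)$, and then, at every point $x$ where $\nabla\phi$ exists, shows that any cluster point $y$ of $(T_k(x))_k$ satisfies $\phi(x)-\psi(y)=c(x,y)$ with $\phi(\cdot)-\psi(y)\leq c(\cdot,y)$, so that the twist condition forces $y=y_c(x,\nabla\phi(x))=T(x)$; this gives $T_k\to T$ $\rho$-a.e., and $\LL^1(\rho)$ convergence then follows from the uniform bound. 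You instead stay at the level of plans: you use the first bullet of Theorem~\ref{th:stab} plus the uniqueness statement in Theorem~\ref{th:brenier} to identify the weak limit of $\gamma_{T_k}=(\id,T_k)_\#\rho$ as $\gamma_T$, and then prove the general measure-theoretic fact that weak convergence of graph-supported plans with a common first marginal forces convergence in $\rho$-measure of the maps, via Lusin and Tietze and the test function $f(x,y)=\min(\nr{y-\tilde T(x)},1)$; boundedness of $Y$ upgrades this to $\LL^1(\rho)$, and the subsequence principle (which you state explicitly, whereas the paper leaves it implicit after its extraction of a subsequence of potentials) closes the argument. Your route is more modular: it works whenever the limiting optimal plan is unique and induced by a map, independently of how that uniqueness is proved, and it never touches differentiability of the potentials. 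The paper's route is more self-contained relative to the tools it has already built (it reuses the potential machinery and the twist condition directly, and needs neither Lusin nor Tietze) and yields pointwise a.e. convergence of $T_k$ as a byproduct. Both arguments are complete; just make sure, as you did, that $f$ is tested against measures on $X\times Y$ (the Tietze extension $\tilde T$ need not take values in $Y$, but that is harmless since only $\nr{y-\tilde T(x)}$ enters).
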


\begin{remark}  Note that unlike the stability theorem for
optimal transport plans (Theorem~\ref{th:stab}), the convergence in
Proposition~\ref{prop:stab-maps} is for the whole sequence and not up
to subsequence. This theorem is not quantitative, and there exists
very few quantitative variants of this theorem. We are aware of two
such results. To state them, given a fixed $\rho\in\Prob(X)$ and
$\mu\in \Prob(Y)$, we denote $T_\mu$ the unique optimal transport map
between $\rho$ and $\mu$.
\begin{itemize}
\item A first result of Ambrosio, reported in an article of Gigli
  \cite[Proposition 3.3 and Corollary 3.4]{gigli2011holder}, shows
  that if $\mu_0$ is such that the optimal transport map $T_{\mu_0}$ is Lipschitz, then
  $$ \nr{T_\mu - T_{\mu_{0}}}_{\LL^2(\rho)}^2 \leq C(\mu_0)
  \Wass_2(\mu,\mu_0). $$ The theorem in \cite{gigli2011holder} holds
  for the quadratic cost on $\Rsp^d$. It was recently generalized to
  other cost functions \cite{ambrosio2019optimal}.
\item Berman \cite{berman2018convergence} proves a global estimate,
  not assuming the regularity of $T_{\mu_0}$ but with a worse Hölder
  exponent, of the form
  $$\nr{T_\mu - T_{\mu_0}}_{\LL^2(\rho)}^2 \leq C
  \Wass_1(\mu,\mu_0)^{1/2^{d-1}},$$ assuming that $\rho$ is bounded
  from below on a compact convex domain of $\Rsp^d$, when the cost is
  quadratic. The constant then $C$ only depends on $X,Y$ and $\rho$. Recently 
  a similar bound with an exponent independent on the dimension
  was obtained by M\'erigot, Delalande and Chazal~\cite{merigot2019quantitative}:
  $$
  \nr{T_\mu - T_{\mu_0}}_{\LL^2(\rho)}^2 \leq C
  \Wass_1(\mu,\mu_0)^{1/15}.
  $$
\end{itemize}
\end{remark}

\begin{proof}
  As before, without loss of generality, we assume that $\spt(\sigma)$
  lies in the interior of $X$.  Let $(\phi_k,\psi_k)$ be solutions to
  $\DP_k$, which are $c$-conjugate to each other, and such that
  $\phi_k(x_0) = 0$ for some $x_0\in X$. Then, by stability of
  Kantorovich potentials, there exists a subsequence $(\phi_k,\psi_k)$
  (which we do not relabel) which converges uniformly to
  $(\phi,\psi)$. Moreover, $(\phi,\psi)$ are Kantorovich potentials
  for $\DP$, and are also $c$-conjugate to each other.

  Since $\phi, \phi_k\in \Lip(X)$ are differentiable almost
  everywhere, there exists a subset $Z \subseteq \spt(\sigma)$ with
  $\mu(Z) = 1$ and such that for all $x\in Z$, $\nabla \phi_k$ exists
  for all $k$ and $\nabla \phi$ exists.  Let $x\in Z$. Using
  $$ \phi_k(x) - \psi_k(T_k(x)) = c(x,T_k(x)), $$ we get that for any
  cluster point $y$ of the sequence $(T_k(x))_k$,
  $$ \begin{cases} \phi(x) - \psi(y) = c(x,y),\\
    \phi(x') - \psi(y) \leq c(x',y) &\forall x'\in X\end{cases},
    $$ where the second inequality is obtained using $\phi_k \ominus
    \psi_k \leq c$. Thus, as in the proof of Brenier-McCann-Gangbo's
    theorem, $x$ is a minimizer of $c(\cdot,y) - \phi$, i.e. $\nabla_x
    c(x,y) = \nabla \phi(x)$, implying that $y = y_c(x,\nabla \phi(x))
    = T(x)$.  By compactness, this shows that the whole sequence
    $(T_k(x))_k$ converges to $S(x)$. Therefore, $T_k$ converges
    $\sigma$-almost everywhere to $T$, and $\LL^1(\sigma)$ convergence
    follows easily.
\end{proof}

\subsection{Kantorovich's functional}
\label{subsec:Kantorovich-functional}
As already mentioned in Equation~\eqref{eq:KD-unconstrained}, the
Kantorovich's dual problem $\DP$ can be expressed as an unconstrained
maximization problem:
$$
  \DP = \max_{\psi\in\Class^0(Y)} \int_{X} \psi^c \dd \mu - \int_Y \psi\dd\nu.
$$
This motivates the definition of \emph{Kantorovich's functional} as follows
%
%
\begin{definition} \label{def:Kant}
  The   Kantorovitch functional is defined on $\Class^0(Y)$ by
  \begin{equation} \label{eq:Kant}
    \Kant(\psi)= \int_{X} \psi^c \dd \mu - \int_Y \psi\dd\nu.
  \end{equation}
\end{definition}
The Kantorovitch dual problem therefore amounts to maximizing the Kantorovitch functional:
$$
  \DP = \max_{\psi\in\Class^0(Y)}   \Kant(\psi).
$$
This subsection is devoted to the general computation of the
superdifferential of Kantorovich's functional when $Y$ is finite. This
computation will be used to construct and study algorithms for
discretized optimal transport problems.
The definition, as well as basic 
properties on the superdifferential $\partial^+F$ of a function $F$ are recalled in Appendix~\ref{sec:appendixconvexanalysis}.

\begin{proposition}\label{prop:gradientkant} Let $X$ be a compact space, $Y$ be finite, 
  $c\in \Class^0(X\times Y)$ and $\mu\in \Prob(X)$ and
  $\nu\in\Prob(Y)$.  Then, for all $\psi_0\in\FYR$,
  \begin{equation}
    \partial^+ \Kant(\psi_0) = \{ \Pi_{Y\#}\gamma - \nu \mid \gamma \in \Gamma_{\psi_0}(\mu) \}.
  \end{equation}
  where $\Gamma_{\psi_0}(\mu)$ is the set of probability measures on $X\times Y$
  with first marginal $\mu$ and supported on the $c$-subdifferential $\partial^c\psi_0$ (defined in Eq.~\eqref{eq:csubdiff}), i.e.
  \begin{equation}
    \Gamma_{\psi_0}(\mu) = \{ \gamma \in \Prob(X\times Y) \mid
    \Pi_{X\#}\gamma = \mu \hbox{ and } \spt(\gamma)\subseteq
    \partial^c\psi_0 \}.
    \end{equation}
\end{proposition}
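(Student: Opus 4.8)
The plan is to exploit that $\Kant$ is a \emph{finite concave} function on $\FYR$, which we identify with $\Rsp^N$ after writing $Y=\{y_1,\dots,y_N\}$ and $v_j=v(y_j)$. Indeed, for each $x$ the map $\psi\mapsto\psi^c(x)=\min_j(c(x,y_j)+\psi(y_j))$ is a minimum of $N$ affine functions, hence finite and concave, and integrating against $\mu\in\Prob(X)$ and subtracting the linear functional $\sca{\cdot}{\nu}$ preserves these properties. Consequently $\partial^+\Kant(\psi_0)$ is a nonempty compact convex subset of $\Rsp^N$ ($\psi_0$ being interior to the domain), and as such it is determined by its support function $v\mapsto\sup_{g\in\partial^+\Kant(\psi_0)}\sca{g}{v}$, which by concavity equals $v\mapsto-\Kant'(\psi_0;-v)$, where $\Kant'(\psi_0;w):=\lim_{t\downarrow0}t^{-1}(\Kant(\psi_0+tw)-\Kant(\psi_0))$ is the one-sided directional derivative (see Appendix~\ref{sec:appendixconvexanalysis}). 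On the other hand $\Gamma_{\psi_0}(\mu)$ is convex and weakly compact --- weak closedness follows from lower semicontinuity of $\gamma\mapsto\gamma(U)$ for open $U$ together with closedness of $\partial^c\psi_0$, which itself holds because $\psi_0^c\in\Class^0(X)$ --- and it is nonempty (take $(\id,S)_\#\mu$ with $S(x)=y_{j(x)}$, where $j(x)$ is any measurable choice, e.g. the smallest, of an index realizing the minimum defining $\psi_0^c(x)$). Since $\gamma\mapsto\Pi_{Y\#}\gamma$ is a weakly continuous linear map into $\Rsp^N$ ($Y$ being discrete), the set $\{\Pi_{Y\#}\gamma-\nu\mid\gamma\in\Gamma_{\psi_0}(\mu)\}$ is also nonempty compact convex. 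So it suffices to check that the two convex bodies have the same support function.

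Fix $v\in\Rsp^N$ and, for $x\in X$, let $A(x):=\{j\mid c(x,y_j)+\psi_0(y_j)=\psi_0^c(x)\}$, a nonempty finite set, so that $(x,y_j)\in\partial^c\psi_0$ if and only if $j\in A(x)$. I would first compute the support function of the right-hand set, $h(v)=\sup_{\gamma\in\Gamma_{\psi_0}(\mu)}\sca{\Pi_{Y\#}\gamma}{v}-\sca{\nu}{v}$. Because $Y$ is finite, specifying $\gamma$ with first marginal $\mu$ and $\spt(\gamma)\subseteq\partial^c\psi_0$ is the same as specifying a decomposition $\mu=\sum_j\gamma^{(j)}$ into non-negative measures on $X$ with $\spt(\gamma^{(j)})\subseteq\{x\mid j\in A(x)\}$, and then $\sca{\Pi_{Y\#}\gamma}{v}=\sum_j v_j\,\gamma^{(j)}(X)\leq\int_X\max_{j\in A(x)}v_j\dd\mu(x)$, using $v_j\leq\max_{k\in A(x)}v_k$ on the support of $\gamma^{(j)}$ and $\sum_j\gamma^{(j)}=\mu$. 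Equality is reached by sending $\mu$-almost every $x$ to a maximizing index: with $j^{*}(x)$ the smallest element of $A(x)$ maximizing $j\mapsto v_j$ --- a measurable selection, elementary thanks to the tie-break --- the decomposition given by $\gamma^{(j)}=$ the restriction of $\mu$ to $\{x\mid j^{*}(x)=j\}$ is admissible, since $\{x\mid j^{*}(x)=j\}\subseteq\{x\mid j\in A(x)\}$ and the latter set is closed. Hence $h(v)=\int_X\max_{j\in A(x)}v_j\dd\mu(x)-\sca{\nu}{v}$.

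Next, for a direction $w\in\Rsp^N$, I would compute $\Kant'(\psi_0;w)$. For each fixed $x$ and all $t>0$ small enough (the threshold depending on $x$ through the positive gaps $c(x,y_j)+\psi_0(y_j)-\psi_0^c(x)$ for $j\notin A(x)$) the minimum defining $(\psi_0+tw)^c(x)$ is realized within $A(x)$, so $(\psi_0+tw)^c(x)=\psi_0^c(x)+t\min_{j\in A(x)}w_j$; since the difference quotients $t^{-1}((\psi_0+tw)^c(x)-\psi_0^c(x))$ are bounded in modulus by $\max_j|w_j|$, dominated convergence yields $\Kant'(\psi_0;w)=\int_X\min_{j\in A(x)}w_j\dd\mu(x)-\sca{\nu}{w}$. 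Substituting $w=-v$ gives $-\Kant'(\psi_0;-v)=\int_X\max_{j\in A(x)}v_j\dd\mu(x)-\sca{\nu}{v}=h(v)$. The two nonempty compact convex sets thus have the same support function, hence coincide, which is the claim. I expect the only delicate point to be the equality case in the second step: one must exhibit the optimal transport plan as a genuine element of $\Gamma_{\psi_0}(\mu)$ --- with support, not merely full measure, inside $\partial^c\psi_0$ --- which is precisely where closedness of $\partial^c\psi_0$ (equivalently, continuity of $\psi_0^c$) enters, alongside a routine measurable selection of $j^{*}(\cdot)$.
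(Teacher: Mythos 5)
Your proof is correct, but it takes a genuinely different route from the paper's. The paper proves the two inclusions separately: the easy inclusion $\{\Pi_{Y\#}\gamma-\nu \mid \gamma\in\Gamma_{\psi_0}(\mu)\}\subseteq\partial^+\Kant(\psi_0)$ by integrating the pointwise inequality $\psi^c(x)\leq c(x,y)+\psi(y)$ (with equality at $\psi_0$ on $\spt(\gamma)$) against $\gamma$, and the reverse inclusion via the characterization \eqref{eq:SDcalc} of the superdifferential as the convex hull of limits of gradients at nearby differentiability points, passing to weak limits of plans $\gamma^n\in\Gamma_{\psi^n}(\mu)$ thanks to compactness of $\Prob(X\times Y)$. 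You instead note that both sides are nonempty compact convex subsets of $\Rsp^Y$ and identify them through their support functions: on one side an explicit maximization over decompositions $\mu=\sum_j\gamma^{(j)}$, achieved by a measurable selection (where, as you rightly stress, closedness of $\partial^c\psi_0$ is what turns "concentrated on" into "supported in"), and on the other side the one-sided directional derivative of $\Kant$, computed pointwise on the $c$-transform and passed to the limit by dominated convergence. Each argument leans on one standard convex-analysis fact: the paper on \eqref{eq:SDcalc} (\cite[Theorem 25.6]{rockafellar1970convex}), you on the fact that for a finite concave function the superdifferential is the compact convex set with support function $v\mapsto-\Kant'(\psi_0;-v)$ (\cite[Theorem 23.4]{rockafellar1970convex}); note this latter fact is \emph{not} what Appendix~\ref{sec:appendixconvexanalysis} records, so you should cite it explicitly rather than point to the appendix. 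What your route buys is an explicit formula for all directional derivatives, $\Kant'(\psi_0;w)=\int_X\min_{y:\,(x,y)\in\partial^c\psi_0}w(y)\dd\mu(x)-\sca{w}{\nu}$, which in particular recovers Corollary~\ref{coro:Kant-grad} for coordinate directions without the projection argument used there; what the paper's route buys is that it avoids measurable selections altogether (weak compactness of $\Prob(X\times Y)$ does that work) and obtains the concavity of $\Kant$ as an immediate by-product of the easy inclusion.
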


\begin{proof} Let $\gamma \in \Gamma_{\psi_0}(\mu)$. Then, for all $\psi\in\FYR$,
  \begin{align*}
    \Kant(\psi) &= \int \psi^c \dd \mu - \int \psi \dd \nu \\
    &= \int \psi^c(x) \dd\gamma(x,y) - \int \psi\dd\nu \\
    &\leq \int c(x,y) + \psi(y) \dd \gamma(x,y) - \int \psi \dd \nu,
  \end{align*}
  where we used $\Pi_{X\#} \gamma = \mu$ to get the second equality and
  $\psi^c(x) \leq c(x,y) + \psi(y)$ to get the inequality. Note
  also that equality holds if $\psi=\psi_0$, by assumption on the
  support of $\gamma$.  Hence,
\begin{align*}
  \Kant(\psi) &\leq \Kant(\psi_0) + \int (\psi(y) - \psi_0(y)) \dd \gamma(x,y) -
  \int (\psi - \psi_0) \dd\nu \\
  &= \Kant(\psi_0) + \sca{\Pi_{Y\#}\gamma - \nu}{\psi - \psi_0}.
\end{align*}
This implies by definition that $\Pi_{Y\#}\gamma - \nu$ lies in the
superdifferential $\partial^+\Kant(\psi_0)$, giving us the inclusion
$$ D(\psi_0) := \{ \Pi_{Y\#}\gamma - \nu\mid \gamma \in \Gamma_{\psi_0}(\mu)\}
\subseteq \partial^+ \Kant(\psi_0).$$ Note also that  the
superdifferential of $\Kant$ is non-empty at any $\psi_0\in
\FYR$, so that $\Kant$ is concave. 
As a concave function on the finite-dimensional space $\Rsp^Y$,
$\Kant$ is differentiable almost everywhere and one has $\partial
\Kant^+(\psi)=\{\nabla \Kant(\psi)\}$ at differentiability points.

We now show that $\partial \Kant^+(\psi_0) \subset D(\psi_0)$, using the
characterization of the subdifferential recalled in the Appendix:
$$
\partial \Kant^+(\psi_0) = \conv \left\{\lim_{n\to\infty} \nabla \Kant(\psi^n)\mid (\psi^n)_{n \in \Nsp} \in S\right\},
$$ where $S$ is the set of sequences $(\psi^n)_{n \in \Nsp}$ that
converge to $\psi_0$, such that $\nabla \Kant(\psi^n)$ exist and admit a
limit as $n\to +\infty$. Let $v=\lim_{n\to\infty} \nabla
\Kant(\psi^n)$ , where $(\psi^n)_{n \in \Nsp}$ belongs to the set
$S$. For every $n$, there exists $\gamma^n\in \Gamma_{\psi^n}(\mu)$
such that $\nabla \Kant(\psi^n)=v^n := \Pi_{Y\#}\gamma^n  - \nu$. 
By compactness of $\Prob(X\times Y)$, one can assume (taking a subsequence if necessary)
that $\gamma^n$ weakly converges  to some $\gamma$, and it is not
difficult to check that $\gamma\in\Gamma_{\psi_0}(\mu),$ ensuring that
the sequence $v^n$  converges to some $v\in D(\psi_0)$.
 Thus,
$$ \left\{\lim_{n\to\infty} \nabla \Kant(\psi^n)\mid (\psi^n)_{n \in
  \Nsp} \in S\right\} \subseteq D(\psi_0). $$ Taking the convex hull and using the convexity of $D(\psi_0)$, we get $\partial^+
\Kant(\psi_0) \subseteq D(\psi_0)$ as desired. \qedhere

\end{proof}

As a corollary of this proposition, we obtain an explicit expression
for the left and right partial deriatives of $\Kant$, and a
characterization of its differentiability. In this corollary, we use
the terminology of semi-discrete optimal transport
(Section~\ref{sec:semidiscrete}), and we will refer to the
$c$-subdifferential at $y\in Y$ as \emph{Laguerre cell associated to
$y$} and we will denote it by $\Lag_y(\psi)$.
\begin{equation} \label{eq:Lag0}
\Lag_y(\psi) := \{ x\in X\mid\forall z\in Y, c(x,y)+ \psi(y)\leq c(x,z)+\psi(z)\}.
\end{equation}
We also need to
introduce the \emph{strict Laguerre cell} $\SLag_y(\psi)$:
\begin{equation} \label{eq:SLag0}
\SLag_y(\psi) := \{ x\in X\mid\forall z\in Y, c(x,y)+ \psi(y)< c(x,z)+\psi(z)\}.
\end{equation}

\begin{corollary}[Directional derivatives of $\Kant$] \label{coro:Kant-grad}
 Let $\psi\in\FYR$, $y \in Y$ and define $\kappa(t) = \Kant(\psi^t)$
 where $\psi^t = \psi + t \one_{y}$. Then, $\kappa$ is concave and
 $$ \partial^+ \kappa(t) = [\mu(\SLag_{y}(\psi^t))
 - \nu(\{y\}), \mu(\Lag_{y}(\psi^t)) - \nu(\{y\})] $$ In particular
 $\Kant$ is differentiable at $\psi\in\FYR$ iff
 $\mu(\Lag_y(\psi)\setminus \SLag_y(\psi))=0$ for all $y\in Y$, and in
 this case
$$
\nabla \Kant(\psi) =\Big( \mu(\Lag_y(\psi)) - \nu(\{y\}) \Big)_{y\in Y}.
$$
\end{corollary}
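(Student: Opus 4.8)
The plan is to derive this corollary directly from Proposition~\ref{prop:gradientkant} by restricting attention to the one-dimensional slice $t\mapsto\kappa(t)=\Kant(\psi+t\one_y)$. Concavity of $\kappa$ is immediate since it is the restriction of the concave function $\Kant$ to an affine line. The heart of the matter is to compute $\partial^+\kappa(t)$, and by the chain rule for superdifferentials of concave functions (or directly from the definition of superdifferential applied along the direction $\one_y$), we have $\partial^+\kappa(t) = \{\, \sca{g}{\one_y} \mid g\in\partial^+\Kant(\psi^t)\,\} = \{\, g_y \mid g \in \partial^+\Kant(\psi^t)\,\}$, where $g_y$ denotes the $y$-component. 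So everything reduces to identifying the set of possible $y$-components of elements of $\partial^+\Kant(\psi^t) = \{\Pi_{Y\#}\gamma - \nu \mid \gamma\in\Gamma_{\psi^t}(\mu)\}$.

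The key computation is therefore: as $\gamma$ ranges over $\Gamma_{\psi^t}(\mu)$ (probability measures on $X\times Y$ with first marginal $\mu$, supported on $\partial^c\psi^t$), what values can $(\Pi_{Y\#}\gamma)(\{y\}) = \gamma(X\times\{y\})$ take? I would argue as follows. First, note that $\partial^c\psi^t(y) = \Lag_y(\psi^t)$ by definition, so $\gamma$ is concentrated on $\bigcup_z (\Lag_z(\psi^t)\times\{z\})$ with first marginal $\mu$. Disintegrating $\gamma$ with respect to its first marginal $\mu$, we may write $\gamma = \int_X \delta_x\otimes\gamma_x\,\dd\mu(x)$ where $\gamma_x\in\Prob(Y)$ is supported on $\{z\mid x\in\Lag_z(\psi^t)\}$ for $\mu$-a.e.\ $x$. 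Then $\gamma(X\times\{y\}) = \int_X \gamma_x(\{y\})\,\dd\mu(x)$. For $x\in\SLag_y(\psi^t)$, the only admissible choice is $\gamma_x(\{y\})=1$; for $x\notin\Lag_y(\psi^t)$, we must have $\gamma_x(\{y\})=0$; and for $x$ in the ``boundary'' set $\Lag_y(\psi^t)\setminus\SLag_y(\psi^t)$, the value $\gamma_x(\{y\})$ can be anything in $[0,1]$. Moreover, since the $\Lag_z(\psi^t)$ cover $X$ (every point has a minimizing $z$) and $Y$ is finite, one can always build such a disintegration attaining any prescribed profile on the boundary set while keeping each $\gamma_x$ a probability measure. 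Hence $\gamma(X\times\{y\})$ ranges over exactly the interval $[\mu(\SLag_y(\psi^t)), \mu(\Lag_y(\psi^t))]$, both endpoints attained. Subtracting $\nu(\{y\})$ gives the claimed formula for $\partial^+\kappa(t)$.

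From this description the differentiability statement follows by a standard fact from convex analysis recalled in the appendix: a concave function on a finite-dimensional space is differentiable at a point iff its superdifferential there is a singleton. Here $\partial^+\Kant(\psi)$ is a singleton iff $\Pi_{Y\#}\gamma$ is the same for every $\gamma\in\Gamma_\psi(\mu)$, which by the analysis above happens iff $\mu(\Lag_y(\psi)\setminus\SLag_y(\psi))=0$ for every $y\in Y$ (finiteness of $Y$ lets us pass between ``for all $y$'' and the global singleton condition). In that case $\mu(\Lag_y(\psi))=\mu(\SLag_y(\psi))$ for all $y$, so the unique element of $\partial^+\Kant(\psi)$ has $y$-component $\mu(\Lag_y(\psi))-\nu(\{y\})$, giving $\nabla\Kant(\psi) = (\mu(\Lag_y(\psi))-\nu(\{y\}))_{y\in Y}$.

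The main obstacle I anticipate is the careful justification of the ``range of values'' claim for $\gamma(X\times\{y\})$: one must verify both that the disintegration argument is valid (measurability of the assignment $x\mapsto\{z\mid x\in\Lag_z(\psi^t)\}$, which is fine since $Y$ is finite and the cells are closed/open Borel sets) and, for the surjectivity onto the whole interval, that one can actually construct an admissible $\gamma$ realizing each endpoint and each intermediate value — the upper endpoint by sending all boundary mass to $y$, the lower by sending it elsewhere (which is possible precisely because every boundary point of $\Lag_y$ also lies in $\Lag_z$ for some $z\neq y$), and intermediate values by convex combination, using convexity of $\Gamma_{\psi^t}(\mu)$. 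Everything else is bookkeeping with the chain rule for superdifferentials and the appendix facts on concave functions.
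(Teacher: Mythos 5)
Your proof is correct and follows essentially the same route as the paper: it reduces the corollary to Proposition~\ref{prop:gradientkant}, projects $\partial^+ \Kant(\psi^t)$ onto the direction $\one_y$, and identifies the attainable values of $\gamma(X\times\{y\})$ for $\gamma\in\Gamma_{\psi^t}(\mu)$, with your disintegration argument being a repackaging of the paper's decomposition of $\gamma(X\times\{y\})$ together with the explicit map $T$ used to attain the endpoints. The only point worth flagging is that your appeal to the ``chain rule for superdifferentials'' is doing real work (it plays the role of the paper's Hahn--Banach extension step), since the definition alone only yields the inclusion $\{\sca{g}{\one_y} \mid g\in\partial^+\Kant(\psi^t)\}\subseteq\partial^+\kappa(t)$ and not the reverse.
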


\begin{proof} Using Hahn-Banach's  extension theorem, one can easily see that the super-differential
of $\kappa$ at $t$ is the projection of the super-differential $\Kant$
at $\psi^t$: $$\partial^+ \kappa(t)
= \left\{ \sca{\pi}{\one_{y}} \mid \pi \in \partial^+\Kant(\psi^t) \right\}. $$
Combining with the previous proposition we get
$$
\begin{aligned}
 \partial^+ \kappa(t)
& = \left\{ \sca{\Pi_{X\#\gamma}
- \nu}{\one_{y}} \mid \gamma \in \Gamma_{\psi^t}(\mu) \right\} \\
&= \left\{ \gamma(X\times \{y\})
- \nu(\{y\}) \mid \gamma \in \Gamma_{\psi^t}(\mu) \right\}
\end{aligned}
$$
To obtain the desired formula for $\partial\kappa^+(t)$, it remains to prove that
$$ \max \left\{\gamma(X\times \{y\}) \mid \gamma \in \Gamma_{\psi^t}(\mu)\right\}
= \mu(\Lag_y(\psi^t)), $$
$$ \min \left\{\gamma(X\times \{y\}) \mid \gamma \in \Gamma_{\psi^t}(\mu)\right\}
= \mu(\SLag_y(\psi^t)). $$ We only prove the first equality, the
second one being similar. Denote $Z = X \setminus \Lag_y(\psi^t)$, so that
for any $\gamma\in \Gamma_{\psi^t}(\mu)$,
$$
\begin{aligned}
 \gamma(X\times\{y\}) 
= \gamma(\Lag_y(\psi^t) \times \{y\}) + \gamma(Z\times\{y\}).
\end{aligned} $$
Moreover, by definition of $\Lag_y(\psi^t)$,
$Z\times\{y\} \cap \partial^c \psi^t = \emptyset$. Since $\gamma$
belongs to $\Gamma_{\psi^t}(\mu)$, we have
$\spt(\gamma)\subseteq \partial^c \psi^t$ so that
$\gamma(Z\times\{y\}) = 0$. This gives us
$$ \gamma(X\times\{y\})
= \gamma(\Lag_y(\psi^t) \times \{y\}) \leq \gamma(\Lag_y(\psi^t)\times Y)
= \mu(\Lag_y(\psi^t)), $$ where we used $\Pi_{X\#}\gamma = \mu$ to get
the last equality. This proves that
$$\sup \left\{\gamma(X\times \{y\}) \mid \gamma \in \Gamma_{\psi^t}(\mu)\right\}
\leq \mu(\Lag_y(\psi^t)).$$
To show equality, we consider an explicit
$\gamma\in\Gamma_{\psi^t}(\mu)$ using a map $T: X\to Y$ defined as
follows: for $x\in \Lag_y(\psi^t)$, we set $T(x) = y$ and for points
$x\not\in \Lag_y(\psi^t)$, we define $T(x)$ to be an arbitrary $z \in
Y$ such that $x\in \Lag_z(\psi^t)$. Then, one can readily check that
$\gamma = (\id,T)_\# \mu$ belongs to $\Gamma_{\psi^t}(\mu)$ and that
by construction, $\gamma(X\times\{y\}) = \mu(\Lag_y(\psi^t))$.
\end{proof}

\section{Discrete optimal transport}
In this part we present two algorithms for solving discrete optimal
transport problems, wich can be both be interpreted using
Kantorovich's duality:
\begin{itemize}
\item The first one is Bertsekas' auction algorithm,
which allows to solve optimal transport problem where the source and
targed measures are uniform over two sets with the same cardinality, a
case known as the \emph{assignment problem} in combinatorial
optimization. Bertsekas' algorithm is a
\emph{coordinate-ascent method} that iteratively modifies the
coordinates of the dual variable $\psi$ so as to reach a maximizer of
the Kantorovitch functional $\Kant(\psi)$.
\item The second algorithm is
the \emph{Sinkhorn-Knopp's algorithm} that allows to solve the
entropic regularization of (discrete) optimal transport problems. This
algorithm can be seen as a \emph{block-coordinate ascent method} since
it amounts to maximizing the dual of the regularized optimal transport
problem, denoted by $\Kant^\eta(\phi,\psi)$, by alternatively
optimizing with respect to the two dual variables $\varphi$ and
$\psi$.
\end{itemize}

\subsection{Formulation of discrete optimal transport} 
\subsubsection{Primal and dual problems}
We consider in this section that the two sets $X$ and $Y$ are finite, and
we consider two discrete probability measures $\mu = \sum_{x\in X} \mu_x
\delta_x$ and $\nu = \sum_{y\in Y} \nu_y \delta_y$. This setting
occurs frequently in applications. The set of transport plans is then
given by
\[
\Gamma(\mu,\nu)=\left\{ \gamma=\sum_{x,y} \gamma_{x,y} \delta_{(x,y)} \mid\
\gamma_{x,y}\geq 0,\
\sum_{y\in Y}\gamma_{x,y}=\mu_x,\
\sum_{x\in X} \gamma_{x,y} = \nu_y
\right\},
\]
and is often referred to as the \emph{transportation polytope}.  In
this discrete setting, we will conflate a transport plan $\gamma \in
\Gamma(\mu,\nu)$ with the matrix $(\gamma_{x,y})_{(x,y)\in X\times
  Y}$, which formally is the density of $\gamma$ with respect to the
counting measure. The constraint $\sum_{y} \gamma_{x,y} = \mu_x$
encodes the fact that all mass from $x$ is transported somewhere in
$Y$, while the constraint $\sum_{x}\gamma_{x,y} = \nu_y$ tells us that
the mass at $y$ is transported from somewhere in $X$.  The
Kantorovitch problem for a cost function $c:X\times Y \to \Rsp$ then
reads
\begin{equation}
\KP = \min_{\gamma\in \Gamma(\mu,\nu)} \sum_{x\in X,y\in Y} c(x,y) \gamma_{x,y}.
\end{equation}
As seen in Section~\ref{sec:kantorovitchduality}, the dual $\DP$ of
this linear programming problem amounts to maximizing the Kantorovitch
functional $\Kant$ \eqref{eq:Kant}, which in this setting can be expressed as 
\begin{equation} \label{eq:Kant-disc}
\Kant(\psi) = \sum_{x\in X} \min_{y\in Y}(c(x,y) + \psi(y))\mu_x  - \sum_{y\in Y} \psi(y) \nu_y,
\end{equation}
where $\psi\in\FYR$ is a function over the finite set $Y$.  Since
strong duality holds (Theorem~\ref{th:Kantorovich}), one has
\begin{equation*} \label{eq:discreteunconstrained}
\KP = \DP = \max_{\psi\in\FYR} \Kant(\psi). 
\end{equation*}

\begin{remark}
  Knowing a maximizer $\psi\in \FYR$ of $\Kant$ does not directly
allow to recover an optimal transport plan $\gamma$. However, by
Corollary~\ref{coro:support}, we know that any transport plan $\gamma
\in\Gamma(\mu,\nu)$ is optimal if and only if its support is included
in the $c$-subdifferential of $\psi$:
$$\spt(\gamma) \subset \partial^c\psi = \Big\{(x,y)\in X\times Y\mid
\psi^c(x)=c(x,y)+\psi(y)\Big\}.$$
\end{remark}

\subsection{Linear assignment via coordinate ascent}
\subsubsection{Assignment problem}
When the two sets $X$  and $Y$ have the same cardinal $N$ and when $\mu$ and $\nu$ are uniform probability measures over these sets, namely
\begin{equation} \label{eq:unif}
  \mu = \frac{1}{N} \sum_{x\in X}\delta_x, \quad \nu = \frac{1}{N}
  \sum_{y\in Y}\delta_y,
\end{equation}
then Monge's problem corresponds to the \emph{(linear) assignment
  problem} $\AP$ which is one of the most famous combinatorial
optimization problem:
\begin{equation} \label{equation} \AP = \min\left\{\frac{1}{N} \sum_{x\in X} c(x,\sigma(x))
\mid \sigma:X\to Y \mbox{ is a bijection } \right\}.
\end{equation}
This problem and its variants have generated a very important amount
of research, as demonstrated by the bibliography of the book by
Burkard, Dell'Amico and Martello on this topic
\cite{burkard2009assignment}.

Note that the set of bijections from $X$ to $Y$ has cardinal $N!$,
making it practically impossible to solve $\AP$ through direct
enumeration. Using Birkhoff's theorem on bistochastic matrices, we
will show that the assignment problem coincides with the Kantorovitch
problem.

\begin{definition}[Bistochastic matrices]
  A $N$-by-$N$ bistochastic matrix is a square matrix $M \in
  \Mat_N(\Rsp)$ with non-negative coefficients such that the sum of
  any row and any column equals one:
  $$\forall i\in\{1,\hdots,N\}, \sum_j M_{ij} = 1,\quad \forall
  j\in\{1,\hdots,N\}, \sum_i M_{ij} = 1 $$ We denote the set of
  $N$-by-$N$ bistochatic matrices as $\Bistoch_N\subseteq
  \Mat_N(\Rsp)$.
\end{definition}
\begin{definition}[Permutation matrix]
The set of permutations (bijections) from $\{1,\hdots,N\}$ to itself
is denoted $\Permut_N$. Given a permutation $\sigma \in \Permut_N, $
we associate the permutation matrix
$$ M[\sigma]_{ij} = \begin{cases} 1 \hbox{ if } \sigma(i) = j \\ 0 \hbox{ if not} \end{cases}. $$
\end{definition}

One can easily check that if $\sigma$ is a permutation, then
$M[\sigma]$ belongs to $\Bistoch_N$. Birkhoff's theorem on the other
hand asserts that the extremal points of the polyhedron $\Bistoch_N$
are permutation matrices, implying thanks to Krein-Milman theorem that
every bistochastic matrix can be obtained as a (finite) convex
combination of permutation matrices.

\begin{theorem}[Birkhoff]
  The extremal points of $\Bistoch_N$ are the permutation matrices. In
  particular, $\Bistoch_N = \conv\{ M[\sigma] \mid \sigma \in
  \Permut_N\}$.
\end{theorem}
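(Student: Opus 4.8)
The plan is to prove the theorem in two halves. First I would show that every permutation matrix is an extremal point of $\Bistoch_N$; then I would show that every extremal point is a permutation matrix. The equality $\Bistoch_N = \conv\{ M[\sigma] \mid \sigma \in \Permut_N\}$ follows immediately from the second half together with the Krein--Milman theorem, since $\Bistoch_N$ is a compact convex polytope (it is cut out of $\Mat_N(\Rsp)$ by finitely many linear equalities and inequalities and is bounded because all entries lie in $[0,1]$).

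For the easy direction, suppose $M[\sigma] = \tfrac12(A+B)$ with $A,B\in\Bistoch_N$. Wherever $M[\sigma]$ has a zero entry, non-negativity of $A$ and $B$ forces the corresponding entries of $A$ and $B$ to vanish as well; wherever $M[\sigma]$ has a $1$, the row-sum constraint on $A$ (the rest of that row being zero) forces that entry of $A$ to be $1$, and likewise for $B$. Hence $A = B = M[\sigma]$, so $M[\sigma]$ is extremal.

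For the main direction, let $M\in\Bistoch_N$ be extremal; I want to show $M$ is a permutation matrix, i.e. every entry is $0$ or $1$. Suppose not, so $M$ has an entry strictly between $0$ and $1$. Since each row sums to $1$, that row must contain a \emph{second} fractional entry; moving to its column, which also sums to $1$, there must be another fractional entry in that column; continuing in this way and alternating between rows and columns, one builds a path through fractional entries in a finite set, so it must eventually close up into a cycle of even length, say with entries $M_{i_1 j_1}, M_{i_1 j_2}, M_{i_2 j_2}, M_{i_2 j_3}, \ldots$ all strictly in $(0,1)$. Let $\eps>0$ be smaller than the distance from each of these entries to $\{0,1\}$, and define matrices $M^{\pm}$ by adding $\pm\eps$ and $\mp\eps$ alternately around the cycle (and leaving all other entries unchanged). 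Then $M^{\pm}$ still have non-negative entries and the same row and column sums (each row and column touched by the cycle gains $+\eps$ and $-\eps$ once each), so $M^{\pm}\in\Bistoch_N$; moreover $M = \tfrac12(M^+ + M^-)$ with $M^+ \neq M^-$, contradicting extremality. Therefore $M$ has all entries in $\{0,1\}$, and the doubly stochastic constraints then force exactly one $1$ in each row and column, i.e. $M = M[\sigma]$ for some $\sigma\in\Permut_N$.

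I expect the cycle-extraction argument to be the only delicate point: one must argue carefully that following fractional entries (each row/column containing one forces another) cannot terminate and hence produces a closed alternating circuit, and that perturbing along it preserves both stochasticity constraints. Everything else — compactness of $\Bistoch_N$, the application of Krein--Milman, and the easy extremality of permutation matrices — is routine.
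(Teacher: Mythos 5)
Your proof is correct. Note that the paper itself does not prove this statement: Birkhoff's theorem is quoted there as a classical result (with Krein--Milman invoked in the surrounding text exactly as you do), so there is no "paper proof" to compare against; your argument is the standard one and it holds up. Both halves are fine: the extremality of $M[\sigma]$ via forcing zeros and then ones from the row sums, and the contrapositive cycle-perturbation argument showing an extremal point cannot have a fractional entry. The one place I would tighten the write-up is the step you yourself flag: rather than following a walk and asserting it "closes up into a cycle of even length," it is cleaner to consider the bipartite graph whose vertices are the rows and columns and whose edges are the fractional entries. Every vertex meeting an edge has degree at least $2$ (a row or column summing to $1$ cannot contain exactly one entry in $(0,1)$ with all others in $\{0,1\}$), so this graph contains a simple cycle, which is automatically of even length and alternates rows and columns. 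On a simple cycle each row and each column involved carries exactly two cycle entries, which receive opposite signs under your $\pm\eps$ perturbation, so the row and column sums are preserved and $M^{\pm}\in\Bistoch_N$ for $\eps$ small; this is precisely what is needed for $M=\tfrac12(M^++M^-)$ to contradict extremality. With that phrasing the argument is complete.
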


Kantorovitch's problem $\KP$ amounts to minimizing a linear function
over the set of bistochastic matrices which is convex. Birkoff's
theorem implies that there exists a bijection that solves this problem
$\KP$, hence the following theorem:
\begin{theorem}\label{thm:assignment}
Let $\mu$ and $\nu$ be as in \eqref{eq:unif}. Then, $\AP = \KP$.
\end{theorem}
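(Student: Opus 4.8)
The plan is to identify the transportation polytope $\Gamma(\mu,\nu)$ with a rescaled copy of the Birkhoff polytope $\Bistoch_N$, and then to use that a linear functional over a polytope attains its minimum at a vertex. First I would fix enumerations $X = \{x_1,\dots,x_N\}$ and $Y = \{y_1,\dots,y_N\}$. Since $\mu$ and $\nu$ are uniform, a transport plan $\gamma\in\Gamma(\mu,\nu)$, viewed as the matrix $(\gamma_{x_i,y_j})_{i,j}$, satisfies $\sum_j \gamma_{x_i,y_j} = \frac{1}{N}$ for all $i$ and $\sum_i \gamma_{x_i,y_j} = \frac{1}{N}$ for all $j$, so $M := N\gamma \in \Bistoch_N$; conversely $\frac{1}{N}M \in \Gamma(\mu,\nu)$ for every $M\in\Bistoch_N$. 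This yields an affine bijection $\Gamma(\mu,\nu) = \frac{1}{N}\Bistoch_N$, under which Kantorovich's problem becomes the minimization
\[
\KP = \min_{M\in\Bistoch_N} L(M), \qquad L(M) := \frac{1}{N}\sum_{i,j} c(x_i,y_j)\, M_{ij},
\]
of the linear functional $L$ over $\Bistoch_N$.

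Next, by Birkhoff's theorem, $\Bistoch_N = \conv\{M[\sigma] \mid \sigma\in\Permut_N\}$, so any $M\in\Bistoch_N$ can be written $M = \sum_k \lambda_k M[\sigma_k]$ with $\lambda_k \geq 0$ and $\sum_k\lambda_k = 1$. By linearity of $L$, one gets $L(M) = \sum_k \lambda_k L(M[\sigma_k]) \geq \min_k L(M[\sigma_k]) \geq \min_{\sigma\in\Permut_N} L(M[\sigma])$; since the reverse inequality is trivial ($M[\sigma]\in\Bistoch_N$), we obtain $\KP = \min_{\sigma\in\Permut_N} L(M[\sigma])$. Finally, identifying $\sigma\in\Permut_N$ with the bijection $x_i\mapsto y_{\sigma(i)}$ from $X$ to $Y$, one computes $L(M[\sigma]) = \frac{1}{N}\sum_i c(x_i,y_{\sigma(i)})$, which is precisely the cost of the assignment $\sigma$; taking the minimum over $\sigma$ gives $\KP = \AP$. (Equivalently, $\frac{1}{N}M[\sigma]$ is the deterministic plan $\gamma_\sigma = \frac{1}{N}\sum_i\delta_{(x_i,y_{\sigma(i)})}$ associated to this bijection.)

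The whole argument is essentially bookkeeping once Birkhoff's theorem is granted; the one step deserving care — the ``main step'' — is the passage from ``minimum over the polytope'' to ``minimum over its vertices'', which rests exactly on the representation $\Bistoch_N = \conv\{M[\sigma]\}$ (equivalently, on Krein--Milman together with Birkhoff's identification of the extremal points). The rest is the routine translation between the measure-theoretic description of transport plans between uniform measures and the matrix description via doubly stochastic matrices, plus the observation that the extremal (deterministic) plans are precisely the scaled permutation matrices. Note finally that the inequality $\KP \leq \AP$ is already contained in the general bound $\KP \leq \MP$ recalled earlier, so only $\KP \geq \AP$ genuinely uses the vertex argument.
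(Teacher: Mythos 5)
Your proof is correct and follows essentially the same route as the paper: identify $\Gamma(\mu,\nu)$ with $\frac{1}{N}\Bistoch_N$ and invoke Birkhoff's theorem to reduce the minimization to permutation matrices. The only cosmetic difference is that you argue via the convex-combination decomposition $M=\sum_k\lambda_k M[\sigma_k]$ and linearity, whereas the paper cites directly that the minimum of the linear functional is attained at an extreme point of $\Bistoch_N$; these are two phrasings of the same vertex argument.
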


\begin{proof} Take an arbitrary ordering of the points in $X$ and $Y$, i.e. 
  $X = \{x_1,\hdots,x_N\}$ and $Y = \{y_1,\hdots,y_N\}$. Then $\gamma
  \in \Rsp^{X\times Y} \simeq \Mat_N(\Rsp)$ is a transport plan
  between $\mu$ and $\nu$ iff $N\gamma \in \Bistoch_N$. Since
  bistochastic matrices include permutation matrices, we have $\KP\leq
  \AP$, and the converse follows from the fact that the minimum in
  $\KP$ is attained at an extreme point of $\Bistoch_N$, i.e. a
  permutation matrix.
\end{proof}

\subsubsection{Dual coordinate ascent methods} 
We follow Bertsekas \cite{bertsekas1981new} by trying to solve the assignment
problem $\AP$ through the unconstrained dual problem
\eqref{eq:discreteunconstrained}.  Combining
Theorem~\ref{thm:assignment} and Corollary~\ref{coro:support}, we have
the following proposition.
\begin{proposition}The following statements are equivalent
\begin{itemize}
\item $\psi$ is a global maximizer of the Kantorovitch functional $\Kant$
\item There exists a bijection $\sigma:X \to Y$ that satisfies
$$
\forall x \in X,~  c(x,\sigma(x))+ \psi(\sigma(x)) = \min_{y \in Y} c(x,y) + \psi(y).
$$
\end{itemize}
A bijection $\sigma$ satisfying this last equation is a solution to the linear assignment problem. 
\end{proposition}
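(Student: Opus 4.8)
The plan is to establish the two implications separately, leveraging the tools already assembled in the excerpt. The key connecting facts are: (i) by Theorem~\ref{thm:assignment}, $\AP = \KP$, and by strong duality (Theorem~\ref{th:Kantorovich}), $\KP = \DP = \max_\psi \Kant(\psi)$; (ii) by Corollary~\ref{coro:support}, a transport plan $\gamma \in \Gamma(\mu,\nu)$ is optimal for $\KP$ if and only if $\spt(\gamma) \subseteq \partial^c\psi$, whenever $\psi$ maximizes $\Kant$. The main observation I would make at the outset is that, in the uniform setting \eqref{eq:unif}, the transport plans $\gamma_\sigma := \frac{1}{N}\sum_{x\in X}\delta_{(x,\sigma(x))}$ associated to bijections $\sigma:X\to Y$ are precisely the $\frac{1}{N}M[\sigma]$, i.e. the vertices of the transportation polytope; and the condition $\spt(\gamma_\sigma)\subseteq\partial^c\psi$ unravels to exactly the displayed equation $c(x,\sigma(x))+\psi(\sigma(x)) = \min_{y\in Y} c(x,y)+\psi(y)$ for all $x\in X$.

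For the implication ``$\psi$ is a global maximizer $\Rightarrow$ such a bijection exists'': since $\psi$ maximizes $\Kant$, we have $\Kant(\psi) = \DP = \KP = \AP$, so there is a bijection $\sigma^*$ attaining the minimum in the assignment problem. By Theorem~\ref{thm:assignment} the associated plan $\gamma_{\sigma^*}$ is optimal for $\KP$ (it realizes the value $\AP=\KP$), hence by Corollary~\ref{coro:support}, $\spt(\gamma_{\sigma^*})\subseteq\partial^c\psi$. Reading off the support condition point by point — $(x,\sigma^*(x))\in\partial^c\psi$ means $\psi^c(x)-\psi(\sigma^*(x)) = c(x,\sigma^*(x))$, i.e. $c(x,\sigma^*(x))+\psi(\sigma^*(x)) = \psi^c(x) = \min_{y\in Y}c(x,y)+\psi(y)$ — gives the claimed equation.

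For the converse: suppose $\sigma$ satisfies the displayed equation. Then $\spt(\gamma_\sigma)\subseteq\partial^c\psi$ by the same unravelling, so $c(x,y) = \psi^c(x)-\psi(y)$ holds $\gamma_\sigma$-a.e. Now I would invoke Proposition~\ref{prop:complementary-slackness} (complementary slackness): with $\phi := \psi^c$ we have $\phi\ominus\psi\leq c$ and $\phi\ominus\psi = c$ $\gamma_\sigma$-a.e., which forces $\gamma_\sigma$ to minimize $\KP$ and $(\phi,\psi)$ to maximize $\DP$, and $\KP=\DP$. In particular $\Kant(\psi) = \langle\phi,\mu\rangle - \langle\psi,\nu\rangle = \DP = \max\Kant$, so $\psi$ is a global maximizer of $\Kant$; and $\gamma_\sigma$ optimal for $\KP$ together with $\KP=\AP$ shows $\frac{1}{N}\sum_x c(x,\sigma(x)) = \KP = \AP$, i.e. $\sigma$ solves the linear assignment problem, which also takes care of the final sentence of the statement.

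The only mildly delicate point — not really an obstacle — is checking carefully that ``$\spt(\gamma_\sigma)\subseteq\partial^c\psi$'' is genuinely equivalent to the pointwise equation, since $\gamma_\sigma$ is a sum of $N$ Dirac masses and $\spt(\gamma_\sigma) = \{(x,\sigma(x))\mid x\in X\}$ (all $\mu_x = 1/N > 0$, so no atom is lost). With $X$ finite this is immediate. The rest is just bookkeeping with the duality results already proved.
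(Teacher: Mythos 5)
Your proof is correct and follows essentially the route the paper intends: the paper derives this proposition precisely by combining Theorem~\ref{thm:assignment} ($\AP=\KP$) with Corollary~\ref{coro:support}, which is exactly your forward direction, and your converse via Proposition~\ref{prop:complementary-slackness} with $\phi=\psi^c$ is the natural way to fill in the details the paper leaves implicit. Nothing is missing; the identification of $\spt(\gamma_\sigma)$ with $\{(x,\sigma(x))\mid x\in X\}$ and the equivalence with the pointwise equation are handled correctly.
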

The idea of Bertsekas \cite{bertsekas1981new} is to iteratively modify
the weights $\psi\in\Rsp^Y$ so as to reach a maximizer of $\Kant$.
By
Corollary~\ref{coro:Kant-grad}, the gradient of the
Kantorovitch functional, when it exists, is given by
$$
\nabla \Kant(\psi)   = \frac{1}{N} \Big( \card(\Lag_y(\psi)) - 1 \Big)_{y\in Y}.
$$
In addition, recalling the definition of a Laguerre cell,
$$ \Lag_y(\psi)  = \{ x\in X\mid \forall y'\in Y,~ c(x,y) +
\psi(y) \leq c(x,y') + \psi(y')\} ,$$ one can see that $\card(\Lag_y(\psi))$ is obviously
decreasing when $\psi(y)$ increases.  Therefore, in order to maximize
the concave function $\Kant$, it is natural to increase the weight
$\psi(y)$ of any Laguerre cells that satisfy
$\card(\Lag_y(\psi))>1$. In the following lemma, we calculate the
optimal increment, which is known as the \emph{bid}.

\begin{lemma}[Bidding increment]
Let $\psi\in \FYR$ and $y_0\in Y$ be such that $\Lag_{y_0}(\psi) \neq
\emptyset$. Then the maximum of the function $t\to \Kant(\psi + t
\one_{y_0})$ is reached at
$$
\bid_{y_0}(\psi) = \max \{ \bid_{y_0}(\psi,x),\ x \in \Lag_{y_0}(\psi)\},
$$
where
$$
\bid_{y_0}(\psi,x) := \left(\min_{y\in Y\setminus{y_0}}c(x,y) + \psi(y)\right) - (c(x,y_0) + \psi(y_0)).
$$
\end{lemma}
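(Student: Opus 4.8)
The plan is to use the directional-derivative formula from Corollary~\ref{coro:Kant-grad} to locate the maximizer of the concave function $\kappa(t) := \Kant(\psi + t\one_{y_0})$. Since $\kappa$ is concave, its maximum over $\Rsp$ is attained exactly at the points where $0 \in \partial^+\kappa(t)$, i.e. where $\mu(\SLag_{y_0}(\psi^t)) - \nu(\{y_0\}) \leq 0 \leq \mu(\Lag_{y_0}(\psi^t)) - \nu(\{y_0\})$, where $\psi^t = \psi + t\one_{y_0}$. In the assignment setting $\mu = \frac1N\sum_{x}\delta_x$ and $\nu(\{y_0\}) = \frac1N$, so this condition reads $\card(\SLag_{y_0}(\psi^t)) \leq 1 \leq \card(\Lag_{y_0}(\psi^t))$. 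First I would make explicit how $\Lag_{y_0}(\psi^t)$ and $\SLag_{y_0}(\psi^t)$ depend on $t$: a point $x$ lies in $\Lag_{y_0}(\psi^t)$ iff $c(x,y_0) + \psi(y_0) + t \leq c(x,y) + \psi(y)$ for all $y \neq y_0$, i.e. iff $t \leq \bid_{y_0}(\psi,x)$, and it lies in $\SLag_{y_0}(\psi^t)$ iff $t < \bid_{y_0}(\psi,x)$. Thus increasing $t$ can only shrink these cells.

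Next I would argue that the threshold $t^* := \bid_{y_0}(\psi) = \max\{\bid_{y_0}(\psi,x) : x \in \Lag_{y_0}(\psi)\}$ is exactly the value at which the optimality condition $0 \in \partial^+\kappa$ holds. On the one hand, at $t = t^*$ one has $x \in \Lag_{y_0}(\psi^{t^*})$ iff $t^* \leq \bid_{y_0}(\psi,x)$; since $\bid_{y_0}(\psi,x) \leq t^*$ for every $x \in \Lag_{y_0}(\psi)$ and the cells only shrink in $t$ (so no new points enter), $\Lag_{y_0}(\psi^{t^*})$ consists precisely of the maximizers of $x \mapsto \bid_{y_0}(\psi,x)$ over $\Lag_{y_0}(\psi)$, which is a nonempty set containing at least one point — hence $\card(\Lag_{y_0}(\psi^{t^*})) \geq 1$. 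On the other hand, a point $x$ lies in $\SLag_{y_0}(\psi^{t^*})$ iff $t^* < \bid_{y_0}(\psi,x)$, which is impossible by the definition of $t^*$ as a maximum; hence $\SLag_{y_0}(\psi^{t^*}) = \emptyset$ and $\card(\SLag_{y_0}(\psi^{t^*})) = 0 \leq 1$. Therefore $0 \in \partial^+\kappa(t^*)$, so $t^*$ is a maximizer of $\kappa$, which is exactly the claim.

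The one point requiring a little care — and the main (mild) obstacle — is the boundary behavior in $t$ of the two cells, since the Laguerre cell is closed and the strict one is open: one must check that $\card(\Lag_{y_0}(\psi^t))$ does not ``jump'' as we pass through $t^*$ from below, i.e. that the relevant points of $\Lag_{y_0}(\psi)$ are still present at $t = t^*$. This is handled by the equivalences above: $x \in \Lag_{y_0}(\psi^t)$ for $t \leq \bid_{y_0}(\psi,x)$ is a closed condition in $t$, so the maximizers survive up to and including $t^*$, while strictly beyond $t^*$ the cell $\Lag_{y_0}(\psi^t)$ becomes empty. (One should also note that the hypothesis $\Lag_{y_0}(\psi) \neq \emptyset$ guarantees that the maximum defining $\bid_{y_0}(\psi)$ is over a nonempty finite set and hence well defined, and that $X$ being finite makes all the minima and maxima attained.) Together with the concavity of $\kappa$ from Corollary~\ref{coro:Kant-grad}, this completes the argument.
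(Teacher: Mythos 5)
Your proposal is correct and follows essentially the same route as the paper: both rest on the equivalence $x \in \Lag_{y_0}(\psi^t) \Leftrightarrow t \leq \bid_{y_0}(\psi,x)$ and on Corollary~\ref{coro:Kant-grad} to conclude $0 \in \partial^+\kappa(\bid_{y_0}(\psi))$, the paper via the sign of the upper endpoint $\mu(\Lag_{y_0}(\psi^t)) - \tfrac1N$ on either side of the bid, you by checking both endpoints of the superdifferential interval at $t^* = \bid_{y_0}(\psi)$ directly. Just note that your emptiness claim for $\SLag_{y_0}(\psi^{t^*})$ also needs the points $x \notin \Lag_{y_0}(\psi)$ (for which $\bid_{y_0}(\psi,x) < 0 \leq t^*$), but this is already covered by your observation that the cells only shrink as $t$ increases.
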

\begin{proof}
Denote $\psi^t = \psi + t\one_{y_0}$. For $t>0$ one has $\Lag_{y_0}(\psi^t)
\subseteq \Lag_{y_0}(\psi)$. Remark also that for every $x\in X$, one has
$$
\begin{array}{lll}
x \in \Lag_{y_0}(\psi^t) &\Leftrightarrow &\forall z\neq y_0\quad  c(x,y_0)+\psi(y_0)+t \leq c(x,z) + \psi(z)\\
&\Leftrightarrow & t \leq \left(\min_{z\in Y\setminus{y_0}}c(x,z) + \psi(z)\right) - (c(x,y_0) + \psi(y_0))\\
&\Leftrightarrow & t \leq \bid_{y_0}(\psi,x)
\end{array}
$$ This implies that $\Lag_{y_0}(\psi^t) \ne \emptyset$ if and only if
$t \leq \bid_{y_0}(\psi)$.  By Corollary~\ref{coro:Kant-grad}, the
upper-bound of the superdifferential $\partial^+ \kappa(t)$ of the function $\kappa(t) = \Kant(\psi+t\one_{y_0})$
is $ \mu(\Lag_{y_0}(\psi^t)) - \frac{1}{N}$. It is
non-negative for $t\in[0,\bid_{y_0}(\psi)]$ and strictly negative for
$t> \bid_{y_0}(\psi)$. This directly implies (for instance by~\eqref{eq:SDcalc}) that $0\in \partial^+ \kappa(\bid_{y_0}(\psi))$, so that the largest
  maximizer of $\kappa$ is $\bid_{y_0}(\psi)$.
\end{proof}

\begin{remark}[Economic interpretation of the bidding increment.]
Assume that $Y$ is a set of houses owned by one seller and $X$ is a
set of customers that want to buy a house.  Given a set of prices
$\psi:Y\to\Rsp$, each customer $x\in X$ will make a compromise between
the location of a house $y\in Y$ (measured by $c(x,y)$) and its price
(measured by $\psi(y)$) by choosing a house among those minimizing
$c(x,y)+\psi(y)$. In other words, $x$ chooses $y$ iff
$x\in \Lag_y(\psi)$. Let $y$ be a given house. The seller of $y$ wants
to maximize his profit, hence to increase $\psi(y)$ as much as
possible while keeping (at least) one customer. Let $x\in X$ be a
customer interested in the house $y$ (i.e. $x\in \Lag_y(\psi)$). Then,
$\bid_{y,x}(\psi)$ tells us how much it is possible to increase the
price of $y$ while keeping it interesting to $x$. The best choice for
the seller is to increase the price by the maximum bid, which is the
maximum raise so that there remains at least one customer, giving the
definition of $\bid_{y}(\psi)$.
\end{remark}

\begin{remark}[Naive coordinate ascent]
A naive algorithm would be to would choose at each step a coordinate
$y\in Y$ such that $\Lag_y(\psi)\neq \emptyset$ and to increase
$\psi(y)$ by the bidding increment $\bid_y(\psi)$. In practice, such
an algorithm might get stuck at a point which is not a global
maximizer, a phenomenon which is referred to as \emph{jamming} in
\cite[\S2]{bertsekas1988dual}. In practice, this can happen when some
bidding increments $\bid_y(\psi)$ vanishes, see
Remark~\ref{rem:no-convergence} below. Note that this is a particular
case of the well known fact that coordinate ascent algorithms may
converge to points that are not maximizers, when the maximized
functional is nonsmooth.
\end{remark}

In order to tackle the problem of non-convergence of coordinate
ascent, Bertsekas and Eckstein changed the naive algorithm outlined
above to impose that the bids are at least $\eps>0$. To analyse their
algorithm, we introduce the notion of \emph{$\eps$-complemen\-ta\-ry
slackness}, where $\eps$ can be seen as a tolerance.

\begin{definition}[$\eps$-Complementary slackness.]
A \emph{partial assignment} is a couple $(\sigma,S)$ where $S \subseteq
X$ and $\sigma: S \to Y$ is an injective map. A partial assignment
$(\sigma,S)$ and a price function $\psi \in\FYR$ satisfy \emph{$\eps$-complementary
  slackness} if for every $x$ in $S$ the following inequality holds:
\begin{equation}
c(x,\sigma(x)) + \psi(\sigma(x)) \leq \min_{y \in Y} \left[c(x,y) +
  \psi(y)\right] + \eps.  \tag{CS$_\eps$}
\label{CS}
\end{equation}
\end{definition}
In the economic interpretation, a partial assignment
$\sigma:S\subseteq X\to Y$ satisfies \eqref{CS} with respect to prices
$\psi\in\FYR$ if every customer $x\in S$ is assigned to a house
$\sigma(x)$ which is ``nearly optimal'', i.e. is within $\eps$ of
minimizing $c(x,\cdot)+\psi(\cdot)$ over $Y$.


\begin{lemma} \label{lem:CS}
If $\sigma: X\to Y$ is a bijection which satisfies \eqref{CS} together
with some $\psi\in\FYR$, then
\begin{equation} \label{eq:epsoptimal}
  \KP \leq \frac{1}{N} \sum_{x \in X} c(x,\sigma(x)) \leq \KP + \eps.
\end{equation}
\end{lemma}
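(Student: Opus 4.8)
The plan is to sandwich $\frac1N\sum_{x\in X}c(x,\sigma(x))$ between $\KP$ and $\KP+\eps$ using only the definition of the Kantorovitch functional and strong duality (Theorem~\ref{th:Kantorovich}), together with the bijectivity of $\sigma$.

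First I would establish the lower bound $\KP \leq \frac1N\sum_{x\in X}c(x,\sigma(x))$, which is immediate: the bijection $\sigma$ is admissible for the assignment problem, so $\KP = \AP \leq \frac1N\sum_{x\in X}c(x,\sigma(x))$ by Theorem~\ref{thm:assignment}; equivalently, $\gamma_\sigma := \frac1N\sum_{x\in X}\delta_{(x,\sigma(x))}$ lies in $\Gamma(\mu,\nu)$ and has transport cost $\frac1N\sum_{x\in X}c(x,\sigma(x))$. For the upper bound, I would use that by strong duality $\KP = \DP = \max_{\psi'}\Kant(\psi') \geq \Kant(\psi)$ for the particular $\psi$ in the statement, and then lower bound $\Kant(\psi)$. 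Expanding \eqref{eq:Kant-disc} for uniform $\mu,\nu$ gives
$$\Kant(\psi) = \frac1N\sum_{x\in X}\min_{y\in Y}\bigl(c(x,y)+\psi(y)\bigr) - \frac1N\sum_{y\in Y}\psi(y).$$

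The key manipulation is to use that $\sigma:X\to Y$ is a bijection, so $\sum_{y\in Y}\psi(y) = \sum_{x\in X}\psi(\sigma(x))$, which lets me regroup everything as a single sum over $x$:
$$\Kant(\psi) = \frac1N\sum_{x\in X}\Bigl[\min_{y\in Y}\bigl(c(x,y)+\psi(y)\bigr) - \psi(\sigma(x))\Bigr].$$
Since $\sigma$ is a bijection it is in particular a partial assignment with $S = X$, so \eqref{CS} applies to every $x\in X$ and rearranges to $\min_{y\in Y}(c(x,y)+\psi(y)) - \psi(\sigma(x)) \geq c(x,\sigma(x)) - \eps$. Summing over $x$ and dividing by $N$ yields $\Kant(\psi) \geq \frac1N\sum_{x\in X}c(x,\sigma(x)) - \eps$. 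Combining with $\Kant(\psi) \leq \KP$ then gives $\frac1N\sum_{x\in X}c(x,\sigma(x)) \leq \KP + \eps$, completing the proof.

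There is essentially no serious obstacle here; the argument is a short chain of equalities and one inequality. The only point that deserves care is the bijectivity bookkeeping: one must make sure that every $x\in X$ lies in the domain of the assignment so that \eqref{CS} is available for all of them, and that the reindexing $\sum_{y\in Y}\psi(y) = \sum_{x\in X}\psi(\sigma(x))$ is used correctly. Everything else reduces to weak duality ($\Kant(\psi)\le\DP=\KP$) plus the explicit form of $\Kant$.
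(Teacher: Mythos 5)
Your proof is correct and follows essentially the same route as the paper: the lower bound comes from $\sigma$ inducing an admissible transport plan, and the upper bound comes from summing \eqref{CS} over all $x$, using bijectivity to identify $\sum_{x}\psi(\sigma(x))$ with $\sum_{y}\psi(y)$, and then bounding $\Kant(\psi)\leq \DP = \KP$. The only cosmetic difference is that you phrase the last step via the explicit expansion of $\Kant$ from \eqref{eq:Kant-disc}, which is exactly what the paper's computation does implicitly.
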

\begin{proof}
The first inequality just comes from the fact $\sigma$ is a particular transport plan. For the second inequality, by summing the \eqref{CS} condition, one gets
$$
\frac{1}{N} \sum_{x \in X} c(x,\sigma(x)) +\psi(\sigma(x)) \leq \frac{1}{N} \sum_{x \in X}  \min_{y\in Y}(c(x,y) + \psi(y)) + \eps.
$$
This leads to
\begin{equation*} 
\frac{1}{N} \sum_{x \in X} c(x,\sigma(x)) \leq \Kant(\psi) + \eps \leq \DP + \eps = \KP + \eps. \qedhere
\end{equation*}
\end{proof}

\subsubsection{Bertsekas' auction algorithm} 
Bertsekas' auction algorithm maintains a partial matching $(\sigma,S)$
and prices $\psi \in \FYR$ that together satisfy
$\eps$-complementary slackness. At the end of the execution, $\sigma$
is a bijection, and $(\sigma,\psi)$ satisfy the $\eps$-CS condition.

\begin{algorithm}[h!]
\caption{Bertsekas' auction algorithm \label{algo:auction-original}}
\begin{algorithmic}
  \Function{Auction}{$c, \eps, \psi = 0$}
  \State{$S\gets \emptyset$}\Comment{All points are unassigned}
  \While{$\exists x \in X \setminus S$}
  \State{$y_0 \gets \arg\min_{y\in Y}  c(x,y) + \psi(y)  $}
  \State{$y_1 \gets \arg\min_{y\in Y\setminus \{y_0\}}  c(x,y) + \psi(y)  $}
   \State{$\psi(y_0) \gets \psi(y_0) + (c(x,y_1) + \psi(y_1)) -  (c(x,y_0) + \psi(y_0))+ \eps$}
  \If{$\exists x'\in X\hbox{ s.t. } \sigma(x') = y_0$}   \Comment{$y_0$ is ``stolen'' from $x'$}
  \State{$S \gets S \setminus \{x'\}$}
  \EndIf
  \State{$S \gets S\cup \{x\}, \quad \sigma(x) \gets y_0$}
  \EndWhile
  \State{\textbf{return} $\sigma,\psi$}
  \EndFunction
  \end{algorithmic}
\end{algorithm}
\begin{remark}[Non-convergence when $\eps=0$]\label{rem:no-convergence}
Consider for instance $X = \{x_1,x_2,x_3\}$, $Y = \{y_1,y_2,y_3\}$, $c(x,y) = \nr{x - y}$ and 
$$ y_1=(0,1), \quad y_2=(0,-1),\quad y_3=(10,0),$$
$$ x_1=(-1,0),\quad x_2 = (-2, 0), \quad x_3 = (-3,0).$$ The points
$x_1$, $x_2$ and $x_3$ are equidistant to $y_1$ and $y_2$ and ``far''
from $y_3$. Implementing auction's algorithm with $\eps=0$ then leads
to an infinite loop. Indeed, at every steps, the customers
$x_1,x_2,x_3$ pick one of the houses $y_1$ or $y_2$, but do not raise
the prices, as the second best house is equally interesting. This
``bidding war'' goes on forever.
\end{remark}

\begin{remark}[Lower bound on the number of steps] Consider the same setting as before, but with
  $\eps>0$. At the beginning of the algorithm, the customers $x_1$,
  $x_2$ and $x_3$ pick alternatively $y_1$ or $y_2$. As long as $y_3$
  has never been selected, the difference of prices between $y_1$ and
  $y_2$ is either $0$ or $\eps$, so that the bid is always $\eps$ or
  $2\eps$.  After $n$ iterations, the price of the houses $y_1,y_2$ is
  at most equal to $2n \eps$. This means that the third house $y_3$
  will never be chosen until $2n\eps > \min_{i} \nr{x_i - y_3} :=
  C$. As a consequence, the number of iterations is at least $C/(2\eps)$.
\end{remark}

The lower bound in the previous remark has a matching upper bound.

\begin{theorem} \label{th:steps-auction}
  If one starts the auction algorithm  with $\psi = 0$, then
  \begin{itemize}
  \item the number of steps in the auction algorithm is at most $N
    (C/\eps+1)$, where $C := \max_{X\times Y} c(x,y)$.
  \item the number of operations is at most $N^2 (C/\eps+1)$.
  \end{itemize}
  Moreover, the bijection $\sigma$ and the prices $\psi$ returned by the
  algorithm satisfy \eqref{CS}, so that in particular $\sigma$ is $\eps$-optimal \eqref{eq:epsoptimal}.
\end{theorem}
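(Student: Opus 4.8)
The plan is to prove the three assertions of Theorem~\ref{th:steps-auction} in order: first the invariants maintained by the algorithm (that $\eps$-complementary slackness is preserved throughout and that prices only increase), then the bound on the number of steps, then the deduction of the operation count and of $\eps$-optimality via Lemma~\ref{lem:CS}.

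\medskip\noindent\textbf{Step 1: invariants.} I would first check that after each iteration of the \texttt{while} loop, the pair $(\sigma,S)$ together with the current prices $\psi$ satisfies \eqref{CS}. The key observation is that the update sets $\psi(y_0)$ so that, \emph{before} the increment, one has $c(x,y_0)+\psi(y_0) = \min_y c(x,y)+\psi(y)$ and $c(x,y_1)+\psi(y_1) = \min_{y\neq y_0} c(x,y)+\psi(y)$; after the increment by $\bid + \eps$ where $\bid = (c(x,y_1)+\psi(y_1)) - (c(x,y_0)+\psi(y_0)) \geq 0$, the new value of $c(x,y_0)+\psi(y_0)$ equals $\min_{y\neq y_0}(c(x,y)+\psi(y)) + \eps$, so $x$ satisfies \eqref{CS} with respect to the \emph{new} price vector. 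For the customers $x'\in S\setminus\{x\}$ already assigned, their assigned house $\sigma(x')\neq y_0$ (we removed the old owner of $y_0$ if any), so the left-hand side $c(x',\sigma(x'))+\psi(\sigma(x'))$ is unchanged while the right-hand side $\min_y c(x',y)+\psi(y)$ can only increase since $\psi(y_0)$ increased and no other price changed; hence \eqref{CS} is preserved for them too. I would also record that prices are non-decreasing in each coordinate, and in fact that if a house $y$ has been bid on, its price has strictly increased by at least $\eps$.

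\medskip\noindent\textbf{Step 2: counting the steps.} This is the crux. The argument I would use is: once a house $y$ is assigned, it stays assigned forever (it can change owner but never becomes unassigned), and each time it is bid on again its price strictly increases by at least $\eps$. Since initially $\psi = 0$ and at any point $c(x,y_0)+\psi(y_0) \leq \min_{y}(c(x,y)+\psi(y))$ just before an update, together with $c \leq C$ and $c\geq 0$ (or more carefully: one shows $\psi(y) \leq C$ as long as $y$ is not yet assigned, and once assigned the price of $y$ can rise by at most a further $C$ before being ``too expensive'' for everyone), one obtains that any fixed house $y$ can be the object of a bid at most about $C/\eps + 1$ times. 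There are $N$ houses, so the total number of iterations is at most $N(C/\eps + 1)$. The delicate point — and I expect this to be the main obstacle — is making the price-boundedness argument airtight: one must argue that once $\psi(y) > C$ the house $y$ will never again be selected as the $\arg\min$ by an unassigned customer (because any customer always has the option of a house with price $\leq$ the price of whatever house it has been kicked off of, bounded via the running minimum), and from there that the price of each house is bounded by roughly $2C$; combined with increments of at least $\eps$ this yields the per-house bound. I would present this carefully as the price of any house never exceeds $C$ before it is first assigned, and never exceeds $2C$ afterwards, hence at most $2C/\eps$, actually $C/\eps$ increments when measured from the first assignment; matching the stated bound $N(C/\eps+1)$ up to the book-keeping of the ``$+1$''.

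\medskip\noindent\textbf{Step 3: operation count and termination.} Each iteration of the loop does a constant number of $\arg\min$ computations over $Y$ (two of them), plus $\mathrm{O}(1)$ bookkeeping, so each iteration costs $\mathrm{O}(N)$ operations; multiplying by the step bound from Step~2 gives $\mathrm{O}(N^2(C/\eps+1))$. Finally, since the algorithm terminates (Step~2 bounds the number of iterations), upon exit we have $X\setminus S = \emptyset$, i.e. $\sigma$ is defined on all of $X$; since $\sigma$ is injective (it is maintained as a partial matching — whenever $x$ takes $y_0$ the previous owner is removed) between sets of equal cardinality $N$, it is a bijection; and by Step~1 it satisfies \eqref{CS} with $\psi$. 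Lemma~\ref{lem:CS} then gives $\KP \leq \frac1N\sum_x c(x,\sigma(x)) \leq \KP + \eps$, which is the claimed $\eps$-optimality \eqref{eq:epsoptimal}.
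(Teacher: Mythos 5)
Your proposal is correct and follows essentially the same route as the paper: your Step~1 invariants are exactly the paper's Lemma~\ref{lem:auction:raise}, and the step count rests, as in the paper, on the fact that while the loop runs there is always a house that has never been bid on and so still has price $0$, which (since $0\leq c\leq C$) forces the price of the chosen house to be at most $C$ just before every bid and hence gives at most $C/\eps+1$ bids per house, after which the operation count and the $\eps$-optimality via Lemma~\ref{lem:CS} are as in the paper. The only wobble is in your Step~2: the reason a house priced above $C$ is never selected is precisely this zero-priced unassigned house (an option of value at most $C$ for every bidder), not the ``kicked-off customer'' reasoning in your parenthesis, and once this is stated the detour through a $2C$ bound on the prices is unnecessary --- one simply counts raises, each of size at least $\eps$, using that the price before any raise is at most $C$.
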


\begin{remark} Note that the computational complexity of this algorithm
  is very \emph{bad}.  Indeed, if $\eps = 10^{-k}$ and if $C=1$, the
  number of steps in the worst-case complexity is $10^k N^2$. It would
  be highly desirable to replace the factor $1/\eps$ by
  $\log(1/\eps)$. In the next paragraph, we see how this can be
  achieved using a scaling technique.
\end{remark}

The proof of Theorem~\ref{th:steps-auction} relies on the following
lemma, whose proof is straightforward.

\begin{lemma} \label{lem:auction:raise}
Over the course of the auction algorithm,
\begin{itemize}
\item[(i)] the set of selected ``houses'' $\sigma(S)$ is increasing
           w.r.t inclusion;
\item[(ii)] $(\psi, \sigma)$ always satisfy the $\eps$-complementary slackness
  condition ;
\item[(iii)] the price increments are by at least $\eps$.
\end{itemize}
\end{lemma}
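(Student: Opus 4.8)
The plan is to argue by induction on the number of completed iterations of the while loop, each iteration being the processing of a single unassigned customer $x\in X\setminus S$. Fix such an iteration and adopt the notation of Algorithm~\ref{algo:auction-original}: let $y_0,y_1$ be the best and second-best houses for $x$ with respect to the current prices $\psi$, let
$$\delta := \bigl(c(x,y_1)+\psi(y_1)\bigr)-\bigl(c(x,y_0)+\psi(y_0)\bigr)+\eps$$
be the amount added to $\psi(y_0)$, and write $\psi'$, $\sigma'$, $S'$ for the prices, assignment and assigned set produced by this iteration. The first thing I would record is the elementary observation that $\psi'$ coincides with $\psi$ except in the single coordinate $y_0$, where $\psi'(y_0)=\psi(y_0)+\delta$, and that by definition of $y_1$ one has $\min_{y\in Y\setminus\{y_0\}}\bigl(c(x,y)+\psi(y)\bigr)=c(x,y_1)+\psi(y_1)$.

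Granting this, item~(iii) is immediate: since $y_0$ minimizes $y\mapsto c(x,y)+\psi(y)$ over all of $Y$, we have $c(x,y_1)+\psi(y_1)\ge c(x,y_0)+\psi(y_0)$, hence $\delta\ge\eps$; as $y_0$ is the only coordinate ever modified in an iteration, every price change over the run of the algorithm is an increase by at least $\eps$. For item~(i) I would split into two cases. If $y_0\notin\sigma(S)$, then no customer is displaced, $S'=S\cup\{x\}$, and $\sigma'(S')=\sigma(S)\cup\{y_0\}\supseteq\sigma(S)$. If $y_0=\sigma(x')$ for some $x'\in S$, then $S'=(S\setminus\{x'\})\cup\{x\}$, $\sigma'$ agrees with $\sigma$ on $S\setminus\{x'\}$ and $\sigma'(x)=y_0$; since $\sigma$ is injective on $S$, this gives $\sigma'(S')=\bigl(\sigma(S)\setminus\{y_0\}\bigr)\cup\{y_0\}=\sigma(S)$. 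In both cases $\sigma(S)$ does not shrink (and the same case analysis shows $\sigma'$ is injective on $S'$, so $(\sigma',S')$ is again a partial assignment), which proves~(i).

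The substance of the lemma is item~(ii), proved by induction with the vacuous base case $S=\emptyset$. Assume $(\sigma,\psi)$ satisfies \eqref{CS}; I must verify that $(\sigma',\psi')$ does. For the newly assigned customer $x$, the definition of $\delta$ gives
$$c(x,y_0)+\psi'(y_0)=c(x,y_1)+\psi(y_1)+\eps,$$
while, since $\psi'(y_0)\ge\psi(y_0)$ and $\psi'=\psi$ away from $y_0$,
$$\min_{y\in Y}\bigl(c(x,y)+\psi'(y)\bigr)=\min\Bigl(c(x,y_0)+\psi'(y_0),\ c(x,y_1)+\psi(y_1)\Bigr)=c(x,y_1)+\psi(y_1),$$
so that $c(x,\sigma'(x))+\psi'(\sigma'(x))=\min_{y\in Y}\bigl(c(x,y)+\psi'(y)\bigr)+\eps$ and \eqref{CS} holds (with equality) for $x$. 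For any other $x''\in S'\setminus\{x\}$ one has $x''\in S$ and $\sigma'(x'')=\sigma(x'')$; moreover $\sigma(x'')\ne y_0$, because the only customer that could have held $y_0$ before the iteration, namely $x'$, has just been removed from $S$ (and if $y_0\notin\sigma(S)$ there was no such customer at all). Hence the left-hand side of \eqref{CS} for $x''$ is unchanged, while its right-hand side $\min_{y\in Y}\bigl(c(x'',y)+\psi'(y)\bigr)$ can only have increased because $\psi'\ge\psi$ pointwise; so \eqref{CS} is preserved for $x''$ as well, completing the induction.

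I expect the only point requiring any care is this last step of~(ii): raising $\psi(y_0)$ would a priori break complementary slackness for whichever customer previously held $y_0$, and the sole reason it does not is that this customer has just been evicted from $S$; everything else is bookkeeping with the case distinction $y_0\in\sigma(S)$ versus $y_0\notin\sigma(S)$.
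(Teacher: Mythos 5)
Your proof is correct and is exactly the direct inductive verification the paper has in mind when it declares this lemma's proof ``straightforward'' and omits it: one iteration changes only the coordinate $\psi(y_0)$, by $\delta\ge\eps$, and the case split on whether $y_0$ was previously held, together with the eviction of its former holder, is precisely what preserves \eqref{CS}. Nothing is missing; in particular you correctly isolate the only delicate point, namely that raising $\psi(y_0)$ cannot violate \eqref{CS} for anyone still in $S$ because the unique possible victim has just been removed.
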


\begin{proof}[Proof of theorem~\ref{th:steps-auction}]
  Suppose that after $i$ steps the algorithm hasn't stopped. Then,
  there exists a point $y_0$ in $Y$ that does not belong to $\sigma(S)$,
  i.e whose price hasn't increased since the beginning of the
  algorithm, i.e. $\psi(y_0) = 0$.  
 
 Suppose now that there exists $y_1$ whose price has been raised more than $n> C/\epsilon+1$. Then, by Lemma~\ref{lem:auction:raise}.(iii), one has for every $x \in X$ 
$$
\psi_i(y_0)+c(x,y_0) = c(x,y_0) \leq C < n \epsilon -\epsilon \leq \psi_i(y_1) -\epsilon\leq \psi_i(y_1) + c(x,y_1) -\epsilon
$$
This contradicts the fact that  $y_1$ was chosen at a former step. From this, we deduce that there is
  no point in $Y$ whose price has been raised $n$ times with $n >C/\epsilon+1$.
With at most $C/\eps+1$ price rise
  for each of the $N$ objects, and every step costing $N$ (finding the
  minimum among $N$) we deduce the desired bound.
\end{proof}

\subsubsection{Auction algorithm with $\eps$-scaling}
Following \cite{edmonds1972theoretical}, Bertsekas and Eckstein
\cite{bertsekas1988dual} modified
Algorithm~\ref{algo:auction-original} using a scaling technique which
improves dramatically both the running time and worst-case complexity
of the algorithm. Note that similar scaling techniques have also been
applied to improve other algorithms for the assignment problem, see
e.g. \cite{goldberg1987efficient,gabow1989faster}.

The modified algorithm can be described as follows: define $\psi_0 =
0$ and recursively, let $\psi_{k+1}$ be the prices returned by
$\mathrm{Auction}(\psi_k, \eps_k)$, where $\eps_k =
\frac{C}{2^k}$. One stops when $\eps_k < \eps$, so that the number of
runs of the unscaled auction algorithm is bounded by
$\log_2(C/\eps)$. Bounding carefully the complexity of each auction
run, one gets:

\begin{algorithm}[h!]
\caption{Bertsekas-Eckstein  auction algorithm with $\eps$-scaling \label{algo:auction-scaling}}
\begin{algorithmic}
  \Function{AuctionScaling}{$c,\eta$}
  \State{$\eps \gets C,~\psi \gets 0$}
  \While{$\eps > \eta$}
  \State{$\sigma,\psi\gets \Call{Auction}{c,\eps,\psi}$}
  \State{$\eps \gets \eps/2$}
  \EndWhile
  \State{\Return $\sigma,\psi$}
  \EndFunction
  \end{algorithmic}
\end{algorithm}
\begin{theorem} The  auction algorithm with scaling constructs an $\eta$-optimal assignement
  in time $\BigO(N^3 \log(C/\eta))$. \label{th:compauction}
\end{theorem}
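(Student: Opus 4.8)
The plan is to split the complexity bound into two independent factors — the number of calls to the auction subroutine made by Algorithm~\ref{algo:auction-scaling}, and the cost of each such call — and multiply them.

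\emph{Counting the calls.} The outer loop halves $\eps$ starting from $C$ and stops as soon as $\eps\le\eta$, so it issues $m=\lceil\log_2(C/\eta)\rceil$ calls with parameters $\eps_0=C,\ \eps_1=C/2,\ \dots,\ \eps_{m-1}$, the last of which satisfies $\eps_{m-1}\le 2\eta$. By the stopping rule of the auction subroutine (Algorithm~\ref{algo:auction-original}) each call returns a \emph{complete} bijection $\sigma$, and by Lemma~\ref{lem:auction:raise}.(ii) the returned pair $(\sigma,\psi)$ satisfies \eqref{CS} for the parameter of that call; in particular the final output satisfies \eqref{CS} with tolerance $\le 2\eta$, so by Lemma~\ref{lem:CS} the returned $\sigma$ is $2\eta$-optimal. (Running the loop with target $\eta/2$ gives exact $\eta$-optimality without changing the asymptotics, so I will not dwell on this constant.)

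\emph{Cost of one call.} For the first call, $\psi=0$ and $\eps_0=C$, so Theorem~\ref{th:steps-auction} already bounds it by $N^2(C/\eps_0+1)=2N^2$ operations. For a warm-started call — say the $k$-th, with $k\ge 1$, parameter $\eps:=\eps_k$, started from the prices $\psi:=\psi_k$ and the complete bijection $\bar\sigma:=\sigma_{k-1}$ of the previous call — I would use that $(\psi,\bar\sigma)$ satisfies \eqref{CS} with the \emph{smaller} tolerance $\eps_{k-1}=2\eps$. The key claim is then that during the $k$-th call the price of every object increases by at most $3N\eps$ above its starting value $\psi(\cdot)$. Granting this, and since prices only increase, in steps of at least $\eps$ (Lemma~\ref{lem:auction:raise}.(iii)), each object's price is raised at most $3N$ times, so the call performs at most $3N^2$ bids; each bid costs $\mathrm{O}(N)$ (scanning $Y$ for the two cheapest objects of the bidder), for a total of $\mathrm{O}(N^3)$ operations. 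Multiplying by $m=\mathrm{O}(\log(C/\eta))$ gives the announced bound $\mathrm{O}(N^3\log(C/\eta))$.

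\emph{The price-growth claim, which is the heart of the matter.} Fix an instant during the $k$-th call, write $p$ and $\sigma$ for the current prices and partial assignment, and set $\Delta_y=p(y)-\psi(y)\ge 0$. If $y$ is currently assigned, $\sigma(x)=y$, then writing $y'=\bar\sigma(x)$ and adding the running $\eps$-complementary slackness of $(p,\sigma)$ at $x$ to the warm-start $2\eps$-complementary slackness of $(\psi,\bar\sigma)$ at $x$ — both evaluated against the competitor $y'$ — the cost terms cancel and one is left with
$$\Delta_y \le \Delta_{y'}+3\eps,\qquad y'=\bar\sigma\bigl(\sigma^{-1}(y)\bigr).$$
I would then iterate this along the alternating walk $y_0=y,\ y_{i+1}=\bar\sigma(\sigma^{-1}(y_i))$ in the union of the two matchings $\sigma$ and $\bar\sigma$: as long as the run has not ended there is an unassigned object (the partial assignment is injective and $\card(X)=\card(Y)$), such an object has $\Delta=0$, and the walk reaches one within fewer than $N$ steps, so the telescoped inequality gives $\Delta_{y_0}<3N\eps$. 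The hard part — and where I expect to spend most of the effort — is to show that this walk cannot close up into a cycle of \emph{assigned} objects before reaching an unassigned one; this must be ruled out by examining the chronological order in which the bids along the chain were placed, since traversing such a cycle would require some bidder to have switched its preference away from its warm-start object and back, contradicting the monotonicity of prices. This is precisely the bookkeeping carried out in the $\eps$-scaling analysis of Bertsekas–Eckstein~\cite{bertsekas1988dual}, in the spirit of the scaling technique of~\cite{edmonds1972theoretical}. With the claim established, the theorem follows by the multiplication above.
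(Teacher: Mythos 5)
Your outline — number of scaling phases times an $\mathrm{O}(N^3)$ bound per warm-started phase, the latter obtained by telescoping the sum of the running $\eps$-CS and the warm-start $2\eps$-CS along an alternating $\sigma/\bar\sigma$ walk down to an object whose price has not moved — is exactly the paper's argument (Lemma~\ref{lem:scaling}). But the step you defer is not mere bookkeeping, and in fact the claim you build the count on is false as stated. The assertion ``during the $k$-th call every price increases by at most $3N\eps$'' can fail at the very first bid: take $N=3$, warm-start prices $\psi\equiv 0$ and $\bar\sigma=\mathrm{id}$, with $c(x_1,y_1)=0$, $c(x_1,y_2)=c(x_1,y_3)=100\eps$ and all other costs $0$; then $(\psi,\bar\sigma)$ satisfies $2\eps$-CS, yet when $x_1$ bids first on $y_1$ the update of Algorithm~\ref{algo:auction-original} raises $\psi(y_1)$ by $101\eps>3N\eps$ in a single step. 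So no amount of chronological analysis of the bids can rescue the post-bid price bound; your worry about the walk closing into a cycle is also well founded (two customers currently holding each other's warm-start houses give a genuine cycle of the map $y\mapsto\bar\sigma(\sigma^{-1}(y))$, and on such a cycle the CS inequalities alone give nothing). The same criticism applies verbatim to the paper's Lemma~\ref{lem:scaling}, whose closing remark that the path ``is simple'' is unjustified, so your instinct about where the difficulty sits is correct — but your proposal, like the paper, does not actually close it.

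The fix is to bound the \emph{pre-bid} price rather than the price at an arbitrary instant, and to anchor the walk at the bidder rather than at an arbitrary assigned object; this is what the counting actually needs, since each bid raises a price by at least $\eps$ (Lemma~\ref{lem:auction:raise}), so ``$\Delta_y\leq 3N\eps$ whenever $y$ is about to receive a bid'' caps the number of bids on $y$ by $3N+1$ even if the last bid overshoots. Concretely: at the instant a bid lands on $y$, the bidder $x$ is unassigned and $y$ minimizes $c(x,\cdot)+p(\cdot)$; comparing with $u_1:=\bar\sigma(x)$ and using the warm-start $2\eps$-CS at $x$ gives $\Delta_y\leq\Delta_{u_1}+2\eps$; if $u_1$ is currently owned by $v_1$, the running $\eps$-CS at $v_1$ against $u_2:=\bar\sigma(v_1)$ plus the warm-start $2\eps$-CS at $v_1$ gives $\Delta_{u_1}\leq\Delta_{u_2}+3\eps$, and so on. The persons $x,v_1,v_2,\dots$ along this walk are automatically pairwise distinct: each $v_i$ is the current owner of $u_i$, owners hold exactly one object and $\bar\sigma$ is injective, so a repetition would propagate back and force the \emph{unassigned} $x$ to coincide with an owner. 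Hence the walk terminates within $N$ steps at an object never bid in this phase, where $\Delta=0$, giving $\Delta_y\leq 3N\eps$ with no reference to the order in which earlier bids were placed. With that lemma in place, your multiplication of $\mathrm{O}(N^2)$ bids per phase, $\mathrm{O}(N)$ work per bid and $\mathrm{O}(\log(C/\eta))$ phases goes through.
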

\begin{lemma} Consider a bijection $\sigma_0:X \to Y$, an injective map
$\sigma: S\subseteq X\to Y$, and two price vector
  $\psi_0,\psi:Y\to\Rsp$. Assume that $(\sigma_0, \psi_0)$ and
  $(\sigma,\psi)$ satisfy respectively the $\lambda$- and
  $\eps$-complementary slackness conditions, with $\eps \leq
  \lambda$. Moreover, suppose that $S \neq X$ and that $\psi_0$ and
  $\psi$ agree on the set $Y \setminus \sigma(S)$. Then,
\label{lem:scaling}
$$ \forall y \in Y, ~\psi(y) \leq \psi_0(y) + N(\lambda+\eps) $$
\end{lemma}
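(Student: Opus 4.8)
The plan is to bound the price $\psi(y)$ of an arbitrary house $y\in Y$ by tracing how much it could have been raised during the current auction run (the one with tolerance $\eps$), relative to its starting value $\psi_0(y)$. Two cases naturally present themselves. First, if $y\notin\sigma(S)$, then $y$ has never been selected during the current run, so its price has not changed; by hypothesis $\psi$ and $\psi_0$ agree on $Y\setminus\sigma(S)$, and the inequality is trivial. The interesting case is $y\in\sigma(S)$, i.e. $y=\sigma(x)$ for some $x\in S$. Since $S\neq X$, there is an unassigned point $x_*\in X\setminus S$. Because the auction algorithm maintains $\eps$-complementary slackness throughout (Lemma~\ref{lem:auction:raise}.(ii)), and any unassigned point is ``competing'' for every house, I expect to be able to control $\psi(y)$ from the fact that $x_*$ was never willing to bid for $y$.

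The key step is as follows. Since $x_*$ is unassigned, it has not (yet) been processed to completion, but more usefully: consider the house $y'=\sigma_0(x_*)$ assigned to $x_*$ under the \emph{previous} bijection $\sigma_0$. If $y'\notin\sigma(S)$, then $\psi(y')=\psi_0(y')$, and the $\lambda$-complementary slackness of $(\sigma_0,\psi_0)$ gives
$$ c(x_*,y') + \psi_0(y') \leq c(x_*,y) + \psi_0(y) + \lambda $$
for the particular house $y$, i.e. $c(x_*,y')+\psi(y') \leq c(x_*,y)+\psi_0(y)+\lambda$. On the other hand, the reason $x_*$ remains unassigned rather than having bid for $y$ and grabbed it must be that $y$ is not (nearly) optimal for $x_*$ at the current prices — more precisely, one uses that at the moment $y$ was last raised to $\psi(y)$ by some bidder $x$, the house $y$ satisfied $c(x,y)+\psi^{\mathrm{old}}(y) \le c(x,z)+\psi(z)$ for all $z$ (it was the minimizer for $x$), and combine this with the $\eps$-CS bound $\psi(y)\le\psi^{\mathrm{old}}(y)+(\text{bid})+\eps$. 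Iterating: each of the at most $N$ houses can be raised, and between consecutive raises the ``price gaps'' $c(x,\cdot)+\psi(\cdot)$ can shift by at most $\lambda+\eps$ because of the two slackness conditions; chaining these $N$ estimates along a sequence of houses connecting $y$ back to a house whose price equals its $\psi_0$-value (which exists since $S\ne X$ forces $\sigma(S)\subsetneq Y$) yields $\psi(y)\le\psi_0(y)+N(\lambda+\eps)$.

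Concretely, I would argue: let $y_0\in Y\setminus\sigma(S)$ (nonempty since $\sigma$ is injective and $S\ne X$), so $\psi(y_0)=\psi_0(y_0)$. For the target house $y$, build a chain $y = z_0, z_1, \dots, z_m = y_0$ of distinct houses ($m\le N-1$) and points $x_1,\dots,x_m$ such that $x_k$ bid for $z_{k-1}$ at some stage and $z_k$ was (near-)optimal for $x_k$ at that stage; existence of such a chain follows from the bookkeeping of who displaced whom, terminating at an unselected house. Along each link, $\eps$-CS at the bidding time plus $\lambda$-CS of $(\sigma_0,\psi_0)$ gives $\psi(z_{k-1}) \le \psi(z_k) + (\lambda+\eps)$ after also using that prices only increase (Lemma~\ref{lem:auction:raise}.(iii) and the monotonicity of $\sigma(S)$). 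Summing the $m\le N$ links: $\psi(y)\le\psi(y_0)+N(\lambda+\eps)=\psi_0(y_0)+N(\lambda+\eps)\le\psi_0(y)+N(\lambda+\eps)$, the last step using $\psi_0(y_0)\le\psi_0(y)+N(\lambda+\eps)$ if needed, or more simply by re-centering the chain to land on the specific $y$.

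The main obstacle I anticipate is making the ``chain'' argument rigorous: precisely identifying the sequence of houses and bidders that links an arbitrary selected house back to an unselected one, and verifying that each link loses at most $\lambda+\eps$ in the relevant price-gap quantity. This requires carefully combining (a) the invariant that $(\sigma,\psi)$ satisfies $\eps$-CS at \emph{every} step, not just at the end, (b) the invariant that $(\sigma_0,\psi_0)$ satisfies $\lambda$-CS, and (c) the monotonicity facts from Lemma~\ref{lem:auction:raise}. Once the chain is set up, the arithmetic (at most $N$ links, each costing $\lambda+\eps$) is immediate.
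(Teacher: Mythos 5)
Your overall instinct — a telescoping argument along a chain of houses that ends at some $y_K\in Y\setminus\sigma(S)$, where $\psi=\psi_0$ — is the same as the paper's, but the content of the proof is in how the links are built, and your version does not work as described. The lemma is a purely static statement about the two pairs $(\sigma_0,\psi_0)$ and $(\sigma,\psi)$; the paper's chain uses no history of the auction run. Starting from the target house $y_0:=y$, it sets $x_k:=\sigma^{-1}(y_k)$ and $y_{k+1}:=\sigma_0(x_k)$, so each link is a single customer viewed through the two assignments. For such a link, the $\eps$-CS inequality of $(\sigma,\psi)$ at $x_k$ tested against the competitor $y_{k+1}$, added to the $\lambda$-CS inequality of $(\sigma_0,\psi_0)$ at $x_k$ tested against the competitor $y_k$, involves the same two cost values $c(x_k,y_k)$ and $c(x_k,y_{k+1})$; the costs cancel and one gets $\psi(y_k)-\psi_0(y_k)\le \psi(y_{k+1})-\psi_0(y_{k+1})+\lambda+\eps$. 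It is the difference $\psi-\psi_0$ that telescopes, and the chain stops at the first $y_K\notin\sigma(S)$, where this difference vanishes.

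Your per-link estimate $\psi(z_{k-1})\le\psi(z_k)+\lambda+\eps$ cannot be right: the CS conditions only compare values of $c(x,\cdot)+\psi(\cdot)$, so the prices of two houses may legitimately differ by amounts of the order of the oscillation of $c$, far larger than $\lambda+\eps$. Summing such bounds can also never reinstate the $\psi_0(y)$ on the right-hand side, and indeed your closing step, invoking $\psi_0(y_0)\le\psi_0(y)+N(\lambda+\eps)$ ``if needed'', assumes an instance of the very inequality being proved. Moreover, your chain is built from the execution history of the algorithm (who bid for what, prices only increase, monotonicity of $\sigma(S)$); none of this appears among the hypotheses of the lemma, and the ``bookkeeping'' existence, distinctness and termination of the chain are asserted rather than proved. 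Your worry about termination is in fact well placed — even with the paper's static chain one must argue that the walk eventually leaves $\sigma(S)$ and that the path is simple, a point the paper itself treats very tersely — but the key missing idea in your proposal, without which no bookkeeping can help, is the pairing of the two complementary-slackness inequalities through a common customer so that the cost terms cancel and the quantity $\psi-\psi_0$, not $\psi$ alone, telescopes along the chain.
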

\begin{proof}
Consider a point $y_0$ in $Y$, and define $y_{k+1}$ as follows: (a) if
$y_k \in \sigma(S)$, let $x_k := \sigma^{-1}(y_k)$, and $y_{k+1} =
\sigma_0(y_k)$ (b) if $y_k \not\in \sigma(S)$, then stop.  The
$\eps$-complementary slackness for $(\psi, \sigma)$ at $(x_k, y_k)$
implies
\begin{equation}
c(x_k, y_k) + \psi(y_k) \leq \min_{y\in Y} c(x_k, y) + \psi(y) +
\eps 
\leq \psi(y_{k+1}) + c(x_k, y_{k+1})  + \eps.
\label{eq:cs:1}
\end{equation}
Similarly, $\lambda$-CS for $(\psi_0, \sigma_0)$ at $(x_k, y_{k+1})$ with
$y = y_k$ implies
\begin{equation}
\label{eq:cs:2}
\psi_0(y_{k+1}) +  c(x_k, y_{k+1})  \leq c(x_k, y_{k}) + \psi_0(y_{k})
+ \lambda.
\end{equation}
Summing the inequalities \eqref{eq:cs:1} and \eqref{eq:cs:2} for $k=0$
to $k = K -1$ gives
$$ \psi_0(y_K) - \psi_0(y_0) + \psi(y_0) - \psi(y_K) \leq
K\times(\lambda + \eps)$$ By assumption, the point $y_K$ does not
belong to $\sigma(S)$ and $\psi(y_K) = \psi_0(y_K)$. This gives us
$\psi(y_0) \leq \psi_0(y_0) + K (\lambda + \eps)$, and we conclude by
remarking that the path $(y_0,x_0,\hdots,y_K)$ is simple, i.e. $K
\leq N$.
\end{proof}

\begin{proof}[Proof of Theorem~\ref{th:compauction}]
Lemma~\ref{lem:scaling} implies that during the run $k+1$  of the (unscaled) auction
algorithm, the price vector never grows larger than $\psi_0 + (\eps +
\lambda) N = \psi_0 + 3 \eps N$, with $\lambda := \eps_k$ and $\eps :=\frac{1}{2}\lambda$.
Since at each step, the price grows by at least $\eps$, there are at
most $3 N^2$ steps in the run $k$. Taking into account the cost of
finding $\min_{y\in Y} c(x,y)+\psi(y)$ at each step, the computational
complexity of each auction run is therefore $\BigO(N^3)$. Since the
the number of runs is $\BigO(\log(C/\eta))$, we get the claimed
estimate.
\end{proof}

\subsubsection{Implementations of auction's algorithm}
\label{subsec:auction-implementation}
One of the most expensive phase of Auction's algorithm is the
computation of the bid. Computing the bid for a certain customer $x\in
X$ requires one to browse through all the houses $y\in Y$ in order to
determine the smallest values of $c(x,y) + \psi(y)$, $y\in Y$. The
cost of determining the bid accounts for a factor $N=\Card(Y)$ in the
computational complexity of auction's algorithm in
Theorems~\ref{th:steps-auction} and Theorem~\ref{th:compauction}. We
mention two possible ways to overcome this difficulty.

\subsubsection*{Exploiting the geometry of the cost}
The first idea is to exploit the geometry of the space in order to
reduce the cost of finding the minimum of $c(x,y)+\psi(y)$, $y\in Y$,
which accounts for a cost of $N$ in the complexity analysis of
Theorem~\ref{th:compauction}.  The computation of this minimimum is
similar to the \emph{nearest neighbor problem} in computational
geometry, and nearest neighbors can sometimes be found in $\log(N)$
time, after some preprocessing. For instance, in the case of $c(x,y)
= \nr{x-y}^2$ on $\Rsp^d$, and for $\psi\geq0$ one can rewrite
  $$ c(x,y) + \psi(y) = \nr{x-y}^2 + (\sqrt{\psi(y)} - 0)^2 =
\nr{(x,0) - (y, \sqrt{\psi(y)})}^2, $$ thus showing that finding the
smallest value of $c(x,y)+\psi(y)$ over $Y$ amounts to finding the
closest point to $(x,0)$ in the set $\{(y,\sqrt{\psi(y)}) \mid y\in
Y\}\subseteq \Rsp^{d+1}$. This idea and variants thereof leads to
practical and theoretical improvements, both for auction's algorithm
and for other algorithms for the assignment problem. We refer to
\cite{kerber2017geometry,agarwal2014approximation} and references
therein.

\subsubsection*{Exploiting the graph structure of solutions} When the cost
satisfies the Twist conditition \eqref{eq:twist} on $\Rsp^d$ and the
source measure is absolutely continuous, Theorem~\ref{th:brenier}
guarantees that the solution to the Kantorovich's problem is
concentrated on a \emph{graph}, i.e. $\dim(\spt(\gamma)) = d$ while a
priori, the dimension of $\spt(\gamma) \subseteq \Rsp^{2d}$ could be
as high as $2d$. It is natural, in view of the stability of the
optimal transport plans (Theorem~\ref{th:stab}), to hope that this
feature remains true at the discrete level, meaning that one expects
that the support of the discrete solution concentrates on a lower
dimensional graph $G$. One could then try to use this phenomenom to
prune the search space, i.e. taking the minimum in $c(x,y) + \psi(y)$
not over the whole space but over points $y$ such that $(x,y)$ lie
``close'' to $G$. In practice, $G$ is unknown but can estimated in a
coarse-to-fine way. This idea or variants thereof has been used as a
heuristic in several works
\cite{merigot2011multiscale,oberman2015efficient,benamou2015iterative},
and has been analyzed more precisely by Bernhard Schmitzer
\cite{schmitzer2016sparse,schmitzer2019stabilized}.

\subsection{Discrete optimal transport via entropic regularization}
\label{sec:dot-entropic}
We now turn to another method to construct approximate solutions to
optimal transport problems between probability measures on two finite
sets $X$ and $Y$. Here, the measures are not supposed uniform any
more, and we set
$$ \mu = \sum_{x\in X} \mu_x \delta_x \qquad \nu = \sum_{y \in Y}
\nu_y \delta_y.$$ For simplicity, we assume throughout that all the
points in $X$ and $Y$ carry some mass, that is $\min(\min_{x \in X}
\mu_x, \min_{y\in Y}\nu_y) > 0.$ As before, we conflate a transport plan
$\gamma\in \Gamma(\mu,\nu)$ with its density $(\gamma_{x,y})_{(x,y)\in
  X\times Y}$.

\subsubsection{Entropic regularization problem}

We start from the primal formulation of the optimal transport problem,
but instead of imposing the non-negativity constraints
$\gamma_{x,y}\geq 0$, we add a term to the transport cost, which
penalizes (minus) the entropy of the transport plan and acts as a
barrier for the non-negativity constraint:
\begin{equation}\label{eq:entropy}
  \begin{aligned}
    &\qquad H(\gamma) = \sum_{x\in X,y\in Y} h(\gamma_{x,y}), \\
    &\hbox{ where }
    h(t)= \begin{cases}
      t (\log(t) - 1) &\mbox{if } t>0\\
0  &\mbox{if } t=0\\
+\infty &\mbox{if } t\leq 0\\
\end{cases}
  \end{aligned}
\end{equation}
The regularized problem is the following minimization problem:
\begin{equation}\label{eq:regularizationPb}
  \begin{aligned}
    &\qquad \KPeta :=\min_{\gamma \in \overline{\Gamma}(\mu,\nu)} \sca{c}{\gamma}  + \eta H(\gamma),\\
    &\hbox{ where }  \overline{\Gamma}(\mu,\nu)=\left\{ \gamma=(\gamma_{x,y}) \mid\
\sum_{y\in Y}\gamma_{x,y}=\mu_x,\
\sum_{x\in X} \gamma_{x,y} = \nu_y
\right\}.
\end{aligned}
\end{equation}

\begin{theorem}\label{th:entropic:existence}
  The problem $\KPeta$ has a unique solution $\gamma$, which belongs
  to $\Gamma(\mu,\nu)$. Moreover, if $\min_{x\in X} \mu_x > 0$ and
  $\min_{y\in Y} \mu_y > 0$, then
$$ \forall (x,y) \in X\times Y,~ \gamma_{x,y} >0.
$$ 
\end{theorem}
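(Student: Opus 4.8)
The plan is to proceed in three stages: existence and uniqueness via direct method and strict convexity, positivity of the optimal plan via a perturbation/first-order argument, and finally the observation that a plan of finite entropy automatically satisfies the nonnegativity constraints and hence lies in $\Gamma(\mu,\nu)$.

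First I would address existence. The feasible set $\overline{\Gamma}(\mu,\nu)$ is an affine subspace of $\Rsp^{X\times Y}$ intersected with the effective domain of $H$, which forces $\gamma_{x,y}\geq 0$; combined with the marginal constraints this makes the admissible set (where the objective is finite) a compact convex subset of the simplex-like polytope $\Gamma(\mu,\nu)$. The objective $\gamma\mapsto \sca{c}{\gamma}+\eta H(\gamma)$ is lower semicontinuous (each $h$ is continuous on $[0,\infty)$ with the convention $h(0)=0$, and $+\infty$ elsewhere) and the admissible set is nonempty (it contains $\mu\otimes\nu$, for which $H$ is finite since all masses are finite). Hence a minimizer exists by compactness. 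For uniqueness, I would note that $h$ is strictly convex on $(0,\infty)$ (as $h''(t)=1/t>0$), so $H$ is strictly convex on its domain, the linear term $\sca{c}{\gamma}$ does not affect strict convexity, and the constraint set is convex; therefore the minimizer is unique.

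The main obstacle is the positivity statement $\gamma_{x,y}>0$ for all $(x,y)$, under the hypothesis $\min_x\mu_x>0$ and $\min_y\nu_y>0$. Here the key point is that $h$ has infinite slope at $0^+$: $h'(t)=\log t\to-\infty$ as $t\to 0^+$. Suppose for contradiction that $\gamma_{x_0,y_0}=0$ for some pair. Since $\mu_{x_0}>0$, the row $x_0$ must have some entry $\gamma_{x_0,y_1}>0$, and since $\nu_{y_0}>0$, the column $y_0$ must have some entry $\gamma_{x_1,y_0}>0$. I would then consider the perturbation that adds $\eps$ to $\gamma_{x_0,y_0}$ and to $\gamma_{x_1,y_1}$, and subtracts $\eps$ from $\gamma_{x_0,y_1}$ and $\gamma_{x_1,y_0}$; this keeps all row and column sums unchanged, so the perturbed matrix stays in $\overline{\Gamma}(\mu,\nu)$ for small $\eps>0$. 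The change in the objective is bounded above by $\eps\cdot O(1)$ coming from the linear term $\sca{c}{\gamma}$ and from the Lipschitz variation of $h$ near the positive entries, \emph{minus} a term $\eta(h(\eps)-h(0)) = \eta\,\eps(\log\eps-1)$ coming from the entry that was raised from $0$. Since $\eps(\log\eps-1)\to 0$ but its derivative $\log\eps\to-\infty$, this negative contribution dominates for $\eps$ small, so the objective strictly decreases, contradicting optimality. Hence no entry can vanish.

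Finally, once positivity (or even just finiteness of $H(\gamma)$) is established, every $\gamma_{x,y}\geq 0$, so $\gamma$ is genuinely a transport plan, i.e. $\gamma\in\Gamma(\mu,\nu)$; this is immediate from the definition of the domain of $h$. I expect the existence and uniqueness parts to be routine applications of the direct method and strict convexity, while the delicate step is making the perturbation argument for positivity rigorous — in particular checking that the admissible perturbation exists (which uses that the relevant row and column each contain a strictly positive entry, itself a consequence of $\mu_{x_0},\nu_{y_0}>0$) and that the $\eta\,\eps\log\eps$ term genuinely beats the $O(\eps)$ terms.
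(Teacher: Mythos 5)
Your proposal is correct, and it reaches the conclusion by a genuinely different route than the paper on the key point, the positivity of the optimal plan. The paper perturbs globally, setting $\gamma^\eps=(1-\eps)\gamma^*+\eps\,\mu\otimes\nu$, which fills \emph{all} vanishing entries simultaneously; convexity of $h$ controls the positive entries, the hypothesis $\min_x\mu_x,\min_y\nu_y>0$ makes the new entries of size at least $C\eps$, and the resulting $Cn\,\eps\log\eps+O(\eps)$ term forces $n=\Card\{\gamma^*_{x,y}=0\}=0$. You instead use a local four-entry cycle perturbation ($+\eps$ at $(x_0,y_0)$ and $(x_1,y_1)$, $-\eps$ at $(x_0,y_1)$ and $(x_1,y_0)$), which preserves the marginals and exploits the infinite slope of $h$ at $0^+$ at the single vanishing entry; the hypothesis on $\mu,\nu$ enters only to guarantee the row $x_0$ and column $y_0$ contain positive entries. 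Both arguments are standard and correct; the paper's needs no case analysis on the auxiliary entries, while yours is more local and shows the mechanism (infinite marginal cost of a zero entry) more transparently. One small point in your version: the entry $(x_1,y_1)$ that you increase need not be positive, so the ``Lipschitz variation near positive entries'' bound does not apply to it; but if $\gamma_{x_1,y_1}=0$ its contribution is $\eta\,\eps(\log\eps-1)\le 0$, which only reinforces the decrease, so the estimate survives — worth stating explicitly. Your uniqueness argument also differs slightly: you use strict convexity of $h$ on $[0,\infty)$ (which indeed follows from strict convexity on $(0,\infty)$ plus convexity up to the endpoint, a half-line you should spell out), whereas the paper invokes $1$-strong convexity of $H$ on the positive orthant together with the positivity of the solution set; your route is a bit more elementary and does not rely on the positivity statement.
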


\begin{lemma}\label{lemma:stronglyconvex}$H:\gamma \in (\Rsp_+^*)^{X\times Y} \mapsto \sum_{x,y}  h(\gamma_{x,y})$ is $1$-strongly convex. 
\end{lemma}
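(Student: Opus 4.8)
The plan is to reduce the statement to a one-dimensional computation. Recall that a $\Class^2$ (indeed, any continuous) function $f$ on a convex set $D$ of a Euclidean space is $1$-strongly convex exactly when $f-\tfrac12\nr{\cdot}^2$ is convex on $D$. Since
$$ H(\gamma)-\tfrac12\nr{\gamma}^2=\sum_{x\in X,y\in Y}\bigl(h(\gamma_{x,y})-\tfrac12\gamma_{x,y}^2\bigr), $$
and a sum of convex functions of the individual coordinates is convex, it suffices to check that the scalar function $g(t):=h(t)-\tfrac12 t^2$ is convex on the range taken by each coordinate $\gamma_{x,y}$.

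Next I would carry out that scalar computation. From $h(t)=t(\log t-1)$ one gets $h'(t)=\log t$ and $h''(t)=1/t$, hence $g''(t)=\tfrac1t-1$, which is nonnegative precisely for $t\le 1$. This is where the constraint set enters: for any $\gamma\in\overline{\Gamma}(\mu,\nu)$ — in particular on $\overline{\Gamma}(\mu,\nu)\cap(\Rsp_+^*)^{X\times Y}$, which is the domain relevant to all later uses of the lemma — each coordinate satisfies $0<\gamma_{x,y}\le\sum_{y'\in Y}\gamma_{x,y'}=\mu_x\le 1$, the last inequality because $\mu$ is a probability measure. So every coordinate lies in $(0,1]$, on which $g$ is convex; hence $H-\tfrac12\nr{\cdot}^2$ is convex there and $H$ is $1$-strongly convex. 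Equivalently, one may argue through Hessians: $\nabla^2 H(\gamma)$ is the diagonal matrix with entries $1/\gamma_{x,y}\ge 1$, i.e. $\nabla^2 H(\gamma)\succeq\mathrm{Id}$, which is the infinitesimal form of $1$-strong convexity.

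The only point worth flagging is that the constant $1$ is genuinely tied to the bound $\gamma_{x,y}\le 1$: on the full open orthant the quantity $\tfrac1t-1$ is negative for $t>1$, so without the marginal constraints one obtains only strict convexity, not $1$-strong convexity. Since $H$ is always evaluated on the transportation polytope in what follows, making this restriction explicit at the start of the proof is harmless. No other step presents any difficulty; the argument is entirely a one-variable second-derivative check plus the elementary bound $\gamma_{x,y}\le\mu_x\le 1$.
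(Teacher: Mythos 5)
Your argument is correct and is essentially the paper's: the paper likewise observes that $\D^2 H(\gamma)$ is diagonal with entries $h''(\gamma_{x,y})=1/\gamma_{x,y}\geq 1$ because $\gamma_{x,y}\in\,]0,1]$. Your remark that the constant $1$ relies on the coordinates lying in $(0,1]$ (via the marginal constraints) is a point the paper uses implicitly without comment, so making it explicit is a harmless refinement rather than a different proof.
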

\begin{proof}
From $h''(t)=1/t$, one sees that the Hessian $\D^2 H(\gamma)$ is
diagonal with diagonal coefficients $1/\gamma_{x,y}\geq 1$ since
$\gamma_{x,y}\in ]0,1]$.
\end{proof}

\begin{proof}
 The regularized problem $\KPeta$ amounts to minimizing a continuous and coercive function over a closed
  convex set, thus showing existence. 
Let us denote by
  $\gamma^*$ a solution of $\KPeta$. Then, $\gamma^*$ has a finite
  entropy, so that it satisfies the constraint $\gamma^*_{x,y} \geq
  0$. This implies that $\gamma^*$ is a transport map between $\mu$
  and $\nu$.  We now prove by contradiction that the set $Z
  :=\{(x,y)\mid \gamma^*_{x,y}=0\}$ is empty. For this purpose, we
  define a new transport map $\gamma^\eps \in \Gamma(\mu,\nu)$ by $
  \gamma^\eps = (1-\eps)\gamma^*+ \eps \mu \otimes \nu$, and we give
  an upper bound on the energy of $\gamma^\eps$.  We first observe
  that by convexity of $h: r\mapsto r(\log r - 1)$, one has
  $$ h(\gamma^\eps_{x,y}) \leq (1-\eps) h(\gamma^*_{x,y}) + \eps
  h(\mu_x \nu_y) \leq h(\gamma^*_{x,y}) + O(\eps). $$We consider some
  $(x,y)\in Z$. Introducing $C =
  \min_{x,y} \mu_x\nu_y$, which is strictly positive by assumption, we
  have
\begin{align*}
  h(\gamma^\eps_{x,y}) = h(\eps \mu_x \nu_y) &= \mu_x\nu_y\eps(\log\eps + \log(\mu_x\mu_y)) - \mu_x\nu_y\eps \\
  &\leq C \eps \log\eps + O(\eps),
\end{align*}
Summing the two previous estimates over $Z$ and $(X\times Y)\setminus
Z$, and setting $n=\Card(Z)$, we get
$$ H(\gamma^\eps) \leq H(\gamma^*) + Cn \eps\log\eps + O(\eps). $$
Since in addition we have by linearity
$\sca{c}{\gamma^\eps}  \leq \sca{c}{\gamma^*} + O(\eps),$ we get 
$$ \sca{c}{\gamma^*} + H(\gamma^*) \leq \sca{c}{\gamma^\eps} +
H(\gamma^\eps) \leq \sca{c}{\gamma^*} + H(\gamma^*) + Cn \eps\log\eps
+ O(\eps),$$ where the lower bound comes from the optimality of
$\gamma^*$. Thus, $Cn \eps\log\eps + O(\eps)\geq 0$, which is possible
if and only if $n=\Card(Z)$ vanishes, implying the strict positivity of $\gamma^*$.

By continuity of the function minimized in $\KPeta$, the set of solutions $\gamma^*$ is closed and therefore included in $[\delta,+\infty)^{X \times Y}$ for some $\delta >0$. Therefore, by Lemma~\ref{lemma:stronglyconvex}, the regularized problem $\KPeta$ amounts to
minimizing a coercive and strictly convex function over a closed convex set, thus showing uniqueness of the solution.  
\end{proof}

\subsubsection{Dual formulation}
We start by deriving (formally) the dual problem and  first introduce the Lagragian of $\KPeta$
\begin{equation}
  \begin{aligned}
  L(\gamma,\varphi,\psi) := 
  \sum_{x,y} \gamma_{x,y} c(x,y) +\eta h(\gamma_{x,y}) &+
  \sum_{x\in X} \phi(x) \left(\mu_x - \sum_{y\in Y} \gamma_{x,y}\right)\\
  &+ \sum_{y\in Y} \psi(y) \left(\sum_{y\in Y} \gamma_{x,y} - \nu_y\right),
  \end{aligned}
\end{equation}
where $\phi:X\to\Rsp$ and $\psi:Y\to\Rsp$ are the Lagrange multipliers. Then,
$$
\KPeta =  \min_{\gamma} \sup_{\phi,\psi} L(\gamma,\varphi,\psi).
$$ As always, the dual problem is obtained by inverting the infimum
and the supremum. We also simplify slightly the expressions:
\begin{align}
  \sup_{\phi,\psi} \min_{\gamma}   L(\gamma,\varphi,\psi)
  = \sup_{\phi,\psi} \min_{\gamma}     &\sum_{x,y} \gamma_{x,y} (c(x,y) + \psi(y) - \phi(x) + \eta(\log(\gamma_{x,y}) - 1)) \notag \\
  \label{eq:dual-reg}
  &+ \sum_{x \in X}\phi(x)\mu_x - \sum_{y\in Y}\psi(y)\nu_y.
\end{align}
Taking the derivative with respect to $\gamma_{x,y}$, we find that for
a given $\phi,\psi$, the optimal $\gamma$ must satisfy:
\begin{equation}
  \label{eq:rel-phipsi-gamma}
  \begin{aligned}
 &c(x,y) + \psi(y) - \phi(x) + \eta \log(\gamma_{x,y}) = 0\\
 & \hbox{ i.e. } \gamma_{x,y} = e^{\frac{1}{\eta}(\phi(x) - \psi(y) - c(x,y))}
  \end{aligned}
\end{equation}
Putting these values in the Equation \eqref{eq:dual-reg} gives the following definition:
\begin{definition}[Dual regularized problem] The dual of the regularized optimal transport problem is defined by 
\begin{equation}\label{eq:dual-eta}
\DPeta = \sup_{\phi,\psi} \Kant^\eta(\phi,\psi)
\end{equation}
where
\begin{equation}\label{eq:Kant-eta}
\Kant^\eta(\phi,\psi) := -\sum_{(x,y)\in X\times Y} \eta e^{\frac{1}{\eta}(\phi(x) - \psi(y) - c(x,y))} + \sum_{x \in X}\phi(x)\mu_x - \sum_{y\in Y}\psi(y)\nu_y.
\end{equation}
\end{definition}
We can now state the strong duality result
\begin{theorem}[Strong duality]\label{thm:reg-duality}
Strong duality holds and the maximum in the dual problem is reached, i.e.  there exist $\phi \in\Rsp^X$ an $\psi \in \FYR$ such that 
$$
\KPeta = \DPeta = \Kant^\eta(\phi,\psi).
$$
\end{theorem}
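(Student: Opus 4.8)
The plan is to first establish weak duality $\DPeta \le \KPeta$, and then to exhibit an explicit maximizer of $\Kant^\eta$ attaining the value $\KPeta$, built out of the primal minimizer provided by Theorem~\ref{th:entropic:existence}.

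For weak duality I would use that, for fixed $(\phi,\psi)$, the map $\gamma\mapsto L(\gamma,\phi,\psi)$ is separable and strictly convex on $(\Rsp_+^*)^{X\times Y}$, and that — exactly as in the computation leading to \eqref{eq:Kant-eta} (optimality condition \eqref{eq:rel-phipsi-gamma}) — its infimum over positive $\gamma$ is attained at $\gamma_{x,y}=e^{\frac1\eta(\phi(x)-\psi(y)-c(x,y))}$, with value $\Kant^\eta(\phi,\psi)$. For any $\gamma\in\Gamma(\mu,\nu)$ with positive entries the Lagrange-multiplier terms in $L$ vanish, so $L(\gamma,\phi,\psi)=\sca{c}{\gamma}+\eta H(\gamma)$, whence $\Kant^\eta(\phi,\psi)\le \sca{c}{\gamma}+\eta H(\gamma)$. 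Taking the supremum over $(\phi,\psi)$ and the infimum over such $\gamma$ — which suffices since $H=+\infty$ outside the positive orthant and $h$ is continuous up to $0$ — gives $\DPeta \le \KPeta$.

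For the reverse inequality and attainment, I would invoke Theorem~\ref{th:entropic:existence}: $\KPeta$ has a (unique) minimizer $\gamma^\star$ with $\gamma^\star_{x,y}>0$ for all $(x,y)$. Thus $\gamma^\star$ minimizes the differentiable convex function $F(\gamma)=\sca{c}{\gamma}+\eta H(\gamma)$ over the affine set cut out by the marginal constraints, and lies in the interior of the domain of $F$. The first-order optimality condition then asserts that $\nabla F(\gamma^\star)$ lies in the span of the constraint normals, i.e. there exist $\phi\in\Rsp^X$ and $\psi\in\FYR$ with $c(x,y)+\eta\log\gamma^\star_{x,y}=\phi(x)-\psi(y)$, equivalently $\gamma^\star_{x,y}=e^{\frac1\eta(\phi(x)-\psi(y)-c(x,y))}$, which is precisely \eqref{eq:rel-phipsi-gamma}. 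Substituting this expression into \eqref{eq:Kant-eta} and using that $\gamma^\star$ has marginals $\mu$ and $\nu$ — so that $\sum_{x,y}\gamma^\star_{x,y}=1$ and $\sum_x\phi(x)\mu_x-\sum_y\psi(y)\nu_y=\sum_{x,y}\gamma^\star_{x,y}(\phi(x)-\psi(y))=\sca{c}{\gamma^\star}+\eta\sum_{x,y}\gamma^\star_{x,y}\log\gamma^\star_{x,y}$ — a short computation would give $\Kant^\eta(\phi,\psi)=\sca{c}{\gamma^\star}+\eta H(\gamma^\star)=\KPeta$. Combined with weak duality, this forces $\KPeta=\DPeta=\Kant^\eta(\phi,\psi)$, so $(\phi,\psi)$ is a dual maximizer.

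The delicate point I expect is the justification of the first-order (Lagrange) condition: that at the constrained minimizer $\gamma^\star$ the gradient of $F$ belongs to the span of the affine constraint normals. This is standard when the minimizer is interior to the domain of $F$ and the constraints are affine — no constraint qualification is needed — but one should note that the constraint map $\gamma\mapsto(\sum_y\gamma_{x,y},\sum_x\gamma_{x,y})$ has range of codimension one, reflecting the invariance $(\phi,\psi)\mapsto(\phi+t\one,\psi+t\one)$ of $\Kant^\eta$, so $(\phi,\psi)$ is not unique; existence of one such pair is all that is needed. An alternative route, avoiding the primal problem entirely, would be to prove directly that $\Kant^\eta$ is smooth, concave, and coercive once the one-dimensional diagonal invariance is factored out — this is exactly where the hypothesis $\mu_x,\nu_y>0$ enters — hence admits a maximizer; writing its first-order conditions $\partial_{\phi(x)}\Kant^\eta=0$ and $\partial_{\psi(y)}\Kant^\eta=0$ then produces a feasible $\gamma$ and the same identity $\Kant^\eta(\phi,\psi)=\sca{c}{\gamma}+\eta H(\gamma)\ge\KPeta$.
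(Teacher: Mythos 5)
Your proposal is correct and follows essentially the same route as the paper: both exploit the strict positivity of the primal minimizer $\gamma^\star$ (Theorem~\ref{th:entropic:existence}) to justify first-order/Lagrange-multiplier conditions at an interior point of the domain with affine constraints, and then identify the resulting multipliers $(\phi,\psi)$ as a dual pair attaining $\KPeta$, which together with weak duality closes the gap. The only cosmetic differences are that you spell out the weak duality inequality (which the paper merely asserts) and that you substitute the Gibbs form of $\gamma^\star$ directly into $\Kant^\eta(\phi,\psi)$, whereas the paper argues via $\gamma^\star=\argmin_\gamma L(\gamma,\tilde\phi,\tilde\psi)$; these are the same computation in substance.
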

\begin{corollary}\label{coro:reg-dual-recovery}
If $\phi,\psi$ is the solution to the dual problem $\DPeta$,  then the solution $\gamma$ of $\KPeta$  is given by
$$
\gamma_{x,y} = e^{\frac{\phi(x) - \psi(y) - c(x,y)}{\eta}}.
$$
\end{corollary}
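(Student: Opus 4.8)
The plan is to deduce the formula from the first-order optimality condition for the dual functional $\Kant^\eta$, combined with strong duality (Theorem~\ref{thm:reg-duality}) and uniqueness of the primal solution (Theorem~\ref{th:entropic:existence}). Let $(\phi,\psi)\in\Rsp^X\times\FYR$ be a maximizer of $\Kant^\eta$, which exists by Theorem~\ref{thm:reg-duality}, and set $\bar\gamma_{x,y}:=e^{(\phi(x)-\psi(y)-c(x,y))/\eta}$; the goal is to show that $\bar\gamma$ is \emph{the} solution of $\KPeta$, which by definition of $\bar\gamma$ is exactly the claimed identity. The first observation is that $\Kant^\eta$, as defined in \eqref{eq:Kant-eta}, is a smooth function on all of $\Rsp^X\times\FYR$ and is concave — each term $-\eta e^{(\phi(x)-\psi(y)-c(x,y))/\eta}$ is concave in $(\phi,\psi)$, being $-\exp$ precomposed with an affine map, and the remaining terms are linear — so a maximizer is nothing but a critical point.

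First I would write out $\nabla\Kant^\eta(\phi,\psi)=0$. Differentiating \eqref{eq:Kant-eta} with respect to $\phi(x)$ gives $-\sum_{y\in Y}\bar\gamma_{x,y}+\mu_x=0$, and with respect to $\psi(y)$ gives $\sum_{x\in X}\bar\gamma_{x,y}-\nu_y=0$; hence $\bar\gamma$ has marginals $\mu$ and $\nu$, i.e. $\bar\gamma\in\overline\Gamma(\mu,\nu)$, and since all its entries are positive, $\bar\gamma\in\Gamma(\mu,\nu)$. Then I would evaluate $\Kant^\eta(\phi,\psi)$ by substituting $\eta e^{(\phi(x)-\psi(y)-c(x,y))/\eta}=\eta\bar\gamma_{x,y}$ and the identity $\phi(x)-\psi(y)=c(x,y)+\eta\log\bar\gamma_{x,y}$: using $\sum_{x,y}\bar\gamma_{x,y}=\sum_{x}\mu_x=1$ together with the marginal relations $\sum_x\phi(x)\mu_x=\sum_{x,y}\phi(x)\bar\gamma_{x,y}$ and $\sum_y\psi(y)\nu_y=\sum_{x,y}\psi(y)\bar\gamma_{x,y}$, one obtains $\Kant^\eta(\phi,\psi)=\sum_{x,y}c(x,y)\bar\gamma_{x,y}+\eta\sum_{x,y}\bar\gamma_{x,y}(\log\bar\gamma_{x,y}-1)=\sca{c}{\bar\gamma}+\eta H(\bar\gamma)$. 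By Theorem~\ref{thm:reg-duality} this quantity equals $\KPeta$, so $\bar\gamma\in\overline\Gamma(\mu,\nu)$ attains the minimum defining $\KPeta$; uniqueness in Theorem~\ref{th:entropic:existence} then forces $\bar\gamma$ to coincide with the solution $\gamma$, proving the corollary.

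There is essentially no obstacle in the corollary itself beyond careful bookkeeping with the exponentials; the substance lies in Theorem~\ref{thm:reg-duality}, whose proof I would organise as follows. Weak duality $\DPeta\le\KPeta$ is the minimax inequality for the Lagrangian $L(\gamma,\phi,\psi)$ — its supremum over $(\phi,\psi)$ equals $\sca{c}{\gamma}+\eta H(\gamma)$ on $\overline\Gamma(\mu,\nu)$ and $+\infty$ elsewhere, while its infimum over $\gamma$ equals $\Kant^\eta(\phi,\psi)$ by \eqref{eq:rel-phipsi-gamma}. For the reverse inequality, take the primal minimizer $\gamma$; by Theorem~\ref{th:entropic:existence} all its entries are strictly positive, so the non-negativity constraints are inactive and the first-order optimality condition for the smooth equality-constrained convex program yields multipliers $(\phi,\psi)$ with $c(x,y)+\eta\log\gamma_{x,y}=\phi(x)-\psi(y)$, which is \eqref{eq:rel-phipsi-gamma}; the same computation as in the previous paragraph, now applied to $\gamma$, gives $\Kant^\eta(\phi,\psi)=\sca{c}{\gamma}+\eta H(\gamma)=\KPeta$, hence $\DPeta\ge\KPeta$ and the supremum defining $\DPeta$ is attained at $(\phi,\psi)$. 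The one mild subtlety here is that the marginal constraints are linearly dependent (both sets sum to $1$), so the multipliers $(\phi,\psi)$ are determined only up to a common additive constant $(\phi+t,\psi+t)$; this is harmless for the existence of multipliers and leaves $\bar\gamma$, hence the formula, unchanged.
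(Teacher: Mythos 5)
Your argument is correct, and it reaches the formula by a slightly different route than the paper. The paper simply declares the corollary to be a direct consequence of the relation \eqref{eq:rel-phipsi-gamma}: in its proof of Theorem~\ref{thm:reg-duality}, the multipliers $(\tilde{\phi},\tilde{\psi})$ are produced so that the primal solution $\gamma^*$ minimizes $L(\cdot,\tilde{\phi},\tilde{\psi})$, and the inner-minimization identity \eqref{eq:rel-phipsi-gamma} then yields the exponential formula at those particular multipliers. You instead start from an \emph{arbitrary} maximizer $(\phi,\psi)$ of $\Kant^\eta$, use concavity to identify maximizers with critical points, read off from $\nabla\Kant^\eta(\phi,\psi)=0$ that the candidate $\bar\gamma_{x,y}=e^{(\phi(x)-\psi(y)-c(x,y))/\eta}$ has marginals $\mu$ and $\nu$, then compute $\Kant^\eta(\phi,\psi)=\sca{c}{\bar\gamma}+\eta H(\bar\gamma)$ and invoke strong duality plus the uniqueness of the primal solution (Theorem~\ref{th:entropic:existence}) to conclude $\bar\gamma=\gamma$. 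What your version buys is completeness: the corollary as stated applies to any dual solution, and at this point of the paper uniqueness of the dual maximizer (up to additive constants) has not yet been established, so the paper's one-line justification implicitly leaves a small step to the reader that your value-matching argument actually closes. What the paper's route buys is brevity: the formula is literally the stationarity condition of the Lagrangian in $\gamma$, so once the saddle-point structure is in hand nothing further needs to be computed. Your verification sketch of Theorem~\ref{thm:reg-duality} itself (positivity of $\gamma^*$, inactive inequality constraints, Lagrange multipliers for the equality-constrained smooth problem, degeneracy of the constraints accounting for the additive-constant ambiguity) coincides with the paper's proof and could simply be replaced by a citation.
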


Corollary~\ref{coro:reg-dual-recovery} is a direct consequence of the
relation \eqref{eq:rel-phipsi-gamma}.  This holds because, unlike the
original linear programming formulation of optimal transport, the
regularized problem $\KPeta$ is smooth and strictly convex.
\begin{proof}[Proof of Theorem~\ref{thm:reg-duality}] Weak duality $\KPeta \geq \DPeta$ always hold.
  To prove the strong duality, we denote by $\gamma^*$ the solution to
  $\KPeta$, and we note that by Theorem~\ref{th:entropic:existence},
  $\gamma^*_{xy} > 0$ for all $(x,y)\in X\times Y$. This implies that
  the optimized functional $\gamma\mapsto \sca{c}{\gamma} + \eta
  H(\gamma)$ is $\Class^1$ in a neighborhood of $\gamma^*$. Thus,
  there exists Lagrange multipliers for the equality constrained
  problem, i.e. $\tilde{\phi}\in \Rsp^X$ and $\tilde{\psi} \in\FYR$ such that
  $$ \nabla_\gamma L(\gamma^*,\tilde{\phi},\tilde{\psi}) = 0. $$ Since
  the function $L(\cdot,\tilde{\phi},\tilde{\psi})$ is convex, this
  implies that $\gamma^* = \argmin_\gamma
  L(\gamma,\tilde{\phi},\tilde{\psi})$. Hence
$$
\DPeta = \sup_{\phi,\psi} \min_\gamma L(\gamma,\phi,\psi)
\geq \min_\gamma L(\gamma,\tilde{\phi},\tilde{\psi}) 
= L(\gamma^*,\tilde{\phi},\tilde{\psi}) 
= \KPeta.
$$ The last equality follows from the fact that $\gamma^*$ satisfies
the constraints and is a solution to $\KPeta$. Thus $\DPeta = \KPeta$.
\end{proof}

\subsubsection{Regularized $c$-transform}
A natural way to maximize $\Kant^\eta(\phi,\psi)$ is to maximize
alternatively in $\phi$ and $\psi$. In the case of entropy-regularized
optimal transport, each of the partial maximization problems
($\max_\phi \Kant^\eta(\phi,\psi)$ and $\max_\psi
\Kant^\eta(\phi,\psi)$) have explicit solutions, which are connected
to the notion of $c$-transform in (non-regularized) optimal transport:

\begin{proposition} The following holds
  \begin{enumi}
  \item Given $\psi \in\FYR$, the maximizer of
    $\Kant^\eta(\cdot,\psi)$ is attained at a unique point in  $\Rsp^X$, denoted $\psi^{c,\eta}$, and defined by
    \begin{equation}
      \psi^{c,\eta}(x) = \eta\log(\mu_x) -  \eta\log\left(\sum_{y \in Y} e^{\frac{1}{\eta}(-c(x,y) - \psi(y))}\right). \label{eq:phi-eta}
    \end{equation}
  \item Given, $\phi \in \Rsp^X$, the maximizer of
    $\Kant^\eta(\phi,\cdot)$ is attained at a unique point in  $\FYR$, denoted $\phi^{\bar{c},\eta}$,
    and defined by
    \begin{equation}  \phi^{\bar{c},\eta}(y) = -\eta\log(\mu_y) + \eta\log\left(\sum_{x \in X} e^{\frac{1}{\eta}(- c(x,y) + \phi(x))}\right). \label{eq:psi-eta}
    \end{equation}
  \end{enumi}
\end{proposition}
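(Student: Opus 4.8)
The plan is to prove part (i); part (ii) follows by the same computation with the roles of $X$ and $Y$ interchanged (and a sign flip). First I would fix $\psi \in \FYR$ and regard $\Kant^\eta(\cdot,\psi)$ as a function of $\phi \in \Rsp^X$. Reading off \eqref{eq:Kant-eta}, the dependence on $\phi$ decouples completely over $x \in X$: we can write
\begin{equation*}
  \Kant^\eta(\phi,\psi) = \sum_{x\in X} g_x(\phi(x)) - \sum_{y\in Y}\psi(y)\nu_y,
  \qquad
  g_x(s) := s\,\mu_x - \eta\, e^{s/\eta} \sum_{y\in Y} e^{\frac{1}{\eta}(-c(x,y)-\psi(y))}.
\end{equation*}
So maximizing $\Kant^\eta(\cdot,\psi)$ amounts to maximizing each $g_x$ separately over $s\in\Rsp$.

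Next I would analyze the scalar function $g_x$. Writing $A_x := \sum_{y\in Y} e^{\frac{1}{\eta}(-c(x,y)-\psi(y))} > 0$ (strictly positive since $Y$ is finite and nonempty, with at least one term), we have $g_x'(s) = \mu_x - e^{s/\eta} A_x$ and $g_x''(s) = -\frac{1}{\eta} e^{s/\eta} A_x < 0$, so $g_x$ is strictly concave. Since $\mu_x > 0$ by the standing assumption that every point carries mass, the equation $g_x'(s) = 0$ has the unique solution $e^{s/\eta} = \mu_x / A_x$, i.e.
\begin{equation*}
  s = \eta\log(\mu_x) - \eta\log(A_x) = \eta\log(\mu_x) - \eta\log\!\left(\sum_{y\in Y} e^{\frac{1}{\eta}(-c(x,y)-\psi(y))}\right),
\end{equation*}
which is exactly the claimed formula \eqref{eq:phi-eta}. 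Strict concavity together with $g_x'(s)\to\mu_x>0$ as $s\to-\infty$ and $g_x'(s)\to-\infty$ as $s\to+\infty$ guarantees this critical point is the unique global maximizer; hence $\phi = \psi^{c,\eta}$, defined coordinatewise by the above, is the unique maximizer of $\Kant^\eta(\cdot,\psi)$ on $\Rsp^X$.

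There is no real obstacle here — the computation is routine once one notices the separation over $x$. The only points requiring a word of care are: (a) that $A_x > 0$, which uses that $Y$ is a nonempty finite set so the sum is a genuine finite sum of positive terms; and (b) that $\mu_x > 0$, which is needed both for the logarithm $\log(\mu_x)$ to make sense and for the maximizer to be finite (if $\mu_x$ were $0$, $g_x$ would be strictly decreasing with supremum attained only in the limit $s\to-\infty$). Both are supplied by the hypotheses in force in this subsection. For part (ii), one repeats the argument fixing $\phi\in\Rsp^X$: the dependence of $\Kant^\eta(\phi,\psi)$ on each coordinate $\psi(y)$ again decouples, giving a strictly concave scalar function $s \mapsto -s\,\nu_y - \eta\, e^{-s/\eta}\sum_{x\in X} e^{\frac{1}{\eta}(-c(x,y)+\phi(x))}$, whose unique maximizer is $\phi^{\bar c,\eta}(y)$ as in \eqref{eq:psi-eta}, using $\nu_y>0$ and $X$ nonempty finite.
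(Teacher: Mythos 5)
Your proof is correct and follows essentially the same route as the paper: the paper simply takes the partial derivative of $\Kant^\eta$ with respect to $\phi(x)$, sets it to zero, and reads off \eqref{eq:phi-eta}, while you additionally make explicit the coordinate-wise decoupling, the strict concavity of each scalar function $g_x$, and the role of $\mu_x>0$ (resp.\ $\nu_y>0$) in guaranteeing that the critical point is attained and is the unique maximizer — details the paper leaves implicit by assuming a maximizer from the start. Nothing is missing; if anything your version is the more complete one.
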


\begin{proof}
  To prove (i), consider $\phi\in\Rsp^X$ the maximizer of
  $\Kant^\eta(\cdot,\psi)$. Taking the derivative of $\Kant^\eta$ with
  respect to the variable $\phi(x)$ gives us
  $$ \mu_x = e^{\frac{\phi(x)}{\eta}} \sum_{y \in Y}
  e^{-\frac{1}{\eta}(\psi(y) + c(x,y))}, $$ implying the desired
  formula. The second formula is proven similarly.
\end{proof}

\begin{definition}[Regularized $c$-transform]
  Given $\psi\in\Rsp^Y$, we will call the function $\psi^{c,\eta}$
  defined by \eqref{eq:phi-eta} its \emph{regularized
    $c$-transform}. Similarly, given $\phi\in\Rsp^X$, we call the
  function $\phi^{\bar{c},\eta}$   defined by \eqref{eq:psi-eta} its \emph{regularized
    $\bar{c}$-transform}
\end{definition}

\begin{remark}[Relation to the $c$-transform]
  As the notation indicates, $\psi^{c,\eta}$ is related to the
  $c$-transform used in optimal transport
  (Def.~\ref{def:ctransform}). Indeed, when $\eta$ tends to zero, one
  has
  $$\begin{aligned} \lim_{\eta\to 0} \psi^{c,\eta}(x) &= \lim_{\eta\to
      0} \eta\left(\log(\mu_x) - \log\left(\sum_{y \in Y}
    e^{\frac{1}{\eta}(-c(x,y) - \psi(y))}\right)\right)\\ &=
    \min_{y\in Y} c(x,y)+\psi(y) = \psi^{c}(x).
  \end{aligned}$$
  This explains the choice of notation: $\psi^{c,\eta}$ is a smoothed
  version of the $c$-transform introduced in
  Definition~\ref{def:ctransform}. 
\end{remark}

The following two properties are very similar to some properties
holding for the standard $c$-transform. In the following, we denote
$\nr{\cdot}_{o,\infty}$ the pseudo-norm of uniform convergence up to
addition of a constant:
$$ \nr{f}_{o,\infty} = \inf_{a \in \Rsp}\nr{f+a}_\infty
= \frac{1}{2}(\sup f - \inf f). $$ This pseudo-norm will be very
useful to state convergence results for Sinkhorn-Knopp's algorithm for
solving the regularized optimal transport problem.

\begin{proposition} \label{prop:ceta-transform} Let $\psi,\bar{\psi}\in\Rsp^Y$. Then,
  \begin{itemize}
  \item[(i)] for $a\in\Rsp$, $(\psi+a)^{c,\eta} = \psi^{c,\eta} + a$.
  \item[(ii)] $\nr{\psi^{c,\eta}}_{o,\infty} \leq \eta \nr{\log(\nu)}_{o,\infty} + \nr{c}_{o,\infty}$,
  \item[(iii)]  $\nr{\psi^{c,\eta} - \bar{\psi}^{c,\eta}}_{o,\infty} \leq \nr{\psi - \bar{\psi}}_{o,\infty}$.
  \end{itemize}
  Similar properties hold for the map $\phi\in\Rsp^X\mapsto \phi^{\bar{c},\eta}$.
\end{proposition}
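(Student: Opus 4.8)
The plan is to isolate two structural properties of the map $T\colon \psi\in\Rsp^Y \mapsto \psi^{c,\eta}\in\Rsp^X$ and to deduce everything from them: $T$ is \emph{translation-equivariant}, i.e. $T(\psi+a) = T(\psi)+a$ for every constant $a\in\Rsp$, and $T$ is \emph{order-preserving}, i.e. $\psi\leq\bar\psi$ implies $T(\psi)\leq T(\bar\psi)$ (pointwise inequalities). The first property is exactly assertion (i): substituting $\psi+a$ for $\psi$ in \eqref{eq:phi-eta} multiplies each summand $e^{(-c(x,y)-\psi(y))/\eta}$, hence the whole sum, by $e^{-a/\eta}$, so that $-\eta\log$ of it increases by $a$. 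The second is just as immediate: if $\psi\leq\bar\psi$ pointwise then $-c(x,y)-\psi(y)\geq -c(x,y)-\bar\psi(y)$ for all $y$, so the sum in \eqref{eq:phi-eta} can only increase when $\psi$ replaces $\bar\psi$, and $-\eta\log$ of it can only decrease.

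From these two properties, (iii) follows by a sandwich argument. Put $\delta := \nr{\psi-\bar\psi}_\infty$, so that $\bar\psi-\delta \leq \psi \leq \bar\psi+\delta$ pointwise; applying order-preservation and then translation-equivariance gives $T(\bar\psi)-\delta \leq T(\psi) \leq T(\bar\psi)+\delta$, that is, $\nr{T(\psi)-T(\bar\psi)}_\infty \leq \nr{\psi-\bar\psi}_\infty$. To upgrade this to the pseudo-norm $\nr{\cdot}_{o,\infty}$, fix $a\in\Rsp$ and use (i): $T(\psi)-T(\bar\psi)+a = T(\psi)-T(\bar\psi-a)$, whose sup norm is at most $\nr{\psi-(\bar\psi-a)}_\infty = \nr{\psi-\bar\psi+a}_\infty$; taking the infimum over $a\in\Rsp$ yields $\nr{T(\psi)-T(\bar\psi)}_{o,\infty} \leq \nr{\psi-\bar\psi}_{o,\infty}$, which is (iii). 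This passage from the $\nr{\cdot}_\infty$ estimate to the $\nr{\cdot}_{o,\infty}$ estimate, where (i) is genuinely needed, is the only mildly delicate point.

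For (ii) I would split off the marginal appearing in \eqref{eq:phi-eta}: write $\psi^{c,\eta}(x) = \eta\log\mu_x + m(x)$ with $m(x) := -\eta\log\sum_{y\in Y} e^{-(c(x,y)+\psi(y))/\eta}$, the log-sum-exp (``soft-min'') of the vector $\big(c(x,y)+\psi(y)\big)_{y\in Y}$. The same computation that established (iii) shows precisely that this soft-min is $1$-Lipschitz for $\nr{\cdot}_\infty$; applied to two points $x,x'\in X$, the vectors $\big(c(x,y)+\psi(y)\big)_{y}$ and $\big(c(x',y)+\psi(y)\big)_{y}$ differ by $\big(c(x,y)-c(x',y)\big)_{y}$ — in which $\psi$ has dropped out — so $\abs{m(x)-m(x')} \leq \max_{y\in Y}\abs{c(x,y)-c(x',y)} \leq \sup_{X\times Y}c - \inf_{X\times Y}c = 2\nr{c}_{o,\infty}$. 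Since likewise $\abs{\eta\log\mu_x - \eta\log\mu_{x'}} \leq 2\eta\nr{\log\mu}_{o,\infty}$, we get $\sup_x\psi^{c,\eta}(x) - \inf_x\psi^{c,\eta}(x) \leq 2\eta\nr{\log\mu}_{o,\infty} + 2\nr{c}_{o,\infty}$, and dividing by $2$ (recall $\nr{f}_{o,\infty} = \frac{1}{2}(\sup f - \inf f)$) gives (ii).

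Finally, the analogous statements for $\phi\mapsto\phi^{\bar{c},\eta}$ are obtained in the same way, interchanging the roles of $X,Y$ and of $\mu,\nu$, replacing $c(x,y)$ by $-c(x,y)$, and replacing soft-min by soft-max (the sum in \eqref{eq:psi-eta} is of exponentials of $+(\cdots)/\eta$ and carries a $+\eta\log$): the map $\phi\mapsto\phi^{\bar{c},\eta}$ is again translation-equivariant and order-preserving, so (i)--(iii) hold with $\log\nu$ in place of $\log\mu$ and with $\nr{-c}_{o,\infty}=\nr{c}_{o,\infty}$. No step presents a genuine obstacle; the whole argument is driven by the monotonicity and translation-equivariance of the log-sum-exp operation.
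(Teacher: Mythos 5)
Your proof is correct and follows essentially the same route as the paper: both arguments rest on the order-preservation and translation-equivariance (equivalently, the $1$-Lipschitz property in $\nr{\cdot}_\infty$) of the log-sum-exp in \eqref{eq:phi-eta}, and both pass from the sup-norm estimate in (iii) to $\nr{\cdot}_{o,\infty}$ via (i), a step the paper only states as following ``easily'' and which you spell out. Note that, exactly as in the paper's own proof, your bound in (ii) comes out with $\eta\nr{\log\mu}_{o,\infty}$ rather than the $\eta\nr{\log\nu}_{o,\infty}$ printed in the statement, which appears to be a typo there rather than a gap on your side.
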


\begin{proof}  (ii) Using the formula \eqref{eq:phi-eta}, and $c(x,y) - c(x',y) \leq \sup c - \inf c$,
  $$
  \begin{aligned}
    &\psi^{c,\eta}(x) - \psi^{c,\eta}(x') \\
    &\quad= \eta(\log(\mu_x) - \log(\mu_{x'})) \\
    \quad &\qquad+ \eta\left(\log\left(\sum_{y \in Y} e^{\frac{1}{\eta}(-c(x',y) - \psi(y))}\right) - \log\left(\sum_{y \in Y} e^{\frac{1}{\eta}(-c(x,y) - \psi(y))}\right)\right). \\
     &\quad\leq \eta (\sup \log(\mu) - \inf \log(\mu)) + \sup c - \inf c,
  \end{aligned}$$
  implying the first inequality.

(iii) If we show that $\nr{\psi^{c,\eta} - \bar{\psi}^{c,\eta}}_\infty
  \leq \nr{\psi - \bar{\psi}}_\infty$, the same inequality with
  $\nr{\cdot}_{o,\infty}$ will follow easily using (i). Using $\psi(y)
  \leq \bar{\psi}(y) + \nr{\psi - \bar{\psi}}_\infty, $ we have
  \begin{align*}
    &\psi^{c,\eta}(x) - \bar{\psi}^{c,\eta}(x) \\&\qquad=
    -  \eta\log\left(\sum_{y \in Y} e^{\frac{1}{\eta}(-c(x,y) - \psi(y))}\right) + \eta\log\left(\sum_{y \in Y} e^{\frac{1}{\eta}(-c(x,y) - \bar{\psi}(y))}\right)  \\
    &\qquad\leq \nr{\psi - \bar{\psi}}_{\infty} \qedhere
  \end{align*}
\end{proof}

\subsubsection{Regularized Kantorovitch functional}
As in standard optimal transport (see
\S\ref{subsec:Kantorovich-functional}) and following Cuturi and Peyré
\cite{cuturi2018semidual}, we can express the regularized dual
maximization problem \eqref{eq:dual-eta} using only the variable $\psi
\in \FYR$.

\begin{definition}[Regularized Kantorovitch functional] The regularized Kantorovitch functional $\Kant^\eta:\FYR\to \Rsp$ is defined by
  \begin{equation} \label{eq:Kantsmooth} \Kant^{\eta}(\psi)
      = \max_{\phi\in\Rsp^X} \Kant^\eta(\phi,\psi)
      = \sca{\psi^{c,\eta}}{\mu}
      - \sca{\psi}{\nu} \end{equation} \end{definition} Since
      $\psi^{c,\eta}$ has a closed-form expression, the functional
      $\Kant^\eta$ can be computed explicitely. This explicit
      expression is a special feature of the choice of the entropy as
      the regularization. In the next formula,
      $H(\mu)=\sum_{x\in X} \mu_x \log(\mu_x)$:
$$ \Kant^\eta(\psi) =  -\eta \sum_{x\in X}  \mu_x 
\left(\log\sum_{y \in Y} e^{\frac{-c(x,y) - \psi(y)}{\eta}} \right)
+  \eta H(\mu )- \sum_{y\in Y}\psi(y)\nu_y,$$
\begin{remark}
  Note the similarity between the formula for Kantorovich functional
  derived from regularized transport \eqref{eq:Kantsmooth} and the
  formula for the Kantorovich functional without
  regularization \eqref{eq:Kant}. Note also that $\Kant^\eta$ is also
  invariant by addition of a constant, namely $\Kant^\eta(\psi
  + \lambda \one_Y) = \Kant^\eta(\psi)$ for any $\lambda \in \Rsp$ and
  $\one_Y = \sum_{y\in Y}\one_y$ the constant function equal to one.
\end{remark}

In order to express the gradient and the Hessian of $\Kant^\eta$, we
introduce the notion of \emph{smoothed laguerre cells}.
\begin{definition}[Smoothed Laguerre cells]
Given $\psi\in\Rsp^Y$, we define
\begin{equation}\label{eq:RLag}
\RLag^\eta_{y}(\psi) = \frac{e^{- \frac{c(\cdot,y) + \psi(y)}{\eta}}}{\sum_{z\in Y} e^{- \frac{c(\cdot,z) + \psi(z)}{\eta}}}.
\end{equation}
\end{definition}
  Unlike the standard Laguerre cell $\Lag_y(\psi)$ defined
  in \eqref{eq:Lag0}, which is a set, $\RLag^\eta_y(\psi)$ is
  a \emph{function}.  The family $(\RLag^\eta_{y}(\psi))_{y\in Y}$ is
  a partition of unity, meaning that the sum over $y$ of
  $\RLag^\eta_{y}(\psi)$ equals one.  One can loosely think of the
  regularized Laguerre cells as smoothed indicator functions of the
  (standard) Laguerre cells. In particular,
  $$ \lim_{\eta\to 0} \RLag_{y}^{\eta}(\psi)(x) = \begin{cases} 0 & \hbox{ if } x\not\in \Lag_y(\psi) \\
1 & \hbox{ if } x \in \SLag_y(\psi),
  \end{cases} $$
  where $\SLag_y(\psi)$ is the strict Laguerre cell introduced in \eqref{eq:SLag0}.
  We also introduce the two quantities
 \begin{align*}
 &G_y^\eta(\psi) = \sca{\RLag^\eta_{y}(\psi)}{\mu} \\
 &G_{yz}^\eta(\psi) = \begin{cases}
    \frac{1}{\eta} \sca{\RLag^\eta_{y}(\psi)\RLag^\eta_{z}(\psi)}{\mu}
    & \hbox{ if } z\neq y \\
   -\sum_{z\neq y} G_{yz}^\eta(\psi) & \hbox{ if } z=y.
  \end{cases}
   \end{align*}
   Informally, $G_y^\eta(\psi)$ measures the quantity of mass of $\mu$ within the regularized
  Laguerre cell $\RLag^\eta_{y}(\psi)$.
\begin{theorem}\label{thm:hessian_regKant} \mbox{}\\
$\bullet$ The regularized Kantorovitch functional $\Kant^\eta$ is
$\Class^\infty$, concave, with first and second-order partial
derivatives given by
\begin{align*}
&\forall y\in Y,~ \frac{\partial \Kant^\eta}{\partial \one_y}(\psi)
= G_y^\eta(\psi) - \nu_y, \\
&\forall y\neq z\in Y,~\frac{\partial^2 \Kant^\eta}{\partial \one_z\partial\one_y}(\psi)
= G_{yz}^\eta(\psi).
\end{align*}
$\bullet$ The function $\Kant^\eta$  is strictly concave on the orthogonal of the set of constant functions. More precisely,  for every $\psi \in \FYR$ one has
$$
\forall v\in \FYR\ s.t. \sum_{y\in Y} v(y) =0,~
\D^2\Kant^\eta(\psi)(v,v)<0.
$$
$\bullet$ If $\psi$ is a maximizer in $\DPeta$, then the solution to $\KPeta$ is
given by
$$\gamma = \sum_{x,y} \gamma_{x,y} \delta_{(x,y)}, \hbox{ with } \gamma_{x,y}
= \RLag^\eta_{y}(\psi)(x)\mu_x.$$
\end{theorem}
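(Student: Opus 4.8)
The plan is to prove the three bullet points in order, using the closed‑form expression for $\Kant^\eta$ obtained just before the statement. Write $\Kant^\eta(\psi) = \sum_{x\in X}\mu_x F_x(\psi) + \eta H(\mu) - \sum_{y\in Y}\psi(y)\nu_y$, where
\[
F_x(\psi) = -\eta\log\Bigl(\sum_{y\in Y}e^{(-c(x,y)-\psi(y))/\eta}\Bigr).
\]
Since the argument of the logarithm is a finite sum of strictly positive $\Class^\infty$ functions of $\psi$, each $F_x$ is $\Class^\infty$ on $\FYR$, hence so is $\Kant^\eta$. Differentiating $F_x$ with respect to $\psi(y)$, the chain rule produces exactly the softmax weight $\RLag^\eta_y(\psi)(x)$ of \eqref{eq:RLag}, so that $\partial\Kant^\eta/\partial\one_y(\psi) = \sum_{x}\mu_x\RLag^\eta_y(\psi)(x) - \nu_y = G_y^\eta(\psi) - \nu_y$. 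For the second derivatives, I would differentiate the rational expression $\RLag^\eta_y(\psi)(x) = a_y/(\sum_z a_z)$, with $a_z := e^{(-c(x,z)-\psi(z))/\eta}$, with respect to $\psi(z)$; for $z\ne y$ this gives $\tfrac{1}{\eta}\RLag^\eta_y(\psi)(x)\RLag^\eta_z(\psi)(x)$, and integrating against $\mu$ yields $\partial^2\Kant^\eta/\partial\one_z\partial\one_y(\psi) = G_{yz}^\eta(\psi)$. The diagonal entry $G_{yy}^\eta$ drops out of the same computation together with $\sum_z\RLag^\eta_z(\psi)\equiv 1$, consistently with its definition $G_{yy}^\eta = -\sum_{z\ne y}G_{yz}^\eta$.

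For the concavity statements, the key observation is that for each fixed $x$ the Hessian of $F_x$ is minus a variance. Writing $p_y := \RLag^\eta_y(\psi)(x)$, a probability vector on $Y$ with $p_y>0$ for all $y$, one finds $\partial^2 F_x/\partial\one_z\partial\one_y(\psi) = \tfrac{1}{\eta}(p_yp_z - \delta_{yz}p_y)$, whence for any $v\in\FYR$
\[
\D^2 F_x(\psi)(v,v) = \frac{1}{\eta}\left(\Bigl(\sum_{y\in Y}p_yv(y)\Bigr)^2 - \sum_{y\in Y}p_yv(y)^2\right) = -\frac{1}{\eta}\,\mathrm{Var}_p(v) \le 0,
\]
the inequality being Jensen's (equivalently Cauchy--Schwarz applied to $\sqrt{p_y}$ and $\sqrt{p_y}\,v(y)$). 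Summing over $x$ against $\mu$ gives $\D^2\Kant^\eta(\psi)(v,v) = -\tfrac{1}{\eta}\sum_x\mu_x\,\mathrm{Var}_{p(x)}(v)\le 0$, proving concavity. For strictness, note that $\mathrm{Var}_{p(x)}(v)=0$ forces $v$ to be constant on $\{y : p_y(x)>0\}=Y$, i.e.\ $v$ constant; since every $\mu_x$ is strictly positive (standing assumption of \S\ref{sec:dot-entropic}), whenever $v\ne 0$ and $\sum_y v(y)=0$, so that $v$ is non‑constant, every term $\mu_x\,\mathrm{Var}_{p(x)}(v)$ is strictly positive and $\D^2\Kant^\eta(\psi)(v,v)<0$.

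For the last bullet, let $\psi$ be a maximizer of $\DPeta$. By \eqref{eq:Kantsmooth} and the fact that, for $\psi$ fixed, $\max_\phi\Kant^\eta(\phi,\psi)$ is attained only at $\phi=\psi^{c,\eta}$, the pair $(\psi^{c,\eta},\psi)$ is a maximizing pair for $\Kant^\eta(\phi,\psi)$ and hence solves $\DPeta$. By Corollary~\ref{coro:reg-dual-recovery}, the unique solution $\gamma$ of $\KPeta$ is $\gamma_{x,y} = e^{(\psi^{c,\eta}(x)-\psi(y)-c(x,y))/\eta}$. Substituting the closed form \eqref{eq:phi-eta} gives $e^{\psi^{c,\eta}(x)/\eta} = \frac{\mu_x}{\sum_{z}e^{(-c(x,z)-\psi(z))/\eta}}$, and plugging this in yields precisely $\gamma_{x,y} = \mu_x\RLag^\eta_y(\psi)(x)$. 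All the manipulations are elementary; the step demanding the most care is the strict concavity, where one must invoke the strict positivity of both the smoothed Laguerre weights $\RLag^\eta_y(\psi)(x)$ and the masses $\mu_x$ to rule out non‑trivial null directions of the Hessian outside the constants, but this is not a genuine obstacle.
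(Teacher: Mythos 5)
Your proposal is correct, and for the first and third bullets it follows the paper's proof essentially verbatim: direct differentiation of the log-sum-exp expression giving the softmax weights $\RLag^\eta_y(\psi)$, the off-diagonal second derivatives $\tfrac1\eta\sca{\RLag^\eta_y(\psi)\RLag^\eta_z(\psi)}{\mu}$, and the recovery of the optimal plan via Corollary~\ref{coro:reg-dual-recovery} together with the closed form \eqref{eq:phi-eta}. Where you genuinely diverge is the concavity and strict-concavity part. The paper argues at the level of the assembled Hessian matrix: it observes that $\D^2\Kant^\eta(\psi)$ is symmetric, diagonally dominant with negative diagonal, hence negative semidefinite, and then identifies its kernel with $\Rsp\one_Y$ by a maximum-principle argument (take $v$ in the kernel, evaluate $(Hv)(y_0)$ at a maximizer $y_0$ of $v$, and use strict positivity of the off-diagonal entries), which is the same mechanism reused later in Lemma~\ref{lemma:irreducible} for the semi-discrete case. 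You instead decompose $\D^2\Kant^\eta(\psi)(v,v)$ as $-\tfrac1\eta\sum_x\mu_x\,\mathrm{Var}_{p(x)}(v)$, a $\mu$-weighted sum of variances under the softmax distributions $p(x)=\RLag^\eta_\cdot(\psi)(x)$, with negativity from Jensen/Cauchy--Schwarz and strictness from the strict positivity of the weights $p_y(x)$ and of $\mu_x$. Your route gives semidefiniteness and the exact kernel in one stroke and makes the ``why'' transparent (the Hessian of a log-partition function is a covariance); the paper's route avoids the per-$x$ decomposition and sets up the matrix-theoretic lemma it needs anyway in the semi-discrete sections. One implicit step in both arguments worth keeping in mind: passing from ``the quadratic form vanishes only on constants'' to ``strictly negative on $\{\one_Y\}^\perp\setminus\{0\}$'' uses that a negative semidefinite form vanishes only on its kernel — your variance formulation makes this immediate, since each variance term is individually nonpositive.
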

\begin{proof}
For every $y\in Y$, the derivative is given by
$$
\frac{\partial \Kant^\eta}{\partial \one_y}(\psi)
=  \sum_{x\in X}  \mu_x 
\frac{ e^{\frac{- \psi(y) - c(x,y)}{\eta}}}{\sum_{z \in Y} e^{\frac{-c(x,z) - \psi(z)}{\eta}}}
-\nu_y
= G_y^\eta(\psi) - \nu_y.
$$
The second order derivative is given for $z\neq y$ by
$$
\frac{\partial^2 \Kant^\eta}{\partial \one_z\partial\one_y}(\psi)
=  \sum_{x\in X}  \mu_x e^{\frac{- \psi(y) - c(x,y)}{\eta}}
\frac{\frac{1}{\eta} e^{\frac{- \psi(z) - c(x,z)}{\eta}}}{\left(\sum_{z \in Y} e^{\frac{-c(x,z) - \psi(z)}{\eta}}\right)^2}
= G_{yz}^\eta(\psi) .
$$
The relation
$$
\sum_{y\in Y} \frac{\partial \Kant^\eta}{\partial \one_y}(\psi) = 1
$$
gives the desired formula for the second order derivatives when $z=y$.
The hessian of $\Kant^n$ is therefore symmetric with dominant
diagonal, with negative diagonal coefficients. This implies that the
Hessian is negative, hence that $\Kant^\eta$ is concave.  Let us now
show that $\ker H = \Rsp \one_Y$, where
$H=\D^2\Kant^\eta(\psi)$. Consider $v \in \ker H$ and let $y_0\in Y$
be the point where $v$ attains its maximum.  Then using $Hv=0$, and in
particular $(H v)(y_0) =0$, one has
$$\begin{aligned}
  0 &= \left(\sum_{y \neq y_0} H_{y, y_0} v(y)\right) + H_{y_0, y_0} v(y_0)\\
   &= \sum_{y \neq y_0} H_{y, y_0} (v(y) -  v(y_0)).
\end{aligned}
$$ This follows from $H_{y_0, y_0} = -\sum_{y \neq y_0}
   H_{y,y_0}$. Since for every $y\neq y_0$, one has $ H_{y,y_0} > 0 $
   and $v({y_0})-v(y) \geq 0 $, this implies that
   $v(y)=v(y_0)$. Therefore $\ker H \subseteq \Rsp \one_Y$. The reverse
   inclusion is obvious and therefore $\Kant^\eta$ is strictly concave
   on the orthogonal of the set of constant functions.

To prove the last claim we note that if $\psi$ maximizes
$\Kant^\eta(\cdot)$, then $(\psi^{c,\eta},\psi)$ maximizes
$\Kant^\eta(\cdot,\cdot)$. By Corollary~\ref{coro:reg-dual-recovery},
the optimal transport map $\gamma$ is 
\begin{equation*}
\gamma_{x,y}= e^{\frac{\phi(x)-\psi(y)-c(x,y)}{\eta}} =  \frac{e^{-\frac{\psi(y)+c(x,y)}{\eta} }\mu_x}{\sum_{z\in Y} e^{-\frac{\psi(z)+c(x,z)}{\eta}}}
= \RLag^\eta_{y}(\psi)(x)  \mu_x. \qedhere
\end{equation*}
\end{proof}

\subsubsection{Sinkhorn-Knopp as block coordinate ascent}
We present here the Sinkhorn-Knopp algorithm that consists in
computing a maximizer to the dual problem $\DPeta$ by optimizing the
functional $\Kant^\eta$ alternatively in $\phi$ and $\psi$.  The
iterations are defined by
\begin{equation}\label{eq:sinkhorn}
  \begin{cases}
  \phi^{(k+1)} = (\psi^{(k)})^{c,\eta}\\
  \psi^{(k+1)} = (\phi^{(k+1)})^{\bar{c},\eta},
  \end{cases}
\end{equation}
or equivalently $\psi^{(k+1)} = S(\psi^{(k)})$ where 
\begin{equation}\label{eq:sinkhorn-S}
  S(\psi) = (\psi^{c,\eta})^{\bar{c},\eta}.
\end{equation}
\begin{remark}[Relation to matrix factorization]
  This algorithm is in fact a reformulation, using a logarithmic
  change of variable, of Sinkhorn-Knopp's algorithm
  \cite{sinkhorn1967concerning} for finding a factorization of
  non-negative matrices introduced by Sinkhorn
  \cite{sinkhorn1964relationship}. We therefore refer to the
  iterations \eqref{eq:sinkhorn}--\eqref{eq:sinkhorn-S} as
  Sinkhorn-Knopp's algorithm.
\end{remark}

\paragraph{Correctness}
We first show the correctness of Sinkhorn--Knopp's algorithm, using a
simple expression for $S(\psi)$ which can be found in an article of
Robert Berman~\cite{berman2017}.

\begin{proposition}[Correctness of Sinkhorn-Knopp]
Let $\psi \in \FYR$ be a potential. The following assertions are equivalent:
\begin{itemize}
\item[(i)] $\psi$ is a fixed point of $S$;
\item[(ii)] 
for every $y\in Y$ $\sca{\mu}{\RLag^\eta_y(\psi)} = \nu_y$;
\item[(iii)] 
  $\psi$ is a maximizer of the regularized Kantorovich function $\Kant^\eta$
  \end{itemize}
\end{proposition}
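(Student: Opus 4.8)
The plan is to funnel all three assertions through the single condition
$$ \sca{\mu}{\RLag^\eta_y(\psi)} = \nu_y \quad\text{for every } y\in Y, $$
which is precisely (ii). The equivalence (i)$\Leftrightarrow$(ii) will come from an explicit closed form for the Sinkhorn map $S$ (the ``simple expression'' of Berman referred to in the text), while (ii)$\Leftrightarrow$(iii) is nothing but the first-order optimality condition for the smooth concave function $\Kant^\eta$. So the bulk of the work is the formula for $S$.

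First I would establish that, for every $\psi\in\FYR$ and every $y\in Y$,
$$ S(\psi)(y) = \psi(y) + \eta\log\left(\frac{G_y^\eta(\psi)}{\nu_y}\right), \qquad\text{where } G_y^\eta(\psi) = \sca{\mu}{\RLag^\eta_y(\psi)}, $$
as in Theorem~\ref{thm:hessian_regKant}. To see this, set $\phi := \psi^{c,\eta}$ and abbreviate $D(x) := \sum_{z\in Y} e^{-(c(x,z)+\psi(z))/\eta}$; exponentiating the closed form \eqref{eq:phi-eta} gives exactly $e^{\phi(x)/\eta} = \mu_x/D(x)$. Substituting this into the closed form \eqref{eq:psi-eta} of $\phi^{\bar c,\eta} = S(\psi)$ (whose normalizing constant is $\nu_y$), and using that $e^{-c(x,y)/\eta}/D(x) = e^{\psi(y)/\eta}\,\RLag^\eta_y(\psi)(x)$ by the definition \eqref{eq:RLag} of the smoothed Laguerre cell, the sum over $x$ against $\mu$ collapses to $e^{\psi(y)/\eta}\,G_y^\eta(\psi)$ inside the logarithm, and the displayed identity follows after cancelling the $e^{\psi(y)/\eta}$ term. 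This is the only genuine computation in the proof; the one point to watch is that the two transforms carry different normalizations (the factor $\mu_x$ in $\psi^{c,\eta}$, the factor $\nu_y$ in $\phi^{\bar c,\eta}$).

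Granting this formula, (i)$\Leftrightarrow$(ii) is immediate: since $\nu_y>0$ for all $y$, we have $S(\psi)=\psi$ if and only if $\log(G_y^\eta(\psi)/\nu_y)=0$ for every $y$, that is $G_y^\eta(\psi)=\nu_y$ for every $y$, which is (ii). For (ii)$\Leftrightarrow$(iii) I would invoke Theorem~\ref{thm:hessian_regKant}, which tells us that $\Kant^\eta$ is $\Class^\infty$ and concave on $\FYR$ with
$$ \frac{\partial \Kant^\eta}{\partial \one_y}(\psi) = G_y^\eta(\psi) - \nu_y ; $$
hence $\nabla\Kant^\eta(\psi)=0$ holds exactly when (ii) holds. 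Since a differentiable concave function attains its maximum precisely at its critical points, this is in turn equivalent to $\psi$ being a maximizer of $\Kant^\eta$, i.e.\ to (iii). Chaining the two steps yields (i)$\Leftrightarrow$(ii)$\Leftrightarrow$(iii).

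The main (and essentially only) obstacle is the algebraic manipulation of the first step: recognizing the smoothed Laguerre cells and the mass terms $G_y^\eta(\psi)$ after composing the $c$- and $\bar c$-transforms. There is no conceptual difficulty there, and once Berman's formula $S(\psi) = \psi + \eta\log(G^\eta(\psi)/\nu)$ is in hand the remaining equivalences are routine.
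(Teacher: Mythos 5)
Your proposal is correct and follows essentially the same route as the paper: the paper's proof rests on the lemma $\frac{S(\psi)(y)-\psi(y)}{\eta} = -\log(\nu_y)+\log\sca{\mu}{\RLag^\eta_y(\psi)}$, which is exactly the closed form for $S$ you derive, combined with the gradient formula and concavity of $\Kant^\eta$ from Theorem~\ref{thm:hessian_regKant}. Your remark about the differing normalizations is well taken (the paper's Eq.~\eqref{eq:psi-eta} has a typo writing $\mu_y$ where $\nu_y$ is meant, as its own lemma confirms), and nothing further is needed.
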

This proposition follows at once from the next lemma, and from the
computation of $\nabla \Kant^\eta$ in
Theorem~\ref{thm:hessian_regKant}.
\begin{lemma} $
\frac{S(\psi)(y)-\psi(y)}{\eta} 
= - \log(\nu_y) +  \log \sca{\mu}{\RLag^\eta_{y}(\psi)}.
$
  \end{lemma}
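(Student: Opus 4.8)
The plan is to simply unfold the definition $S(\psi) = (\psi^{c,\eta})^{\bar{c},\eta}$ from \eqref{eq:sinkhorn-S} and simplify using the closed-form expressions \eqref{eq:phi-eta} and \eqref{eq:psi-eta} for the regularized $c$- and $\bar{c}$-transforms. First I would apply \eqref{eq:psi-eta} to the function $\phi := \psi^{c,\eta}$, which gives
$$ S(\psi)(y) = -\eta\log(\nu_y) + \eta\log\left(\sum_{x\in X} e^{\frac{1}{\eta}(-c(x,y) + \psi^{c,\eta}(x))}\right), $$
where the second term involves $\nu_y$ (the occurrence of $\mu_y$ in \eqref{eq:psi-eta} being a typo, consistent with the $Y$-marginal constraint being the one associated to $\nu$).

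Next I would substitute the closed form \eqref{eq:phi-eta} of $\psi^{c,\eta}$ into the exponential. The key algebraic observation is that
$$ e^{\frac{1}{\eta}\psi^{c,\eta}(x)} = \frac{\mu_x}{\sum_{z\in Y} e^{-\frac{c(x,z)+\psi(z)}{\eta}}}, $$
so that, multiplying and dividing by $e^{\psi(y)/\eta}$ and recognizing the definition \eqref{eq:RLag} of the smoothed Laguerre cell,
$$ e^{\frac{1}{\eta}(-c(x,y) + \psi^{c,\eta}(x))} = \mu_x\,\frac{e^{-\frac{c(x,y)}{\eta}}}{\sum_{z\in Y} e^{-\frac{c(x,z)+\psi(z)}{\eta}}} = e^{\frac{\psi(y)}{\eta}}\,\mu_x\,\RLag^\eta_{y}(\psi)(x). $$

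Summing over $x\in X$ pulls the constant factor $e^{\psi(y)/\eta}$ out of the sum, leaving exactly $\sca{\mu}{\RLag^\eta_{y}(\psi)}$. Plugging this back and using $\log(e^{\psi(y)/\eta}\,t) = \psi(y)/\eta + \log t$ gives
$$ S(\psi)(y) = -\eta\log(\nu_y) + \psi(y) + \eta\log\sca{\mu}{\RLag^\eta_{y}(\psi)}, $$
and dividing by $\eta$ and rearranging yields the claimed identity. The argument is a routine computation with no real obstacle; the only place where I would be careful is the bookkeeping of signs and of the $\mu$ versus $\nu$ normalizations in \eqref{eq:phi-eta}--\eqref{eq:psi-eta}, which I would cross-check against the derivation of the dual problem \eqref{eq:dual-reg}--\eqref{eq:rel-phipsi-gamma} before finalizing.
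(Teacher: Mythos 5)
Your computation is correct and is essentially the same as the paper's own proof: both unfold $S(\psi)(y)=(\psi^{c,\eta})^{\bar c,\eta}(y)$, substitute the closed form of $\psi^{c,\eta}$, factor out $e^{\psi(y)/\eta}$ to recognize $\RLag^\eta_y(\psi)$, and take logarithms. Your reading of the $\mu_y$ in \eqref{eq:psi-eta} as a typo for $\nu_y$ matches what the paper's proof actually uses, so nothing is missing.
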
 
\begin{proof}
A calculation shows that
$$
\begin{aligned}
S(\psi)(y) &=  - \eta \left(  \log(\nu_y) -  \log\sum_{x \in X} e^{\frac{-c(x,y)  +\eta \left( \log(\mu_x) -  \log\sum_{z \in Z} e^{\frac{- c(z,y) - \psi(z)}{\eta}} \right)}{\eta}} \right)\\
&= - \eta \left(  \log(\nu_y) -  \log\sum_{x \in X}   \mu_x \frac{e^{\frac{-c(x,y)}{\eta}}}{\sum_{z\in Y} e^{\frac{- c(z,y) - \psi(z)}{\eta}}} \right)\\
&= - \eta \left(  \log(\nu_y) -  \log e^{\frac{\psi(y)}{\eta}}\sum_{x \in X}   \mu_x \RLag^\eta_{y}(\psi)(x) \right)\\
&= - \eta \left(  \log(\nu_y) -  \log e^{\frac{\psi(y)}{\eta}} \sca{\mu}{\RLag^\eta_{y}(\psi)} \right),
\end{aligned}
$$
which implies the equation.
\end{proof}

\paragraph{Convergence}
In order to prove convergence, we need to strengthen the $1$-Lipschitz
estimation from Proposition~\ref{prop:ceta-transform}. This allows to
apply Picard's fixed point theorem to get the contraction of the
Sinkhorn-Knopp iteration \eqref{eq:sinkhorn-S}.  The proof we present
in this chapter has been first introduced in course notes of Vialard
\cite{vialard:hal-02303456}.

\begin{theorem}[Convergence of Sinkhorn, \cite{vialard:hal-02303456}]
  \label{th:cv-sinkhorn-vialard}
  The map $S$ is a contraction for $\nr{\cdot}_{o,\infty}$. More precisely,
  $$ \nr{S(\psi^0) - S(\psi^1)}_{o,\infty} \leq \left(1 -
  e^{-2\frac{\nr{c}_{o,\infty}}{\eta}}\right) \nr{\psi^0
  - \psi^1}_{o,\infty}.$$ In particular, the iterates
  $(\phi^{(k)},\psi^{(k)})$ of Sinkhorn-Knopp's
  algorithm \eqref{eq:sinkhorn} converge with linear rate to the
  unique (up to constant) maximizer the regularized dual
  problem \eqref{eq:dual-eta}
\end{theorem}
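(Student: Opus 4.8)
The plan is to show the Birkhoff–Hilbert-type contraction estimate for the map $S = (\cdot^{c,\eta})^{\bar c,\eta}$, and then deduce convergence from Picard's fixed-point theorem together with the correctness proposition. The key point is to sharpen the $1$-Lipschitz bound of Proposition~\ref{prop:ceta-transform}(iii) into a strict contraction, and the natural tool is a quantitative version of monotonicity of the log-sum-exp map on the quotient by constants. First I would reduce to studying a single $c$-transform step: by Proposition~\ref{prop:ceta-transform}(iii) applied twice, it suffices to show that $\psi\mapsto\psi^{c,\eta}$ contracts $\nr{\cdot}_{o,\infty}$ by a factor $\bigl(1-e^{-\nr{c}_{o,\infty}/\eta}\bigr)$ — wait, actually the cleaner route is to prove directly that $S$ contracts by the factor $1-e^{-2\nr{c}_{o,\infty}/\eta}$, since the two half-steps each contribute a factor $e^{-\nr{c}_{o,\infty}/\eta}$ in the relevant Hilbert-metric sense; alternatively one can run the argument on $S$ in one shot. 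The concrete mechanism: writing $u(x) = e^{-\psi^0(x)/\eta}$ and $v(x) = e^{-\psi^1(x)/\eta}$, the map $\psi\mapsto\psi^{c,\eta}$ corresponds (up to the additive $\eta\log\mu_x$ term, which cancels in $\nr{\cdot}_{o,\infty}$) to a linear positive map $w \mapsto Kw$ with kernel $K_{xy} = e^{-c(x,y)/\eta}$, followed by $-\eta\log(\cdot)$; and $\nr{\psi^{c,\eta}-\bar\psi^{c,\eta}}_{o,\infty}$ is exactly $\tfrac{\eta}{2}$ times the Hilbert (projective) oscillation of $Kw$ versus $K\bar w$.

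The core estimate I would carry out is the following elementary inequality, which replaces the abstract Birkhoff theorem: for any positive vectors $u,v$ indexed by $Y$ with $\max_y (u_y/v_y) / \min_y(u_y/v_y) = e^{R/\eta}$, and any row of the kernel, the ratio of the two weighted sums $\sum_y K_{xy}u_y$ and $\sum_y K_{xy}v_y$ lies between $\min(u/v)$ and $\max(u/v)$, and in fact one can pull these bounds strictly inward by an amount governed by the smallest relative weight $e^{-\nr{c}_{o,\infty}/\eta}$ that any single index $y$ carries. Concretely, if $\theta := \min_{x}\min_y \dfrac{K_{xy}v_y}{\sum_{z}K_{xz}v_z}$, then one obtains $\dfrac{\sum_y K_{xy}u_y}{\sum_y K_{xy}v_y} \le \max_y\tfrac{u_y}{v_y}\cdot\bigl(1-\theta\bigr) + \min_y\tfrac{u_y}{v_y}\cdot\theta$ for every $x$, and symmetrically for the min; subtracting the log of these and using $\tfrac12(\sup - \inf)$ gives the oscillation contraction by $(1-\theta)$ at the level of $w\mapsto Kw$. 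The point is then to check that $\theta \ge e^{-\nr{c}_{o,\infty}/\eta}$ independently of $v$: indeed $\dfrac{K_{xy}v_y}{\sum_z K_{xz}v_z} \ge \dfrac{\min_z K_{xz}}{\max_z K_{xz}}\cdot\dfrac{v_y}{\sum_z v_z}$... hmm — this naive bound also involves $v$. The correct bound uses instead that $K_{xy}/K_{xz} = e^{(c(x,z)-c(x,y))/\eta} \ge e^{-(\sup c - \inf c)/\eta}$ pointwise in $x$, so $K_{xy} \ge e^{-\nr{c}_{\mathrm{osc},\infty}/\eta}\max_z K_{xz} \ge e^{-\nr{c}_{\mathrm{osc},\infty}/\eta} K_{xz}$ for all $z$, whence $\dfrac{K_{xy}v_y}{\sum_z K_{xz}v_z} \ge e^{-\nr{c}_{\mathrm{osc},\infty}/\eta}\cdot\dfrac{v_y}{\sum_z v_z}$ — still not uniform in $v$, but it suffices to note the oscillation contraction argument only needs $\theta$ to be a uniform lower bound on the ratio $\min_y(K_{xy}v_y)/\sum_z(K_{xz}v_z)$, hmm, not quite. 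I would instead follow Vialard's note directly: bound the oscillation of $\log(Kw)$ by comparing, for two indices $x,x'$, the sums $\sum_y K_{xy}w_y$ and $\sum_y K_{x'y}w_y$, using $K_{xy} = K_{x'y}\,e^{(c(x',y)-c(x,y))/\eta}$ with the exponent in $[-2\nr{c}_{o,\infty}/\eta, 2\nr{c}_{o,\infty}/\eta]$, and a weighted-average argument showing $\sum_y K_{xy}w_y \le e^{2\nr{c}_{o,\infty}/\eta}\sum_y K_{x'y}w_y$ can be improved when one already controls the oscillation of $w$. The clean statement is: $\mathrm{osc}(\log Kw) \le (1 - e^{-2\nr{c}_{o,\infty}/\eta})\,\mathrm{osc}(\log w)$, obtained by writing $\log w_y = a + b s_y$ with $s_y\in[0,1]$, $b = \mathrm{osc}(\log w)$, and estimating the log-sum-exp convex combination.

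Once the contraction $\nr{S(\psi^0)-S(\psi^1)}_{o,\infty}\le (1-e^{-2\nr{c}_{o,\infty}/\eta})\nr{\psi^0-\psi^1}_{o,\infty}$ is established, the rest is short. The space $\FYR/\Rsp\one_Y$ equipped with $\nr{\cdot}_{o,\infty}$ is a complete metric space (finite-dimensional, and $\nr{\cdot}_{o,\infty}$ is a genuine norm on the quotient), $S$ descends to it by Proposition~\ref{prop:ceta-transform}(i), and is a strict contraction, so Picard's theorem gives a unique fixed point $\bar\psi$ modulo constants and geometric convergence $\nr{\psi^{(k)} - \bar\psi}_{o,\infty}\le (1-e^{-2\nr{c}_{o,\infty}/\eta})^k\nr{\psi^{(0)}-\bar\psi}_{o,\infty}$. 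By the Correctness Proposition, this fixed point is exactly the (unique up to constant) maximizer of $\Kant^\eta$, i.e.\ of the dual problem~\eqref{eq:dual-eta}; and $\phi^{(k+1)} = (\psi^{(k)})^{c,\eta}$ converges correspondingly by the $1$-Lipschitz property of the regularized $c$-transform. The main obstacle is entirely in the sharp oscillation estimate of the middle paragraph: getting the explicit constant $1-e^{-2\nr{c}_{o,\infty}/\eta}$ rather than just "some contraction factor" requires the careful weighted-average / log-sum-exp bookkeeping, and it is easy to lose a factor of $2$ in the exponent or to end up with a $v$-dependent constant if one is not careful to phrase the estimate purely in terms of oscillations.
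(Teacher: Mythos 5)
Your high-level architecture is the right one and matches the paper's: prove that a single regularized $c$-transform step contracts $\nr{\cdot}_{o,\infty}$, use Proposition~\ref{prop:ceta-transform}(iii) for the other half-step, then conclude by Picard's fixed point theorem on the quotient by constants and identify the fixed point with the maximizer of $\Kant^\eta$ via the correctness proposition; your last paragraph is correct. The problem is that the contraction estimate itself, which is the entire content of the theorem, is never actually established, and the two attempts you sketch both fail. Your weighted-average bound does give $\log\frac{(Ku)(x)}{(Kv)(x)}$ pulled inward by the weight $\theta=\min_{x,y} K_{xy}v_y/\sum_z K_{xz}v_z$, but $\theta$ depends on the potentials: if $\mathrm{osc}(\psi)$ is large, $\theta$ is of order $e^{-(\mathrm{osc}(\psi)-2\nr{c}_{o,\infty})/\eta}$ and tends to $0$, so this route yields no contraction factor that is uniform over $\FYR$ — which you half-acknowledge but do not repair. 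Your fallback ``clean statement'' $\mathrm{osc}(\log Kw)\leq(1-e^{-2\nr{c}_{o,\infty}/\eta})\,\mathrm{osc}(\log w)$ is not the inequality you need and is in fact false as stated: take $w$ constant, so the right-hand side vanishes while the left-hand side is the oscillation in $x$ of $\log\sum_y e^{-c(x,y)/\eta}$, which is nonzero for a generic cost. The statement required is a two-potential one, controlling $\mathrm{osc}_x \log\bigl((Kw_1)(x)/(Kw_0)(x)\bigr)$ by $\mathrm{osc}_y\log\bigl(w_1(y)/w_0(y)\bigr)$, i.e. a Hilbert-metric contraction, and a single-potential smoothing bound cannot substitute for it.

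What closes the gap in the paper (Vialard's argument) is precisely the ingredient absent from your sketch: interpolate $\psi_t=\psi_0+t(\psi_1-\psi_0)$, write the quantity $A(x,x')=(\psi_1^{c,\eta}-\psi_0^{c,\eta})(x)-(\psi_1^{c,\eta}-\psi_0^{c,\eta})(x')$ as $\int_0^1\sca{v}{g_{x,t}-g_{x',t}}\dd t$ with $g_{x,t}$ the Gibbs probability vector with energies $c(x,\cdot)+\psi_t(\cdot)$, and bound $\sum_y\abs{g_{x,t}(y)-g_{x',t}(y)}$ by $2\bigl(1-e^{-2\nr{u_{x,t}-u_{x',t}}_{o,\infty}}\bigr)$ (Lemma~\ref{lemma:gibbs}); the key point is that $u_{x,t}-u_{x',t}=-\frac1\eta(c(x,\cdot)-c(x',\cdot))$ does not involve $\psi_t$ at all, which is what makes the constant uniform in the potentials. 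Alternatively, staying in your kernel framing, you could invoke Birkhoff's theorem: the projective diameter of the rows of $K=e^{-c/\eta}$ is bounded in terms of $\nr{c}_{o,\infty}/\eta$ only, giving a $\psi$-independent Hilbert-metric contraction per application of $K$ — but you explicitly set out to avoid Birkhoff, and the elementary replacement you propose does not deliver a uniform constant. Finally, your opening hesitation (each half-step contributing $e^{-\nr{c}_{o,\infty}/\eta}$ versus one half-step giving the whole factor) should be resolved: in the paper the full factor $1-e^{-2\nr{c}_{o,\infty}/\eta}$ comes from one smoothed $c$-transform alone, the second map being used only as a $1$-Lipschitz map; leaving this bookkeeping open is exactly where the factor-of-$2$ slip you worry about turns into a real loss.
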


\begin{remark}[Other convergence proofs]
The convergence of Sinkhorn-Knopp's algorithm is usually proven
(e.g. in \cite{sinkhorn1967concerning}) using a theorem of Birkhoff
\cite{birkhoff1946tres}. We refer to the recent book by Peyré and
Cuturi \cite{peyre2019computational} for this point of view.  Other
convergence proofs exist, see for instance Berman~\cite{berman2017}
(in the continuous case), and Altschuler, Weed and Rigolet
\cite{altschuler2017near}.
\end{remark}

\begin{remark}[Convergence speed]
  This theorem shows that the Sinkhorn-Knopp algorithm converges with
  linear speed, but the contraction constant has a bad dependency in
  $\eta$.  Denoting $C = \nr{c}_{o,\infty}$, to get an error of $\eps$
  one needs
   $$ (1 - e^{-2C/\eta})^k \leq \eps $$
  $$ \hbox{ i.e. } k \gtrsim e^{2C/\eta}  \log(1/\eps), $$
  where the second inequality holds for small values of $\eta$.
  This bad dependency in $\eta$ seems to be a practical obstacle to
  choosing a very small smoothing parameter. This calls for scaling
  techniques, as for the auction's algorithm, and was considered by
  Schmitzer~\cite{schmitzer2016sparse,schmitzer2019stabilized}.
\end{remark}

\begin{remark}[Implementation]
  The numerical implementation of Sinkhorn-Knopp's algorithm is more
  complicated than it seems:
  \begin{itemize}
    \item In a naive implementation, the computation of the smoothed
      $c$-transforms \eqref{eq:phi-eta}--\eqref{eq:psi-eta} has a cost
      proportional to $\Card(X)\Card(Y)$. This can be alleviated for
      instance when $X=Y$ are grids and when the cost is a
      $\nr{\cdot}_p$ norm, using fast convolution techniques (see
      e.g. \cite{solomon2015convolutional} or \cite[Remark
        4.17]{peyre2019computational}), or when the cost is the
      squared geodesic distance on a Riemannian manifold
      \cite{crane2013geodesics,solomon2015convolutional}.
    \item The convergence speed can be slow when the supports of the
      data $X,Y$ are ``far'' from each other, and when $\eta$ is
      small. This difficulty is cirvumvented using the $\eta$-scaling
      techniques mentioned above, often combined with multi-scale
      (coarse-to-fine) strategies, studied in this context by Benamou,
      Carlier and Nenna \cite{benamou2016numerical} and Schmitzer
      \cite{schmitzer2016sparse}.
    \item Finally, some numerical difficulties (divisions by zero) can
      occur when $\eta$ is small and the potential $\psi$ is far from
      the solution.
  \end{itemize}
  The book of Cuturi and Peyré present these difficulties in more
  details and explain how to circumvent them
  \cite{peyre2019computational}. In addition to the works already
  cited, we refer to the PhD work of Feydy
  \cite{charlier2017efficient,feydy2019fast}, and especially to the
  implementation of regularized optimal transport in the library
  GeomLoss\footnote{\url{https://www.kernel-operations.io/geomloss/}}.
\end{remark}

In order to prove this theorem, we will make use of the following
elementary lemma, giving an upper bound on the $\LL^1$ distance
between two Gibbs kernels $e^{u_i}/Z_i$ for $i\in\{0,1\}$ as a
function of $\nr{u_1 - u_0}_{o,\infty}$.

\begin{lemma} \label{lemma:gibbs}
  Let $u_0,u_1$ be two functions on $Y$ and denote $g_i = e^{u_i}/Z_i$
  where $Z_i = \sum_{y\in Y}e^{u_i(y)}$. Then,
  $$\sum_{y\in Y} \abs{g_1(y) - g_0(y)} \leq 2(1- e^{-2\nr{u_0 - u_1}_{o,\infty}}). $$
\end{lemma}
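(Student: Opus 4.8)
The plan is to exploit the fact that $g_0$ and $g_1$ are probability vectors on the finite set $Y$, and to use the elementary identity
$$ \sum_{y\in Y} \abs{g_1(y) - g_0(y)} = 2\Bigl(1 - \sum_{y\in Y}\min(g_0(y),g_1(y))\Bigr), $$
which follows from $\sum_{y\in Y}(g_1(y) - g_0(y)) = 0$ by splitting $g_1 - g_0$ into its positive and negative parts. Thus it suffices to bound the overlap from below, namely to show that $\sum_{y\in Y}\min(g_0(y),g_1(y)) \geq e^{-2\nr{u_0 - u_1}_{o,\infty}}$.

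To do this, set $v = u_1 - u_0$, so that $\nr{u_0 - u_1}_{o,\infty} = \tfrac12(\sup v - \inf v)$, and write $g_1(y) = \lambda\, e^{v(y)} g_0(y)$ with $\lambda = Z_0/Z_1$. Expanding $Z_1 = \sum_{y\in Y} e^{u_0(y)}e^{v(y)}$ gives $\lambda^{-1} = \sum_{y\in Y} g_0(y) e^{v(y)}$, which is an average of the numbers $e^{v(y)}$ against the probability vector $g_0$; hence $e^{\inf v}\leq \lambda^{-1}\leq e^{\sup v}$, i.e. $\lambda \geq e^{-\sup v}$.

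Consequently, for every $y\in Y$ we have $\lambda e^{v(y)} \geq e^{\inf v - \sup v}$, and since this exponent is nonpositive, $\min(1,\lambda e^{v(y)}) \geq e^{\inf v - \sup v} = e^{-2\nr{u_0 - u_1}_{o,\infty}}$. As $\min(g_0(y),g_1(y)) = g_0(y)\min(1,\lambda e^{v(y)})$ and $\sum_{y\in Y} g_0(y) = 1$, summing gives $\sum_{y\in Y}\min(g_0(y),g_1(y)) \geq e^{-2\nr{u_0 - u_1}_{o,\infty}}$, and the identity above yields the claim. I do not expect a genuine obstacle here; the only mildly delicate point is the two-sided bound on the normalizing ratio $\lambda$, which is precisely where the oscillation pseudo-norm $\nr{\cdot}_{o,\infty}$ (rather than $\nr{\cdot}_\infty$) enters naturally — consistent with the fact that $g_0,g_1$ are unchanged when a constant is added to $u_0$ or $u_1$.
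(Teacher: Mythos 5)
Your proof is correct, but it is assembled differently from the one in the paper. The paper first adds a constant so that $\nr{u_0-u_1}_{o,\infty}=\nr{u_0-u_1}_\infty=:\eps$ (using the invariance of $g_i$ under addition of constants), derives the two-sided pointwise bound $e^{-2\eps}g_0\leq g_1\leq e^{2\eps}g_0$, deduces $\abs{g_1-g_0}\leq (1-e^{-2\eps})(g_0+g_1)$, and sums. You instead use the exact total-variation identity $\sum_y\abs{g_1(y)-g_0(y)}=2\bigl(1-\sum_y\min(g_0(y),g_1(y))\bigr)$ and lower-bound the overlap, which only requires the one-sided estimate $g_1\geq e^{-2\nr{u_0-u_1}_{o,\infty}}g_0$; moreover you obtain the bound on the normalizing ratio $\lambda=Z_0/Z_1$ by viewing $\lambda^{-1}=\sum_y g_0(y)e^{v(y)}$ as a $g_0$-average of $e^{v}$, so the oscillation $\sup v-\inf v$ enters directly and no preliminary normalization is needed. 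What each approach buys: the paper's route is slightly shorter once the normalization is accepted, and the pointwise sandwich $e^{-2\eps}g_0\leq g_1\leq e^{2\eps}g_0$ is reusable elsewhere; your route is sharper in spirit (the TV-overlap identity is an equality, so the only loss is in the ratio bound), avoids the WLOG step, and makes transparent why the pseudo-norm $\nr{\cdot}_{o,\infty}$ rather than $\nr{\cdot}_\infty$ is the natural quantity. Both yield exactly the constant $2(1-e^{-2\nr{u_0-u_1}_{o,\infty}})$ needed for the contraction estimate in Theorem~\ref{th:cv-sinkhorn-vialard}.
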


\begin{proof}
  Note that by definition the Gibbs kernel $g_i$ does not change if a
  constant is added to $u_i$, so that we can assume that
  $$\eps := \nr{u_0 - u_1}_{o,\infty} = \nr{u_0 - u_1}_{\infty}. $$
  Using the inequality 
  $u_0 - \eps \leq u_1 \leq u_0 + \eps,$
  one easily shows that 
  $$ e^{-2\eps} \frac{e^{u_0}}{Z_0} \leq \frac{u_1}{Z_1} \leq e^{2\eps} \frac{e^{u_0}}{Z_0},$$
  thus implying $e^{-2\eps} g_0 \leq g_1 \leq e^{2\eps} g_0$.
  This gives
  $$ \begin{cases}
    (e^{-2\eps}-1) g_0 \leq g_1 - g_0\\
    (e^{-2\eps}-1) g_1 \leq g_0 - g_1,
    \end{cases}
  $$
  thus implying
  $$ \abs{g_1 - g_0} \leq (1 - e^{-2\eps}) \max(g_0, g_1) \leq (1 -
  e^{-2\eps})(g_0+g_1). $$ Summing this inequality over $Y$ and using
  $\sum_Y g_i = 1$, we obtain the desired inequality.
\end{proof}

\begin{proof}[Proof of Theorem~\ref{th:cv-sinkhorn-vialard}]
   Consider $\psi_0,\psi_1\in\Rsp^Y$ and $\psi_t = \psi_0 + t v$ with
   $v=\psi_1-\psi_0$. Without loss of generality, we assume that the
   functions $\psi_0,\psi_1$ are translated by a constant so that
   $\nr{\psi_0 - \psi_1}_\infty = \nr{\psi_0 - \psi_1}_{o,\infty}$. We
   will first give an upper bound on $\nr{\psi_1^{c,\eta} -
     \psi_0^{c,\eta}}_{o,\infty}$, and to do that we will give an upper bound on
  $$ A(x,x') = (\psi_1^{c,\eta}(x) - \psi_0^{c,\eta}(x)) -
   (\psi_1^{c,\eta}(x') - \psi_0^{c,\eta}(x')) $$ which is independent
   of $x,x' \in X$. For this purpose, we introduce
  $$B(t,x,x') = - \eta\log\left(\sum_{y \in Y} e^{\frac{1}{\eta}(-c(x,y) - \psi_t(y))}\right)
+ \eta \log\left(\sum_{y \in Y} e^{\frac{1}{\eta}(-c(x',y) - \psi_t(y))}\right),$$
and
  $$ g_{x,t}(y) = \frac{e^{\frac{1}{\eta}(-c(x,y) -
    \psi_t(y))}}{\sum_{z \in Y} e^{\frac{1}{\eta}(-c(x,z) -
    \psi_t(z))}}.$$ Then, recalling the definition of
$\psi_t^{c,\eta}$ in Eq.~\eqref{eq:psi-eta},
  \begin{align*}
    A(x,x') &= B(1,x,x') - B(0,x,x')\\
    &= \int_0^1 \partial_t B(t,x,x')\dd t 
    = \int_{0}^1 \sca{v}{g_{x,t} - g_{x',t}}_{\Rsp^Y} \dd t, \\
    &\leq \nr{v}_\infty \int_{0}^1 \sum_{y \in Y} \abs{g_{x,t}(y) - g_{x',t}(y)} \dd t 
  \end{align*}
 Then, by the previous lemma (Lemma~\ref{lemma:gibbs}) and setting
 $u_{x,t}(y) = -\frac{1}{\eta}(c(x,y) + \psi_t(y))$, so that $g_{x,t}
 = e^{u_{x,t}}/Z_{x,t}$ with $Z_{x,t} =\sum_Y g_{x,t}$, we obtain
 $$
 \begin{aligned}
   A(x,x') &\leq 2\nr{v}_\infty \int_{0}^1 1-e^{-2\nr{u_{x,t} - u_{x',t}}_{o,\infty}}\dd t\\
   &\leq 2 \nr{\psi_1 - \psi_0}_{o,\infty} (1-e^{-2\nr{u_{x,t} - u_{x',t}}_{o,\infty}})
   \end{aligned} $$
  In addition,
  $$ \nr{u_{x,t} - u_{x',t}}_{o,\infty} \leq \frac{\nr{c}_{o,\infty}}{\eta}. $$
  We therefore obtain
  $$
  \begin{aligned}
    \nr{\psi^{c,\eta}_1 - \psi^{c,\eta}_0}_{o,\infty}
    &\leq \frac{1}{2} \sup_{x,x'\in X} A(x,x') \\
    &\leq  \nr{\psi_1 - \psi_0}_{o,\infty} \left(1-e^{-2\frac{\nr{c}_{o,\infty}}{\eta}}\right).
  \end{aligned}$$
  We conclude the proof of the contraction inequality by remarking
  that the map $\phi \mapsto \phi^{\bar{c},\eta}$ is $1$-Lipschitz,
  thanks to Proposition~\ref{prop:ceta-transform}.(iii).
\end{proof}

\section{Semi-discrete optimal transport}
In this part, we consider the semi-discrete optimal transport problem,
where the source measure is a probability density and the target is a
finitely supported measure.  We start by introducing in
Section~\ref{sec:semidiscrete} the framework of semi-discrete optimal
transport, showing its connection with the notion of Laguerre tessellation
in discrete geometry. We study in detail the regularity of
Kantorovitch functional $\Kant$ in this setting, in connection with algorithms for solving the semi-discrete optimal transport problem:
\begin{itemize}
\item In Section~\ref{sec:olikerprussner}, we show convergence of the \emph{coordinate-wise
increment} algorithm introduced by Oliker and Pr\"{u}ssner using  the
Lipschiz-continuity of the gradient $\nabla \Kant$.
\item In Section~\ref{sec:newton} a damped Newton method and prove its
convergence from a $C^1$-regularity and monotonicity property
of the gradient $\nabla \Kant$.
\item Finally, we consider in Section~\ref{sec:rot_semi-discrete} the
entropic regularization of the semi-discrete optimal transport problem
and its relation to unregularized semi-discrete optimal transport.

\end{itemize}


\subsection{Formulation of semi-discrete optimal transport}
\label{sec:semidiscrete}
Our working assumptions for this section are the following:
\begin{itemize} 
\item $\Omega_X,\Omega_Y$ are two open subsets of $\Rsp^d$. The cost
function $c\in \Class^1(\Omega_X\times \Omega_Y)$ satisfies the twist condition
introduced in Definition~\ref{def:twist}.
\item the source measure $\rho$ is absolutely continuous with respect
  to the Lebesgue measure on $\Omega_X$ and its support is contained
  in a compact subset $X$ of $\Omega_X$. \textbf{When writing $\rho\in
    \Probac(X)$ we always mean that $\rho$ belongs to
    $\Probac(\Omega_X)$ with $\spt(\rho)\subseteq X$.}
\item the target space $Y$ is finite so that $\nu \in \Prob(Y)$ can be
  written under the form $\nu = \sum_{y\in Y} \nu_y\delta_y$. For
  simplicity, we assume that $\min_y \nu_y > 0$.
\end{itemize}
Note that by an abuse of notation, we will often conflate $\rho$ with
its density with respect to the Lebesgue measure. 

\subsubsection{Laguerre tessellation}\label{sec:laguerre}
In the semi-discrete setting, the dual of Kantorovich's relaxation can
be conveniently phrased using the notion of Laguerre tessellation, a
variant of the Voronoi tesselation. This connection was already known
and used in the 1980s and 1990s, see for instance Cullen--Purser
\cite{cullen1984extended}, Aurenhammer--Hoffman--Aronov
\cite{aurenhammer1998minkowski} or
Gangbo-McCann~\cite{gangbo1996geometry}, Caffarelli--Kochengin--Oliker
\cite{caffarelli1999problem}. Large-scale numerical implementations
are more recent, starting in the 2010s, see
e.g. \cite{merigot2011multiscale,de2012blue,gu2013variational,kitagawa2014iterative,
  levy2015numerical,kitagawa2016newton,deleo2017numerical,hartmann2017semi,
  degournay2018discrete,degournay2019differentiation}.
%
%
To explain the connection, we start with an economic metaphor. Assume
that the probability density $\rho$ describes the population
distribution over a large city $\Omega_X$, and that the finite set $Y$
describes the location of bakeries in the city. Customers living at a
location $x$ in $\Omega_X$ try to minimize the walking cost $c(x,y)$,
resulting in a decomposition of the space called a Voronoi
tessellation. The number of customers received by a bakery $y\in Y$ is
equal to the integral of $\rho$ over its Voronoi cell,
$$\Vor_y := \{ x\in \Omega_X \mid\forall z \in Y, c(x,y) \leq c(x,z)
\}. $$ If the price of bread is given by a function $\psi: Y\to\Rsp$,
customers living at location $x$ in $X$ make a compromise between
walking cost and price by minimizing the sum $c(x,y) + \psi(y)$. This
leads to the notion of Laguerre tessellation.

\begin{definition}[Laguerre tessellation]
The Laguerre tessellation associated to a set of prices $\psi:
Y\to \Rsp$ is a decomposition of the space into \emph{Laguerre cells}
defined by
\begin{equation}
  \Lag_y(\psi) := \{ x\in \Omega_X \mid \forall z \in Y, c(x,y)
  + \psi(y) \leq c(x,z) + \psi(z) \}.
\end{equation}
More generally, for any distinct $y_1,\hdots,y_\ell \in Y$, we denote the common facet between
the Laguerre cells $\Lag_{y_i}(\psi)$ by
\begin{equation} \label{eq:Lagyz}
\Lag_{y_1\hdots y_\ell}(\psi) = \bigcap_{1\leq i\leq \ell} \Lag_{y_i}(\psi)
\end{equation}
\end{definition}

\begin{figure}
    \begin{center}
      \includegraphics[width=\textwidth]{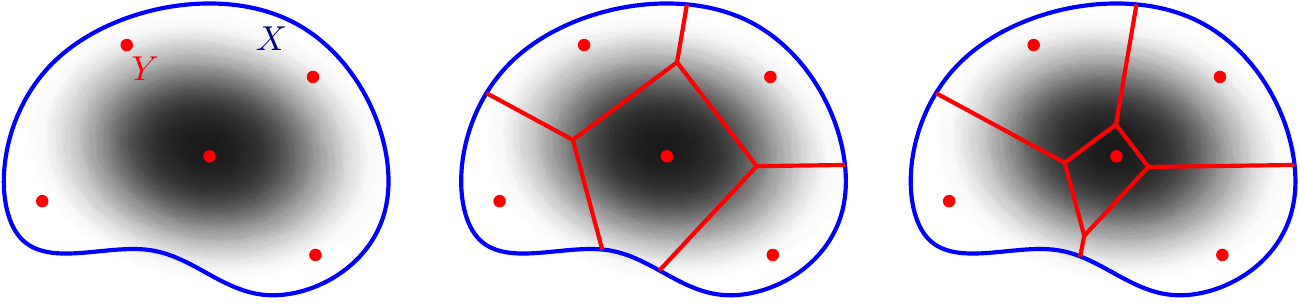}
      \caption{(Left) The domain $X$ (with boundary in blue) is
        endowed with a probability density pictured in grayscale
        representing the density of population in a city. The set $Y$
        (in red) represents the location of bakeries. Here,
        $X,Y\subseteq \Rsp^2$ and $c(x,y) = |x-y|^2$ (Middle) The
        Voronoi tessellation induced by the bakeries (Right) The
        Laguerre tessellation: the price of bread the bakery near the 
        center of $X$ is higher than at the other bakeries,
        effectively shrinking its Laguerre cell.\label{fig:laguerre}}
      \end{center}
\end{figure}

We will also frequently consider the following hypersurfaces/halfspaces:
\begin{equation}
\begin{aligned} \label{eq:Hyz}
H_{yz}(\psi) &= \{x \in \Omega_X \mid c(x,y) + \psi(y) = c(x,z) + \psi(z) \},\\
H^\leq_{yz}(\psi) &= \{x \in \Omega_X \mid c(x,y) + \psi(y) \leq c(x,z) + \psi(z) \},
\end{aligned}
\end{equation}
which are defined so that
\begin{equation}\label{eq:LagH}
\begin{aligned}
&\forall y\in Y, \Lag_y(\psi) = \cap_{z \in Y\setminus\{y\}} H_{yz}^\leq(\psi)\\
&\forall y,z\in Y, \Lag_{yz}(\psi) \subseteq H_{yz}(\psi)
\end{aligned}
\end{equation}

\begin{remark} For the quadratic cost  $c(x,y) = \nr{x-y}^2$, one has
\begin{align*}
&c(x,y) + \psi(y) \leq c(x,z) + \psi(z) \\
&\Longleftrightarrow \sca{x}{z-y} \leq \frac{1}{2}(\psi(z) + \nr{z}^2 - (\psi(y) - \nr{y}^2)),
\end{align*}
which easily implies that the Laguerre cells are convex polyhedra
intersected with the domain $\Omega_X$. Introducing $\tilde{\psi}(z) = \frac{1}{2}(\psi(z) + \nr{z}^2)$, one has 
$$ \Lag_{y}(\psi) = \{ x\in \Omega_X \mid \forall z\in Y,
~\sca{x}{z-y} \leq \tilde{\psi}(z) - \tilde{\psi}(y) \}.$$ 
As a direct
consequence, the intersection of two distinct Laguerre cells is
contained in an hyperplane and is therefore Lebesgue negligible. If in
addition $\psi \equiv 0$, then the Laguerre tessellation coincides
with the Voronoi tessellation.  The shape of the Voronoi and Laguerre
tessellations is depicted in Figure~\ref{fig:laguerre}.
\end{remark}

The following proposition shows that Laguerre tessellations can be used to
build optimal transport maps.

\begin{proposition}   \label{prop:OT}
Under the twist condition (Def.~\ref{def:twist}), the intersection of
two distinct Laguerre cells $\Lag_y(\psi)\cap \Lag_z(\psi)$ ($y\neq
z$) is Lebesgue-negligible, and the map
$$ T_\psi: x\in \Omega_X\mapsto \arg\min_{y\in Y} c(x,y)+ \psi(y)$$ is
well-defined Lebesgue almost-everywhere. In addition for any
$\psi\in\Rsp^Y$ and any $\rho \in \Probac(X)$, $T_\psi$ is an optimal
transport map for the cost $c$ between $\rho$ and the measure
\begin{equation}\label{eq:nupsi}
\nu_\psi := T_{\psi \#} \rho = \sum_{y\in Y} \rho(\Lag_y(\psi)) \delta_y.
\end{equation}
\end{proposition}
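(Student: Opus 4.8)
The plan is to prove the three assertions — negligibility of the common facets, well-definedness of $T_\psi$, and optimality — in turn, the first being the only one that really uses the twist condition.

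\textbf{Negligibility of the facets.} Fix $y\neq z$ in $Y$. By \eqref{eq:LagH} one has $\Lag_y(\psi)\cap\Lag_z(\psi)\subseteq H_{yz}(\psi)$, so it suffices to show that $H_{yz}(\psi)$ is Lebesgue-negligible. Writing $g:=c(\cdot,y)-c(\cdot,z)\in\Class^1(\Omega_X)$, we have $H_{yz}(\psi)=g^{-1}(\psi(z)-\psi(y))$. The twist condition (Definition~\ref{def:twist}) asserts that $w\mapsto\nabla_x c(x,w)$ is injective for every $x\in\Omega_X$, hence $\nabla g(x)=\nabla_x c(x,y)-\nabla_x c(x,z)\neq 0$ for all $x$. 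Thus $g$ is a $\Class^1$ submersion, every value is regular, and by the implicit function theorem each level set of $g$ is locally the graph of a $\Class^1$ function of $d-1$ variables, hence of $d$-dimensional Lebesgue measure zero (cover $\Omega_X$ by countably many such charts). Since $Y$ is finite, $\bigcup_{y\neq z}\Lag_y(\psi)\cap\Lag_z(\psi)$ is a finite union of null sets.

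\textbf{The map $T_\psi$.} A point $x$ at which $\arg\min_{y\in Y} c(x,y)+\psi(y)$ fails to be a singleton lies in some $H_{yz}(\psi)$ with $y\neq z$, hence in the null set just exhibited; so $T_\psi$ is well defined Lebesgue-a.e. Since $T_\psi\equiv y$ on the open strict cell $\SLag_y(\psi)$ (see \eqref{eq:SLag0}) and $\bigcup_y\SLag_y(\psi)$ has full Lebesgue measure, after modification on a null set $T_\psi$ is Borel measurable.

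\textbf{Optimality.} Set $\phi:=\psi^c$, i.e.\ $\phi(x)=\min_{y\in Y} c(x,y)+\psi(y)$, which is continuous because $Y$ is finite. For a.e.\ $x$, $T_\psi(x)$ realizes this minimum, so $c(x,T_\psi(x))+\psi(T_\psi(x))=\phi(x)$. Using the push-forward formula for maps into a finite set, together with $T_\psi^{-1}(\{y\})=\SLag_y(\psi)$ up to a null set and $\rho(\Lag_y(\psi)\setminus\SLag_y(\psi))=0$ (again by the facet argument), we get $T_{\psi\#}\rho=\sum_{y\in Y}\rho(\Lag_y(\psi))\delta_y=\nu_\psi$, so $T_\psi$ is a transport map between $\rho$ and $\nu_\psi$. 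For optimality, take any $T$ with $T_\#\rho=\nu_\psi$. From $c(x,T(x))=\bigl(c(x,T(x))+\psi(T(x))\bigr)-\psi(T(x))\geq\phi(x)-\psi(T(x))$ and the change of variables $\int\psi(T(x))\dd\rho(x)=\sca{\psi}{T_\#\rho}=\sca{\psi}{\nu_\psi}$, we obtain
$$\int_{\Omega_X} c(x,T(x))\dd\rho(x)\;\geq\;\sca{\phi}{\rho}-\sca{\psi}{\nu_\psi},$$
with equality when $T=T_\psi$, by the displayed identity. Hence $T_\psi$ minimizes the Monge cost among transport maps from $\rho$ to $\nu_\psi$. (Alternatively, one may note that $\gamma_{T_\psi}$ is supported in $\partial^c\psi$ and invoke Proposition~\ref{prop:complementary-slackness}.)

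The main — and essentially only non-routine — obstacle is the first step: translating the twist condition into negligibility of the common facets via the regular-value theorem. Once that is in place, the rest reduces to the measure-theoretic bookkeeping of Step 2 and the one-line "easy half" of Kantorovich duality in Step 3.
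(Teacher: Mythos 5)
Your proof is correct and, for the first two assertions, follows exactly the paper's route: the twist condition makes $\nabla\bigl(c(\cdot,y)-c(\cdot,z)\bigr)$ nonvanishing, so each common facet sits inside a level set of a $\Class^1$ submersion and is Lebesgue-negligible, whence $T_\psi$ is well defined a.e. The only divergence is in the optimality step: your main computation quantifies only over transport \emph{maps} $T$ with $T_\#\rho=\nu_\psi$, whereas the paper proves the stronger statement by integrating the very same pointwise inequality $c(x,T_\psi(x))+\psi(T_\psi(x))\leq c(x,y)+\psi(y)$ against an arbitrary \emph{plan} $\gamma\in\Gamma(\rho,\nu_\psi)$ and using $\Pi_{Y\#}\gamma=\nu_\psi$ to cancel the $\psi$-terms; your argument extends verbatim if you replace $\dd\rho(x)$ by $\dd\gamma(x,y)$. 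Your parenthetical alternative (equality $\gamma_{T_\psi}$-a.e.\ in $\phi\ominus\psi\leq c$ with $\phi=\psi^c$, then Proposition~\ref{prop:complementary-slackness}) also delivers optimality among all plans, so nothing is missing; it simply routes through the duality proposition rather than the self-contained two-line integration the paper uses.
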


\begin{proof}  By Equation \eqref{eq:LagH}, one has 
$$ \Lag_y(\psi) \cap \Lag_z(\psi) = \Lag_{yz}(\psi) \subseteq  f^{-1}(\{0\}),$$ where we
have set $f(x) = c(x,y) - c(x,z) + \psi(y) - \psi(z)$. By the twist
condition, $\nabla f(x) \neq 0$ for all $x\in \Omega_X$, implying that
the set $f^{-1}(\{0\})$ is a $(d-1)$-submanifold and is in
particular Lebesgue-negligible. This easily implies that $T_\psi$ is
well-defined.

Let us now prove optimality of $T_\psi$ in the optimal transport
problem between $\rho$ and $T_{\psi\#}\rho$. By definition of
$T_\psi$, one has
$$ \forall (x,y) \in X,~~ c(x,T_\psi(x)) + \psi(T_\psi(x)) \leq c(x,y)
  + \psi(y).$$ Let $\gamma$ be a transport plan between $\rho$ and
  $\nu_\psi$. Integrating the above inequality with respect to $\gamma$
  gives
$$ \int_{X} (c(x,T_\psi(x))
  + \psi(T_\psi(x)))  \rho(x)\dd x \leq \int_{X\times Y} (c(x,y)
  + \psi(y)) \dd \gamma(x,y),$$ where we have used $\Pi_{X\#} \gamma
  = \rho$ to simplify the left-hand side. Since $\nu = \Pi_{Y\#}\gamma
  = T_{\psi\#} \rho$,  applying change of variable formulas we get
$$ \int_{X \times Y} \psi(y) \dd\gamma(x,y) = \int_Y \psi(y) \dd \nu
  = \int_X \psi(T_\psi(x))  \rho(x) \dd x $$ Substracting this equality
  from the inequality above shows that the map $T_\psi$ is optimal in
  the optimal transport :
\begin{equation*} \int_X c(x,T_\psi(x)) \rho(x) \dd  x \leq
\int_{X\times Y} c(x,y)\dd \gamma(x,y)\qedhere
  \end{equation*}
\end{proof}

\subsubsection{Monge-Ampère equation}
Proposition~\ref{prop:OT} implies that any map $T_\psi$ induced by a Laguerre
tessellation of the domain solves the optimal transport between $\rho$
and the image measure $\nu_\psi = T_{\psi\#}\rho$. From now on, we
will denote
\begin{equation} \label{eq:G}
\begin{aligned}
G_y: \Rsp^Y\to \Rsp, \psi \mapsto \rho(\Lag_y(\psi)) \\
G: \Rsp^Y\to \Rsp^Y, \psi \mapsto (y\mapsto G_y(\psi)).
\end{aligned}
\end{equation}
In the bakery analogy, the function $G_y(\psi)$ measures the number of
customers for the bakery $y$ given a family of prices $\psi\in\Rsp^Y$,
and $G:\Rsp^Y\to\Rsp^Y$ maps a family of prices to a distribution of
customers among the bakeries. By \eqref{eq:nupsi}, one has
$$\nu_\psi = \sum_{y\in Y}
G_y(\psi) \delta_y.$$
For simplicity, we consider $\Prob(Y)$ as a
subset of $\Rsp^Y$, conflating a probability measure $\nu = \sum_{y\in
Y} \nu_y \delta_y$ with the function $\nu: y\mapsto \nu_y$. Then,
$T_\psi$ is an optimal transport map between $\rho$ and $\nu$ iff
$T_{\psi\#}\rho = \nu$ iff
\begin{equation}
\begin{aligned}
\label{eq:DMA}
G(\psi) = \nu.
\end{aligned}
\end{equation}
In other words, we have transformed the optimal transport problem into
a finite-dimensional non-linear system of equations \eqref{eq:DMA}.

\begin{remark}[Relation to subdifferential and Monge-Ampère equation]
Assume that $X=\Omega_X = \Rsp^d$ and that $c(x,y) =
-\sca{x}{y}$. Then,
$$
\begin{aligned}
  \Lag_y(\psi)
  &= \{ x \in \Omega_X\mid \forall z\in Y,~-\sca{x}{y} + \psi(y) \leq -\sca{x}{z} + \psi(z) \} \\
  &= \{ x \in \Omega_X \mid \forall z\in Y,~ \psi(z) \geq \sca{x}{z-y} + \psi(y)  \}
\end{aligned}
$$ Denote $\hat{\psi}$ the convex envelope of $\psi$, which can be
defined using the double Legendre-Fenchel transform by
$$\phi(x) = \max_{y\in Y} \sca{x}{y} - \psi(y), $$
$$\hat{\psi}(z) = \max_{x\in X} \sca{x}{y} - \phi(x). $$ Then the
Laguerre cells defined above agree with the subdifferential of
$\hat{\psi}$, i.e. $\Lag_y(\psi) =  \partial \hat{\psi}(y).$
Moreover, in the context of Monge-Ampère equations,  the (infinite) measure
$$ \sum_{y\in Y} \Leb(\Lag_{y}(\psi)) \delta_y = \sum_{y\in Y}
\Leb(\partial \hat{\psi}(y)) \delta_y$$ is called the
\emph{Monge-Ampère measure} of the function $\hat{\psi}$
\cite{gutierrez2001monge}. Semi-discrete techniques can also be
applied to the numerical resolution of Monge-Ampère equations (with
e.g. Dirichlet boundary conditions). We refer the reader to the
pioneering work of Oliker-Prussner \cite{oliker1989numerical} and  to the
survey by Neilan, Salgado and Zhang \cite{neilan2019monge}.
\end{remark}

\begin{remark}[Lack of uniqueness]
The solution $\psi$ to $G(\psi) = \nu$ is never unique, because $G$ is
invariant under addition of a constant (see
Proposition~\ref{prop:G}-\ref{prop:G:constant}). When $\spt(\rho)$ is
disconnected there might also exist two solutions $\psi^0,\psi^1$ to
$G(\psi^i) = \nu$ such that $\psi^0 - \psi^1$ is not constant. Take $X
= [-1,1]$, $Y = \{-1,1\}$, choose $c(x,y)= (x-y)^2$ and
$$ \rho = \one_{[-1,-\frac{1}{2}] \cup [\frac{1}{2},1]} \qquad \nu
= \frac{1}{2}(\delta_{-1} + \delta_{1}), $$ A computation shows that
if $\abs{\psi(1) - \psi(-1)} \leq
2$, then $G(\psi) = \nu$.
\end{remark}

The existence of solutions to \eqref{eq:DMA} and the algorithms that
one can use to solve this system depend crucially on the properties of
the function $G$. In the next proposition, we denote $(\one_y)_{y\in
Y}$ the canonical basis of $\Rsp^Y$, i.e. $\one_y(z) = 1$ if $y=z$ and
$0$ if not. We also denote $\one_Y$ the constant function on $Y$ equal to $1$. 
On $\Rsp^Y$ we consider two norms:
$$ \nr{\psi} = \sqrt{\sum_{y\in Y} \abs{\psi(y)}^2} \quad \hbox{ and }
\quad \nr{\psi}_\infty = \max_{y\in Y} \abs{\psi(y)}.$$ We will often
use the notation $R$, which measures the oscillation of the cost
function:
\begin{equation} \label{eq:def-R}
R:= \max_{X\times Y} c - \min_{X\times Y} c,
\end{equation}

\begin{proposition}\label{prop:G}
Assume  $c$ is twisted (Def.~\ref{def:twist}) and $\rho\in\Probac(X)$. Then,
\begin{enumi}
\item \label{prop:G:monotone1}
$\forall y\in Y,\forall t\geq 0$, $G_y(\psi + t \one_y) \leq
G_y(\psi)$,
\item \label{prop:G:monotone2}
$\forall y\neq z\in Y,\forall t\geq 0$, $G_y(\psi + t \one_z) \geq G_y(\psi)$,
\item \label{prop:G:constant} $\forall \psi\in\Rsp^Y,\forall t\in \Rsp,~G(\psi + t\one_Y) = G(\psi)$,
\item \label{prop:G:proba} $\forall \psi\in\Rsp^Y, G(\psi)\in \Prob(Y)$,
\item \label{prop:G:empty}
if $\psi\in \Rsp^Y$ is such that $G_{y_0}(\psi) > 0$, then $\psi(y_0) \leq \min_Y \psi + R$,
\item \label{prop:G:allnonempty}
if $\psi\in \Rsp^Y$ is such that $G_{y}(\psi) > 0$ for every $y\in Y$, then\\ $\max_Y\psi - \min_Y \psi \leq R$,
\item $G$ is continuous,
\end{enumi}
where $R=\max_{X\times Y} c - \min_{X\times Y} c$. 
\end{proposition}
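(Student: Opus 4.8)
The plan is to prove the seven items in the stated order, since items (iv)–(vii) lean on (i)–(iii) and on Proposition~\ref{prop:OT}. Items (i)–(iii) are immediate from the definition of $\Lag_y(\psi)$. For (i), raising $\psi(y)$ by $t\geq 0$ only tightens each of the defining inequalities $c(x,y)+\psi(y)+t\leq c(x,z)+\psi(z)$ ($z\neq y$), so $\Lag_y(\psi+t\one_y)\subseteq\Lag_y(\psi)$ and monotonicity of $\rho$ gives $G_y(\psi+t\one_y)\leq G_y(\psi)$. For (ii), raising $\psi(z)$ with $z\neq y$ relaxes exactly the single constraint indexed by $z$ in the definition of $\Lag_y(\psi)$ and leaves the others untouched, whence $\Lag_y(\psi)\subseteq\Lag_y(\psi+t\one_z)$ and $G_y(\psi)\leq G_y(\psi+t\one_z)$. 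For (iii), adding the same constant to every coordinate of $\psi$ leaves every inequality $c(x,y)+\psi(y)\leq c(x,z)+\psi(z)$ unchanged, hence fixes all Laguerre cells and therefore $G$.

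For (iv), each $x\in\Omega_X$ belongs to $\Lag_y(\psi)$ for any $y$ minimizing the finite family $z\mapsto c(x,z)+\psi(z)$, so the cells $(\Lag_y(\psi))_{y\in Y}$ cover $\Omega_X$; by Proposition~\ref{prop:OT} their pairwise intersections are Lebesgue-negligible, hence $\rho$-negligible since $\rho\in\Probac(X)$, so $\sum_{y\in Y}G_y(\psi)=\rho(\Omega_X)=1$, and each $G_y(\psi)\geq0$, giving $G(\psi)\in\Prob(Y)$. For (v), if $G_{y_0}(\psi)>0$ then $\Lag_{y_0}(\psi)\neq\emptyset$; picking $x\in\Lag_{y_0}(\psi)$ and a point $z_0$ at which $\psi$ attains its minimum, the defining inequality at $z_0$ reads $c(x,y_0)+\psi(y_0)\leq c(x,z_0)+\psi(z_0)$, so $\psi(y_0)\leq\min_Y\psi+\bigl(c(x,z_0)-c(x,y_0)\bigr)\leq\min_Y\psi+R$. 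Item (vi) follows by taking the maximum over $y_0\in Y$ in the bound from (v).

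For (vii), the plan is to first establish upper semicontinuity of each $G_y$ and then upgrade to continuity using (iv). If $\psi^n\to\psi$, any $x$ lying in $\Lag_y(\psi^n)$ for infinitely many $n$ satisfies all the defining inequalities in the limit, so $\limsup_n\Lag_y(\psi^n)\subseteq\Lag_y(\psi)$; the reverse Fatou lemma (valid since $\rho$ is finite) then yields $\limsup_n G_y(\psi^n)\leq\rho(\Lag_y(\psi))=G_y(\psi)$, i.e. $G_y$ is upper semicontinuous. Since $\sum_{y\in Y}G_y\equiv1$ by (iv) and $Y$ is finite, each $G_{y_0}=1-\sum_{y\neq y_0}G_y$ is also lower semicontinuous, hence continuous.

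Almost all of this is bookkeeping; the one step demanding a little care is (vii). A direct alternative, perhaps more transparent, is dominated convergence: one checks that $\one_{\Lag_y(\psi^n)}(x)\to\one_{\Lag_y(\psi)}(x)$ for every $x$ outside the facets $\bigcup_{z\neq y}\Lag_{yz}(\psi)$ (if $x$ lies in the strict cell the strict inequalities persist for $\psi^n$ near $\psi$; if $x\notin\Lag_y(\psi)$ some inequality fails strictly and keeps failing), and these facets are contained in $(d-1)$-submanifolds by the twist condition via the proof of Proposition~\ref{prop:OT}, hence Lebesgue- and therefore $\rho$-negligible. Either route closes the argument.
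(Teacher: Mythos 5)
Your argument is correct, and for items (i)--(vi) it is essentially the paper's: (i)--(iii) are read off from the definition of the Laguerre cells, (iv) comes from Proposition~\ref{prop:OT} (negligible pairwise overlaps) plus $\rho\in\Probac(X)$, (v) is the same pointwise inequality at a point of the cell, and (vi) follows by maximizing over $y_0$. One small caveat in (v): you should pick $x\in\Lag_{y_0}(\psi)\cap X$ (which is possible precisely because $\rho(\Lag_{y_0}(\psi))>0$ and $\spt(\rho)\subseteq X$), since $R$ is the oscillation of $c$ over $X\times Y$ only; the bound $c(x,z_0)-c(x,y_0)\leq R$ need not hold for an arbitrary point of the cell in $\Omega_X$.

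For (vii) your primary route differs genuinely from the paper's. The paper fixes $y$, shows that $\one_{\Lag_y(\psi_n)}\to\one_{\Lag_y(\psi)}$ pointwise outside the ``tie set'' $S=\{x\mid\exists\,y\neq z,\ c(x,y)+\psi(y)=c(x,z)+\psi(z)\}$, which is Lebesgue- (hence $\rho$-) negligible by the twist condition, and concludes by dominated convergence --- this is exactly your stated alternative. Your main argument instead proves upper semicontinuity of each $G_y$ by a purely set-theoretic $\limsup$ of the closed cells together with reverse Fatou, and then upgrades to continuity via the conservation of mass $\sum_{y}G_y\equiv 1$ from (iv), writing $G_{y_0}=1-\sum_{y\neq y_0}G_y$ as a difference making it simultaneously lower semicontinuous. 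This is a nice structural shortcut: the semicontinuity half uses no negligibility whatsoever, and the twist hypothesis enters only through (iv); it also avoids any case analysis of where pointwise convergence of indicators holds. The paper's route, on the other hand, establishes the stronger fact of $\rho$-a.e.\ convergence of the indicator functions themselves, a pattern that is reused later (e.g.\ in the continuity of the off-diagonal derivatives $G_{yz}$ in Lemma~\ref{lemma:Gyz:cont}), so it is the more load-bearing argument in the rest of the chapter. Either proof is complete.
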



\begin{proof}
The properties \ref{prop:G:monotone1},
\ref{prop:G:monotone2}, \ref{prop:G:constant} are straightforward consequences of the definition of Laguerre cells. 
Property \ref{prop:G:proba} is a consequence of Proposition~\ref{prop:OT} 
and of the assumption $\rho\in\Probac(X)$. To
prove \ref{prop:G:empty}, take $\psi$ such that $G_{y_0}(\psi) > 0$,
implying in particular that the Laguerre cell $\Lag_{y_0}(\psi)$ is
non-empty and contains a point $x\in X$. Then, by definition of the
cell one has for all $y\in Y\setminus\{y_0\}$, $c(x,y_0)
+ \psi(y_0) \leq c(x,y) + \psi(y),$ thus showing that $\psi(y_0)
\leq \min_{Y} \psi + R$. Point \ref{prop:G:empty} is a consequence of Point~\ref{prop:G:allnonempty}.

It remains to establish that each of the maps $G_y$ is
continuous. For this purpose, we consider a sequence 
$(\psi_n)_{n\in \Nsp} \in \Rsp^Y$ converging to some
$\psi_\infty \in \Rsp^Y$. We first note that as in the proof of
Proposition~\ref{prop:OT}, the set
$$ S = \{x \in X \mid \exists y\neq z \in
Y \hbox{ s.t. } c(x,y) + \psi(y) = c(x,y)+\psi(z) \}.$$
is
Lebesgue-negligible and therefore also $\rho$-negligible. Defining
$\chi = \one_{\Lag_y(\psi)}$ and $\chi_n =
\one_{\Lag_y(\psi_n)}$, 
$$ G_y(\psi_n) = \int \chi_n \dd\rho, \hbox{ and } G(\psi) = \int \chi
\dd\rho.$$ To prove that $\lim_{n\to +\infty} G_y(\psi_n) = G_y(\psi)$
it suffices to establish that $\chi_n$ converges to $\chi$ on
$X\setminus S$, which is straightforward (because the inequalities
defining the set $X\setminus S$ are strict), and to apply Lebesgue's
dominated convergence theorem.
\end{proof}

From these properties of $G$, we can deduce the existence of a
solution to the equation $G(\psi) = \nu$. The strategy used to prove
this proposition is borrowed from \cite{caffarelli2008weak} and is
also reminiscent of Perron's method to prove existence to Monge-Ampère
equations, see e.g. \cite{gutierrez2001monge}.

\begin{corollary} \label{coro:existence-DMA} Let $G: \Rsp^Y\to \Rsp^Y$ satisfying (i)-- (vi) in
Proposition~\ref{prop:G} and let $\nu\in \Prob(Y)$. Then, there exists
$\psi \in \Rsp^Y$ such that $G(\psi) = \nu$.
\end{corollary}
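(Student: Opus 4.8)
The plan is to give a Perron-type argument using only the structural properties of $G$ gathered in Proposition~\ref{prop:G}. The naive idea — take the largest $\psi$ with $G_y(\psi)\le\nu_y$ for every $y$ — fails, because by Proposition~\ref{prop:G}-\ref{prop:G:proba} one has $\sum_y G_y(\psi)=1=\sum_y\nu_y$, so those inequalities already force $G(\psi)=\nu$: that set \emph{is} the solution set. The remedy is to discard exactly one constraint. Fix $y_0\in Y$ (we have $\nu_{y_0}>0$ by the standing assumption $\min_y\nu_y>0$; if $\card(Y)=1$ the claim is immediate from Proposition~\ref{prop:G}-\ref{prop:G:proba}, so assume $\card(Y)\ge 2$) and set
$$\mathcal{S} := \bigl\{\psi\in\Rsp^Y \mid \psi(y_0)=0 \ \text{ and }\ G_y(\psi)\le\nu_y \ \text{ for all }\ y\in Y\setminus\{y_0\}\bigr\}.$$
The normalization $\psi(y_0)=0$ only kills the invariance of Proposition~\ref{prop:G}-\ref{prop:G:constant}, and for any $\psi\in\mathcal S$ one has $G_{y_0}(\psi)=1-\sum_{y\ne y_0}G_y(\psi)\ge 1-\sum_{y\ne y_0}\nu_y=\nu_{y_0}>0$.

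The argument has three steps. (1) $\mathcal S\ne\emptyset$: taking $M>R$ and $\psi_0$ with $\psi_0(y_0)=0$, $\psi_0(y)=M$ for $y\ne y_0$, we get $\min_Y\psi_0=0$ and $\psi_0(y)=M>\min_Y\psi_0+R$, so the contrapositive of Proposition~\ref{prop:G}-\ref{prop:G:empty} gives $G_y(\psi_0)=0\le\nu_y$. (2) $\mathcal S$ is closed (continuity of $G$) and its elements are bounded: since $G_{y_0}(\psi)>0$ on $\mathcal S$, Proposition~\ref{prop:G}-\ref{prop:G:empty} gives $0=\psi(y_0)\le\min_Y\psi+R$, hence $\psi(y)\ge -R$ for all $y$; letting $m$ be the infimum of the \emph{linear} functional $\psi\mapsto\sum_y\psi(y)$ over $\mathcal S$ (finite, since $\mathcal S\ne\emptyset$ and $\sum_y\psi(y)\ge-\card(Y)\,R$), any minimizing sequence $(\psi^{(n)})$ has $\psi^{(n)}(y)\ge -R$ and bounded sum, hence is bounded, so a subsequence converges to some $\psi^\star\in\mathcal S$ with $\sum_y\psi^\star(y)=m$. (3) Perturbation: I claim $G(\psi^\star)=\nu$. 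If not, the inequalities $G_y(\psi^\star)\le\nu_y$, $y\ne y_0$, cannot all be equalities (equality throughout would force $G_{y_0}(\psi^\star)=\nu_{y_0}$ by Proposition~\ref{prop:G}-\ref{prop:G:proba}), so $G_{y_1}(\psi^\star)<\nu_{y_1}$ for some $y_1\ne y_0$. Put $\psi^t=\psi^\star-t\one_{y_1}$; by continuity of $G$, $G_{y_1}(\psi^t)<\nu_{y_1}$ for $t>0$ small, while Proposition~\ref{prop:G}-\ref{prop:G:monotone2} gives $G_y(\psi^t)\le G_y(\psi^\star)\le\nu_y$ for $y\in Y\setminus\{y_0,y_1\}$, and $\psi^t(y_0)=0$; hence $\psi^t\in\mathcal S$, yet $\sum_y\psi^t(y)=m-t<m$, a contradiction.

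The main obstacle is conceptual rather than computational: one has to notice both that the symmetry of the problem must be broken by dropping a single constraint $G_{y_0}(\psi)\le\nu_{y_0}$, and that compactness of $\mathcal S$ (after normalizing $\psi(y_0)=0$) is handed over for free by the empty-cell bound Proposition~\ref{prop:G}-\ref{prop:G:empty}, which forces $\psi\ge-R$ on $\mathcal S$, so that minimizing the linear functional $\sum_y\psi(y)$ automatically bounds the potentials from above as well. I would also mention a more direct but less self-contained route available in our setting, where $\rho\in\Probac(X)$ and $c$ is twisted: the functional $\Kant(\psi)=\int\psi^c\dd\rho-\sum_y\psi(y)\nu_y$ attains its maximum at some $\psi^\star$ by Proposition~\ref{prop:dp-maximizer}, and since the twist condition makes Laguerre cells overlap on a $\rho$-negligible set (Proposition~\ref{prop:OT}), Corollary~\ref{coro:support} together with Lemma~\ref{lemma:tmaps-are-tplans} identifies the optimal plan between $\rho$ and $\nu$ with $(\mathrm{id},T_{\psi^\star})_{\#}\rho$, giving $G(\psi^\star)=\nu$ — but this uses the specific form of $G$ rather than only Proposition~\ref{prop:G}.
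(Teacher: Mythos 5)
Your argument is correct and is essentially the paper's own proof: fix $y_0$, normalize $\psi(y_0)=0$, minimize the linear functional $\sum_y\psi(y)$ over the set where $G_y(\psi)\le\nu_y$ for $y\ne y_0$, and rule out a strict inequality by decreasing one coordinate using continuity and the monotonicity property \emph{(ii)}, concluding via \emph{(iv)}. The only (harmless) deviations are that you verify nonemptiness of the feasible set explicitly and obtain compactness through a bounded minimizing sequence (using the lower bound from \emph{(v)}) instead of adding the explicit constraint $\psi(y)\le R$ as in the paper's set $K$.
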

\begin{proof}
Fix some $y_0\in Y$ such that $\nu_{y_0} \neq 0$, and consider the set
$$ K = \{ \psi \in \Rsp^Y \mid \psi(y_0) = 0 \hbox{ and } \forall y\in
Y\setminus\{y_0\}, G_{y_0}(\psi) \leq \nu_y \hbox{ and } \psi(y) \leq
R \}, $$ where $R$ is defined as in \eqref{eq:def-R}. Given $\psi\in K$, one has
$$ G_{y_0}(\psi) = 1 - \sum_{y\neq
y_{0}} G_y(\psi) \geq \nu_{y_0} > 0,$$ 
implying
by \ref{prop:G:empty} that $\min_{y\in Y} \psi \geq \psi(y_0) -
R$.  The set $K$ is therefore bounded and closed (by continuity of the
functions $G_y$) and therefore compact. We consider $\psi^*$ a
minimizer over the set $K$ of the function
$J(\psi) =  \sum_{y\in Y} \psi_y$. Assume that
$G_y(\psi^*) < \nu_y$ for some $y\in Y\setminus\{y_0\}$. Then, by
continuity of $G_y$, there exists some $t>0$ such that $G_y(\psi^* -
t \one_y) < \nu_y$.  Then, by property \ref{prop:G:monotone2}, we have
$$ \forall z\neq y, G_z(\psi^* - t\one_y) \leq
G_z(\psi^*) \leq \nu_z,$$ thus showing that $\psi^* - t\one_y \in
K$. Since $J(\psi^* - t \one_y) = J(\psi^*) - t < J(\psi^*)$, we get a
contradiction. We thus have showed that $\forall y\in Y\setminus\{y_0\},$
$G_y(\psi^*) = \nu_y$, and using \ref{prop:G:proba} and $\nu\in \Prob(Y)$ we
obtain
$$ G_{y_0}(\psi^*) = 1 - \sum_{y\in Y\setminus \{y_0\}} G_y(\psi^*) = 
1 - \sum_{y\in Y\setminus \{y_0\}} \nu_y = 
\nu_{y_0},$$
so that $G(\psi^*) = \nu$ and $\psi^*$ is a solution
to \eqref{eq:DMA}.
\end{proof}


\subsubsection{Kantorovich's functional}

We now show that Equation \eqref{eq:DMA} is the optimality condition
of the Kantorovitch functional, and can thus be recast as a smooth  unconstrained optimization problem. 
We recall that 
$$
\KP = \max_{\psi\in \Rsp^Y} \Kant(\psi),
$$
where $\Kant$ is the Kantorovich functional given by 
\begin{align*}
\Kant(\psi) &=  \int_{X} \psi^c \dd \mu - \int_Y \psi\dd\nu\\
&= \sum_{y\in Y} \int_{\Lag_y(\psi)} (c(x,y) + \psi(y)) \dd \rho(x) - \sum_{y\in Y}\psi(y)\nu_y.\\
\end{align*}

\begin{theorem}[Aurenhammer, Hoffman, Aronov]
  \label{th:Aurenhammer} Assume that $\rho \in \Probac(X)$,
  that $c$ is twisted (Def.~\ref{def:twist}), and consider 
  $\Kant$ defined in \eqref{eq:Kant}. Then:
  \begin{itemize}
  \item $\Kant$ is concave and $\Class^1$-smooth and its gradient is 
  \begin{equation}
  \nabla \Kant(\psi) = G(\psi) - \nu 
\label{eq:GradKant}
\end{equation}
where $G$ is defined in \eqref{eq:G}.
  \item $\forall \psi\in\Rsp^Y,\forall t\in \Rsp,~\Kant(\psi + t\one_Y) = \Kant(\psi)$,
  \item $\Kant$ attains its maximum over $\Rsp^Y$, and $\nabla \Kant(\psi) = 0$ iff $\psi$ solves \eqref{eq:DMA}.
  \end{itemize}
\end{theorem}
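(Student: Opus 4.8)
The plan is to assemble the four assertions from results already established in the excerpt, specialized to $\mu=\rho$; the twist condition enters only through the negligibility of the boundaries of the Laguerre cells.

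\emph{Concavity and the gradient formula.} For each fixed $x$, the function $\psi\mapsto\psi^c(x)=\min_{y\in Y}(c(x,y)+\psi(y))$ is an infimum of finitely many affine functions of $\psi$, hence concave; integrating against $\rho\geq 0$ and subtracting the linear term $\int_Y\psi\,\dd\nu$ shows that $\Kant$ is concave (this is also recorded in the proof of Proposition~\ref{prop:gradientkant}). For differentiability I would apply Corollary~\ref{coro:Kant-grad} with $\mu=\rho$: it gives differentiability of $\Kant$ at $\psi$ as soon as $\rho(\Lag_y(\psi)\setminus\SLag_y(\psi))=0$ for every $y\in Y$, together with the formula $\nabla\Kant(\psi)=(\rho(\Lag_y(\psi))-\nu_y)_{y\in Y}=G(\psi)-\nu$. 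Since $\Lag_y(\psi)\setminus\SLag_y(\psi)=\bigcup_{z\neq y}(\Lag_y(\psi)\cap\Lag_z(\psi))$, the required negligibility follows from Proposition~\ref{prop:OT}, where the twist condition is used precisely to show that each pairwise intersection of Laguerre cells is Lebesgue-negligible, hence $\rho$-negligible because $\rho\in\Probac(X)$. Therefore $\Kant$ is differentiable on all of $\Rsp^Y$ with $\nabla\Kant=G-\nu$, and since $G$ is continuous (Proposition~\ref{prop:G}) this gradient is continuous, so $\Kant$ is $\Class^1$.

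\emph{Invariance under constants.} Because $(\psi+t\one_Y)^c=\psi^c+t$ pointwise and both $\rho$ and $\nu$ are probability measures, $\Kant(\psi+t\one_Y)=\int_X(\psi^c+t)\,\dd\rho-\int_Y(\psi+t\one_Y)\,\dd\nu=\Kant(\psi)$; equivalently, this follows from $G(\psi+t\one_Y)=G(\psi)$ (Proposition~\ref{prop:G}) and the gradient formula after checking a single value.

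\emph{Existence of a maximizer and the characterization of critical points.} Corollary~\ref{coro:existence-DMA}, applied to $G$ (which satisfies properties (i)--(vi) of Proposition~\ref{prop:G}), produces some $\psi^\star\in\Rsp^Y$ with $G(\psi^\star)=\nu$; by the gradient formula $\nabla\Kant(\psi^\star)=0$, and since $\Kant$ is concave a vanishing gradient forces $\psi^\star$ to be a global maximizer, so $\Kant$ attains its maximum over $\Rsp^Y$. Conversely, for an arbitrary $\psi$, the equality $\nabla\Kant(\psi)=0$ is by definition the equation $G(\psi)=\nu$, that is, $\psi$ solves \eqref{eq:DMA}.

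\emph{Main difficulty.} There is no essential obstacle here: the theorem is a repackaging of Corollary~\ref{coro:Kant-grad}, Proposition~\ref{prop:OT}, Proposition~\ref{prop:G} and Corollary~\ref{coro:existence-DMA}. The only point requiring a little care is matching the hypotheses of Corollary~\ref{coro:Kant-grad} (stated for a compact metric space and $c\in\Class^0$) to the present setting, which is handled by restricting $c$ to the compact set $X\times Y$, viewing $\rho$ as an element of $\Prob(X)$ supported on $X$, and noting that $\rho(\Lag_y(\psi))=\rho(\Lag_y(\psi)\cap X)$ since $\spt(\rho)\subseteq X$. Should one prefer not to invoke Corollary~\ref{coro:existence-DMA} for the existence of a maximizer, the only genuinely delicate step would be a direct coercivity argument for $\Kant$ on the quotient $\Rsp^Y/\Rsp\one_Y$, exploiting $\min_y\nu_y>0$ together with the pointwise bound $\psi^c(x)\leq\max_{X\times Y}c+\min_{y\in Y}\psi(y)$ to control $\Kant$ when $\max_y\psi-\min_y\psi$ is large; invoking Corollary~\ref{coro:existence-DMA} bypasses this.
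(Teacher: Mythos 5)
Your proof is correct, but it follows a different route from the one in the paper -- in fact exactly the route the paper explicitly declines to take: the remark following Theorem~\ref{th:Aurenhammer} notes that the result ``could be deduced from the computation of directional derivatives of $\Kant$ given in Corollary~\ref{coro:Kant-grad}'', which is precisely your strategy (concavity from $\psi\mapsto\psi^c(x)$ being a minimum of affine maps, differentiability from Corollary~\ref{coro:Kant-grad} once the twist condition and Proposition~\ref{prop:OT} give $\rho\bigl(\Lag_y(\psi)\setminus\SLag_y(\psi)\bigr)=\rho\bigl(\bigcup_{z\neq y}\Lag_y(\psi)\cap\Lag_z(\psi)\bigr)=0$, and $\Class^1$ from continuity of $G$). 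The paper instead gives the self-contained Aurenhammer--Hoffman--Aronov argument: it shows directly that $G(\psi')-\nu\in\partial^+\Kant(\psi')$ by comparing $\Kant(\psi)$ with the transport cost of the map $T_{\psi'}$, which yields concavity without any reference to strict Laguerre cells or to the superdifferential computation of Proposition~\ref{prop:gradientkant}; it then upgrades to differentiability everywhere by taking points $\psi_n\to\psi$ of differentiability, where $\partial^+\Kant(\psi_n)=\{\nabla\Kant(\psi_n)\}=\{G(\psi_n)-\nu\}$, and invoking continuity of $G$ together with the characterization \eqref{eq:SDcalc} of the superdifferential. Your route buys economy, since it recycles the general discrete-$Y$ machinery already established in \S\ref{subsec:Kantorovich-functional} (at the cost of the hypothesis-matching you correctly flag: restricting $c$ to $X\times Y$ and using $\spt(\rho)\subseteq X$, plus reading $\SLag_y$ with the quantifier over $z\neq y$); the paper's route buys a short, elementary and self-contained argument whose only measure-theoretic input is the continuity of $G$ from Proposition~\ref{prop:G}. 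The remaining two bullets (invariance under constants, and existence of a maximizer via Corollary~\ref{coro:existence-DMA} combined with concavity) are handled in your proposal exactly as in the paper.
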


\begin{remark}
This theorem could be deduced from the computation of directional
derivatives of $\Kant$ given in Corollary~\ref{coro:Kant-grad},
however we prefer to give a simple and self-contained proof due to
Aurenhammer, Hoffman, Aronov \cite{aurenhammer1998minkowski}.
\end{remark}




\begin{proof}[Proof of Theorem~\ref{th:Aurenhammer}] We simultaneously  show that
  the functional is concave and compute its gradient.  For any
  function $\psi$ on $Y$ and any measurable map $T: X\to Y$, one has
  $$\min_{y\in Y} (c(x,y) + \psi(y)) \leq c(x,T(x)) + \psi(T(x)),$$
  which by integration against $\rho$ gives
\begin{equation}
 \Kant(\psi) \leq \int_{X} (c(x,T(x)) + \psi(T(x)))\rho(x) \dd x - \sum_{y\in Y}\psi(y)\nu_y. 
\label{eq:AurSD}
\end{equation}
Moreover, equality holds when $T=T_\psi$. Taking another function
$\psi' \in \Rsp^Y$ and setting $T = T_{\psi'}$ in Equation~\eqref{eq:AurSD}
gives
\begin{align*}
\Kant(\psi) &\leq
\int_{X} (c(x,T_{\psi'}(x)) + \psi(T_{\psi'}(x))) \rho(x) \dd x- \sum_{y\in Y}\psi(y)\nu_y\\
&= \sum_{y\in Y} \int_{\Lag_y(\psi')} (c(x,y) + \psi(y)) \rho(x) \dd x - \sum_{y\in Y}\psi(y)\nu_y \\
&= \sum_{y\in Y} \int_{\Lag_y(\psi')} (c(x,y) + \psi'(y)) \rho(x) \dd x + \\
&\phantom{= } \sum_{y\in Y} \rho(\Lag_y(\psi')) (\psi(y) - \psi'(y)) - \sum_{y\in Y}\psi(y)\nu_y\\
&= \Kant(\psi') + \sca{G(y) - \nu}{\psi - \psi'}
\end{align*}
By definition, this shows that $G(\psi)-\nu$ belongs to the
superdifferential to $\Kant$ (Definition~\ref{def:superdifferential})
at $\psi$, i.e. $G(\psi)-\nu \in \partial^+ \Kant(\psi)$, thus proving
by Proposition~\ref{prop:superdifferential} that $\Kant$ is concave.

We now prove that $\Kant$ belongs to $\Class^1(\Rsp^Y)$. Consider
$\psi\in \Rsp^Y$ and let $(\psi_n)_{n\in\Nsp}$ be a sequence
converging to $\psi$ and such that $\nabla \Kant(\psi_n)$ exists for
every $n\in \Nsp$. Since $G(\psi_n) - \nu \in \partial^+\Kant(\psi_n)
= \{ \nabla \Kant(\psi_n) \},$ we obtain $\nabla \Kant(\psi_n) =
G(\psi_n) - \nu$. Thus, by the continuity of $G$
(Proposition~\ref{prop:G}),
$$ \lim_{n\to +\infty} \nabla \Kant(\psi_n) = \lim_{n\to +\infty}
G(\psi_n) - \nu = G(\psi) - \nu,$$ ensuring by \eqref{eq:SDcalc} that
$\partial^+ G(\psi) = \{ G(\psi) - \nu \}$, so that
$\nabla \Kant(\psi) = G(\psi) - \nu$ for all $\psi\in\Rsp^Y$.  By
continuity of $G$ we get $\Kant\in\Class^1$ as announced, and
Equation~\eqref{eq:G} holds for all $\psi\in \Rsp^Y$, so that one
trivially has $\nabla \Kant(\psi) = 0$ iff $G(\psi) = \nu$.  Finally,
we note that thanks to Corollary~\ref{coro:existence-DMA}, there
exists $\psi\in\Rsp^Y$ such that $G(\psi) = \nu$, which automatically
is a maximizer of $\Kant$ because $\Kant$ is concave and
$\nabla \Kant(\psi) = 0$.
\end{proof}

\subsection{Semi-discrete optimal transport via coordinate decrements}\label{sec:olikerprussner}
As before, we assume that $X \subseteq \Omega_X$ is compact, that
$Y \subseteq \Omega_Y$ is finite and that
$\Omega_X,\Omega_Y\subseteq \Rsp^d$ are open sets.  We recall the
notation $G_y(\psi) := \rho(\Lag_y(\psi)).$ Oliker-Prussner's
algorithm for solving $G(\psi) = \nu$ is described in
Algorithm~\ref{algo:oliker-prussner}, and bears strong resemblance
with Bertsekas' auction algorithm, in that the ``prices'' are evolved
in a monotonic way.

\begin{algorithm}
\begin{description}
  \item[Input] A tolerence parameter $\delta>0$.
  \item[Initialization] Fix some $y_0 \in Y$ once for all. Set 
  $$ \psi^{(0)}(y) := 
  \begin{cases}
    0 &\hbox{ if } y = y_0\\
    R  & \hbox{ if not}.
  \end{cases}
  $$
\item[While] $\exists y \neq y_0$ such that $G_{y}(\psi^{(k)})) \leq
  \nu_y - \frac{\delta}{N}$
    \begin{description}
\item[Step 1] Compute 
  \begin{equation}
    t_y = \min \{ t \geq 0 \mid G_y(\psi^{(k)} -  t\one_y) \geq \nu_y \}.  \label{eq:ty}
  \end{equation}
  \item[Step 2] Set $\psi^{(k+1)} = \psi^{(k)} - t\one_y$.
\end{description}
  \item[Output] A vector $\psi^{(k)}$ that satisfies $\nr{G(\psi^{(k)}) - \nu}_\infty  \leq \delta$.
\end{description}
\caption{Oliker-Prussner algorithm} \label{algo:oliker-prussner}
\end{algorithm}

This algorithm can be described in words using the bakery analogy of
Section~\ref{sec:laguerre}. We choose once and for all a bakery
$y_0\in Y$ whose price will be set to zero. Initially, the price of
bread $\psi^{(0)}$ is zero at this bakery $y_0$ and set to the
prohibitively large value $R$, defined in Equation \eqref{eq:def-R},
at any other location. This choice guarantees that the bakery $y_0$
initially gets all the customers. The prices $\psi^{(k)} \in\Rsp^y$
are then constructed iteratively by performing a sort of reverse
auction: at step $k$, start by finding some bakery $y = y^{(k)} \in
Y\setminus \{y_0\}$ which sells less bread than its production
capacity, i.e.
$$G_{y}(\psi^{(k)})\leq \nu_{y} - \frac{\delta}{N}.$$
The
price of bread at $y$ is then decreased so that the amount of
bread sold equals the production capacity of $y$, i.e. one finds
$t_y \geq 0$ such that
$$ G_{y}(\psi^{(k)} - t_y\one_{y}) = \nu_y $$
and then updates $\psi^{(k+1)} = \psi^{(k)} - t_y \one_{y}$.

\begin{remark}[Origin and extensions]
This algorithm was introduced by Oliker and Prussner, for the purpose
of solving Monge-Ampère equations with Dirichlet boundary conditions
in \cite{oliker1989numerical}. In the context of optimal transport,
the first use of Algorithm~\ref{algo:oliker-prussner} seems to be in
an article of Caffarelli, Kochengin and Oliker
\cite{caffarelli1999problem} (see also \cite{caffarelli2008weak}), in
the setting of the reflector problem, namely $c(x,y) =
-\log(1-\sca{x}{y})$ on $X=Y=\Sph^{d-1}$. Since then, the convergence
of this algorithm has been generalized to more other costs and/or more
general assumptions on the probability density $\rho$, we refer the
reader to \cite{kitagawa2014iterative,deleo2017numerical} and to
references therein.
\end{remark}

\subsubsection{$\Class^{1,1}$ estimates for Kantorovich functional}
The proof of convergence of Oliker-Prussner's algorithm relies on the
Lipschitz regularity of the map $G$ when $\rho$ is bounded, proven in
the next proposition. (Since $\nabla \Kant = G-\nu$, this proposition
also implies that Kantorovich's functional $\Kant$ has Lipschitz
gradient, improving from the $\Class^1$ estimate of
Theorem~\ref{th:Aurenhammer}.)

\begin{proposition} \label{prop:Glip}
Assume that $c\in \Class^2(\Omega_X \times \Omega_Y)$  satisfies the
twist condition, and assume also that $\rho
\in\Probac(X)\cap\LL^\infty(X)$. Then for every $y\in Y$, the map
$G_y: \Rsp^Y\to\Rsp$ defined in \eqref{eq:G} is globally Lipschitz.
\end{proposition}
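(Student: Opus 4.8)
The plan is to control the variation of $G_y(\psi)=\rho(\Lag_y(\psi))$ through the symmetric difference of Laguerre cells. For $\psi,\psi'\in\Rsp^Y$ put $\delta:=\nr{\psi-\psi'}_\infty$. Since $\rho\geq 0$ one has $|G_y(\psi)-G_y(\psi')|\leq \rho\bigl(\Lag_y(\psi)\,\triangle\,\Lag_y(\psi')\bigr)$, and because $\spt(\rho)\subseteq X$ and $\rho\in\LL^\infty(X)$, this is at most $\nr{\rho}_{\LL^\infty}\,\Leb\bigl((\Lag_y(\psi)\triangle\Lag_y(\psi'))\cap X\bigr)$. So it suffices to prove that the symmetric difference, intersected with $X$, has Lebesgue measure $O(\delta)$.

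First I would localize this symmetric difference in thin slabs. For $z\in Y$ set $f_z(x):=(c(x,y)+\psi(y))-(c(x,z)+\psi(z))$, so that $\Lag_y(\psi)=\{x\mid\forall z,\ f_z(x)\leq 0\}$. If $x\in\Lag_y(\psi)\setminus\Lag_y(\psi')$ then $f_z(x)\leq 0$ for all $z$, while $x\notin\Lag_y(\psi')$ gives some $z_0\neq y$ with $(c(x,y)+\psi'(y))-(c(x,z_0)+\psi'(z_0))>0$; as this quantity differs from $f_{z_0}(x)$ by at most $2\delta$, we get $-2\delta<f_{z_0}(x)\leq 0$. Hence $\Lag_y(\psi)\setminus\Lag_y(\psi')\subseteq\bigcup_{z\neq y}\{x\mid|f_z(x)|\leq 2\delta\}$, and symmetrically for $\Lag_y(\psi')\setminus\Lag_y(\psi)$ (with the analogous functions built from $\psi'$). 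It remains to bound $\Leb(\{x\in X\mid|f_z(x)|\leq 2\delta\})$ for each fixed $z\neq y$.

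This is the quantitative heart of the proof, and where the twist condition and the $\Class^2$ assumption enter. The twist condition makes $w\mapsto\nabla_xc(x_0,w)$ injective, so $\nabla_xf_z(x)=\nabla_xc(x,y)-\nabla_xc(x,z)\neq 0$ for every $x\in\Omega_X$ whenever $z\neq y$; by continuity of $\nabla_xc$, compactness of $X$ and finiteness of $Y$ there is a uniform lower bound $\lambda:=\min_{z\neq y}\min_{x\in X}|\nabla_xc(x,y)-\nabla_xc(x,z)|>0$. Using $c\in\Class^2$, I would fix a compact $X'$ with $X$ in its interior and $X'\subseteq\Omega_X$, on which $\nr{\D^2_xc(\cdot,w)}\leq M$ for all $w\in Y$, then pick $r_0>0$ small enough that $\B(x,r_0)\subseteq X'$ for all $x\in X$ and $2Mr_0\leq\lambda/2$, and cover $X$ by finitely many balls $\B(x_i,r_0)$, $i=1,\dots,N_0$. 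On $\B(x_i,r_0)$, with $e_i:=\nabla_xf_z(x_i)/|\nabla_xf_z(x_i)|$, the estimate $|\nabla_xf_z(x)-\nabla_xf_z(x_i)|\leq 2Mr_0\leq\lambda/2$ forces $\partial_{e_i}f_z\geq\lambda/2$ throughout the ball; slicing the ball by segments parallel to $e_i$, along each such segment $f_z$ is increasing at rate $\geq\lambda/2$, so the portion where $|f_z|\leq 2\delta$ is an interval of length $\leq 8\delta/\lambda$, and Fubini gives $\Leb(\{x\in\B(x_i,r_0)\mid|f_z(x)|\leq 2\delta\})\leq C_d\,r_0^{d-1}\,\delta/\lambda$. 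Summing over $i$, over $z\in Y\setminus\{y\}$, and combining with the reduction above yields $|G_y(\psi)-G_y(\psi')|\leq L\,\nr{\psi-\psi'}_\infty$ with $L$ depending only on $X$, $d$, $\nr{\rho}_{\LL^\infty}$, $\Card(Y)$, $\lambda$ and $M$ (equivalently a Lipschitz bound for $\nr{\cdot}$, since $Y$ is finite).

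The main obstacle is precisely this last geometric estimate: one must upgrade the qualitative non-degeneracy $\nabla_xf_z\neq 0$ coming from the twist condition into the quantitative statement that the slab $\{|f_z|\leq 2\delta\}$ has volume $O(\delta)$, uniformly in $z$ and in $\psi$, and the $\Class^2$ regularity of $c$ is what guarantees the gradient direction rotates slowly enough for a single scale $r_0$ and a finite family of charts to work. The remaining ingredients — the symmetric-difference inequality, the slab localization, and $\rho\in\LL^\infty$ converting Lebesgue measure into $\rho$-mass — are elementary.
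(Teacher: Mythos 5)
Your proof is correct, but it takes a genuinely different route from the paper's. You bound $\abs{G_y(\psi)-G_y(\psi')}$ by the $\rho$-mass of the symmetric difference $\Lag_y(\psi)\,\triangle\,\Lag_y(\psi')$, localize that symmetric difference inside the slabs $\{\abs{f_z}\leq 2\nr{\psi-\psi'}_\infty\}$ for $z\neq y$, and then estimate the Lebesgue volume of each slab directly by $O(\delta)$ via a finite covering of $X$ by balls on which the gradient direction of $f_z$ is nearly constant, monotonicity along segments, and Fubini. The paper instead changes one coordinate of $\psi$ at a time (incurring the factor $N$ through a path of $N$ intermediate potentials), writes each one-coordinate increment with the coarea formula as an integral of facet measures (Lemma~\ref{lemma:Gy:pd}), and then invokes a uniform bound on the $(d-1)$-Hausdorff measure of the level sets $c_{yz}^{-1}(s)\cap X$ (Lemma~\ref{lemma:bnd-area}). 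The geometric core is the same in both arguments — the twist condition plus compactness gives the uniform lower bound $\lambda$ (the paper's $\kappa$) on $\nr{\nabla_x c(\cdot,y)-\nabla_x c(\cdot,z)}$, and the $\Class^2$ bound $M$ controls how fast the gradient direction can turn, so a single scale and a finite family of directions suffice — but you apply it to slab volumes rather than to level-set areas, which lets you dispense with the coarea formula and with Hausdorff measures altogether; your factor $\Card(Y)-1$ from the union over $z$ plays the role of the paper's factor $N$ from the coordinate path, so the resulting constants are of the same order. What your shortcut does not provide, and what motivates the paper's heavier route, is the identity of Lemma~\ref{lemma:Gy:pd} itself: the facet integrals $G_{yz}$ are reused later to compute the Hessian of the Kantorovich functional (Theorem~\ref{th:Hessian}) and in the entropic estimates, whereas your argument only yields the Lipschitz bound.
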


\begin{remark}
The proof of this proposition comes with an estimation of the
Lipschitz constant: namely it shows  $\abs{G_y(\psi) -
G_y(\phi)} \leq L_G \nr{\phi - \psi}_\infty$ with
\begin{equation} \label{eq:def-LG}
\begin{aligned}
&L_G =  c(d) N \nr{\rho}_\infty \frac{1}{\kappa} \left(1+\frac{M}{\kappa} \diam(X)\right)\diam(X)^{d-1},\\
&\kappa = \min_{y \neq z \in Y} \min_{X} \nr{\nabla_x c(\cdot,y) - \nabla_x c(\cdot,z)}, \\
&M = \max_{y \neq z \in Y} \max_{X} \nr{\D^2_{xx} c(\cdot,y) - \D^2_{xx} c(\cdot,z)}.
\end{aligned}
\end{equation}
In the estimation of the Lipschitz constant $L_G$ \eqref{eq:def-LG},
it is possible that the term in $N$ is not tight, but the other terms
cannot be improved without adding assumptions on the cost. 
\end{remark}
\begin{example}
With $c(x,y) = \frac{1}{2}\nr{x-y}^2$, one has $\nabla_x c(x,y) = (x-y)$ and
$\D^2_{xx} c(x,y) =  \id$, so that $M=0$ and $\kappa$ is 
the minimal distance between two distinct points in $Y$:
$\kappa = \min_{y\neq z\in Y} \nr{y-z}.$
\end{example}

The proof relies on the following lemma, which allows to estimate the
variations of $G_y$ in the direction $\one_z$, $z\neq y$.
\begin{lemma} \label{lemma:Gy:pd} Let
 $c\in\Class^1(\Omega_X\times\Omega_Y)$ be a twisted cost and
 $\rho\in\Probac(X)$.  For every $y \neq z\in Y$ and
 $\psi\in \Rsp^Y$,
\begin{equation} \label{eq:Gy:pd}
G_{y}(\psi + t\one_z) - G_y(\psi) = \int_{0}^t G_{yz}(\psi + s\one_z) \dd s.
\end{equation}
where
$$ G_{yz}(\psi) = 
\int_{\Lag_{yz}(\psi)} \frac{\rho(x)}{\nr{\nabla_x c(y,x) - \nabla_x c(y,z)}} \d\Haus^{d-1}(x). $$
\end{lemma}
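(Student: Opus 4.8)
The plan is to express $G_y(\psi+t\one_z)$ as an integral of $\rho$ over the Laguerre cell $\Lag_y(\psi+t\one_z)$, and then differentiate in $t$. Increasing the weight $\psi(z)$ by an amount $s$ shrinks the cell $\Lag_z$ and enlarges the neighbouring cells; in particular $\Lag_y(\psi+s\one_z)$ grows monotonically in $s$ (for $s\ge 0$), and the only facet that moves is the shared facet $\Lag_{yz}(\psi+s\one_z)\subseteq H_{yz}(\psi+s\one_z)$. So the derivative $\tfrac{d}{ds}G_y(\psi+s\one_z)$ should be the flux of $\rho$ across this moving hypersurface, i.e. the integral of $\rho$ over $\Lag_{yz}$ weighted by the normal speed of the facet. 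The normal speed is governed by the defining equation $c(x,y)+\psi(y) = c(x,z)+\psi(z)+s$: differentiating implicitly, a point $x(s)$ on the facet moves with velocity satisfying $\langle \nabla_x c(x,y)-\nabla_x c(x,z), \dot x\rangle = 1$, so the normal speed is $1/\nr{\nabla_x c(x,y)-\nabla_x c(x,z)}$, which is exactly the integrand in the definition of $G_{yz}$. This is the intuitive picture; the task is to make it rigorous.

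First I would reduce to two cells. Write $H^{\le}_{yz}(\psi^s)$ with $\psi^s := \psi + s\one_z$, and observe that for $s' \ge s \ge 0$ one has $\Lag_y(\psi^s) \subseteq \Lag_y(\psi^{s'})$, and moreover the symmetric difference $\Lag_y(\psi^{s'}) \setminus \Lag_y(\psi^s)$ is contained in the ``slab'' of points $x$ with $c(x,y)+\psi(y)-c(x,z)-\psi(z) \in [s,s']$ that also lie in every other halfspace $H^{\le}_{yw}(\psi)$, $w\ne y,z$. This reduces the computation of $G_y(\psi^{s'})-G_y(\psi^s)$ to integrating $\rho$ over this slab. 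The clean way to evaluate it is the coarea formula applied to the function $f(x) = c(x,y) - c(x,z)$ on the set $A := \bigcap_{w\ne y,z} H^{\le}_{yw}(\psi)$: by the twist condition $\nabla_x f = \nabla_x c(\cdot,y)-\nabla_x c(\cdot,z)$ never vanishes on $\Omega_X$, so $f$ has no critical points, and the coarea formula gives
$$
G_y(\psi^{s'}) - G_y(\psi^s) = \int_A \one_{\{f(x)+\psi(y)-\psi(z)\in[s,s']\}}\,\rho(x)\dd x
= \int_s^{s'} \left( \int_{A \cap f^{-1}(r - \psi(y)+\psi(z))} \frac{\rho(x)}{\nr{\nabla_x f(x)}} \dd\Haus^{d-1}(x)\right) \dd r.
$$
Identifying the inner integral: the level set $\{f = r-\psi(y)+\psi(z)\} = \{c(x,y)+\psi(y) = c(x,z)+\psi(z)+r\} = H_{yz}(\psi^r)$, and intersecting with $A$ gives precisely $\Lag_{yz}(\psi^r)$ up to the $\Haus^{d-1}$-negligible parts of the facet where three or more cells meet (which contribute nothing). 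So the inner integral is $G_{yz}(\psi^r)$, and we obtain $G_y(\psi^{s'})-G_y(\psi^s) = \int_s^{s'} G_{yz}(\psi^r)\dd r$, which is the claimed identity (take $s=0$, $s'=t$). For $t<0$ the same argument runs with the roles reversed and signs bookkept, since the cell now shrinks.

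The main obstacle is the rigorous application of the coarea formula together with controlling the boundary/lower-dimensional pieces: I need $A$ to be a Borel set (clear), $f\in\Class^1$ on the open neighbourhood $\Omega_X$ with $\nabla_x f\ne 0$ everywhere (this is exactly where the $\Class^1$ hypothesis and the twist condition enter, and why they are assumed), and I need $\rho \in L^1$ supported in the compact $X$ so that all integrals are finite. I also need to check that the ``extra'' lower-dimensional faces $\Lag_{yzw}(\psi)$ are $\Haus^{d-1}$-negligible inside the level set $H_{yz}$ — this follows because on $H_{yz}(\psi^r)$ the further constraint $c(x,w)+\psi(w) = c(x,z)+\psi(z)+r$ again cuts out a set of $\Haus^{d-1}$-measure zero, by a second application of the twist condition restricted to the $(d-1)$-manifold $H_{yz}$. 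One should also note that the integrand $r\mapsto G_{yz}(\psi^r)$ is measurable (and in fact, one can check, continuous in $r$ by dominated convergence as in the proof of Proposition~\ref{prop:G}), which justifies writing the identity in the integral form \eqref{eq:Gy:pd} and is what will later give the Lipschitz bound once $\rho\in L^\infty$ and the facet areas are bounded via $\diam(X)$.
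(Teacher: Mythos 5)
Your proposal is correct and follows essentially the same route as the paper: both arguments write the increment $G_y(\psi+t\one_z)-G_y(\psi)$ as the $\rho$-mass of $E\cap c_{yz}^{-1}((a,a+t])$ with $E=\bigcap_{w\neq y,z}H^{\leq}_{yw}(\psi)$, $c_{yz}=c(\cdot,y)-c(\cdot,z)$ and $a=\psi(z)-\psi(y)$, then apply the coarea formula to $c_{yz}$ (whose gradient is nonvanishing by the twist condition) and identify each slice $E\cap c_{yz}^{-1}(a+s)$ with $\Lag_{yz}(\psi+s\one_z)$. One small remark: this identification is an exact equality of sets, so your caveat about discarding the points where three or more cells meet is unnecessary -- which is fortunate, since their $\Haus^{d-1}$-negligibility does not follow from the twist condition alone but is precisely the genericity assumption of Definition~\ref{def:genericity}, needed only later for the differentiability of $G$.
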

\begin{proof} This is a consequence of the  coarea formula, Equation~\eqref{eq:coarea2}.
In order to see this, we first note that  
$$\Lag_y(\psi+t\one_z) = E \cap H^{\leq}_{yz}(\psi) \quad \hbox{ where } E = \bigcap_{w \in Y\setminus\{y,z\}} H_{yw}^{\leq}(\psi).$$ In particular, for $t\geq 0$, setting   $c_{yz} = c(\cdot,y) - c(\cdot,z)$ and $a = \psi(z)
- \psi(y)$,
$$ \Lag_y(\psi+t\one_z) \setminus\Lag_y(\psi) = E\cap c_{yz}^{-1}((a,a+t]) $$
 Thus, by the coarea formula,
\begin{align*}
G_y(\psi+t\one_z) - G_y(\psi) &= \rho(\Lag_y(\psi + t\one_z) \setminus \Lag_y(\psi)) \\
&= \int_{E \cap c_{yz}^{-1}((a,a+t])} \rho(x) \dd \vol^d(x) \\
&= \int_0^{t} \int_{E\cap c_{yz}^{-1}(a+s)} \frac{\rho(x)}{\nr{\nabla c_{yz}(x)}} \d\Haus^{d-1}(x)\dd s
\end{align*}
One concludes by remarking that
\begin{align*}
x\in \Lag_{yz}(\psi + s\one_z) &\Longleftrightarrow  x\in E \hbox{ and }c(x,y) + \psi(y) = c(x,z) + \psi(z) + s\\
&\Longleftrightarrow x \in E \cap c_{yz}^{-1}(a + s).
\end{align*}
This establishes \eqref{eq:Gy:pd} in the case $t\geq 0$, and the case
$t\leq 0$ can be treated similarly.
\end{proof} 

The second ingredient to prove Proposition~\ref{prop:Glip} is an uniform upper
 bound on the $(d-1)$--Hausdorff measure of the level set of a
 $\Class^2$ function $f$ with non-vanishing gradient.

\begin{lemma}\label{lemma:bnd-area}
Let $X\subseteq \Omega_X \subseteq \Rsp^d$ with $\Omega_X$ open and
$X$ compact, and let $f \in \Class^2(\Omega_X)$ such that $\forall
x\in \Omega_X, \nr{\nabla f(x)}> 0$.  Then,
$$ \Haus^{d-1}(f^{-1}(0) \cap X) \leq c(d) \left(1+\frac{M}{\kappa}\diam(X)\right)\diam(X)^{d-1}. $$
where $\kappa = \min_X \nr{\nabla f}$ and $M = \max_X \nr{\D^2 f}$.
\end{lemma}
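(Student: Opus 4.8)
The plan is to work at the length scale $r:=\kappa/(3M)$: I would show first that inside each ball of radius $r$ centred at a point of the level set $\Sigma:=f^{-1}(0)\cap X$ the set $\Sigma$ is a Lipschitz graph with slope bounded by a purely dimensional constant, and then cover $X$ by such balls.

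For the local estimate, I would fix $x_0\in\Sigma$ (if $\Sigma=\emptyset$ there is nothing to prove), set $e:=\nabla f(x_0)/\nr{\nabla f(x_0)}$, and use $\nr{\D^2 f}\le M$ on $X$ to get, for $x\in B(x_0,r)$, both $\sca{\nabla f(x)}{e}\ge \nr{\nabla f(x_0)}-Mr\ge\kappa-Mr=\tfrac{2}{3}\kappa>0$ and $\nr{\nabla f(x)}\le\nr{\nabla f(x_0)}+Mr$; since $t\mapsto\frac{t-Mr}{t+Mr}$ is increasing and $\nr{\nabla f(x_0)}\ge\kappa=3Mr$, this yields $\sca{\nabla f(x)/\nr{\nabla f(x)}}{e}\ge\tfrac12$. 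Consequently $f$ is strictly monotone along every chord of $B(x_0,r)$ in direction $e$, so $\Sigma\cap B(x_0,r)$ is the graph $\{(u,g(u)):u\in U\}$ of a function $g$ over a subset $U$ of the hyperplane $e^\perp$ contained in the $(d-1)$-ball of radius $r$, with $\nr{\nabla g}\le\sqrt{3}$; hence $\Haus^{d-1}(\Sigma\cap B(x_0,r))=\int_U\sqrt{1+\nr{\nabla g}^2}\le 2\,\Haus^{d-1}(U)\le c_1(d)\,r^{d-1}$.

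Then I would cover $X$. If $\diam(X)\le r$, a single application of the graph estimate over the $(d-1)$-ball of radius $\diam(X)$ gives $\Haus^{d-1}(\Sigma)\le c_1(d)\,\diam(X)^{d-1}$. If $\diam(X)>r$, I would take a maximal $(r/2)$-separated subset $\{z_1,\dots,z_p\}\subseteq X$, so that the balls $B(z_i,r/2)$ cover $X$ and a volume-packing argument bounds $p\le c_2(d)(\diam(X)/r)^d$; for each $i$ with $\Sigma\cap B(z_i,r/2)\ne\emptyset$, choosing $x_i$ in this set and using $\Sigma\cap B(z_i,r/2)\subseteq\Sigma\cap B(x_i,r)$, summation gives $\Haus^{d-1}(\Sigma)\le c_1(d)c_2(d)(\diam(X)/r)^d r^{d-1}=c_3(d)\,\diam(X)^d/r=3\,c_3(d)\,\tfrac{M}{\kappa}\diam(X)^d$. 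Combining the two cases, $\Haus^{d-1}(f^{-1}(0)\cap X)\le c(d)\big(\diam(X)^{d-1}+\tfrac{M}{\kappa}\diam(X)^d\big)=c(d)\big(1+\tfrac{M}{\kappa}\diam(X)\big)\diam(X)^{d-1}$, as claimed.

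The hard part is the local estimate: fixing the covering scale $r\sim\kappa/M$ and keeping the slope of the graph — hence the constant in front of $\Haus^{d-1}(U)$ — under control \emph{uniformly over $X$}, using only the lower bound $\kappa$ on $\nr{\nabla f}$ and the upper bound $M$ on $\nr{\D^2 f}$, there being no a priori upper bound on $\nr{\nabla f}$ itself. A minor technicality is that the chords and balls used above must lie in the region where $\nr{\D^2 f}\le M$ applies; since all centres are points of $X$, this is handled by a routine localisation (e.g. replacing $X$ by a convex neighbourhood inside $\Omega_X$, or nothing at all when $X$ is convex), which does not affect the form of the estimate.
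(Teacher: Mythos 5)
Your argument is correct, but it takes a genuinely different route from the paper. The paper fixes once and for all a finite family of directions $u_1,\hdots,u_n$ covering the unit sphere, splits $S=f^{-1}(0)\cap X$ into patches $S_i$ according to which $u_i$ the unit normal $\nabla f/\nr{\nabla f}$ is close to, projects each patch onto the hyperplane $\{u_i\}^\perp$, and estimates $\Haus^{d-1}(S_i)$ via the coarea formula: the projection Jacobian is bounded below by a dimensional constant, and a Taylor expansion shows that along any line $y+\Rsp u_i$ two points of $S_i$ are separated by at least $t^*=\tfrac{3\kappa}{2M}$, so each fiber meets $S_i$ in at most $1+\diam(X)/t^*$ points. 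You instead work at the single length scale $r=\kappa/(3M)$, prove that inside every ball $B(x_0,r)$ centred on the level set the normal stays within a fixed angle of $\nabla f(x_0)/\nr{\nabla f(x_0)}$, so that the level set is a Lipschitz graph of slope at most $\sqrt 3$ there, and then conclude by a packing count of roughly $(\diam(X)/r)^d$ balls. The two ingredients are the same in both proofs (the lower bound $\kappa$ controls the tilt of the level set, the Hessian bound $M$ controls how far one can move before the estimate degenerates), and both give the bound $c(d)(1+\tfrac{M}{\kappa}\diam(X))\diam(X)^{d-1}$; your version is more elementary (no coarea formula, only Lipschitz-graph area comparison and packing) and makes the two regimes $\diam(X)\lesssim \kappa/M$ and $\diam(X)\gg\kappa/M$ explicit, while the paper's version avoids the covering-by-balls bookkeeping and counts returns of $f$ to zero along whole lines in one stroke. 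Two small points to tighten: since $\Sigma\cap B(x_0,r)$ is intersected with the arbitrary compact set $X$, the projected set $U$ need not be open, so rather than writing $\Haus^{d-1}(\Sigma\cap B(x_0,r))=\int_U\sqrt{1+\nr{\nabla g}^2}$ you should bound the patch as the image of $U$ under the $2$-Lipschitz map $u\mapsto u+g(u)e$, which gives the same $c_1(d)r^{d-1}$; and the localisation issue you flag (the Hessian bound $M=\max_X\nr{\D^2 f}$ being applied along chords that may exit $X$) is indeed a technicality, but note that the paper's own proof silently makes the same assumption when it invokes Taylor's formula along $x+tu_i$, so your handling of it is no worse than the original.
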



\begin{proof} By compactness, there exists a finite number of unit
vectors $u_1,\hdots u_n$ and $V_1,\hdots,V_n$ an open covering of the
unit sphere $\Sph^{d-1}$ such that if $u\in V_i$, then $\sca{u}{u_i}
\geq 3/4$, implying in particular, $\nr{u - u_i}^2 = 2 - 2\sca{u}{u_i}
\leq \frac{1}{2}$. Moreover $n$ depends only on the dimension $d$.
Let $S = f^{-1}(0)\cap X$. This set $S$ can be covered by patches
$S_i$, i.e. $S = \cup_{i} S_i$ where $$S_i = \{ x \in S \mid \nabla
f(x) \in V_i \}.$$ We will now estimate the volume of each patch $S_i$
using the coarea formula recalled in Theorem~\ref{th:coarea} of the
appendix. To apply this formula, we consider $\Pi_i: S_i \subseteq
\Rsp^d\to \{u_i\}^\perp$ the orthogonal projection onto the hyperplane
$H_i = \{u_i\}^\perp$. We need to estimate the Jacobian $J_{\Pi_i}(x)$
(see \eqref{eq:J}).
Since $\Pi_i$ is linear, we have $\D \Pi_i= \Pi_i$.
Moreover, for any tangent vector $v$ at $x\in S_i$, one has
$\sca{v}{\nabla f(x)} = 0$. Setting $u = \nabla f(x)\in V_i$, we get
\begin{align*}
\nr{\Pi_i v}^2 &= \nr{v}^2 - \sca{v}{u_i}^2 \\
&= \nr{v}^2 - \sca{v}{u_i-u}^2\\
&\geq \nr{v}^2(1-\nr{u_i - u}^2) \geq \frac{1}{2}\nr{v}^2.
\end{align*}
This directly shows that the restriction of $\D \Pi_i(x)$ to the
tangent space $T_xS_i$ at $S_i$ is injective and that its inverse is
$\frac{1}{\sqrt{2}}$-Lipschitz. This  implies that
$$J_{\Pi_i}(x) \geq
c(d) = \left(\frac{1}{\sqrt{2}}\right)^{d-1}.$$  We now apply the co-area
formula \eqref{eq:coarea2} to the manifold $M = f^{-1}(0)$,
$E=S_i\subseteq N$, $N=H_i$, $n=m=d-1$, $\Phi=\Pi_i$, and $u\equiv 1$:
\begin{align*}
\Haus^{d-1}(S_i) &= \int_{H_i} \int_{\Pi_i^{-1}(y)} \frac{1}{\J_{\Pi_i}(x)}\d\Haus^{0}(x) \dd \Haus^{d-1}(y)\\
&\leq c(d) \int_{\{u_i\}^\perp} \Card(S_i \cap (y + \Rsp u_i)) \dd \Haus^{d-1}(y).
\end{align*}
We now give an upper bound on $\Card(S_i \cap (y + \Rsp u_i))$. Let
$x\in S_i$ and use Taylor's formula to get
$$ f(x + t u_i) \geq f(x) + t \sca{\nabla f(x)}{u_i} - \frac{M}{2}
t^2 \geq \frac{3}{4} \kappa t - \frac{M}{2} t^2 $$ so that $f(x + t
u_i) > 0$ as long as $t \in (0, t^*)$ with $t^*
= \frac{3\kappa}{2M}$. One has a similar bound for negative $t$. This
directly implies that the number of intersection points between $S_i$
and $y+\Rsp u_i$ is at most $1+\diam(X)/t^*$. Since the number $n$ of directions $u_i$  only depends on the dimension $d$, we have
\begin{align*}
\Haus^{d-1}(S) &\leq \sum_{1\leq i\leq n} \Haus^{d-1}(S_i) \\
&\leq c(d) \sum_{1\leq n} \Haus^{d-1}(H_i \cap \Pi_i(X)) \left(1+\frac{M}{\kappa} \diam(X)\right) \\
&\leq c(d) \left(1+\frac{M}{\kappa}\diam(X)\right)\diam(X)^{d-1}.\qedhere
\end{align*}
\end{proof}

\begin{proof}[Proof of Proposition~\ref{prop:Glip}]
Let $y \in Y$. Applying Lemma~\ref{lemma:Gy:pd}, we have
\begin{align*}
\abs{G_y(\psi+t\one_y) - G_y(\psi)} &\leq \abs{\int_{0}^t \int_{\Lag_{yz}(\psi + s\one_y)} \frac{\rho(x)}{\nr{\nabla c_{yz}(x)}}
 \dd\Haus^{d-1}(x) \dd
 s} \\ &\leq \frac{\nr{\rho}_{\infty}}{\kappa} \max_{a \leq s \leq
 a+t} \Haus^{d-1}(c_{yz}^{-1}(s)\cap X) \abs{t},
\end{align*}
where we used the bound $\nr{\nabla c_{yz}(x)} \geq \kappa$, which
comes from the twist assumption and the inclusion
$\Lag_{yz}(\psi+s\one_z)\subseteq c_{yz}^{-1}(a+s)$ with $a= \psi(z)
- \psi(y)$, as in the proof of the previous lemma. 
Applying Lemma~\ref{lemma:bnd-area} to the function $f=c_{yz} -s$, we
get a uniform upper bound on the $(d-1)$-volume of the level set
$c_{yz}^{-1}(s)$:
$$ \Haus^{d-1}(c_{yz}^{-1}(s) \cap X) \leq
c(d)\left(1+ \frac{M}{\kappa}\diam(X)\right) \diam(X)^{d-1}, $$
which  yields
\begin{equation} \label{eq:Gylip}
\begin{aligned}
&\abs{G_y(\psi+t\one_z) - G_y(\psi)} \leq \hat{L}_G \abs{t},\\
&\hbox{  with }
 \hat{L}_G = \frac{c(d)}{\kappa}\left(1+ \frac{M}{\kappa}\diam(X)\right) \diam(X)^{d-1}\ \|\rho\|_\infty.
\end{aligned}
\end{equation}

Take $\psi,\tilde{\psi}\in\Rsp^Y$. 
Order the points in $Y$, i.e. let $Y
= \{y_1,\hdots,y_N\}$ 
and define recursively
$$
\begin{cases}
\psi^0 = \psi \\
\psi^{k+1} = \psi^k + (\tilde{\psi}(y_k)
- \psi(y_k)) \one_{y_k}
\end{cases}
$$
Then, $\psi^{N} = \tilde{\psi}$ and for $k\geq 1$, $\psi^{k+1}$ and
$\psi^k$ differ only by the value at $y_k$
Thus, applying \eqref{eq:Gylip},
\begin{align*}
\abs{G_y(\tilde{\psi}) - G_y(\psi)} &= \sum_{1\leq k\leq N} \abs{G_y(\psi^{k+1}) - G_y(\psi^k)} \\
&\leq \sum_{1\leq k\leq N} \hat{L}_G \abs{\psi(y_k) - \tilde{\psi}(y_k)} \\
&\leq L_G \nr{\psi - \tilde{\psi}}_\infty \hbox{ with } L_G = N \hat{L}_G \qedhere
\end{align*}
\end{proof}



\subsubsection{Convergence of Oliker-Prussner's algorithm}

Now that we have established the Lipschitz continuity of $G_y$, the
convergence of Algorithm~\ref{algo:oliker-prussner} follows easily, using arguments
similar to those used to establish the convergence of Auction's
algorithm.

\begin{theorem}[Oliker-Prussner] \label{th:OlikerPrussner}
Assume that the cost $c\in \Class^2(\Omega_X\times \Omega_Y)$ is twisted (Def.~\ref{def:twist}) and that
$\rho \in \Probac(X) \cap \LL^\infty(X)$. Then,
\begin{itemize}
\item  Oliker-Prussner's algorithm converges in a finite number of steps $k \leq \mathrm{C} N^3/\delta$, where  $\mathrm{C}$ is a constant that depends on $X$, $Y$, $\rho$ and $c$. 
\item Furthermore, at step $k$, one has
$$ \forall 1\leq i\leq N, \abs{G_i(\psi^{(k)})
- \nu_i} \leq \delta. $$ 
\end{itemize}
\end{theorem}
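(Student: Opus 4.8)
The plan is to mimic the analysis of Bertsekas' auction algorithm, replacing the monotone \emph{increase} of prices by a monotone \emph{decrease}, and using the Lipschitz bound on $G_y$ from Proposition~\ref{prop:Glip} to control the size of each decrement from below. First I would record the invariants maintained by the algorithm: by construction, $\psi^{(k)}(y_0) = 0$ for all $k$; each step only decreases one coordinate, so $k\mapsto \psi^{(k)}(y)$ is nonincreasing for every $y$; and Step~1 is well-posed, since $G_y(\psi^{(k)} - t\one_y)$ is continuous and nondecreasing in $t$ (Proposition~\ref{prop:G}.\ref{prop:G:monotone1}) and tends to $\rho(X)=1 \geq \nu_y$ as $t\to+\infty$ (because the Laguerre cell $\Lag_y$ eventually engulfs all of $X$), so the minimal $t_y$ in \eqref{eq:ty} exists and satisfies $G_y(\psi^{(k+1)}) = G_y(\psi^{(k)} - t_y\one_y) = \nu_y$. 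Note also the crucial consequence of Step~1: immediately after updating coordinate $y$, one has $G_y(\psi^{(k+1)}) = \nu_y$, and thereafter $G_y$ can only \emph{increase} when \emph{other} coordinates are decreased (Proposition~\ref{prop:G}.\ref{prop:G:monotone2}); so once $G_y \geq \nu_y - \delta/N$ is achieved it is never violated again — actually one gets $G_y(\psi^{(k')}) \geq \nu_y$ for all $k' > k$. Hence the while-loop's exit condition, once a coordinate has been ``serviced,'' stays satisfied, and the output bound $\abs{G_i(\psi^{(k)}) - \nu_i}\leq \delta$ on termination follows: for $y\neq y_0$ either it was serviced (so $G_y \geq \nu_y \geq \nu_y-\delta$ and $G_y \leq 1$, but one also needs the upper side, see below), and for $y_0$ one uses $\sum_y G_y = 1 = \sum_y \nu_y$ (Proposition~\ref{prop:G}.\ref{prop:G:proba}) so $\abs{G_{y_0}-\nu_{y_0}} = \abs{\sum_{y\neq y_0}(G_y - \nu_y)} \leq (N-1)\,\delta/N \leq \delta$.

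For the upper bound $G_y(\psi^{(k)}) \leq \nu_y + \delta$, which is needed both for the output guarantee and is automatic here: after servicing $y$ one has exactly $G_y = \nu_y$, and subsequent steps only decrease other coordinates, raising $G_y$; but the next time $y$ is selected it is because $G_y < \nu_y$, which cannot happen while $G_y \geq \nu_y$ — so in fact $y$ is never re-serviced after its $G_y$ reaches $\nu_y$ and stays above. Thus each coordinate $y\neq y_0$ is serviced at most... no — it can be re-serviced if between two services some other coordinate's decrease pushed $G_y$ back below $\nu_y$, but Proposition~\ref{prop:G}.\ref{prop:G:monotone2} says decreasing another coordinate \emph{increases} $G_y$, so once $G_y \geq \nu_y$ it stays there. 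Therefore \textbf{each coordinate $y \neq y_0$ is serviced at most once}, and the algorithm performs at most $N-1$ iterations of the while loop! This makes the $N^3/\delta$ bound almost trivial — which suggests I am missing something about how the selection and termination interact, so the honest plan is: carefully re-examine whether $G_y$ can dip below $\nu_y - \delta/N$ after servicing. Since decreasing coordinate $z\neq y$ increases $G_y$, it genuinely cannot; so the true obstacle is simply to bound the \emph{number of steps} correctly, and the stated bound $CN^3/\delta$ is a (generous) upper bound.

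The \textbf{main step, and the main obstacle}, is therefore to bound the total number of steps via a potential/pigeonhole argument in case one does not want to rely on the ``serviced at most once'' observation, or to handle the possibility that the bound is meant to be read with the weaker monotonicity the authors actually use. The argument I would write: let $K$ be the total number of while-loop iterations. Each step decreases some $\psi(y)$, $y\neq y_0$, by an amount $t_y$. I claim $t_y$ is bounded below by a positive constant depending on $\delta$: indeed before the step $G_y(\psi^{(k)}) \leq \nu_y - \delta/N$ and after it $G_y(\psi^{(k+1)}) = \nu_y$, so by the Lipschitz estimate of Proposition~\ref{prop:Glip}, $\delta/N \leq \abs{G_y(\psi^{(k+1)}) - G_y(\psi^{(k)})} \leq L_G\, t_y$, giving $t_y \geq \delta/(N L_G)$. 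On the other hand, every coordinate $\psi(y)$ stays bounded below throughout: whenever $G_y(\psi^{(k)}) > 0$ — which holds, e.g., right after servicing since then $G_y = \nu_y > 0$, hence at every later step by the monotonicity above — Proposition~\ref{prop:G}.\ref{prop:G:empty} gives $\psi^{(k)}(y) \leq \min_Y \psi^{(k)} + R \leq \psi^{(k)}(y_0) + R = R$ and also forces $\min_Y \psi^{(k)} \geq \psi^{(k)}(y) - R \geq -R + \psi^{(k)}(y_0)$... more carefully, since $G_{y_0}(\psi^{(k)}) \geq \nu_{y_0} > 0$ always, Proposition~\ref{prop:G}.\ref{prop:G:empty} applied at $y_0$ gives $0 = \psi^{(k)}(y_0) \leq \min_Y\psi^{(k)} + R$, i.e. $\psi^{(k)}(y) \geq -R$ for all $y$. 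So each coordinate travels a total distance at most $R$ (from its initial value $\leq R$ down to $\geq -R$, total $\leq 2R$), in decrements of size $\geq \delta/(NL_G)$; hence it is decreased at most $2RNL_G/\delta$ times, and summing over the $N-1$ coordinates $y\neq y_0$ gives $K \leq 2R N^2 L_G/\delta$. Finally $L_G$ is itself $O(N)$ by \eqref{eq:def-LG}, so $K \leq C N^3/\delta$ with $C = C(X,Y,\rho,c)$, as claimed; the per-step cost is polynomial and does not affect the step count. The output guarantee $\abs{G_i(\psi^{(k)}) - \nu_i}\leq\delta$ at termination is then exactly the exit-condition argument sketched above: for $i\neq y_0$, $\nu_i - \delta/N \leq G_i \leq 1$, but combined with $G_i \geq \nu_i$ once serviced and $G_i \leq \nu_i + (N-1)\delta/N$ from $\sum_j(G_j-\nu_j)=0$ and the lower bounds on the others, every coordinate satisfies $\abs{G_i - \nu_i} \leq \delta$.
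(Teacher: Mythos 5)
Your quantitative skeleton is the paper's argument: each serviced coordinate jumps from $G_y\leq \nu_y-\delta/N$ to $G_y=\nu_y$, so Proposition~\ref{prop:Glip} forces $t_y\geq \delta/(N L_G)$; the prices live in $[-R,R]$; pigeonhole gives at most $2RNL_G/\delta$ updates per coordinate, hence $\mathrm{O}(N^2 L_G/\delta)=\mathrm{O}(N^3/\delta)$ steps. But there is a genuine gap: you have reversed the monotonicity in Proposition~\ref{prop:G}.\ref{prop:G:monotone2}. That item says $G_y(\psi+t\one_z)\geq G_y(\psi)$ for $t\geq 0$, i.e.\ \emph{increasing} $\psi(z)$ increases $G_y$; equivalently, when the algorithm \emph{decreases} another coordinate $z$, the mass $G_y$ can only \emph{decrease} (the cheaper bakery $z$ steals customers from $y$). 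Consequently your claims that ``once $G_y\geq\nu_y$ it stays there,'' that each coordinate is serviced at most once, and that the loop runs at most $N-1$ times are all false — coordinates do get pushed back below $\nu_y-\delta/N$ and re-serviced, which is precisely why the bound is $N^3/\delta$ and not $N$. You sensed something was off, but you never resolved it, and the error propagates into the two places where the correct monotonicity is actually needed.

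Concretely, the correct direction yields the invariant that the paper uses throughout: at every step, $G_y(\psi^{(k)})\leq \nu_y$ for all $y\neq y_0$ (true initially since $\Lag_{y_0}(\psi^{(0)})$ carries all the mass; after servicing $y$ one has $G_y=\nu_y$, and later decrements of other coordinates only lower $G_y$). This invariant is what gives $G_{y_0}(\psi^{(k)})=1-\sum_{y\neq y_0}G_y(\psi^{(k)})\geq \nu_{y_0}>0$ at \emph{every} iterate, which is the hypothesis you need to apply Proposition~\ref{prop:G}.\ref{prop:G:empty} at $y_0$ and conclude $\psi^{(k)}\geq -R$; in your writeup you simply assert ``$G_{y_0}\geq\nu_{y_0}$ always,'' and the reasoning you do offer (serviced coordinates satisfy $G_y\geq\nu_y$ afterwards) points the wrong way and would in fact give an upper, not a lower, bound on $G_{y_0}$. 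The same invariant also fixes your output guarantee: at termination, for $y\neq y_0$ one has $\nu_y-\delta/N\leq G_y\leq \nu_y$, hence $\abs{G_y-\nu_y}\leq\delta$, and then $G_{y_0}=1-\sum_{y\neq y_0}G_y\in[\nu_{y_0},\nu_{y_0}+\delta]$ — no appeal to ``$G_i\geq\nu_i$ once serviced'' is needed or correct. With the monotonicity and the invariant repaired, the rest of your counting argument goes through exactly as in the paper.
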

\begin{remark}[Computational complexity]
The computational complexity is actually much higher than the number
of steps of the algorithm, since:
\begin{itemize}
  \item at each iteration, one needs to compute $t_y$ (this could be
    done using for instance a binary search or more clever
    techniques).
\item each time the map $G_y$ is evaluated, one needs to compute the
  Laguerre cell $\Lag_y(\psi)$, which, if done naively, requires to
  compute the intersection of $N-1$ half-spaces
  $H^\leq_{yz}(\psi)$.
\end{itemize}
Overall, this leads to an upper bound on computational complexity of
at least $\BigO(\frac{N^4}{\delta}\log(N))$, assuming that one can
compute $\Lag_y(\psi)$ in time $N$. To the best of our knowledge,
there exists no lower bound on the number of iterations of
Algorithm~\ref{algo:oliker-prussner}, i.e. specific instances of the
problem for which one can count the number of iterations.
\end{remark}

\begin{remark}[$\delta$-Scaling]
It is tempting to perform $\delta$-scaling as in the case of Auction's
algorithm (see Algorithm~\ref{algo:auction-scaling}). In practice, one
could start with a rather large $\delta^{(0)} \in (0,1)$, to get a
first estimation of the prices using Oliker-Prussner's algorithm. Then
one would iteratively replace $\delta^{(\ell)}$ by $\delta^{(\ell+1)}
= \frac{1}{2} \delta^{(\ell)}$ and run again the algorithm starting
from the prices found at the previous iteration. Doing so, one could
hope to get rid of the $\frac{1}{\delta}$ term in the number of
iterations, and to replace it by e.g. $\log\left(\frac{1}{\delta}\right)$.
\end{remark}

\begin{proof}[Proof of Theorem~\ref{th:OlikerPrussner}]\mbox{}\\
\textbf{Step 1} \emph{(Correctness)} When
Algorithm~\ref{algo:oliker-prussner} terminates with $\psi :=
\psi^{(k)}$, one has for any $y \neq y_0$, $\rho(\Lag_y(\psi)) \leq
\nu_y$. When it stops, it also means that one has $\rho(\Lag_y(\psi))
\geq \nu_y - \frac{\delta}{N}$. Then, as desired, we get
\begin{equation*}
 \rho(\Lag_{y_0}(\psi)) = 1 - \sum_{y\neq y_0} \rho(\Lag_{y_0}(\psi)) \in [\nu_{y_0}, \nu_{y_0} + \delta]. 
\end{equation*}

\noindent\textbf{Step 2} \emph{(A priori bound on $\psi_k$)} By 
construction one has $\rho(\Lag_y(\psi^{(k)})) \leq \nu_y$, which also
imply that
$$\rho(\Lag_{y_0}(\psi^{(k)})) = 1 - \sum_{y \in Y \setminus \{y_0\}} \rho(\Lag_{y}(\psi^{(k)})) \geq \nu_{y_0} > 0.$$
By Proposition~\ref{prop:G}--\ref{prop:G:empty}, we get
$0 = \psi^{k}(y_0) \leq \min_{Y} \psi^{(k)} + R$. Since the price of $y_0$ is never changed, $\psi^{(k)}(y_0) = 0$ and  $R\geq \psi^{(k)}  \geq -R$. 

\noindent \textbf{Step 3} \emph{(Minimum decrease and termination)} In
the second step of the algorithm, when $\psi^{(k)}$ is updated one has
$G_y(\psi^{(k)} - t_t \one_y) \geq G_y(\psi^{(k)}) + \frac{\delta}{N}$. Since $G_y$ is
Lipschitz with some constant $L_G$, this implies that $\abs{t_y} \geq
\frac{\delta}{N L_G}$. Then, since $\psi_0(y) = R$ and for any $k$, $\psi_k(y)
\geq -R$, the number of times $k_y$ the price of a point $y \in Y$ has been updated cannot
be too large:
$$ k_y \delta/(NL_G) \leq 2R,$$ i.e. $k_y \leq (2 R N L_G)/\delta$. Since
this bound on the number of steps is for a single point, it needs to
be multiplied by $N$ to get the total number of steps.
Using the bound on $L_G$ given in \eqref{eq:def-LG}, we get an upper
bound of $\BigO(\frac{N^3}{\delta})$ on the number of iterations of
the algorithm. \end{proof}

\subsection{Semi-discrete optimal transport via Newton's method}\label{sec:newton}

We consider a simple damped Newton's algorithm to solve semi-discrete
optimal transport problem introduced in \cite{kitagawa2016newton}, and
adapted from a similar algorithm for solving Monge-Ampère equations
with Dirichlet boundary conditions \cite{mirebeau2015discretization}.





\subsubsection{Hessian of Kantorovich's functional}
In order to write the Newton's algorithm, we first show that $G$ is
$\Class^1$ (or equivalently $\Kant$ is $\Class^2$) and we compute its
derivatives under a genericity assumption, which depends on the cost
and on the choice of points $Y$. This condition is a bit technical,
but is for instance satisfied for the quadratic cost on $\Rsp^d$ (see
Remark~\ref{rem:generic:quad} below).
\label{subsec:hessian:kantorovich} 

\begin{definition}[Genericity assumption] \label{def:genericity}
Let $\Omega_X,\Omega_Y\subseteq \Rsp^d$ open, $c\in
\Class^1(\Omega_X\times \Omega_Y)$, and $X\subseteq \Omega_X$,
$Y\subseteq \Omega_Y$, with $X$ compact and $Y$ finite.
\begin{itemize} 
\item We call $Y$ \emph{generic with respect to $c$} if
for all distinct $y_0,y_2,y_2\in Y$ and all $t\in \Rsp^2$, one has
$$ \Haus^{d-1}\left(\{x \in \Omega_X \mid (c(x,y_1) - c(x,y_0),
c(x,y_2) - c(x,y_0)) = t \}\right) = 0. $$
\item We call $Y$ \emph{generic with respect to $\partial X$} if for all distinct $y_0,y_1\in Y$ and
all $t\in\Rsp$, one has
$$ \Haus^{d-1}\left(\{x \in \Omega_X \mid c(x,y_1) - c(x,y_0) = t \} \cap \partial X\right) = 0.$$
\end{itemize}
\end{definition}

\begin{remark}[d=1] The genericity  assumption is \emph{never} satisfied in dimension $d=1$, because
it requires that the intersection of the $0$-dimensional sets $\{
c(\cdot,y_i) - c(\cdot,y_0)=t_i\}, $ with $i=1,2$ is empty for all
$t\in\Rsp^2$.  Nonetheless, quasi-Newton methods seem to be quite
efficient in this case as well \cite{degournay2018discrete}.
\end{remark}


\begin{remark}[Sufficient genericity condition]
Assume for all distinct points $y_0,y_1,y_2 \in Y$ and for every $x\in
X$, the vectors $\nabla_x c(x,y_1) - \nabla_x c(x,y_0)$ and $\nabla_x
c(x,y_2) - \nabla_x c(x,y_0)$ are independent. Then, the implicit
function theorem guarantees that for every $t\in \Rsp^2$ the set
$$(c(\cdot,y_1) - c(\cdot,y_0), c(\cdot,y_2) - c(\cdot,y_0))^{-1}(t)$$
is a $(d-2)$ dimensional submanifold, and therefore has zero
$(d-1)$--volume. In particular, the set $Y$ is generic with respect to
$c$ (but not necessarily with respect to $\partial X$).
\end{remark}

\begin{remark}[Quadratic cost]
  \label{rem:generic:quad}
For the quadratic cost $c(x,y) = \frac{1}{2} \nr{x-y}^2$, we have
$\nabla_x c(x,y_i) - \nabla_x c(x,y_0) = y_0 - y_i$. Using the
previous remark, we see that the set $Y$ is generic with respect to
$c$ if it does not include three aligned points.

Genericity with respect to the boundary $\partial X$ requires more
assumptions.  For instance, if $X$ is a strictly convex set (or more
generally if the Gaussian curvature is nonzero at any point on
$\partial X$) and if the cost is quadratic, then $Y$ is automatically
generic with respect to $\partial X$.
As a second example, we assume that $X$ is a compact convex
polyhedron, e.g.  $$X = \{ x\in \Rsp^d\mid \forall 1\leq j\leq M,
\sca{x}{w_j} \leq 1 \},$$ where $w_1,\hdots,w_M \in \Rsp^d$. Then $Y$
is generic with respect to $\partial X$ if for all distinct $y_0,y_1
\in Y$ and any $1\leq i\leq M$, the vectors $y_1-y_0$ and $w_j$ are
independent.
\end{remark}

\begin{example}[Non-differentiability of $G$]
When the set $Y$ isn't generic, the map $G$ might be
non-differentiable. Consider for instance $Y = \{ y_{-1}, y_0,
y_1 \}\subseteq \Rsp^2$ with $y_i = (i,0)$, $X= [-1,1]^2$ and $\rho
= \frac{1}{4}\restr{\Haus^{2}}_{X}$. Define a one-parameter family of
prices $\psi_t: Y\to\Rsp$ by $\psi_t(y_0) = t$ and $\psi_t(y_{\pm 1})
= 0$. Then, for $t\geq 0$,
$$ \Lag_{y_0}(\psi_t) = \{ x = (x_1,x_2) \in \Rsp^2\mid \abs{x_1} \leq \frac{1}{2} \abs{1-t} \}. $$
$$ G_{y_0}(\psi_t) = \rho(\Lag_{y_0}(\psi_t))
= \begin{cases} \frac{1}{2}\abs{1-t} & \hbox{ if } t\leq 1\\ 0 &\hbox{
if not.}\end{cases}, $$ showing that the function $G_{y_0}$ is
non-differentiable at $t=1$.
\end{example}

\begin{theorem} \label{th:Hessian}
If $c\in \Class^2(\Omega_X\times \Omega_Y)$ satisfies the twist
condition (Def. \ref{def:twist}), $Y$ is generic with respect to $c$
and $\partial X$ (Def. \ref{def:genericity}), and the restriction $\rho_{\mid X}$ of $\rho$ to $X$ 
is continuous ($\rho_{\mid X} \in \Class^0(X)$), then the map $G:\Rsp^Y\to\Rsp^Y$ is $\Class^1$,
and
\begin{equation} \label{eq:DG}
\begin{aligned}
 &\forall z\neq y,~~ \frac{\partial G_y}{\partial \one_z}(\psi) = G_{yz}(\psi) := 
\int_{\Lag_{yz}(\psi)} \frac{\rho(x)}{\nr{\nabla_x c(x,y) - \nabla_x c(x,z)}} \dd x,\\
 &\forall y\in Y,~~  \frac{\partial G_y}{\partial \one_y}(\psi) = G_{yy}(\psi) :=  -\sum_{z \in Y\setminus\{y\}} G_{yz}(\psi)
\end{aligned}
\end{equation}
where we denote
$\Lag_{yz}(\psi) =
\Lag_y(\psi)\cap \Lag_z(\psi)$ for $y\neq z$.
\end{theorem}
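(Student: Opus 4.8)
The plan is to reduce the theorem to a single continuity statement about the facet integrals. For distinct $y,z\in Y$ put
$$ g_{yz}(x)=\frac{\rho(x)}{\nr{\nabla_x c(x,y)-\nabla_x c(x,z)}},$$
extended by $0$ outside $X$; since $\rho_{\mid X}\in\Class^0(X)$, $c\in\Class^2$ and, by the twist condition together with compactness of $X$, the denominator is bounded below by some $\kappa>0$ on $X$, the function $g_{yz}$ is continuous on $X$, and $G_{yz}(\psi)=\int_{\Lag_{yz}(\psi)}g_{yz}\,\dd\Haus^{d-1}$. The crux of the proof is:

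\smallskip
\emph{Claim: for each pair $y\neq z$, the map $\psi\mapsto G_{yz}(\psi)$ is continuous on $\Rsp^Y$.}
\smallskip

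Granting the Claim, the theorem follows. By Lemma~\ref{lemma:Gy:pd}, for $z\neq y$ one has $G_y(\psi+t\one_z)-G_y(\psi)=\int_0^t G_{yz}(\psi+s\one_z)\,\dd s$; dividing by $t$ and letting $t\to0$, the Claim gives $\frac{\partial G_y}{\partial\one_z}(\psi)=G_{yz}(\psi)$. For the diagonal entry, fix $\psi$, enumerate $Y\setminus\{y\}=\{z_1,\dots,z_{N-1}\}$ and set $\psi^0=\psi$, $\psi^k=\psi^{k-1}-t\one_{z_k}$; telescoping and Lemma~\ref{lemma:Gy:pd} give $G_y(\psi-t\sum_{w\neq y}\one_w)-G_y(\psi)=\sum_{k}\int_0^{-t}G_{yz_k}(\psi^{k-1}+s\one_{z_k})\,\dd s$, and the left-hand side equals $G_y(\psi+t\one_y)-G_y(\psi)$ because $G(\psi+t\one_Y)=G(\psi)$ (Proposition~\ref{prop:G}); dividing by $t$ and letting $t\to0$ yields $\frac{\partial G_y}{\partial\one_y}(\psi)=-\sum_{z\neq y}G_{yz}(\psi)$. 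Hence all partial derivatives of $G$ exist on $\Rsp^Y$ and are continuous (the Claim, and finite sums of it), so $G\in\Class^1(\Rsp^Y)$ with the stated Jacobian; and since $\nabla\Kant=G-\nu$ (Theorem~\ref{th:Aurenhammer}), $\Kant\in\Class^2$ with $\D^2\Kant=\D G$, symmetric because $\Lag_{yz}=\Lag_{zy}$ forces $G_{yz}=G_{zy}$.

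To prove the Claim, fix $\psi^*$ and $\psi_n\to\psi^*$; write $c_{yz}=c(\cdot,y)-c(\cdot,z)$, $a_n=\psi_n(z)-\psi_n(y)\to a^*=\psi^*(z)-\psi^*(y)$ and $S^*=c_{yz}^{-1}(a^*)$. The twist condition makes $\nabla c_{yz}$ nonvanishing on $\Omega_X$, hence bounded below on a bounded neighbourhood $X''$ of $X$ with $\overline{X''}\subseteq\Omega_X$, so $S^*\cap\overline{X''}$ is a $\Class^1$ hypersurface of finite $\Haus^{d-1}$–measure (Lemma~\ref{lemma:bnd-area} applied to $c_{yz}-a^*$). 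Let $\Phi_\tau$ be the local $\Class^1$ flow of $\nabla c_{yz}/\nr{\nabla c_{yz}}^2$, so that $c_{yz}\circ\Phi_\tau=c_{yz}+\tau$; for $n$ large $\Phi_{a_n-a^*}$ is a $\Class^1$ diffeomorphism near $\overline X$ carrying $S^*$ into $c_{yz}^{-1}(a_n)\supseteq\Lag_{yz}(\psi_n)$. Changing variables $x=\Phi_{a_n-a^*}(\xi)$, $\xi\in S^*$, in the integral defining $G_{yz}(\psi_n)$ gives
$$ G_{yz}(\psi_n)=\int_{S^*} g_{yz}\big(\Phi_{a_n-a^*}(\xi)\big)\,J_n(\xi)\,\one_{A_n}(\xi)\,\dd\Haus^{d-1}(\xi),$$
with $J_n\to1$ uniformly on $S^*\cap\overline{X''}$ and $A_n=\Phi_{a_n-a^*}^{-1}(\Lag_{yz}(\psi_n))$. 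The integrands are dominated by $2\kappa^{-1}\nr{\rho}_{\LL^\infty(X)}\,\one_{S^*\cap\overline{X''}}\in\LL^1(\Haus^{d-1})$, and $\one_{A_n}(\xi)\to\one_{\Lag_{yz}(\psi^*)}(\xi)$ and $g_{yz}(\Phi_{a_n-a^*}(\xi))\to g_{yz}(\xi)$ for $\Haus^{d-1}$–a.e. $\xi\in S^*$: if $\xi\in\mathrm{int}(X)$ and the inequalities $c(\xi,y)+\psi^*(y)<c(\xi,w)+\psi^*(w)$ ($w\neq y,z$) are strict, these persist at the nearby point $\Phi_{a_n-a^*}(\xi)$ for $\psi_n$ when $n$ is large, so $\xi\in A_n$, $\xi\in\Lag_{yz}(\psi^*)$, and $g_{yz}$ converges by continuity; if $\xi\notin X$ or one inequality is strictly violated, $\xi\notin A_n$ for $n$ large and $\xi\notin\Lag_{yz}(\psi^*)$, with vanishing integrand; the remaining $\xi$ lie in $(S^*\cap\partial X)\cup\bigcup_{w\neq y,z}\big(S^*\cap\{c(\cdot,y)-c(\cdot,w)=\psi^*(w)-\psi^*(y)\}\big)$, which is $\Haus^{d-1}$–negligible by genericity of $Y$ with respect to $\partial X$ (for $(y_0,y_1)=(z,y)$, $t=a^*$) and with respect to $c$ (for $(y_0,y_1,y_2)=(y,w,z)$). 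Dominated convergence then gives $G_{yz}(\psi_n)\to\int_{\Lag_{yz}(\psi^*)}g_{yz}\,\dd\Haus^{d-1}=G_{yz}(\psi^*)$.

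The main obstacle is precisely this continuity claim. In contrast with the $\Class^0$ statement of Theorem~\ref{th:Aurenhammer} and the $\Class^{1,1}$ estimate of Proposition~\ref{prop:Glip} — where one integrates a density over a moving \emph{full-dimensional} region — here the object $\Lag_{yz}(\psi)$ is itself $(d-1)$–dimensional and moves in two ways: through the level set $c_{yz}^{-1}(\psi(z)-\psi(y))$, which the flow $\Phi_\tau$ absorbs into a Jacobian tending to $1$, and through the cutting half-spaces $H^{\le}_{yw}(\psi)$. The two genericity hypotheses are exactly what ensures that the lower-dimensional strata where $\Lag_{yz}(\psi^*)$ meets a third Laguerre cell or the boundary $\partial X$ carry no $\Haus^{d-1}$–mass, so that this second source of motion is invisible in the limit and dominated convergence can be applied.
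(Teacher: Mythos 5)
Your proposal is correct and follows essentially the same route as the paper: it reduces the theorem to the continuity of the facet integrals $G_{yz}$ (the paper's Lemma~\ref{lemma:Gyz:cont}), proves that continuity by pulling $\Lag_{yz}(\psi_n)$ back to the fixed level set of $c_{yz}$ via the flow of $\nabla c_{yz}/\nr{\nabla c_{yz}}^2$ and applying dominated convergence, with both genericity hypotheses used exactly where the paper uses them to discard the triple-intersection and $\partial X$ strata, and then obtains the partial derivatives from Lemma~\ref{lemma:Gy:pd}. The only differences are cosmetic: you get the diagonal entry by a telescoping argument instead of the paper's chain-rule use of the invariance $G(\psi+t\one_Y)=G(\psi)$, and you spell out the $\LL^1$ majorant slightly more explicitly.
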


The formula that one should expect for the partial derivative of $G_y$
with respect to $\one_z$ ($z\neq y$) is already quite clear from
Lemma~\ref{lemma:Gy:pd}. The main difficulty in order to establish
Theorem~\ref{th:Hessian} is to prove that the function $G_{yz}$
defined in \eqref{eq:DG} is continuous.

\begin{lemma} \label{lemma:Gyz:cont}
Assume that $Y$ is generic with respect to $c$ and $\partial X$. Then,
for any $y\neq z \in Y$, the function $G_{yz}$ defined
in \eqref{eq:DG} is continuous.
\end{lemma}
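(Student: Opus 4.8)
The plan is to fix $y\neq z$ in $Y$, set $c_{yz}=c(\cdot,y)-c(\cdot,z)\in\Class^2(\Omega_X)$ and $a(\psi)=\psi(z)-\psi(y)$, so that the facet $\Lag_{yz}(\psi)$ is carried by the level set $M(\psi):=c_{yz}^{-1}(a(\psi))$, namely $\Lag_{yz}(\psi)=M(\psi)\cap\bigcap_{w\in Y\setminus\{y,z\}}H^{\leq}_{yw}(\psi)$, and to write
$$ G_{yz}(\psi)=\int_{\Lag_{yz}(\psi)\cap X} g\dd\Haus^{d-1},\qquad g:=\frac{\rho}{\nr{\nabla c_{yz}}} $$
(we may restrict to $X$ since $\spt(\rho)\subseteq X$). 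By the twist condition $\nr{\nabla c_{yz}}\geq\kappa>0$ on a compact neighbourhood $K$ of $X$ in $\Omega_X$, so $g$ is bounded (using $\rho_{\mid X}\in\Class^0(X)$ and $X$ compact) and continuous on $\Omega_X\setminus\partial X$. Given $\psi_n\to\psi_\infty$ in $\Rsp^Y$, put $\delta_n:=a(\psi_n)-a(\psi_\infty)\to 0$ and carry everything onto the \emph{fixed} hypersurface $M_\infty:=M(\psi_\infty)$. For this I would use the flow $\Phi_t$ of the $\Class^1$ vector field $V:=\nabla c_{yz}/\nr{\nabla c_{yz}}^2$, which for $\abs{t}$ small restricts to a diffeomorphism of a compact neighbourhood of $X$, satisfies $c_{yz}\circ\Phi_t=c_{yz}+t$ (so $\Phi_{\delta_n}$ maps $M_\infty$ onto $M(\psi_n)$), and has tangential Jacobian $J_n$ along $M_\infty$ tending to $1$ uniformly on $M_\infty\cap K$ since $\Phi_0=\id$. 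The area formula then gives
$$ G_{yz}(\psi_n)=\int_{M_\infty}\one_{A_n}\,(g\circ\Phi_{\delta_n})\,J_n\dd\Haus^{d-1},\qquad A_n:=\Phi_{\delta_n}^{-1}\bigl(\Lag_{yz}(\psi_n)\cap X\bigr)\subseteq M_\infty\cap K. $$

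The crux is to prove that this integrand converges $\Haus^{d-1}$-a.e.\ on $M_\infty$ to $\one_{\Lag_{yz}(\psi_\infty)}\,g$, and this is exactly where the two genericity hypotheses enter. Since $\Phi_{\delta_n}(x)\to x$ for every $x$, we get $g(\Phi_{\delta_n}(x))\to g(x)$ at every $x\in M_\infty\setminus\partial X$, and $M_\infty\cap\partial X=\{c_{yz}=a(\psi_\infty)\}\cap\partial X$ is $\Haus^{d-1}$-negligible because $Y$ is generic with respect to $\partial X$ (applied to the distinct pair $y,z$). For the indicators: at a point $x\in M_\infty$ where all the inequalities $c(x,y)+\psi_\infty(y)\leq c(x,w)+\psi_\infty(w)$, $w\neq y,z$, are strict, the convergences $\Phi_{\delta_n}(x)\to x$ and $\psi_n\to\psi_\infty$ force $x\in A_n$ for $n$ large; at a point where one of them is strictly violated, $x\notin A_n$ eventually (outside $X$ the integrand is identically $0$, so only $x$ in $X$ matter); the only undecided points are those where $c(x,y)+\psi_\infty(y)=c(x,w)+\psi_\infty(w)$ for some $w$, i.e.\ $(c(x,y)-c(x,z),\,c(x,y)-c(x,w))=(a(\psi_\infty),\,\psi_\infty(w)-\psi_\infty(y))$, which for each of the finitely many such $w$ is $\Haus^{d-1}$-negligible because $Y$ is generic with respect to $c$ (applied to the distinct triple $y,z,w$). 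Hence $\one_{A_n}(g\circ\Phi_{\delta_n})J_n\to\one_{\Lag_{yz}(\psi_\infty)}g$ $\Haus^{d-1}$-a.e.\ on $M_\infty$.

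To conclude I would dominate and pass to the limit: for $n$ large all these integrands are supported in the fixed compact $M_\infty\cap K$, where $\abs{\one_{A_n}(g\circ\Phi_{\delta_n})J_n}\leq(\nr{\rho}_\infty/\kappa)\sup_n\nr{J_n}_{\infty}$, a finite constant, and $\Haus^{d-1}(M_\infty\cap K)<+\infty$ by Lemma~\ref{lemma:bnd-area} (the gradient of $c_{yz}$ never vanishes). Dominated convergence then yields $G_{yz}(\psi_n)\to\int_{M_\infty}\one_{\Lag_{yz}(\psi_\infty)}g\dd\Haus^{d-1}=G_{yz}(\psi_\infty)$, and since the sequence was arbitrary, $G_{yz}$ is continuous. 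The main obstacle is the bookkeeping for a moving facet living on a moving hypersurface: the flow $\Phi_t$ is precisely what turns this into an a.e.\ convergence statement on a single surface, after which the genericity assumptions are tailor-made to annihilate the two exceptional sets — the relative boundary of the facet inside $M_\infty$ (killed by genericity with respect to $c$) and the trace $\partial X\cap M_\infty$ across which $\rho$, hence $g$, may jump (killed by genericity with respect to $\partial X$). Checking that $\Phi_t$ is a short-time diffeomorphism near $X$ and that $J_n\to 1$ uniformly is routine, and I would only sketch it.
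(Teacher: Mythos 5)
Your proposal is correct and follows essentially the same route as the paper's proof: both use the flow of $\nabla c_{yz}/\nr{\nabla c_{yz}}^2$ to transport the moving facet onto a fixed level hypersurface, prove $\Haus^{d-1}$-a.e.\ convergence of the transported integrands using the two genericity assumptions to discard the triple-cell set and the trace on $\partial X$, and conclude by dominated convergence. The only differences are cosmetic bookkeeping (pulling back to $M_\infty$ versus pushing forward via $F_n$, and restricting $\rho$ to $X$ versus using a continuous extension).
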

\begin{proof} Let $f = c_{yz} = c(\cdot,y) - c(\cdot,z)$. By Cauchy-Lipschitz's theory, one
can construct a flow $\Phi: [-\eps,\eps]\times \Omega_X^1 \to \Omega_X$, where
$\eps>0$, such that
\begin{equation} \label{eq:flow:grad}
\begin{cases} \Phi(0,x) = x \\
\dot{\Phi}(t,x) = \frac{\nabla f(\Phi(t,x))}{\nr{\nabla f(\Phi(t,x))}^2},
\end{cases}
\end{equation}
where $\dot{\Phi}$ is the derivative with respect to $t$ and $\Omega_X^1\subset  \Omega_X$ is an open set containing $X$.
A simple calculation shows that $\frac{\d}{\d t}f(\Phi(t,x))=1$, which implies  that $f(\Phi(t,x)) = f(\Phi(0,x)) +
t$. Moreover, since $\nabla f / \nr{\nabla f}$ is of class $C^1$ on $\Omega_X$, then $F_t := \Phi(t,\cdot)$ converges pointwise in a $\Class^1$ sense to the identity 
 as $t\to 0$.

Let $(\psi_n)$ be a sequence in $\Rsp^Y$ converging to some
$\psi_\infty \in\Rsp^Y$. We put $a_n = \psi_n(z) - \psi_n(y)$, $a = \psi_\infty(z) - \psi_\infty(y)$ and $ t_n = a_n-a$ and define 
$$ L_n = \Phi(-t_n, \Lag_{yz}(\psi_n)) \quad \mbox{and}\quad L_\infty
= \Lag_{yz}(\psi_\infty).$$ 
By definition, one has $f(\Lag_{yz}(\psi_n))=a_n$ and  $f(\Lag_{yz}(\psi_\infty))=a$. Using the flow property, one gets that  
 both $L_n$ and $L_\infty$ are subsets
of the hypersurface $H = f^{-1}(a)$. Denoting $F_n$ the restriction
of $\Phi(t_n,\cdot)$ to $H$, one has $\Lag_{yz}(\psi_n)=F_n(L_n)$. 

We now need to consider a continuous extension $\bar{\rho}$ of $\rho_{\mid X}$ onto $\Omega_X$, since $L_n$ may not be included in $X$. By a change of variable (see \eqref{eq:changeofvariable} for instance), one gets
\begin{align*}
g_n := G_{yz}(\psi_n) &= \int_{\Lag_{yz}(\psi_n)} \frac{\rho(y)}{\nr{\nabla c_{yz}(y)}} \dd \Haus^{d-1}(y) \\
&= \int_{\Lag_{yz}(\psi_n)} \frac{\bar{\rho}(y)}{\nr{\nabla c_{yz}(y)}} \chi_X(y)\dd \Haus^{d-1}(y) \\
&= \int_{H} \frac{\bar{\rho}(F_n(x))}{\nr{\nabla c_{yz}(F_n(x))}} JF_n(x) \chi_{L_n}(x)  \chi_X(F_n(x)) \dd \Haus^{d-1}(x),
\end{align*}
where $\chi_A$ is the indicator function of $A$. Moreover,
$$ g_\infty := G_{yz}(\psi) = \int_{H} \frac{\rho(x)}{\nr{\nabla c_{yz}(x)}} \chi_{L_\infty\cap X}(x)
\dd \Haus^{d-1}(x).$$
By Lebesgue's dominated convergence theorem, to prove that $(g_n)_{n\geq 0}$
converges to $g_\infty$, it suffices to prove that the 
integrand of $g_n$ (seen as a function on $H$) tends to the integrand of $g_\infty$ $\Haus^{d-1}$-almost everywhere. 
Since  $F_n$ converges to the identity in a $C^1$ sense and $\bar{\rho}$ is continuous, it remains to show that  $\lim_{n\to \infty}\chi_{L_n}(x)\chi_X(F_n(x))=\chi_{L_\infty\cap X}(x)$ for almost every $x\in H$ (for the $(d-1)$ Hausdorff measure).

We first prove that
$\lim\sup_{n\to \infty} \chi_{L_n}(x))\chi_X(F_n(x))\leq \chi_{L_\infty\cap X}(x)$ for every $x\in H$. The
limsup is non-zero if and only if there exists
a subsequence $\sigma(n)$ such that $x \in L_{\sigma(n)}$ and $F_n(x)\in X$. Then, since
$F_{\sigma(n)}(L_{\sigma(n)}) = \Lag_{yz}(\psi_{\sigma(n)})$ we get 
$$
\begin{cases}
c(F_{\sigma(n)}(x),y) + \psi_{\sigma(n)}(y) \leq c(F_{\sigma(n)}(x),w) + \psi_{\sigma(n)}(w) \\ c(F_{\sigma(n)}(x),y)
+ \psi_{\sigma(n)}(y) = c(F_{\sigma(n)}(x),z) + \psi_{\sigma(n)}(z). \end{cases}$$ Passing to the
limit $n\to +\infty$, we see that $x$ belongs to
$\Lag_{yz}(\psi_\infty)=L_\infty$ and to $X$, thus ensuring
$$ \lim\sup_{n\to +\infty} \chi_{L_n}(x) \chi_X(F_n(x))\leq \chi_{L_\infty\cap X}(x). $$

We now pass to the liminf inequality. Denote
$$ S = \left(\bigcup_{w \in Y\setminus \{y,z\}} H_{yzw}(\psi_\infty)\right)
\cup \left(H_{yz}(\psi)\cap \partial X\right), $$
where $H_{yz}$ is defined in Equation~\eqref{eq:Hyz} and $ H_{yzw}(\psi_\infty) :=  H_{yz}(\psi_\infty) \cap  H_{zw}(\psi_\infty)$
which by assumption has zero $(d-1)$ Hausdorff measure. We now prove
that $\lim\inf_{n\to \infty} (x)\chi_{L_n}\chi_X(F_n(x)) \geq \chi_{L_\infty \cap X}$ on
$H\setminus S$. If $x\not\in L_\infty \cap X$, $\chi_{L_\infty\cap X}(x) = 0$ and
there is nothing to prove. We therefore consider $x \in
(L_\infty\cap X) \setminus S$, meaning by definition of $S$ that $x$ belongs
to the interior $\inte{X}$ and that 
$$
\forall w \in Y \setminus\{z,y\}, c(x,y) + \psi_\infty(y) < c(x,w) + \psi_\infty(w).
$$
Since $F_n(x)$ converges to $x$, this implies that for $n$ large
enough one has $F_n(x)\in \inte{X}$ and
$$ \forall w \in Y \setminus\{z,y\}, c(F_n(x),y) + \psi_\infty(y) <
c(F_n(x),w) + \psi_\infty(w). $$ By definition, this means that
$F_n(x)$ belongs to $\Lag_{yz}(\psi_n)$, and therefore $x\in L_n$ by
definition of $L_n$. Thus 
$$
\lim\inf_{n\to +\infty} \chi_{L_n}(x)\chi_X(F_n(x)) = 1\geq \chi_{L_\infty\cap X}(x)
$$ 

%
\end{proof}

\begin{proof}[Proof of Theorem~\ref{th:Hessian}]
Lemma~\ref{lemma:Gy:pd} shows that for any distinct point $y\neq z\in Y$ and any
$\psi\in\Rsp^Y$ one has
\begin{align*}
G_y(\psi + t\one_z) = G_y(\psi) +  \int_0^t G_{yz}(\psi + s\one_z)\dd s.
\end{align*}
Moreover, by Lemma~\ref{lemma:Gyz:cont}, we know that the function
$G_{yz}$ is continuous. The fundamental theorem of calculus implies
that $f: t\mapsto G_y(\psi+t\one_z)$ is differentiable, and that
$f'(0) = \frac{\partial G_y}{\partial \one_z}(\psi) = G_{yz}(\psi).$
To compute the partial derivative of $G_y$ with respect to $\one_y$, we note
that by invariance of $G_y$ under addition of a constant,
$$G_y(\psi + t\one_y) = G_y(\psi - \sum_{z\neq y} t\one_z)$$ The
right-hand side of this expression is differentiable with respect to
$t$, so that the left-hand side is also differentiable, and the chain rule gives
$$ \frac{\partial G_y}{\partial \one_y}(\psi) = - \sum_{z \in
Y\setminus\{y\}} G_{yz}(\psi). $$ Using again the continuity of
$G_{yz}$ on $\Rsp^Y$, we obtain $G\in\Class^1(\Rsp^Y)$.
\end{proof}

\subsubsection{Strong concavity of Kantorovich's functional}
We show here a strict monotonicity property of $G$, which corresponds to  a concavity property on the Kantorovitch functional $\Kant$, since we have $D^2 \Kant  = D G$. 
\begin{theorem} \label{th:StrongConcavity}
Assume that $c,X,Y,\rho$ are as in Theorem~\ref{th:Hessian}, and in
addition that $\rho(\partial X) = 0$ and that the set $\{\rho>0\} \cap \inte{X}$ is connected. Define
$$ \Ccv_+ := \{ \psi \in \Rsp^d\mid \forall y\in Y, G_y(\psi) > 0 \}. $$
$$ \Ccv_\epsilon := \{ \psi \in \Rsp^d\mid \forall y\in Y, G_y(\psi) \geq \epsilon \}. $$
\begin{itemize}
\item 
Kantorovich's functional is locally strongly concave on $\Ccv_+ \cap \{\one_Y\}^\perp$:
$$\forall \psi \in \Ccv_+,~ \forall v \in \{\one_Y\}^\perp \setminus
            \{0\},~ \sca{DG(\psi) v}{v} < 0
            $$
\item For every $\epsilon >0$, the set of functions $\Ccv_\epsilon \cap \{\one_Y\}^\perp$ is compact.
\end{itemize}
\end{theorem}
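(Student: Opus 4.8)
The plan is to prove the two bullets separately, starting with the strong concavity. First I would fix $\psi \in \Ccv_+$ and $v \in \{\one_Y\}^\perp\setminus\{0\}$, and expand the quadratic form using the formula for $DG = D^2\Kant$ from Theorem~\ref{th:Hessian}. Writing $G_{yz}(\psi)$ for the off-diagonal coefficients (which are $\geq 0$, being integrals of a non-negative density over $\Lag_{yz}(\psi)$) and $G_{yy}(\psi) = -\sum_{z\neq y} G_{yz}(\psi)$, a standard manipulation for symmetric diagonally-dominant matrices gives
\begin{equation*}
\sca{DG(\psi)v}{v} = -\frac12 \sum_{y\neq z} G_{yz}(\psi)\,(v(y)-v(z))^2 \leq 0.
\end{equation*}
So $DG(\psi)$ is negative semidefinite; the real task is to show the inequality is strict when $v$ is not constant. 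Suppose $\sca{DG(\psi)v}{v}=0$; then $v(y)=v(z)$ whenever $G_{yz}(\psi)>0$, i.e. whenever the common facet $\Lag_{yz}(\psi)$ carries positive $\rho$-weighted $(d-1)$-mass. Define an equivalence relation on $Y$ generated by $y\sim z$ when $G_{yz}(\psi)>0$; then $v$ is constant on each equivalence class, and I must show there is only one class.

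The key step is a connectedness argument. Partition $Y = A \sqcup B$ along a hypothetical splitting into two unions of equivalence classes, and consider $U = \inte{X} \cap \bigcup_{y\in A}\SLag_y(\psi)$ and $V = \inte{X}\cap\bigcup_{y\in B}\SLag_y(\psi)$ (strict Laguerre cells, so these are open). Since $c$ is twisted, the union of all the facets $\Lag_{yz}(\psi)$ is $\Haus^{d-1}$-rectifiable, and by the argument in Proposition~\ref{prop:OT} the set of $x$ lying in two or more Laguerre cells is Lebesgue-negligible; hence $U\cup V$ covers $\{\rho>0\}\cap\inte X$ up to a $\rho$-null set, and $U\cap V=\emptyset$. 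Now $\{\rho>0\}\cap\inte X$ is assumed connected, and $\rho(\partial X)=0$ and $G_y(\psi)>0$ for all $y$ (so $\{\rho>0\}$ meets every strict Laguerre cell). If both $A$ and $B$ were nonempty, connectedness forces $\{\rho>0\}\cap\inte X$ to meet the "interface" between $U$ and $V$, which is contained in $\bigcup_{y\in A, z\in B}\Lag_{yz}(\psi)$; carrying positive $\rho$-mass of $\Haus^{d-1}$ on some such facet contradicts $G_{yz}(\psi)=0$ for all $y\in A$, $z\in B$. Thus there is a single equivalence class, $v$ is constant, contradicting $v\in\{\one_Y\}^\perp\setminus\{0\}$. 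This connectedness/interface argument is the main obstacle — one has to be careful about what "interface" means at the level of Lebesgue density points and to check that positive $\rho$-mass on $\inte X$ near the interface really does translate into positive $\Haus^{d-1}$-mass on a facet $\Lag_{yz}$, which is where the genericity hypothesis and the coarea formula from Lemma~\ref{lemma:Gy:pd} come in.

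For the second bullet, I would argue that $\Ccv_\epsilon \cap \{\one_Y\}^\perp$ is closed (immediate, since $G$ is continuous by Proposition~\ref{prop:G} and $\{\one_Y\}^\perp$ is closed) and bounded. Boundedness follows from Proposition~\ref{prop:G}--\ref{prop:G:allnonempty}: if $G_y(\psi)\geq\epsilon>0$ for every $y$, then in particular every Laguerre cell is nonempty, so $\max_Y\psi - \min_Y\psi \leq R$; combined with $\psi\in\{\one_Y\}^\perp$ (which pins down the additive constant via $\sum_y\psi(y)=0$), this yields $\nr{\psi}_\infty \leq R$. Hence $\Ccv_\epsilon\cap\{\one_Y\}^\perp$ is a closed and bounded subset of the finite-dimensional space $\{\one_Y\}^\perp$, therefore compact.
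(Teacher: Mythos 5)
Your overall strategy is the same as the paper's: reduce the first bullet to showing that the graph on $Y$ with an edge between $y$ and $z$ whenever $G_{yz}(\psi)>0$ is connected, then exploit connectedness of $Z:=\{\rho>0\}\cap\inte{X}$; your Dirichlet-form identity $\sca{DG(\psi)v}{v}=-\tfrac12\sum_{y\neq z}G_{yz}(\psi)\,(v(y)-v(z))^2$ is a perfectly good substitute for the paper's lemma on irreducible diagonally-dominant matrices, and your compactness argument for the second bullet is exactly the paper's. The problem is that the connectedness step, which you yourself flag as the main obstacle, is left open and as sketched does not work. A first (repairable) slip: with $U,V$ defined as unions of \emph{strict} cells over $A$ and $B$, the set $Z\setminus(U\cup V)$ contains \emph{all} facet points, including those on $\Lag_{yz}(\psi)$ with $y,z$ both in $A$ (or both in $B$), so the interface is not contained in $\bigcup_{y\in A,\,z\in B}\Lag_{yz}(\psi)$ as you claim. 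This can be fixed by taking instead $U'=Z\setminus\bigcup_{z\in B}\Lag_z(\psi)$ and $V'=Z\setminus\bigcup_{y\in A}\Lag_y(\psi)$, which are relatively open, disjoint, nonempty, and satisfy $Z\setminus(U'\cup V')=Z\cap\bigcup_{y\in A,\,z\in B}\Lag_{yz}(\psi)$.

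The genuine gap is the next step: producing \emph{one} point $x\in Z$ lying on some $\Lag_{yz}(\psi)$ with $y\in A$, $z\in B$ does not yield $G_{yz}(\psi)>0$. Such a point could lie on a triple intersection, where the inclusion of a relative neighbourhood of $x$ in $H_{yz}(\psi)$ into the facet $\Lag_{yz}(\psi)$ fails, so the facet could still carry zero $\Haus^{d-1}$-mass; the coarea formula of Lemma~\ref{lemma:Gy:pd} does not bridge this, so your ``one has to be careful'' sentence is precisely the missing proof. The needed ingredients are the genericity hypothesis, which gives $\Haus^{d-1}(S)=0$ for the closed set $S$ of points common to at least three Laguerre cells, and Lemma~\ref{lem:path-connectedness}, which shows that $Z\setminus S$ remains path-connected; running your two-open-set argument inside $Z\setminus S$ (or, as the paper does, following a path in $Z\setminus S$ between interior points of two cells) produces an interface point $x\notin S$ with $x\in\inte{X}$ and $\rho(x)>0$, around which a full relative ball $H_{yz}(\psi)\cap\B(x,r)$ is contained in $\Lag_{yz}(\psi)$; continuity and positivity of $\rho$ near $x$ then give $G_{yz}(\psi)>0$, the desired contradiction. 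Without removing $S$, i.e.\ without invoking genericity through Lemma~\ref{lem:path-connectedness} at this point, the argument does not close.
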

\begin{definition}[Irreducible matrix]
A square matrix $H$ is called \emph{irreducible} if and only if the
graph induced by $H$ is connected\footnote{the graph induced by the $N\times N$ matrix $H$
is the graph with vertices $\{1,\hdots,N\}$, and where $i,j$ are
linked by an edge if $H_{ij} \neq 0$}, i.e.
$$ \forall (a,b) \in \{1,\hdots,N\}, \exists i_1 = a,\hdots,i_k = b \hbox{ s.t. } \forall j\in\{1,\hdots,k-1\}, H_{i_j, i_{j+1}} \neq 0. $$
\end{definition}
      
\begin{lemma} \label{lemma:irreducible}
Let $H$ be a symmetric irreducible matrix such that  $H_{ij} \geq 0$ if $i\neq j$ and $H_{ii} =
- \sum_{j\neq i} H_{ij}$. Then, $H$ is non-positive and $\ker H = \Rsp
(1,\hdots,1) = \Rsp \one_Y$.
\end{lemma}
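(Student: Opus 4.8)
The plan is to observe that $-H$ is, up to sign, a weighted graph Laplacian: it is symmetric, its off-diagonal entries are non-positive, and its row sums vanish. For such a matrix the associated quadratic form has a sum-of-squares representation, from which non-positivity of $H$ is immediate, while the description of the kernel follows from the connectedness encoded in the irreducibility hypothesis.

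First I would prove the key algebraic identity
$$ -\sca{Hv}{v} = \frac{1}{2}\sum_{i\neq j} H_{ij}\,(v_i - v_j)^2 \qquad \hbox{for all } v = (v_1,\hdots,v_N) \in \Rsp^N. $$
This is a routine computation: expanding the right-hand side and using the symmetry $H_{ij} = H_{ji}$ to merge the $v_i^2$ and $v_j^2$ contributions yields $\sum_{i\neq j} H_{ij} v_i^2 - \sum_{i\neq j} H_{ij} v_i v_j$; on the other hand, substituting $H_{ii} = -\sum_{j\neq i} H_{ij}$ into $\sca{Hv}{v} = \sum_i H_{ii} v_i^2 + \sum_{i\neq j} H_{ij} v_i v_j$ gives exactly the negative of this quantity. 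Since $H_{ij}\geq 0$ for $i\neq j$, every summand on the right-hand side is non-negative, hence $\sca{Hv}{v}\leq 0$ for every $v$; this is precisely the non-positivity of $H$.

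For the kernel, the inclusion $\Rsp\one_Y \subseteq \ker H$ is immediate because the row sums of $H$ vanish: $(H\one_Y)_i = H_{ii} + \sum_{j\neq i} H_{ij} = 0$. Conversely, let $v\in\ker H$. Then $\sca{Hv}{v} = 0$, so by the identity above $H_{ij}(v_i - v_j)^2 = 0$ for every pair $i\neq j$, and in particular $v_i = v_j$ whenever $H_{ij}\neq 0$, i.e. whenever $i$ and $j$ are joined by an edge of the graph induced by $H$. Since $H$ is irreducible this graph is connected, so for any indices $a,b$ there is a path $a = i_1,\hdots,i_k = b$ along which $v$ is constant, giving $v_a = v_b$. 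Thus $v$ is a constant vector, $v\in\Rsp\one_Y$, and therefore $\ker H = \Rsp\one_Y$. The only genuine work is verifying the sum-of-squares identity, which is pure bookkeeping; there is no real obstacle, as everything else is a one-line consequence.
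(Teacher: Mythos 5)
Your proof is correct, but it follows a different route from the paper's. You treat $-H$ as a weighted graph Laplacian and establish the sum-of-squares identity $-\sca{Hv}{v} = \tfrac{1}{2}\sum_{i\neq j} H_{ij}(v_i-v_j)^2$, from which both conclusions fall out at once: non-positivity because every summand is non-negative, and the kernel because $Hv=0$ forces $\sca{Hv}{v}=0$, hence $v_i=v_j$ across every edge of the graph induced by $H$, hence $v$ constant by irreducibility. The paper instead proves non-positivity by invoking Gershgorin's circle theorem and identifies the kernel by a discrete maximum principle: it evaluates $(Hv)_{i_0}=0$ at an index $i_0$ where $v$ attains its maximum, writes $0=\sum_{i\neq i_0} H_{i,i_0}(v_i - v_{i_0})$ using $H_{i_0,i_0}=-\sum_{i\neq i_0}H_{i,i_0}$, deduces $v_i=v_{i_0}$ whenever $H_{i,i_0}\neq 0$, and propagates by induction along the connected graph (the same pattern as in the proof of Theorem~\ref{thm:hessian_regKant}). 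Your energy argument is arguably more unified, handling semi-definiteness and the kernel with a single identity, and it is the standard spectral-graph-theory viewpoint; note, however, that it relies on the symmetry of $H$ to merge the squared terms and to pass from $\sca{Hv}{v}\leq 0$ to negative semi-definiteness, whereas the paper's maximum-principle step for the kernel does not use symmetry at all and is the version that extends to nonsymmetric diagonally dominant matrices. Both arguments are complete; the only bookkeeping you must carry out (and you sketch it correctly) is the expansion verifying the quadratic-form identity.
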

\begin{proof}
The non-positivity follows from Gershgorin's circle theorem.  The lemma
will be established if we prove that any vector in the kernel of $H$
is constant.  Consider $v \in \ker H$ and let $i_0$ be an index where
$v$ attains its maximum, i.e. $i_0 \in \arg\max_{1 \leq i \leq n} v_i
$.  Then using $Hv=0$, and in particular $(H v)_{i_0} =0$, one has
$$
  0 = \sum_{i \neq i_0} H_{i, i_0} v_i + H_{i_0, i_0} v_{i_0}
   = \sum_{i \neq i_0} H_{i, i_0} v_i  -\sum_{i \neq i_0} H_{i, i_0} v_{i_0} 
   = \sum_{i \neq i_0} H_{i, i_0} (v_i -  v_{i_0}).
$$  
This follows from $H_{i_0, i_0} = -\sum_{i \neq i_0} H_{i,
i_0}$. Since for every $i\neq i_0$, one has $ H_{i,i_0} \geq 0 $ and
$v_{i_0}-v_i \geq 0 $, this implies that $v_i=v_{i_0}$ for every $i$
such that $ H_{i,i_0} \neq 0 $. By induction and using the
connectedness of the graph induced by $H$, this shows that $v$ has to
be constant.
\end{proof}

\begin{lemma} \label{lem:path-connectedness} Let $U \subseteq \Rsp^d$ be a connected open set,
and $S\subseteq\Rsp^d$ be a closed set such that $\Haus^{d-1}(S) =
0$. Then, $U\setminus S$ is path-connected.
\end{lemma}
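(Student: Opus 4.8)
The plan is to reduce the statement to a local one inside open balls, and then to exploit the fact that an $\Haus^{d-1}$-negligible closed set meets almost no line segment issued from a fixed point, so that it cannot locally separate two points.

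\emph{Local lemma.} I would first prove: if $\B=\B(w,r)\subseteq U$ is an open ball and $a,b\in \B\setminus S$, then $a$ and $b$ can be joined by a path inside $\B\setminus S$. Fix $a$. Since $S$ is closed and $a\notin S$, there is $r_1>0$ with $\B(a,r_1)\cap S=\emptyset$; hence $S\cap\overline{\B}$ is compact and contained in $\{x\mid \nr{x-a}\ge r_1\}$, on which the radial projection $\pi_a(x)=(x-a)/\nr{x-a}$ onto $\Sph^{d-1}$ is Lipschitz. Consequently the image $D_a:=\pi_a(S\cap\overline{\B})$ is compact and satisfies $\Haus^{d-1}(D_a)=0$. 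If $c\in\B$ is a ``bad'' point, meaning $[a,c]\cap S\neq\emptyset$, then by convexity of $\B$ the intersection point lies in $S\cap\overline{\B}$ and has the same direction from $a$ as $c$, so $\pi_a(c)\in D_a$; integrating in polar coordinates around $a$ shows that the set of bad points has Lebesgue measure zero. Running the same argument with $b$, and using that $\Haus^{d-1}(S)=0$ forces $\Haus^{d}(S)=0$ (so $S$ is Lebesgue-null), I can choose $c\in\B$ that is bad for neither $a$ nor $b$; then $[a,c]\cup[c,b]$ is a path in $\B\setminus S\subseteq U\setminus S$ joining $a$ to $b$.

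\emph{Globalization.} Since $U$ is open and connected it is polygonally connected, so given $x,y\in U\setminus S$ I take a polygonal path parametrized continuously as $\gamma:[0,1]\to U$ with $\gamma(0)=x$, $\gamma(1)=y$. Its image is compact in the open set $U$, so there is $\delta>0$ with $\B(\gamma(t),\delta)\subseteq U$ for all $t$; by uniform continuity I pick $0=t_0<\dots<t_n=1$ with $\diam(\gamma([t_i,t_{i+1}]))<\delta/2$, and set $p_i=\gamma(t_i)$, $\B_i=\B(p_i,\delta)\subseteq U$. Since $S$ is Lebesgue-null, for $0<i<n$ I can choose $q_i\in\B(p_i,\delta/4)\setminus S$, and I put $q_0=x$, $q_n=y$. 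A triangle-inequality check using $\nr{p_{i+1}-p_i}<\delta/2$ shows $q_i,q_{i+1}\in\B_i$ for every $i$. Applying the local lemma in each ball $\B_i$ to the pair $q_i,q_{i+1}$ and concatenating the resulting paths yields a path from $x$ to $y$ inside $U\setminus S$.

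The main obstacle is the local lemma, precisely the assertion that almost every segment issued from a fixed point $a$ avoids $S$. The crucial input is that the hypothesis $\Haus^{d-1}(S)=0$ (and not merely $\Leb^{d}(S)=0$) is preserved by the radial projection $\pi_a$, which is Lipschitz only away from $a$; this is exactly the point where the full strength of the hypothesis is used. The remaining ingredients — polygonal connectedness of open connected subsets of $\Rsp^d$, the covering/partition argument, and the passage from $\Haus^{d-1}$-null to Lebesgue-null — are routine.
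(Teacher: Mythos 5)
Your proof is correct, but its route differs from the paper's in the key geometric step. You reduce to a ball and show, via the radial projection $\pi_a$ onto $\Sph^{d-1}$ (Lipschitz away from $a$, hence sending the $\Haus^{d-1}$-null set $S\cap\overline{\B}$ to a null subset of the sphere) and a polar-coordinates cone estimate, that almost every point $c\in\B$ has $[a,c]\cap S=\emptyset$; picking one $c$ good for both endpoints gives a two-segment path, and you then globalize explicitly by a chain of balls along a polygonal path, choosing intermediate points off $S$ because $\Haus^{d-1}(S)=0$ forces $S$ to be Lebesgue-null. The paper instead fixes the two endpoints $x,y$, takes small balls $\B(x,r),\B(y,r)\subseteq U\setminus S$, and projects $S$ orthogonally onto the hyperplane $H$ perpendicular to $[x,y]$: since the projection is $1$-Lipschitz, $\Pi_H(S)$ is $\Haus^{d-1}$-null in the $(d-1)$-dimensional $H$, so its complement is dense and one finds a single line parallel to $y-x$ that misses $S$ and crosses both balls, yielding a three-segment path; the reduction from a general connected $U$ to a ball is left as a ``standard connectedness argument.'' Both arguments hinge on the same principle — Lipschitz images of $\Haus^{d-1}$-null sets are null in a $(d-1)$-dimensional target, so $S$ cannot block generic straight segments — but your version trades the paper's shorter parallel-line construction for a more self-contained treatment: the a.e.-segment statement from a fixed apex is slightly more robust (it does not need the complement-density argument in $H$), and you make the globalization fully explicit where the paper only gestures at it.
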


\begin{proof}
It suffices to treat the case where $U$ is an open ball, the general
case will follow by standard connectedness arguments. Let $x,y \in
U\setminus S$ be distinct points. Since $U\setminus S$ is open, there
exists $r>0$ such that $\B(x,r)$ and $B(y,r)$ are included in
$U\setminus S$. Consider $H$ the hyperplane orthogonal to the segment
$[x,y]$, and $\Pi_H$ the projection on $H$. Then, since $\Pi_H$ is
$1$-Lipschitz, $\Haus^{d-1}(\Pi_H S) \leq \Haus^{d-1}(S) = 0$, so that
$H\setminus \Pi_H S$ is dense in the hyperplane $H$. In particular,
there exists a point $$z \in \Pi_H(B(x,r))\setminus S
= \Pi_H(B(y,r))\setminus S. $$ By construction the line $z + \Rsp
(y-x)$ avoids $S$ and passes through the balls $\B(x,r)\subseteq U\setminus S$
and $\B(y,r) \subseteq U\setminus S$. This shows that the points $x,y$
can be connected in $U\setminus S$.
\end{proof}

\begin{proof}[Proof of Theorem \ref{th:StrongConcavity}]
Let $Y = \{y_1,\hdots,y_N\}$. Fix $\psi \in \Ccv_+$ and
define $$H_{ij} := \frac{\partial
G_{y_i}}{\partial \one_{y_j}}(\psi).$$ By
Lemma~\ref{lemma:irreducible}, the first claim will hold if we prove
that the matrix $H$ is irreducible. We define $Z = \inte{X}\cap\{\rho
> 0\}$, which by assumption is a connected open set. 

\noindent\emph{Step 1:} We show here that for all $i\in\{1,\hdots,N\}$, $\inte{\Lag_{y_i}(\psi)} \cap Z$ contains
at least a point which we denote $x_i$.  Indeed, since
$\psi\in \Ccv_+$, we know that $\rho(\Lag_{y_i}(\psi)) > 0$. In
addition, by Proposition~\ref{prop:OT},
$\rho(\Lag_{y_i}(\psi)\cap\Lag_{y_j}(\psi)) = 0$ for all $j \neq i$,
and $\rho(\partial X) = 0$ by assumption. This implies that $\rho(L_i)
= \rho(\Lag_{y_i}(\psi)) > 0$, where
$$ L_{i} = \{x\in Z \mid \forall j\neq i, c(x,y_i) + \psi(y_i) <
c(x,y_j) + \psi(y_j)\} \subseteq \inte{\Lag_{y_i}(\psi)}. $$ We
conclude by remarking that $L_i$ is contained in
$\inte{\Lag_{y_i}(\psi)}\cap Z$, which therefore has to be nonempty.

\noindent\emph{Step 2:}
Let $S$ the union of facets that are common to at least three distinct
Laguerre cells, i.e.
 $$ S = \bigcup_{y_1,y_2,y_3 \mathrm{ distinct}} \Lag_{y_1, y_2,
 y_3}(\psi). $$ Then, $Z \setminus S$ is open and
 path-connected. Indeed, by the genericity assumption
 (Def~\ref{def:genericity}), we already know that $\Haus^{d-1}(S) =
 0$, and Lemma~\ref{lem:path-connectedness} then implies that
 $Z\setminus S$ is path-connected.

\noindent\emph{Step 3:} Let $x\in Z\setminus S$ be such that
$x \in \Lag_{y_i}(\psi)\cap\Lag_{y_j}(\psi)$ for $i\neq j$. Then,
$H_{ij} > 0$. To see this, we note that since $x$ belongs to the complement of $S$,
$$ \begin{cases}
c(x,y_i) + \psi(y_i)  = c(x,y_j) + \psi(y_j),\\
\forall k \not\in\{i,j\},  c(x,y_i) + \psi(y_i)  < c(x,y_k) + \psi(y_k).
\end{cases} $$
This implies that there exists a ball with radius $r>0$ around $x$ such that
$$ \forall x'\in \B(x,r),
\forall k\not\in\{i,j\}, c(x,y_i) + \psi(y_i)  < c(x,y_k) + \psi(y_k), $$
directly implying that
$$ H_{yy'}(\psi) \cap \B(x,r) \subseteq \Lag_{y_i y_j}(\psi). $$ By
the twist hypothesis and the inverse function theorem, $H_{yy'}(\psi)$
is a $(d-1)$-dimensional submanifold. In addition, $\rho(x)>0$ because
$x$ belongs to $Z$. This implies that
\begin{align*}
H_{ij} &= \int_{\Lag_{y_iy_j}(\psi)} \frac{\rho(x')}{\nr{\nabla_x c(x',y_i) - \nabla_x c(x',y_j)}} \dd \Haus^{d-1}(x') \\
&\geq  \int_{H_{y_iy_j}(\psi) \cap \B(x,r)} \frac{\rho(x')}{\nr{\nabla_x c(x',y_i) - \nabla_x c(x',y_j)}} \dd \Haus^{d-1}(x') > 0.
\end{align*}

\noindent\emph{Step 4:} We now fix $i\neq j \in\{1,\hdots,N\}$ and the points $x_i,x_j$ 
 whose existence is established in Step~1:
$$x_i \in\inte{\Lag_{y_i}(\psi)} \cap Z,\quad
x_j \in \inte{\Lag_{y_j}(\psi)} \cap Z,$$ so that in particular
$x_i,x_j$ belongs to $Z\setminus S$.  By Step 2, we get the existence
of a continuous path $\gamma\in\Class^0([0,1], Z\setminus S)$ such
that $\gamma(0) = x_i$ and $\gamma(1) = x_j$. We define a sequence
$i_{k} \in \{1,\hdots,N\}$ of indices by induction, starting from
$i_0=i$. For $k\geq 0$ we define $t_k = \max \{ t\in
[0,1] \mid \gamma(t)\in\Lag_{y_{i_k}}\}.$ If $t_k = 1$ we are done. If
not, $\gamma(t_k)$ belongs to exactly two distinct Laguerre cells, and
we define $i_{k+1} \neq i_k$ so that
$\gamma(t_k) \in \Lag_{y_{i_k}}(\psi)\cap \Lag_{y_{i_{k+1}}}(\psi)$. By
definition of $t_i$ as a maximum, the points $y_1,\hdots, y_k$ must be
distinct, so that $t_\ell = 1$ after a finite number of iterations and
then $i_\ell = j$. By Step 3, we get that $H_{y_{i_k} y_{i_{k+1}}} >
0$ for any $k \in \{0,\ell-1\}$, proving that the matrix $H$ is
irreducible, thus $\ker \DD G(\psi) = \Rsp \one_Y$ by Lemma~\ref{lemma:irreducible}, implying the strict concavity property.\\

\noindent\emph{Compactness of $\Ccv_\epsilon \cap \{\one_Y\}^\perp$:} By continuity of the function $G$, this set is closed. 
By Proposition~\ref{prop:G}-\ref{prop:G:allnonempty}, $\max_Y \psi - \min_Y$ is bounded on the set $\Ccv_\epsilon$. This implies that $\Ccv_\epsilon \cap \{\one_Y\}^\perp$ is bounded since every function of $\{\one_Y\}^\perp$ has a mean value equal to zero, and is thus compact.

\end{proof}

\subsubsection{Damped Newton algorithm and its convergence}
\begin{algorithm}
\begin{description}
  \item[Input] A tolerance $\eta > 0$ and an initial $\psi^{(0)}\in
    \Rsp^Y$ such that
\begin{equation}\label{eq:nonzerocells}
  \eps :=  \frac{1}{2} 
  \min\left[\min_{y\in Y} G_y(\psi^{(0)}),~ \min_{y\in Y} \nu_y\right] >  0.
\end{equation}
\item[While] $\nr{G(\psi^{(k)}) - \nu}_\infty  \geq \eta$
  \begin{description}
\item[Step 1] Compute $v^{(k)}$ satisfying
$$  \begin{cases}
\D G(\psi^{(k)}) v^{(k)} = \nu - G(\psi^{(k)}) \\
\sum_{y\in Y} v^{(k)}(y) = 0
\end{cases}$$
\item[Step 2] Determine the minimum $\ell \in \mathbb{N}$ such that $\psi^{(k,\ell)} :=
  \psi^{(k)} + 2^{-\ell} v^{(k)}$ satisfies
\begin{equation*}
\left\{
\begin{aligned}
&\forall y\in Y, G_y(\psi^{(k,\ell)}) \geq \eps \\
&\nr{G(\psi^{(k,\ell)}) - \nu} \leq (1-2^{-(\ell+1)}) \nr{G(\psi^{(k)}) - \nu}
\end{aligned}
\right.
\end{equation*}
\item[Step 3] Set $\psi^{(k+1)} = \psi^{(k)} + 2^{-\ell}  v^{(k)}$ and $k\gets k+1$.
  \end{description}
    \item[Output] A vector $\psi^{(k)}$ that satisfies $\nr{G(\psi^{(k)}) - \nu}_\infty  \leq \eta$.
\end{description}
\caption{Damped Newton algorithm\label{algo:newton}}
\end{algorithm}

\begin{proposition}\label{prop:convergence-newton}
Let $G = (G_1,\hdots,G_N) \in \Class^1(\Rsp^N,\Rsp^N)$ be a function satisfying the following properties:
\begin{enumerate}
\item (Invariance and image) $G$ is invariant under the addition of a constant, $G_i(\psi)\geq 0$ 
and $\sum_i G_i(\psi) = 1$ for all $\psi\in\Rsp^N$.
\item (Compactness) For any $\eps>0$ the set $\Ccv_\eps \cap \{\one\}^\perp$ is compact, where
        \begin{align*}
         &\Ccv_{\eps} :=  \left\{ \psi \in \Rsp^N \mid
            \forall i,~ G_i(\psi) \geq \epsilon \right\} \\
         &\one = (1,\hdots,1)\in \Rsp^N
        \end{align*}
\item (Strict monotonicity) The matrix $\D G(\psi)$ is symmetric nonpositive, and 
$$\forall \psi \in \Ccv_{\eps},~ \forall v \in \{\one\}^\perp \setminus
            \{0\},~ \sca{DG(\psi) v}{v} < 0.$$
\end{enumerate}
Then Algorithm~\ref{algo:newton} terminates in a finite number of
steps. More precisely, the iterates $(\psi^{(k)})$ of
Algorithm~\ref{algo:newton} satisfy, for some
$\tau^*>0$,
    $$ \nr{G(\psi^{k+1}) - \nu} \leq \left( 1 - \frac{\tau^\star}{2} \right)
    \nr{G(\psi^{k}) - \nu}. $$
\end{proposition}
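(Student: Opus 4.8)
The plan is to confine every iterate to one fixed compact set $K$ and to produce a uniform lower bound $\tau^\star>0$ on the accepted step lengths $2^{-\ell_k}$; once this is in place, the geometric decay in the statement is nothing but the second acceptance test of Step~2, and finite termination follows by comparison with the stopping criterion. First, the iterates must be trapped. The normalisation \eqref{eq:nonzerocells} gives $\min_y G_y(\psi^{(0)})\ge 2\eps$ and $\min_y\nu_y\ge 2\eps$, so $\psi^{(0)}\in\Ccv_{2\eps}\subseteq\Ccv_\eps$. Since each correction $v^{(k)}$ has zero sum, all iterates stay in the affine space $\psi^{(0)}+\{\one\}^\perp$, and since Step~2 enforces $G_y(\psi^{(k)})\ge\eps$ for every $y$ while keeping $\nr{G(\psi^{(k)})-\nu}$ non-increasing, an easy induction shows that every iterate lies in
$$K:=\Bigl\{\psi\in\Ccv_\eps\cap(\psi^{(0)}+\{\one\}^\perp)\ \Bigm|\ \nr{G(\psi)-\nu}\le\nr{G(\psi^{(0)})-\nu}\Bigr\}.$$
By the invariance of $G$ (hypothesis~(1)) the set $\Ccv_\eps\cap(\psi^{(0)}+\{\one\}^\perp)$ is a translate of the compact set $\Ccv_\eps\cap\{\one\}^\perp$ of hypothesis~(2), and $K$ is its intersection with the closed set $\{\nr{G(\cdot)-\nu}\le\nr{G(\psi^{(0)})-\nu}\}$, hence compact.

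Next I would record the structure of the Newton step. Differentiating $G(\psi+t\one)=G(\psi)$ gives $\D G(\psi)\one=0$; combined with hypothesis~(3) and the symmetry of $\D G$, this forces $\ker\D G(\psi)=\Rsp\one$ and makes $\D G(\psi)$ restrict to a negative-definite isomorphism of $\{\one\}^\perp$ for every $\psi\in\Ccv_\eps$. Since $\nu-G(\psi)\in\{\one\}^\perp$ (both sum to one), Step~1 is well-posed, $\psi\mapsto v(\psi):=(\D G(\psi)|_{\{\one\}^\perp})^{-1}(\nu-G(\psi))$ is continuous, and $\nr{v(\psi)}\le M\,\nr{G(\psi)-\nu}$ on $K$ for some constant $M$. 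The crucial identity, read off the Newton equation, is
$$\sca{\nabla G_y(\psi)}{v(\psi)}=\bigl(\D G(\psi)v(\psi)\bigr)_y=\nu_y-G_y(\psi)\qquad(y\in Y).$$

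The uniform step length would then come in two parts. For the decay test, integrating yields $G(\psi+tv(\psi))-\nu=(1-t)(G(\psi)-\nu)+\int_0^t\bigl(\D G(\psi+sv(\psi))-\D G(\psi)\bigr)v(\psi)\,\dd s$; since $\D G$ is uniformly continuous on a bounded neighbourhood of the (bounded) set $\{\psi+tv(\psi):\psi\in K,\ t\in[0,1]\}$, the last term is bounded by $t\,\omega(t)\,\nr{G(\psi)-\nu}$ with $\omega(t)\to0$, uniformly in $\psi\in K$; this gives $\tau_1>0$ with $\nr{G(\psi+tv(\psi))-\nu}\le(1-t/2)\nr{G(\psi)-\nu}$ for all $\psi\in K$, $t\in(0,\tau_1]$. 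For the positivity test, the case $G_y(\psi)>\eps$ is handled by continuity, while the case $G_y(\psi)=\eps$ forces $\sca{\nabla G_y(\psi)}{v(\psi)}=\nu_y-\eps\ge\eps>0$ by the identity above, so $G_y$ strictly increases along $v(\psi)$; a contradiction-and-compactness argument (a violating sequence $\psi_n\to\bar\psi\in K$, $t_n\downarrow0$ would force $G_{\bar y}(\bar\psi)=\eps$ and then $G_{\bar y}(\psi_n+t_nv(\psi_n))\ge G_{\bar y}(\psi_n)+\tfrac{\eps}{2}\,t_n>\eps$ for $n$ large, using only $\Class^1$ regularity, i.e. integrating $\nabla G_{\bar y}$) makes this uniform and yields $\tau_2>0$. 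Setting $\tilde\tau=\min(\tau_1,\tau_2)$, both acceptance tests hold at $\psi^{(k)}+tv^{(k)}$ for every $t\in(0,\tilde\tau]$, so the backtracking in Step~2 stops with some $\ell_k$ satisfying $2^{-\ell_k}\ge\tau^\star$, where $\tau^\star>0$ is the largest power of $\tfrac{1}{2}$ not exceeding $\tilde\tau$. The second acceptance test then gives $\nr{G(\psi^{(k+1)})-\nu}\le(1-2^{-(\ell_k+1)})\nr{G(\psi^{(k)})-\nu}\le(1-\tau^\star/2)\nr{G(\psi^{(k)})-\nu}$, and iterating shows $\nr{G(\psi^{(k)})-\nu}\to0$ geometrically, so the stopping test $\nr{G(\psi^{(k)})-\nu}_\infty<\eta$ is met after finitely many steps.

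I expect the main obstacle to be exactly this uniform lower bound on the step length: ruling out that the backtracking halves indefinitely as the iterates drift towards the boundary $\{G_y=\eps\}$ of $\Ccv_\eps$. This is where the Newton identity $\sca{\nabla G_y(\psi)}{v(\psi)}=\nu_y-G_y(\psi)$ together with $\nu_y\ge2\eps$ is essential — it shows the correction pushes $G_y$ strictly inward at that boundary — and where the compactness hypothesis~(2) is needed to upgrade the pointwise estimates into bounds valid for all iterates simultaneously. A pervasive technical nuisance is that $G$ is merely $\Class^1$, so every Taylor-type bound must be obtained by integrating $\D G$ and invoking its uniform continuity on a compact set, never by a second-order remainder.
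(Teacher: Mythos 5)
Your proof is correct and follows essentially the same route as the paper: confine the iterates to a compact subset of $\Ccv_\eps$, expand $G(\psi+\tau v(\psi))=(1-\tau)G(\psi)+\tau\nu+R(\tau)$, control $R(\tau)$ through the uniform continuity of $\D G$ on a compact set and the bound $\nr{v(\psi)}\leq M\nr{G(\psi)-\nu}$, and deduce uniform thresholds $\tau_1,\tau_2$ for the two acceptance tests, hence a step length bounded below and linear decay. The only cosmetic difference is that you secure the test $G_y\geq\eps$ by a case split and a compactness--contradiction argument at the boundary $\{G_y=\eps\}$ (via the identity $\sca{\nabla G_y(\psi)}{v(\psi)}=\nu_y-G_y(\psi)$), whereas the paper gets it in one line from the same expansion, $G_i(\psi_\tau)\geq(1+\tau)\eps-\nr{R(\tau)}$, using $G_i(\psi)\geq\eps$ and $\nu_i\geq 2\eps$.
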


\begin{proof}\mbox{}\\
\textbf{Estimates.} 
Let $\nu \in \Rsp^N$ be such that $\sum_i \nu_i = 1$. We assume that
$ \psi^{(0)} \in \Rsp^N \cap \{\one\}^\perp$ is chosen so that 
$$\epsilon = \frac{1}{2} \min \left( \min_i
G_{i}(\psi^0), \min_i \nu_i \right) >0,$$ and we let $\Ccv
:= \Ccv_\eps \cap \{\one\}^\perp$. Let $\psi\in\Ccv$. By
Theorem~\ref{th:StrongConcavity}, the matrix $\D G(\psi)$ is symmetric
non-positive, and its kernel is the one-dimensional space $\Rsp
\one$. Thus, the equation
$$\begin{cases}
\D G(\psi) v = \nu - G(\psi),\\
 \sum_i v_i = 0, \end{cases}$$ has a unique solution, which we denote
$v(\psi)$, and we let $\psi_\tau = \psi + \tau v(\psi)$. By continuity of 
$\D G(\psi)$ over the compact domain $\Ccv$, the non-zero eigenvalues
of $- \D G(\psi)$ lie in $[a,A]$ for some $0< a\leq A < +\infty$. In
particular, there exists a constant $M>0$ such that for all $\psi \in \Ccv$
\begin{equation} \label{eq:newt:psi}
\frac{\nr{G(\psi) - \nu}}{A} \leq \nr{v(\psi)} \leq \frac{\nr{G(\psi)
- \nu}}{a} \leq M. 
\end{equation}
In particular, the function $F:(\psi,\tau) \in \Ccv \times
[0,1] \mapsto \psi_\tau$ is continuous. Since $\Ccv \times [0,1]$ is
compact, $K := F(\Ccv \times [0,1])$ is also compact. Then, by uniform
continuity of $\D G$ over $K$, we see that there exists an increasing
function $\omega$ such that $\lim_{t\to 0}\omega(t) = 0$ and $\nr{\D
G(\psi) - \D G(\psi')} \leq \omega(\nr{\psi - \psi'})$ for all
$\psi, \psi' \in K$.  Since $G$ is of class $\Class^1$, a Taylor
expansion in $\tau$ gives
\begin{equation}
        \label{eq:taylor-g}
        G(\psi_\tau) = G(\psi + \tau v(\psi)) = (1 - \tau)
        G(\psi) + \tau \nu + R(\tau)
\end{equation}
where $ R(\tau) = \int_0^\tau (\D G(\psi_t) - \D G(\psi)) v(\psi) \dd
t $ is the integral remainder. Then, we can bound the norm of $
R(\tau) $ for $\tau \in [0,1]$:
\begin{align}
        \nr{R(\tau)} &= \nr{\int_0^\tau (\D G(\psi_t) - \D G(\psi)) v(\psi) \dd t} \notag \\
                     &\leq \nr{v(\psi)} \int_0^\tau \omega(\nr{\psi_t - \psi})
                     \dd t \notag \\
                     &\leq  \nr{v(\psi)} \tau\ \omega(\tau \nr{v(\psi)}). \label{ineq:Rtau}
\end{align}
To establish the first  inequality, we used that $\psi$ and $\psi_t$ belong to the compact set $K$ 
 and for the second one that $\omega$ is increasing and that $t\in [0,\tau]$. \\

\noindent \textbf{Linear convergence.}
We first show the existence of $\tau^*_1>0$ such that for all
$\psi\in\Ccv$ and $\tau \in (0,\tau^*_1)$, one has
$\psi_\tau \in \Ccv$.  By definition of $\eps$, for every
$i \in \{1, \ldots, N\}$ one has $ \nu_i \geq 2 \epsilon $ and $
G_i(\psi) \geq \epsilon $. Using \eqref{eq:taylor-g}
and \eqref{ineq:Rtau}, one deduces a lower bound on $G_i(\psi_\tau)$:
\begin{align*}
G_i(\psi_\tau) &\geq (1 - \tau) G_i(\psi) + \tau \nu_i +
    R_i(\tau)\\ 
    &\geq (1 + \tau) \epsilon - \nr{R(\tau)} \\
    &\geq \eps + \tau(\eps - M \omega(\tau M)).
\end{align*}
If we choose $\tau^*_1>0$ small enough so that $M\omega(\tau_1^*
M) \leq \eps$, this implies that $\psi_\tau \in \Ccv$ for all
$\psi\in\Ccv$ and $\tau \in [0,\tau_1^*]$.

We now prove that there exists $\tau_2^*>0$ such that for $\tau \in
[0,\tau_2^*]$, one has $\nr{G(\psi_\tau) - \nu} \leq
(1-\tau/2) \nr{G(\psi) - \nu}$.  From Equation~\eqref{eq:taylor-g}, we have
$ G(\psi_\tau) - \nu = (1 - \tau) (G(\psi) - \nu) + R(\tau),$ and it
is therefore sufficient to prove 
$$ \nr{R(\tau)} \leq
\frac{\tau}{2}\nr{G(\psi) - \nu}.$$
With the upper bound on $R(\tau)$ given in Equation~\eqref{ineq:Rtau}
combined with the two bounds on $\nr{v(\psi)}$
of Equation~\eqref{eq:newt:psi}, this condition will hold provided that $\tau$
is such that $\omega(\tau M)/a \leq 1/2$.

These two bounds directly imply that the $\tau^{(k)}$ chosen in
Algorithm~\ref{algo:newton} always satisfy $\tau^{(k)}\geq \tau^*$ with $\tau^* = \frac{1}{2}\min(\tau_1^*,\tau_2^*)$, so that
$$ \nr{G(\psi^{(k+1)}) - \nu} \leq \left(1-\frac{\tau^*}{2}\right)\nr{G(\psi^{(k+1)}) - \nu}. $$
This establishes the linear convergence of Algorithm~\ref{algo:newton}.
\end{proof}

\subsubsection{Application to optimal transport}
The damped Newton algorithm allows to solve the semi-discrete optimal transport problem when applied to the function $G$ given by $G_y(\psi)=\rho(\Lag_\psi(y))$. 
The function $G$ satisfies the assumptions of
Proposition~\ref{prop:convergence-newton}: it is of class $C^1$ by
Theorem~\ref{th:Hessian}, satisfies the compactness and strict
monotonicity property by Theorem~\ref{th:StrongConcavity} and clearly
also satisfies Assumption \emph{(1)}. We therefore have the following
theorem:

\begin{theorem} We make the following assumptions:
  \begin{itemize}
  \item  $c\in \Class^2(\Omega_X\times \Omega_Y)$ satisfies the twist
    condition (Def. \ref{def:twist}),
  \item  $Y$ is generic with respect to $c$
    and $\partial X$ (Def. \ref{def:genericity}),
  \item  $\rho(\partial X) = 0$ and 
    $\rho_{\mid X}\in\Class^0(X)$ is such that the set $\{ \rho >0\} \cap \inte{X}$ is connected.
  \end{itemize}
  Then Algorithm~\ref{algo:newton} terminates in a finite number of
  steps. More precisely, the iterates $(\psi^{(k)})$ of
  Algorithm~\ref{algo:newton} satisfy, for some
$\tau^*>0$,
    $$ \nr{G(\psi^{k+1}) - \nu} \leq \left( 1 - \frac{\tau^\star}{2} \right)
    \nr{G(\psi^{k}) - \nu}. $$
\end{theorem}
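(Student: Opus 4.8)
The plan is to recognize that this theorem is a direct specialization of the abstract convergence result for the damped Newton iteration, Proposition~\ref{prop:convergence-newton}, applied to the map $G:\Rsp^Y\to\Rsp^Y$ of \eqref{eq:G} given by $G_y(\psi)=\rho(\Lag_y(\psi))$. Accordingly, the whole proof reduces to checking that, under the three standing hypotheses of the statement (twisted $\Class^2$ cost; $Y$ generic with respect to $c$ and to $\partial X$; $\rho(\partial X)=0$ and $\rho_{\mid X}\in\Class^0(X)$ with $\{\rho>0\}\cap\inte X$ connected), the map $G$ is of class $\Class^1$ and satisfies assumptions (1)--(3) of that proposition; then the conclusion — finite termination and the geometric decrease $\nr{G(\psi^{k+1})-\nu}\leq(1-\tau^\star/2)\nr{G(\psi^{k})-\nu}$ — follows verbatim.

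Concretely, I would verify the hypotheses in order. That $G\in\Class^1(\Rsp^Y)$ is exactly Theorem~\ref{th:Hessian}, whose assumptions are those of the present statement. Assumption (1) (invariance of $G$ under addition of a constant, $G_i(\psi)\geq 0$ and $\sum_i G_i(\psi)=1$) is contained in Proposition~\ref{prop:G}: part~\ref{prop:G:constant} gives $G(\psi+t\one_Y)=G(\psi)$, and part~\ref{prop:G:proba} gives $G(\psi)\in\Prob(Y)$. Assumption (2) (compactness of $\Ccv_\eps\cap\{\one_Y\}^\perp$ for every $\eps>0$) is precisely the second bullet of Theorem~\ref{th:StrongConcavity}, which uses $\rho(\partial X)=0$. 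For assumption (3), the explicit formula \eqref{eq:DG} of Theorem~\ref{th:Hessian} shows that $\D G(\psi)$ is symmetric (since $G_{yz}(\psi)=G_{zy}(\psi)$) with $G_{yz}(\psi)\geq 0$ for $z\neq y$ and $G_{yy}(\psi)=-\sum_{z\neq y}G_{yz}(\psi)$, so Gershgorin's circle theorem makes $\D G(\psi)$ nonpositive for every $\psi$; and the strict negativity $\sca{\D G(\psi)v}{v}<0$ for $\psi\in\Ccv_\eps$ and $v\in\{\one_Y\}^\perp\setminus\{0\}$ is the first bullet of Theorem~\ref{th:StrongConcavity}, once one notes $\Ccv_\eps\subseteq\Ccv_+$ for $\eps>0$; this is the step in which the genericity of $Y$ and the connectedness of $\{\rho>0\}\cap\inte X$ are actually used.

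With all hypotheses in place, Proposition~\ref{prop:convergence-newton} applies and gives the result. I do not expect a genuine obstacle here — the content is the verification that the earlier, harder results ($\Class^1$-regularity and strict concavity of $\Kant$, compactness of sublevel sets) assemble into the abstract hypotheses. The one point deserving a remark is that Algorithm~\ref{algo:newton} presupposes an admissible initialization $\psi^{(0)}$ meeting \eqref{eq:nonzerocells}, i.e.\ with $\min_y G_y(\psi^{(0)})>0$; such a point exists — for instance a solution of $G(\psi)=\nu$ furnished by Corollary~\ref{coro:existence-DMA}, for which $G_y(\psi^{(0)})=\nu_y>0$, or in practice the output of a coarse Oliker--Prussner run — so the statement is non-vacuous.
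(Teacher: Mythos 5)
Your proposal is correct and follows exactly the paper's own route: the paper proves this theorem by the one observation that $G$ satisfies the hypotheses of Proposition~\ref{prop:convergence-newton}, citing Theorem~\ref{th:Hessian} for the $\Class^1$ regularity, Theorem~\ref{th:StrongConcavity} for compactness and strict monotonicity, and the elementary properties of Proposition~\ref{prop:G} for assumption (1). Your extra details (symmetry and Gershgorin for nonpositivity of $\D G(\psi)$, and the remark that an admissible initialization exists) are consistent with, and only slightly more explicit than, the paper's argument.
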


\begin{remark}[Quadratic convergence]
The above theorem shows that the convergence of the damped Newton algorithm is globally linear. When the cost $c$ satisfies the \emph{Ma-Trudinger-Wang} (MTW) condition that appears in the regularity theory of optimal transport,  and when the density function $\rho$ is Lipschitz-continuous, the convergence is even locally quadratic~\cite{kitagawa2016newton}. 
\end{remark}

\begin{remark}[Implementation] 
The most difficult part in the implementation of both
Oliker--Prussner's algorithm and the damped Newton algorithm is the
computation of the Laguerre tessellation.  In several interesting
cases, Laguerre cells can be obtained by intersecting Power diagram
with surfaces, such as planes, spheres or triangulated
surfaces. Recall that the Power diagram of a weighted point cloud $P =
(p_1,\hdots, p_N) \in (\Rsp^d)^N$ with weights
$(\omega_1,\hdots,\omega_N)\in\Rsp$, is defined by the cells
$$
\mathrm{Pow}_i:=\{x\in \Rsp^3\mid \|x-p_i\|^2 + \omega_i \leq \|x-p_j\|^2 + \omega_j\quad  \forall j\}.
$$ This diagram can be efficiently computed by using libraries, such
as for instance \textsc{Cgal} or \textsc{Geogram}.

When $c(x,y)=\|x-y\|^2$ is the quadratic cost and $X$ is a
triangulated surface in $\Rsp^3$, the Laguerre cells can be obtained
by intersecting power cells with the triangulated surface
$X$~\cite{merigot2018algorithm}. This approach is also used in several
inverse problems arising in nonimaging optics that correspond to
optimal transport problems. For instance, when
$c(x,y)=-\log(1-\sca{x}{y})$ is the reflector cost on the unit sphere,
the Laguerre cells are obtained by intersecting power cells with the
unit sphere~\cite{meyron2019light,de2016far}.
\end{remark}

\subsection{Semi-discrete entropic transport}\label{sec:rot_semi-discrete}
The semi-discrete entropic transport problem was introduced by
Genevay, Cuturi, Peyré and Bach \cite{genevay2016stochastic}, as a
regularization of high-dimensional optimal transport problems, see
also \cite{cuturi2018semidual}. Such high-dimensional problems occur
for instance in image generation, we refer for instance to the work of
Galerne, Leclaire and Rabin \cite{galerne2018texture}. Our goal here
is to investigate briefly the relation between the semi-discrete
Kantorovich functional $\Kant$ and its entropically regularized
variant $\Kant^\eta$.

Let $X \subseteq \Omega_X$ be compact and $Y \subseteq \Omega_Y$ be
finite. We recall that the entropy of a probability measure is
\begin{equation}
  \Ent(\rho) = \begin{cases} \int_{\Omega_X} \rho \log \rho &\hbox{ if } \rho \in\Probac(X) \\
    +\infty &\hbox{ if not }
    \end{cases}
\end{equation}
We also recall that if $\gamma$ is a transport plan between a
probability density $\rho$ in $\Probac(X)$ and a finitely supported
measure $\nu = \sum_{y\in Y} \nu_y\delta_y$, then there exists
probability densities $\rho_y\in \Meas^+(X)\cap \LL^1(X)$ such that
$\gamma = \sum_y \rho_y \otimes \delta_y$, and which satisfy the two marginal
conditions
\begin{equation}
\label{eq:sde:marginal} \sum_y \rho_y = \rho \qquad \hbox{ and } \qquad \int \rho_y
= \nu_y.
\end{equation}
Then, the entropy of $\gamma$, with respect to
$\Haus^d\otimes \Haus^0$,  is the sum of the entropies of the
$\rho_y$. This leads to the following definition.

\begin{definition}[Semi-discrete entropic transport]
The entropy-regularized semi-discrete optimal transport problem
between a density $\rho \in\Probac(X)$ and a finitely supported
measure $\nu = \sum_{y \in Y}\nu_y \delta_y$ is defined for any
$\eta>0$ by
$$
\begin{aligned}
\KP^\eta 
&= \min \left\{ \sca{c}{\gamma} + \eta\sum_{y\in Y} \Ent(\rho_y) \mid \rho_y \in \LL^1(X), \hbox{s.t. } \sum_{y\in Y} \rho_y = \rho, \int_X \rho_y = \nu_y \right\}.
\end{aligned}
$$\end{definition}

\subsubsection{Dual problem}
The dual problem is constructed, as always, by introducing Lagrange
multipliers $\phi,\psi$ for the marginal constraints
\eqref{eq:sde:marginal}. We skip the derivation of the dual problem,
which is very similar to the one presented in the discrete case
(Section~\ref{sec:dot-entropic}), and we directly state it:
\begin{equation}
\DP^\eta = \sup_{(\phi,\psi)\in\LL^1(X)\times \Rsp^Y} \sca{\phi}{\rho} - \sca{\psi}{\nu} - \eta \sum_{y\in Y} \int_X e^{- \frac{c(x,y) + \psi(y) - \phi(x)}{\eta}} \dd x.
\end{equation}
Maximizing with respect to $\phi$ for a given $\psi \in \Rsp^Y$, we
obtain a second formulation as a finite-dimensional optimization
problem involving a regularized Kantorovich functional, exactly as in
\S\ref{sec:dot-entropic}:
\begin{align}
&\DP^{\eta'} = \sup_{\psi \in \Rsp^Y} \Kant^\eta(\psi)\\
\hbox{ where }
&\Kant^\eta(\psi) := -\eta \int_X \log\left(\sum_{y\in Y} e^{- \frac{c(x,y) + \psi(y)}{\eta}}\right)\rho(x) \dd x  - \sca{\psi}{\nu} + \eta \Ent(\rho) \notag
\end{align}

In order to express the gradient and the Hessian of $\Kant^\eta$, we
also need the notion of smoothed laguerre cells, introduced in the
discrete case (see Equation~\eqref{eq:RLag}) and defined by
\begin{equation}
\RLag^{\eta}_{y}(\psi) = \frac{e^{- \frac{c(\cdot,y) + \psi(y)}{\eta}}}{\sum_{z\in Y} e^{- \frac{c(\cdot,z) + \psi(z)}{\eta}}}.
\end{equation}

\begin{theorem} Assume that $c\in \Class^1(X\times Y)$ is twisted. Then,
$\Kant^\eta$ is a $\Class^2$ strictly concave function over $\Rsp^Y$,
with first-order partial derivatives
\begin{equation}
\forall y\in Y,~ \frac{\partial \Kant^\eta}{\partial \one_y}(\psi)
= G_y^\eta(\psi) - \nu_y \hbox{ with } G_y^\eta(\psi) := \sca{\RLag^{\eta}_{y}(\psi)}{\rho}.
\end{equation}
and second-order partial derivatives
\begin{equation} \label{eq:d2-Keta}
\begin{aligned}
&\forall y\neq z\in Y,~\frac{\partial^2 \Kant^\eta}{\partial \one_z\partial\one_y}(\psi)
= G_{yz}^\eta(\psi) := \frac{1}{\eta} \sca{\RLag^{\eta}_{y}(\psi)\RLag_{z}^{\eta}(\psi)}{\rho}\\
&\forall y\in Y,~\frac{\partial^2 \Kant^\eta}{\partial\one_y^2}(\psi)
=  G_{yy}^\eta(\psi) := -\sum_{z\neq y} G_{yz}^\eta(\psi)
\end{aligned}
\end{equation}
If $\psi$
is a maximizer in $\DP^{\eta'}$, then the solution to $\KP^\eta$ is
given by
$$\gamma = \sum_y \rho_y \otimes \delta_y, \hbox{ with } \rho_y
= \RLag_y^{\eta}(\psi) \rho. $$
\end{theorem}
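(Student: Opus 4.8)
The plan is to discard the two terms of $\Kant^\eta$ that play no role in the assertions — the linear term $-\sca{\psi}{\nu}$ and the additive constant $\eta\Ent(\rho)$ — and to focus on $f(\psi):=-\eta\int_X\log\bigl(\sum_{y\in Y}e^{-(c(x,y)+\psi(y))/\eta}\bigr)\rho(x)\,\dd x$. For each fixed $x\in X$ the map $\psi\mapsto-\eta\log\sum_{y}e^{-(c(x,y)+\psi(y))/\eta}$ is $\Class^\infty$ on $\Rsp^Y$ (a log-sum-exp composed with an affine map), and a direct computation shows that its gradient in $\psi$ is the vector $(\RLag^{\eta}_{y}(\psi)(x))_{y\in Y}$, while its Hessian has off-diagonal entries $\tfrac1\eta\RLag^{\eta}_{y}(\psi)(x)\RLag^{\eta}_{z}(\psi)(x)$ and diagonal entries $-\tfrac1\eta\RLag^{\eta}_{y}(\psi)(x)\bigl(1-\RLag^{\eta}_{y}(\psi)(x)\bigr)$. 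I would then justify differentiation under the integral sign: since $c$ is continuous on the compact set $X\times Y$ and $Y$ is finite, these first- and second-order $\psi$-derivatives of the integrand are bounded in absolute value by $\max(1,1/\eta)$ uniformly in $x\in X$ and $\psi\in\Rsp^Y$; as $\rho\in\LL^1(X)$, the integrand and its derivatives are dominated by the fixed $\LL^1$ function $\max(1,1/\eta)\,\rho$, so by dominated convergence $f\in\Class^2(\Rsp^Y)$ and derivatives pass through the integral. This yields $\nabla\Kant^\eta(\psi)=(G_y^\eta(\psi)-\nu_y)_{y\in Y}$ and $\D^2\Kant^\eta(\psi)=\D G^\eta(\psi)$ with entries as in \eqref{eq:d2-Keta}; the diagonal identity also follows at once from $\sum_{y\in Y}\RLag^{\eta}_{y}(\psi)\equiv 1$.

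Next I would treat concavity. The matrix $H:=\D^2\Kant^\eta(\psi)$ is symmetric, satisfies $H_{yz}=G_{yz}^\eta(\psi)\geq 0$ for $y\neq z$ and $H_{yy}=-\sum_{z\neq y}H_{yz}$, hence it is negative semidefinite by Gershgorin's circle theorem (or directly by Lemma~\ref{lemma:irreducible}); therefore $\Kant^\eta$ is concave. For strict concavity, the key point is that $\RLag^{\eta}_{y}(\psi)(x)>0$ for every $x\in X$ and $y\in Y$ (it is a softmax over the finite set $Y$), and that $\rho>0$ on a set of positive Lebesgue measure since $\rho$ is a probability density; consequently $H_{yz}=\tfrac1\eta\sca{\RLag^{\eta}_{y}(\psi)\RLag^{\eta}_{z}(\psi)}{\rho}>0$ for all $y\neq z$. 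The graph induced by $H$ is thus complete, in particular connected, so $H$ is irreducible and Lemma~\ref{lemma:irreducible} gives $\ker H=\Rsp\one_Y$. Hence $\sca{\D^2\Kant^\eta(\psi)v}{v}<0$ for every $v\in\{\one_Y\}^\perp\setminus\{0\}$, which together with the invariance $\Kant^\eta(\psi+t\one_Y)=\Kant^\eta(\psi)$ (a consequence of the log-sum-exp structure and $\sum_{y\in Y}\nu_y=1$) gives strict concavity transversally to $\Rsp\one_Y$. This is exactly the argument used in the proof of Theorem~\ref{thm:hessian_regKant} in the purely discrete case.

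Finally, for the recovery of the primal optimizer: if $\psi$ maximizes $\Kant^\eta$ over $\Rsp^Y$ then $\nabla\Kant^\eta(\psi)=0$, i.e. $G_y^\eta(\psi)=\sca{\RLag^{\eta}_{y}(\psi)}{\rho}=\nu_y$ for all $y\in Y$. Setting $\rho_y:=\RLag^{\eta}_{y}(\psi)\rho\in\LL^1(X)$, one has $\rho_y\geq 0$, $\sum_{y}\rho_y=\bigl(\sum_y\RLag^{\eta}_{y}(\psi)\bigr)\rho=\rho$ and $\int_X\rho_y=\nu_y$, so $\gamma=\sum_{y}\rho_y\otimes\delta_y$ is admissible for $\KP^\eta$. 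Its optimality follows from strong duality $\KP^\eta=\DP^\eta$ (proved exactly as in the discrete case, Theorem~\ref{thm:reg-duality}, by writing the Lagrangian and using that the entropic objective is smooth and strictly convex on the relevant set) together with the complementary-slackness/KKT relation between primal and dual optima, which after elimination of the multiplier $\phi$ reads precisely $\rho_y(x)=\RLag^{\eta}_{y}(\psi)(x)\rho(x)$; alternatively one checks directly that this $\gamma$ realizes the dual value $\Kant^\eta(\psi)$. I expect no deep obstacle: all the ingredients are already present in the discrete treatment of \S\ref{sec:dot-entropic} and in Theorem~\ref{thm:hessian_regKant}. The only two points demanding a little care are \emph{(i)} the passage of derivatives through the integral when $\rho$ is merely $\LL^1$ — dealt with because the $\psi$-derivatives of the integrand are bounded by constants independent of $x$ — and \emph{(ii)} the strict positivity of the off-diagonal Hessian entries $G_{yz}^\eta(\psi)$, which rests on the everywhere-positivity of the smoothed Laguerre cells and on $\rho$ being a genuine density.
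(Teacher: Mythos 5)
Your proposal is correct and follows essentially the route the paper intends: the paper omits this proof and refers to the discrete-case argument of Theorem~\ref{thm:hessian_regKant}, which is exactly what you reproduce (log-sum-exp derivatives, diagonally dominant Hessian with kernel $\Rsp\one_Y$ via irreducibility, and recovery of the primal plan from the first-order condition), the only genuinely new point in the semi-discrete setting being the differentiation under the integral sign, which you justify correctly by the uniform bounds $0\leq\RLag^\eta_y\leq 1$ and $0\leq\frac{1}{\eta}\RLag^\eta_y\RLag^\eta_z\leq\frac{1}{\eta}$ together with dominated convergence. Your reading of ``strictly concave'' as strict concavity transversally to the constants (combined with the invariance under addition of constants) is also the intended meaning, matching the discrete statement, and your direct verification that $\gamma=\sum_y\RLag^\eta_y(\psi)\rho\otimes\delta_y$ attains the dual value is a clean way to avoid rederiving strong duality in the semi-discrete case.
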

We skip the proof of this theorem which follows closely the one of
Theorem~\ref{thm:hessian_regKant} in the discrete case.

\subsubsection{Strong convergence of $\Kant^\eta$ to $\Kant$}
The next proposition show that for twisted costs, $G^\eta$ converges
to $G$ locally uniformly (i.e. $\Kant^\eta$ converges to $\Kant$ in
$\Class^1$). Its proof follows closely the proof of the Lipschitz
estimate for $G^\eta$ in Proposition~\ref{prop:Glip}.

\begin{proposition}\label{prop:regularityGeta} Assume that  $c\in \Class^2(\Omega_X\times \Omega_Y)$ is
  twisted (Def~\ref{def:twist}), that $X\subseteq \Omega_X$ is compact
  and $Y\subseteq \Omega_Y$ is finite and that $\rho\in\Probac(X)\cap
  \LL^\infty(X)$. Then:
  \begin{enumi}
  \item $G^\eta$ converges pointwise to $G$ as $\eta\to 0$, i.e.
    $$\forall y\in Y,~ \RLag^{\eta}_{y}(\psi)
      \xrightarrow[\LL^1(X)]{\eta\to 0} \one_{\Lag_y(\psi)}.$$
      \item $G^\eta$ is $L$-Lipschitz, where $L$ depends on $c$, $X$
        and $N$ only.
      \item $G^\eta$ converges locally uniformly to $G$.
  \end{enumi}
 \end{proposition}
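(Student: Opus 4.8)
The plan is to establish the three claims in order, since (iii) follows formally from (i) and (ii) by an Arzelà–Ascoli type argument. For claim (i), I would fix $\psi\in\Rsp^Y$ and $y\in Y$ and use the explicit formula $\RLag^\eta_y(\psi)(x) = e^{-(c(x,y)+\psi(y))/\eta}/\sum_z e^{-(c(x,z)+\psi(z))/\eta}$. As in the remark following the definition of smoothed Laguerre cells, for $x$ in the strict Laguerre cell $\SLag_y(\psi)$ the exponent $-(c(x,y)+\psi(y))/\eta$ dominates all others by a fixed positive gap, so $\RLag^\eta_y(\psi)(x)\to 1$; for $x\notin\Lag_y(\psi)$ some other exponent dominates and $\RLag^\eta_y(\psi)(x)\to 0$. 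The twist condition and $\rho\in\Probac(X)$ ensure (via Proposition~\ref{prop:OT}) that $\Lag_y(\psi)\setminus\SLag_y(\psi)$ is $\rho$-negligible, so the pointwise convergence holds $\rho$-a.e.; since $0\le \RLag^\eta_y(\psi)\le 1$ and $\rho\in\LL^1(X)$, dominated convergence upgrades this to $\LL^1(\rho)$ convergence, hence $G^\eta_y(\psi) = \sca{\RLag^\eta_y(\psi)}{\rho}\to \rho(\Lag_y(\psi)) = G_y(\psi)$.

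For claim (ii), the strategy mirrors the proof of Proposition~\ref{prop:Glip}. Writing $G^\eta_{yz}$ for the off-diagonal Hessian entries from \eqref{eq:d2-Keta}, it suffices to bound $G^\eta_{yz}(\psi) = \frac{1}{\eta}\sca{\RLag^\eta_y(\psi)\RLag^\eta_z(\psi)}{\rho}$ uniformly in $\psi$ and then sum over coordinates as in the telescoping argument at the end of the proof of Proposition~\ref{prop:Glip}. The key is that $\RLag^\eta_y(\psi)(x)\RLag^\eta_z(\psi)(x)$ is concentrated, for small $\eta$, near the common facet $\Lag_{yz}(\psi)\subseteq c_{yz}^{-1}(a)$ where $c_{yz}=c(\cdot,y)-c(\cdot,z)$ and $a=\psi(z)-\psi(y)$. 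Using the coarea formula along the level sets of $c_{yz}$ (exactly as in Lemma~\ref{lemma:Gy:pd}) together with the twist bound $\nr{\nabla c_{yz}}\ge\kappa$ and the uniform area bound of Lemma~\ref{lemma:bnd-area} on $\Haus^{d-1}(c_{yz}^{-1}(s)\cap X)$, the $\frac1\eta$ is absorbed by a Gaussian-type integral $\frac1\eta\int_\Rsp e^{-2|s-a|/\eta}\,ds = 1$ coming from the product of two Gibbs weights along the transverse direction. This yields $\abs{G^\eta_{yz}(\psi)}\le \hat L$ with $\hat L$ depending only on $d$, $\nr{\rho}_\infty$, $\kappa$, $M$, $\diam(X)$, and then $L = N\hat L$ after the coordinatewise telescoping.

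For claim (iii), the family $(G^\eta)_{0<\eta\le 1}$ is uniformly Lipschitz by (ii), hence equicontinuous, and pointwise convergent to $G$ by (i); on any compact subset of $\Rsp^Y$ this forces locally uniform convergence (any uniformly convergent subsequence must have limit $G$, and equicontinuity plus compactness give relative compactness in $\Class^0$). The main obstacle I expect is the precise transverse-decay estimate in step (ii): one must show that the product of the two normalized Gibbs kernels, restricted to a tube around the facet, decays like $e^{-c\,\mathrm{dist}/\eta}$ in the direction transverse to $c_{yz}^{-1}(a)$, uniformly in $\psi$ and $x$, so that the $\frac1\eta$ factor is genuinely cancelled; this requires controlling the partition function $\sum_z e^{-(c(x,z)+\psi(z))/\eta}$ from below by the two relevant terms, which is elementary but must be done carefully near the facet to get a bound independent of how close $x$ is to other Laguerre cells. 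Once that estimate is in hand, the rest is the same bookkeeping as in Proposition~\ref{prop:Glip}.
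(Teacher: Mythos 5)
Your plan coincides with the paper's proof: (i) pointwise convergence of $\RLag^\eta_y(\psi)$ plus negligibility of $\Lag_y(\psi)\setminus\SLag_y(\psi)$ under the twist condition and dominated convergence; (ii) a uniform bound on the mixed second derivatives $\frac1\eta\sca{\RLag^\eta_y(\psi)\RLag^\eta_z(\psi)}{\rho}$ via the coarea formula along level sets of $c_{yz}$ and Lemma~\ref{lemma:bnd-area}; (iii) equicontinuity plus pointwise convergence. The transverse-decay estimate you flag as the main obstacle is exactly the paper's elementary two-line bound $\RLag^\eta_y(\psi)\RLag^\eta_z(\psi)\le e^{-\abs{f}/\eta}$ (obtained by bounding the squared partition function below by the single dominant term), so the exponent is $1$ rather than $2$, which only changes the constant.
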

\begin{remark}
  The formula \eqref{eq:d2-Keta} implies that $G^\eta$ is
  $\frac{1}{\eta}$-Lipschitz continous, see
  \cite{genevay2016stochastic} or Remark~5.1 in
  \cite{peyre2019computational}.  When the cost is twisted, the
  previous proposition shows that the family of functions
  $(G^\eta)_{\eta> 0}$ is in fact uniformly Lipschitz.
\end{remark}
\begin{proof}
  (i) To prove this statement, it suffices to remark that
  $$ \lim_{\eta\to 0} \RLag^\eta_y(\psi)(x) =
  \begin{cases}
    1 \hbox{ if } x\in\SLag_y(\psi), \\
    0 \hbox{ if } x\in X\setminus \Lag_y(\psi).
  \end{cases}
  $$ Thus, $\RLag^\eta_y(\psi)$ converges to $\one_{\Lag_y(\psi)}$
  pointwise on the complement in $X$ of $\Lag_y(\psi)\setminus
  \SLag_y(\psi)$. Since the cost is twisted, $\Lag_y(\psi)\setminus
  \SLag_y(\psi)$ is Lebesgue-negligible, therefore proving that
  $\RLag^\eta_y(\psi)$ converges almost everywhere to
  $\one_{\Lag_y(\psi)}$. One concludes by applying Lebesgue's
  dominated convergence theorem.

(ii) To prove that $G^\eta$ is Lipschitz, we compute an upper bound
  on $\D G^\eta = \D^2 \Kant^\eta$, recalling that for $z\neq y$,
  $$ \frac{\partial^2 \Kant^\eta}{\partial \one_z
    \partial\one_y}(\psi) = \frac{1}{\eta}
  \sca{\RLag^{\eta}_{y}(\psi)\RLag_{z}^{\eta}(\psi)}{\rho}.$$ For
  getting such an upper bound, as in Proposition~\ref{prop:Glip}, we
  will apply the co-area formula using the function
$$f(x) = c(x,z) + \psi(z) - (c(x,y) + \psi(y))$$
We note that
$$ \RLag^{\eta}_{y}(\psi)(x)\RLag_{z}^{\eta}(\psi)(x) = \frac{e^{- \frac{c(x,y) + \psi(y) + c(x,z) + \psi(z)}{\eta}}}{\left(\sum_{w\in Y} e^{- \frac{c(x,w) + \psi(w)}{\eta}}\right)^2}.$$
 When $f(x) \geq 0$, we use the equality $c(x,z) + \psi(z)=c(x,y) +
\psi(y) + f(x)$ to obtain the upper bound
$$ \frac{e^{- \frac{c(x,y) + \psi(y) + c(x,z) + \psi(z)}{\eta}}}{\left(\sum_{w\in Y} e^{- \frac{c(x,w) + \psi(w)}{\eta}}\right)^2} \leq \frac{\left(e^{- \frac{c(x,y) + \psi(y)}{\eta}}\right)^2 e^{-\frac{f(x)}{\eta}}}{\left(\sum_{w\in Y} e^{- \frac{c(x,w) + \psi(w)}{\eta}}\right)^2} \leq e^{-\frac{f(x)}{\eta}}.$$
Reasoning similarly when $f(x)\leq 0$, we obtain 
$$ \RLag^{\eta}_{y}(\psi)(x)\RLag_{z}^{\eta}(\psi)(x) \leq e^{-\frac{\abs{f(x)}}{\eta}}. $$
Using the co-area formula and the previous upper bound, we get 
$$ \begin{aligned}
  &\sca{\RLag^{\eta}_{y}(\psi)\RLag_{z}^{\eta}(\psi)}{\rho}\\
  &\quad= \int_X \RLag^{\eta}_{y}(\psi)(x)\RLag_{z}^{\eta}(\psi)(x) \rho(x) \dd x \\
  &\quad= \int_{-\infty}^\infty \int_{f^{-1}(t)}\RLag^{\eta}_{y}(\psi)(x)\RLag_{z}^{\eta}(\psi)(x) \frac{\rho(x)}{\nr{\nabla f(x)}}  \dd \Haus^{d-1}(x) \dd t\\
  &\quad\leq \int_{-\infty}^\infty \int_{f^{-1}(t) \cap X} e^{-\frac{\abs{t}}{\eta}} \frac{\rho(x)}{\nr{\nabla f(x)}}  \dd \Haus^{d-1}(x) \dd t 
\end{aligned}
$$ We now apply Lemma~\ref{lemma:bnd-area}, which gives an upper bound on
$\Haus^{d-1}(f^{-1}(t)\cap X)$ in terms of the constants $\kappa_{yz} =
\min_X \nr{\nabla f}$ and $M_{yz}= \max_X \nr{\DD^2 f}$:
\begin{align*}
  &\sca{\RLag^{\eta}_{y}(\psi)\RLag_{z}^{\eta}(\psi)}{\rho} \\
  &\quad \leq c(d) \frac{\nr{\rho}_\infty}{\kappa_{yz}} \left(1+\frac{M_{yz}}{\kappa_{yz}} \diam(X)\right) \diam(X)^{d-1}  \int_{-\infty}^\infty e^{-\frac{\abs{t}}{\eta}} \dd t \\
  &\quad \leq C \eta,
\end{align*}
where the constant $C$ depends on the domain, $\rho$ and the cost only. In other words, for $z\neq y$,
$$ \abs{\frac{\partial^2 \Kant^\eta}{\partial \one_z
    \partial\one_y}(\psi)} = \frac{1}{\eta}
\sca{\RLag^{\eta}_{y}(\psi)\RLag_{z}^{\eta}(\psi)}{\rho} \leq C.$$ A
similar upper bound holds Since the diagonal elements, thus ensuring
that $G^\eta$ is $L$-Lipschitz with $L$ independent on $\eta$. (iii)
follows at once from pointwise convergence and the uniform Lipschitz
estimate.
\end{proof}


To finish this section, we show that under the genericity assumption
introduced in Section~\ref{subsec:hessian:kantorovich}, the Hessian of
Kantorovich's regularized functional $\D^2 \Kant^\eta$ converges
pointwise to $\D^2 \Kant$ as $\eta$ converges to $0$.

\begin{theorem} \label{thm:smooth-laguerre}
Assume that $c\in \Class^1(X\times Y)$ is twisted
(Def~\ref{def:twist}), that $Y$ is generic with respect to $c$ and
$\partial X$ (Def~\ref{def:genericity}), that
$\rho_{\mid X} \in \Class^0(X)$. Then, 
$$\forall \psi\in \Rsp^Y,~ \lim_{\eta \to 0} \D G^\eta(\psi) = \D G(\psi). $$
\end{theorem}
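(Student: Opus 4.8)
The plan is to prove pointwise convergence of the off-diagonal entries $\frac{\partial^2 \Kant^\eta}{\partial \one_z\partial\one_y}(\psi) = \frac{1}{\eta}\sca{\RLag^\eta_y(\psi)\RLag^\eta_z(\psi)}{\rho}$ to $G_{yz}(\psi) = \int_{\Lag_{yz}(\psi)} \frac{\rho(x)}{\nr{\nabla_x c(x,y) - \nabla_x c(x,z)}}\dd\Haus^{d-1}(x)$ for fixed distinct $y,z$; the diagonal entries then converge automatically since both $\D G^\eta(\psi)$ and $\D G(\psi)$ have rows summing to zero. Fix $\psi$ and write $f(x) = c(x,z) + \psi(z) - (c(x,y) + \psi(y))$, which has non-vanishing gradient by the twist condition. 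The idea is to use the co-area formula to slice the integral along the level sets $f^{-1}(t)$, exactly as in the proof of Proposition~\ref{prop:regularityGeta}(ii):
$$
\frac{1}{\eta}\sca{\RLag^\eta_y(\psi)\RLag^\eta_z(\psi)}{\rho} = \frac{1}{\eta}\int_{-\infty}^\infty \int_{f^{-1}(t)\cap X} \RLag^\eta_y(\psi)(x)\RLag^\eta_z(\psi)(x)\frac{\rho(x)}{\nr{\nabla f(x)}}\dd\Haus^{d-1}(x)\dd t.
$$

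The key step is to identify the limit of the inner "mass density at level $t$" after the rescaling by $1/\eta$. On the level set $f^{-1}(t)$, one has $c(x,z)+\psi(z) = c(x,y)+\psi(y) + t$, so the product $\RLag^\eta_y(\psi)(x)\RLag^\eta_z(\psi)(x)$ equals $e^{-\frac{t}{\eta}}\big(e^{-\frac{c(x,y)+\psi(y)}{\eta}}/D_\eta(x)\big)^2$ where $D_\eta(x) = \sum_{w} e^{-\frac{c(x,w)+\psi(w)}{\eta}}$. For $x$ in the relative interior of $\Lag_{yz}(\psi)$ — i.e. $x$ where $y$ and $z$ are the unique minimizers of $c(x,\cdot)+\psi(\cdot)$ — one has $D_\eta(x) \sim 2 e^{-\frac{c(x,y)+\psi(y)}{\eta}}$ as $\eta\to 0$, so the bracketed factor tends to $\tfrac14$. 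Combining, $\frac{1}{\eta}\int_{-\infty}^\infty e^{-t/\eta}\,\mathbf{1}_{\{t\ge 0\}}\,g(t)\,\dd t \to g(0)/1$ by a standard Laplace/approximate-identity argument (the measure $\frac{1}{\eta}e^{-t/\eta}\dd t$ on $[0,\infty)$ converges weakly to $\delta_0$), and similarly slicing $t\le 0$ with the roles of $y,z$ reversed contributes the other half; the two halves add to give the full density $\frac{\rho(x)}{\nr{\nabla f(x)}}\cdot 1$ on $\Lag_{yz}(\psi)$, integrated in $\Haus^{d-1}$, which is exactly $G_{yz}(\psi)$ — after checking $\nr{\nabla f(x)} = \nr{\nabla_x c(x,z) - \nabla_x c(x,y)}$.

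To make the approximate-identity argument rigorous I would proceed as follows. First, the uniform Lipschitz bound from Proposition~\ref{prop:regularityGeta}(ii) gives $\frac{1}{\eta}\sca{\RLag^\eta_y\RLag^\eta_z}{\rho}\le C$ independently of $\eta$, providing the domination needed to pass limits. Second, I would substitute $t = \eta s$ to rewrite the inner $t$-integral as $\int_0^\infty e^{-s}\big[\int_{f^{-1}(\eta s)\cap X}(\cdots)\big]\dd s$ (plus the $s\le 0$ part), where the bracket is bounded by $C\cdot\Haus^{d-1}(f^{-1}(\eta s)\cap X)$, itself bounded by Lemma~\ref{lemma:bnd-area}; then by dominated convergence in $s$ it suffices to show that for each fixed $s$, the slice $\int_{f^{-1}(\eta s)\cap X}\RLag^\eta_y\RLag^\eta_z\frac{\rho}{\nr{\nabla f}}\dd\Haus^{d-1} \to \tfrac14\int_{\Lag_{yz}(\psi)}\frac{\rho}{\nr{\nabla f}}\dd\Haus^{d-1}$. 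This last slice-convergence is handled by flowing along $\nabla f/\nr{\nabla f}^2$ to push $f^{-1}(\eta s)$ onto $f^{-1}(0)$ — precisely the change-of-variables device used in the proof of Lemma~\ref{lemma:Gyz:cont} — reducing everything to an integral over the fixed hypersurface $f^{-1}(0)$ where the integrand converges pointwise $\Haus^{d-1}$-a.e.: it tends to $\tfrac14\frac{\rho(x)}{\nr{\nabla f(x)}}$ on $\Lag_{yz}(\psi)$ and to $0$ off $\overline{\Lag_{yz}(\psi)}$, the only issue being points where a third index $w$ ties, i.e. points of $H_{yzw}(\psi_\infty)$, which have zero $\Haus^{d-1}$-measure by the genericity assumption (Def.~\ref{def:genericity}).

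The main obstacle is the interchange of the $1/\eta$ rescaling with the two limits (in $s$ and in the $\Haus^{d-1}$-slice integral) and the careful bookkeeping of the factor $\tfrac14$ from $D_\eta(x)^2\sim 4 e^{-2(c(x,y)+\psi(y))/\eta}$ together with the two symmetric contributions from $t\ge 0$ and $t\le 0$ that sum to the correct total; the genericity hypothesis enters exactly to ensure the exceptional set on $f^{-1}(0)$ where $D_\eta$ does \emph{not} have the clean two-term asymptotics is $\Haus^{d-1}$-negligible. Everything else — the uniform domination, the flow construction, the continuity of $\rho_{\mid X}$ — is already available from earlier in the excerpt, so the proof is mostly a matter of organizing these ingredients.
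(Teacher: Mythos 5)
Your overall strategy is the same as the paper's (coarea slicing along the level sets of $f=c(\cdot,z)+\psi(z)-c(\cdot,y)-\psi(y)$, the gradient flow pushing the slices $f^{-1}(\eta s)$ onto $f^{-1}(0)$, genericity to discard triple points and $\partial X$, and dominated convergence with the uniform bound of Proposition~\ref{prop:regularityGeta}), and your reduction of the diagonal entries to the off-diagonal ones via the zero row sums is fine. But the central bookkeeping step is wrong: you decouple the factor
$\bigl(e^{-(c(x,y)+\psi(y))/\eta}/D_\eta(x)\bigr)^2$ from the kernel $\frac{1}{\eta}e^{-t/\eta}$ and freeze it at its value $\tfrac14$ on the interface. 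That factor is \emph{not} asymptotically constant on the scale where the kernel concentrates: writing $t=\eta s$, on $f^{-1}(\eta s)$ away from triple points one has
\begin{equation*}
\Bigl(e^{-\frac{c(x,y)+\psi(y)}{\eta}}/D_\eta(x)\Bigr)^2=(1+e^{-s}+o(1))^{-2},
\end{equation*}
which ranges over $(0,1)$ as $s$ varies and equals $\tfrac14$ only at $s=0$. Freezing it at $\tfrac14$ and then using that $\frac{1}{\eta}e^{-t/\eta}\dd t$ has mass $1$ on each half-line gives $\tfrac14+\tfrac14=\tfrac12$, i.e. your argument as written produces $\tfrac12\,G_{yz}(\psi)$, not $G_{yz}(\psi)$ (and your claim that ``the two halves add to give the full density'' is not what the arithmetic yields). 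Equivalently, if instead you interpret $g(t)$ at fixed $t>0$, its limit involves the factor $1$, not $\tfrac14$, because off the interface $y$ is the unique minimizer; either way the Laplace/approximate-identity step and the $\tfrac14$ cannot both be applied.

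The correct treatment keeps the two effects together: on the relevant slices the product $\RLag^\eta_y(\psi)\RLag^\eta_z(\psi)$ equals $\frac{e^{-\abs{t}/\eta}}{(1+e^{-\abs{t}/\eta}+r_\eta)^{2}}$, so the effective kernel in $t$ is $\frac{1}{\eta}\,\frac{e^{-\abs{t}/\eta}}{(1+e^{-\abs{t}/\eta})^{2}}$, whose total mass is $\int_{\Rsp}\frac{e^{-\abs{s}}}{(1+e^{-\abs{s}})^{2}}\dd s=1$; only the remainder $r_\eta$ coming from third sites is sent to zero, $\Haus^{d-1}$-a.e.\ by genericity. This is exactly how the paper argues (via the function $\chi_\eta(t,x)=(1+e^{-t/\eta}+r_\eta(t,x))^{-2}$ and the integral $\int\frac{e^{-\abs{t}/\eta}}{\eta(1+e^{-\abs{t}/\eta})^{2}}\dd t$), so your proof becomes correct once you retain the $s$-dependence of the denominator instead of replacing it by the constant $\tfrac14$.
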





\begin{proof}
We let $f(x) = c(x,z) + \psi(z) - (c(x,y) + \psi(y)) $ and $\epsilon >0$. From the proof of Proposition~\ref{prop:regularityGeta}, one has for every $x\in X\setminus f^{-1}([-\eps,\eps])$ that 
$$
\RLag^{\eta}_{y}(\psi)(x) \RLag^{\eta}_{z}(\psi)(x) \leq e^{-\frac{\epsilon}{\eta}}.
$$
This implies that 
 $$\lim_{\eta\to 0} \int_{X\setminus f^{-1}([-\eps,\eps])}
\RLag^{\eta}_{y}(\psi)(x) \RLag^{\eta}_{z}(\psi)(x) \rho(x) \dd x =
0.$$
Since the smoothed Laguerre are non-negative on $X$, we get 
\begin{align*}
G_{yz}^\eta(\psi) = \frac{1}{\eta} \int_{X} \RLag^{\eta}_{y}(\psi)(x) \RLag^{\eta}_{z}(\psi)(x) \rho(x) \dd x
\underset{\eta\to 0}{\sim} G_{yz}^{\eta,\eps}(\psi)
\end{align*}
where 
$$
G_{yz}^{\eta,\eps}(\psi):=\frac{1}{\eta}\int_{X \cap f^{-1}([-\eps,\eps])} \RLag^{\eta}_{y}(\psi)(x) \RLag^{\eta}_{z}(\psi)(x) \rho(x) \dd x.
$$
As in the proof of Lemma~\ref{lemma:Gyz:cont}, we construct $\Phi:
[-\eps,\eps]\times M \to \Omega_X$ (for some positive $\eps$),  with $M =
f^{-1}(0)$ by solving the Cauchy problem
$$\begin{cases} \Phi(0,x) = x \\
\frac{d}{d t} \Phi(t,x) = \frac{\nabla f(\Phi(t,x))}{\nr{\nabla f(\Phi(t,x))}^2},
\end{cases}$$
so that $f(\Phi(t,f^{-1}(0))) = t$. Then one has $\Phi([-\epsilon,\epsilon]\times M)=X\cap f^{-1}([-\epsilon,\epsilon])$, and by a change of variable
formula, using the definition of the smoothed indicator function of
Laguerre cells and the definition of $f$, we get
\begin{align*}
&G_{yz}^{\eta,\eps}(\psi) \\ =  &\frac{1}{\eta} \int_M \int_{-\eps}^\eps \RLag^{\eta}_{y}(\psi)(\Phi(t,x))
\RLag^{\eta}_{z}(\psi)(\Phi(t,x)) \rho(\Phi(t,x))J_\Phi(t,x)\dd t \dd \Haus^{d-1}(x).\\
\end{align*}
Remark that
\begin{align*}
\RLag^{\eta}_{y}(\psi)(\Phi(t,x)) \RLag^{\eta}_{z}(\psi)(\Phi(t,x)) 
&= \frac{e^{- \frac{c(\Phi(t,x),y) + \psi(y)+c(\Phi(t,x),z) + \psi(z)}{\eta}}}{\left(\sum_{z\in Y} e^{- \frac{c(\Phi(t,x),z) + \psi(z)}{\eta}}\right)^2}\\
&= \chi_\eta(t,x) e^{- \frac{\abs{f(\Phi(t,x))}}{\eta}}\\
&= \chi_\eta(t,x)e^{- \frac{\abs{t}}{\eta}},
\end{align*}
where we put 
$$ \chi_\eta(t,x) :=
\frac{e^{- 2\min\left(\frac{c(\Phi(t,x),y) + \psi(y)}{\eta}, \frac{c(\Phi(t,x),z) + \psi(z)}{\eta}\right)}}{\left(\sum_{z\in Y} e^{- \frac{c(\Phi(t,x),z) + \psi(z)}{\eta}}\right)^2}.
$$
We deduce that  one gets 
$$
G_{yz}^{\eta,\eps}(\psi) = \int_{M} g_\eta(x)  \dd\Haus^{d-1}(x)
$$
with
$$ g_\eta(x) = \int_{-\eps}^\eps \chi_\eta(t,x)   \frac{e^{- \frac{\abs{t}}{\eta}}}{\eta} \rho(\Phi(t,x)) J_\Phi(t,x) \dd t$$
We first note that $\abs{\chi_\eta(t,x)} \leq 1$, so that
$(g_\eta)_{\eta}$ is  bounded in $\LL^1(M)$.  We now prove
that $g_\eta(x)$ converges to $g(x) = \rho(x)J_\Phi(0,x)\one_{\Lag_{yz}(\psi)}(x)$
$\Haus^{d-1}$-almost everywhere. More precisely, we show convergence
for any $x$ belonging to the following set $E$, which has full
$\Haus^{d-1}$ measure in $\Lag_{yz}(\psi)$ by the genericity
assumption (Def. \ref{def:genericity}):
$$E = \Lag_{yz}(\psi) \setminus \left(\partial
X \cup \bigcup_{w\not\in \{y,z\}}\Lag_w(\psi)\right).$$ We split the
integral defining $g_\eta$ by distinguishing the case $t\leq 0$ and
$t\geq 0$. For $t\geq 0$,
$$ t = f(\Phi(t,x)) = c(\Phi(x,t),z) +\psi(z) - (c(\Phi(x,t),y) + \psi(y)) \geq 0$$
giving
\begin{align*}
\chi_\eta(t,x) &=
\frac{\left(e^{- \frac{c(\Phi(t,x),y) + \psi(y)}{\eta}}\right)^2}
{\left(\sum_{w\in Y} e^{- \frac{c(\Phi(t,x),w) + \psi(w)}{\eta}}\right)^2}\ \\
&=\left(\sum_{w\in Y} e^{- \frac{c(\Phi(t,x),w) + \psi(w) - (c(\Phi(t,x),y)+\psi(y))}{\eta}}\right)^{-2}\ \\
&= \left(1 + e^{- \frac{t}{\eta}} + r_\eta(t,x)\right)^{-2}\ \\
\end{align*}
with
$$ r_\eta(t,x) = \sum_{w\in Y\setminus\{y,z\}}
e^{- \frac{1}{\eta}(c(\Phi(t,x),w) + \psi(w) - (c(\Phi(t,x),y)+\psi(y))}. $$
Now, by assumption on the point $x$, for any $w\not\in \{y,z\}$, one has $c(x,y)
+\psi(y) < c(x,w) + \psi(w)$ so that $r_\eta(t,x)$ is negligible
A similar computation can be done for $t\leq 0$, giving us the  estimation
$$ g_\eta(x) \underset{\eta\to 0}{\sim} \int_{-\eps}^\eps \frac{e^{- \frac{\abs{t}}{\eta}}}{\eta(1+e^{- \frac{\abs{t}}{\eta}})^2}
\rho(\Phi(t,x)) J_\Phi(t,x) \dd t \xrightarrow{\eta\to 0}
\rho(x)J_\Phi(0,x).$$ On the other hand, one can show that
for almost every $x$ in $M$ but not in $\Lag_{yz}(\psi)$,
$\abs{\chi_\eta(t,x)}$ tends to zero when $\eta$ goes to zero, thus implying that 
the sequence $(g_\eta(x))$ also converges to $0$.  In other words,
$$ g_\eta(x) \xrightarrow[a.e.]{\eta\to 0}\rho(x)
J_\Phi(0,x) \one_{\Lag_{yz}(\psi)}(x) = \frac{\rho(x)}{\nr{\nabla f(x)}} \one_{\Lag_{yz}(\psi)}(x).$$
By Lebesgue's dominated convergence
theorem, we get
\begin{equation*}
\lim_{\eta\to 0} G_{yz}^\eta(\psi) = \lim_{\eta\to 0} \int_{M} g_\eta(x)  \dd\Haus^{d-1}(x) = \int_{\Lag_{yz}(\psi)} \frac{\rho(x)}{\nr{\nabla f(x)}} \dd\Haus^{d-1}(x). 
\end{equation*}
From the relation $\nr{\nabla f(x)} = \nr{\nabla_xc(x,y)-\nabla_x c(x,z)}$ we get as desired,
\begin{equation*}
\lim_{\eta\to 0} G_{yz}^\eta(\psi) = G_{yz}(\psi) \qedhere
\end{equation*}
\end{proof}
\section{Appendix}

\subsection{Convex analysis}\label{sec:appendixconvexanalysis}

We recall a few relevant definitions and facts from convex analysis (adapted to concave functions).

\begin{definition}\label{def:superdifferential}
The superdifferential of function $F:\Rsp^N\to\Rsp \cup\{-\infty\}$ at
$x\in \Rsp^N$ is the set of vectors $v\in \Rsp^N$ such that
$$ \forall y\in Y,\quad F(y) \leq F(x) + \sca{v}{y-x}.$$
This set is denoted $\partial^+ F(x)$.
\end{definition}

\begin{proposition}\label{prop:superdifferential} The following hold:
\begin{itemize}
\item 
A function $F: \Rsp^N \to \Rsp$ is concave if and only if
$$ \forall x\in \Rsp^N, \partial^+ F(x) \neq \emptyset. $$
\item The superdifferential can be characterized by (\cite[Theorem 25.6]{rockafellar1970convex}):
\begin{equation} \label{eq:SDcalc}
\partial^+ F (x)=\mathrm{conv}\left\{\lim_{n\to\infty} \nabla F(x_n)\mid (x_n)_{n \in \Nsp} \in S\right\},
\end{equation} where $\mathrm{conv}(Z)$ denotes the convex envelope of the set $Z$ and
$$S  = \{ (x_n)_{n\in\Nsp} \mid \forall n\geq 1, \nabla F(x_n) \hbox{ exists } \hbox{ and } \nabla F(x_n) \hbox{ exists} \}.$$
\end{itemize}
\end{proposition}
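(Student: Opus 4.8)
The plan is to handle the two bullet points separately: the characterization of concavity by a supporting-hyperplane argument, and the gradient representation of the superdifferential by reduction to Rockafellar's theorem.

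For the first bullet, I would begin with the easy implication. If $\partial^+F(x)\neq\emptyset$ for every $x$, choose for each $x$ a vector $v_x\in\partial^+F(x)$ and set $a_x(y) := F(x) + \sca{v_x}{y-x}$. By the very definition of the superdifferential one has $F\leq a_x$ pointwise, while $a_x(x) = F(x)$; hence $F = \inf_x a_x$ is an infimum of affine functions and is therefore concave. For the converse, assume $F$ concave and finite, and fix $x_0\in\Rsp^N$. The hypograph $\mathrm{hyp}(F) := \{(y,t)\in\Rsp^N\times\Rsp\mid t\leq F(y)\}$ is a closed convex set with nonempty interior (as $F$ is continuous, being finite and concave), and $(x_0,F(x_0))$ lies on its boundary. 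Applying the supporting hyperplane theorem at this point yields $(v,-\lambda)\neq 0$ with $\sca{v}{y} - \lambda t \leq \sca{v}{x_0} - \lambda F(x_0)$ for all $(y,t)\in\mathrm{hyp}(F)$. Since $\mathrm{hyp}(F)$ projects onto all of $\Rsp^N$, the hyperplane cannot be vertical, i.e. $\lambda>0$; dividing by $\lambda$ and taking $t=F(y)$ gives $F(y)\leq F(x_0) + \sca{v/\lambda}{y-x_0}$, so $v/\lambda\in\partial^+F(x_0)$. The only subtlety is the exclusion of a vertical supporting hyperplane, which is exactly where finiteness of $F$ on all of $\Rsp^N$ (so that $x_0$ is interior to the domain) enters.

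For the second bullet, the most economical route is to invoke \cite[Theorem 25.6]{rockafellar1970convex} applied to the convex function $-F$, translating its subdifferential into $\partial^+F$ and its gradients into minus those of $F$. If a self-contained sketch is desired, I would proceed as follows. By the first bullet $F$ is concave and finite, hence locally Lipschitz, so by Rademacher's theorem the set $D$ of differentiability points has full Lebesgue measure and in particular is dense. The inclusion ``$\supseteq$'' is then immediate: if $x_n\to x$ with $x_n\in D$ and $\nabla F(x_n)\to w$, passing to the limit in $F(y)\leq F(x_n) + \sca{\nabla F(x_n)}{y-x_n}$ gives $w\in\partial^+F(x)$; since $\partial^+F(x)$ is convex (an intersection of half-spaces), it contains the convex hull of all such limits. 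The reverse inclusion ``$\subseteq$'' is the substantive part: one shows $\partial^+F(x)$ is a nonempty compact convex set all of whose extreme points arise as $\lim_n\nabla F(x_n)$ for suitable $x_n\in D$ converging to $x$, and then Minkowski's (or Krein--Milman's) theorem delivers the stated representation.

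I expect the real obstacle to be precisely this last inclusion — proving that every extreme point of $\partial^+F(x)$ is reached as a limit of gradients at nearby differentiability points. This is the genuinely nontrivial step (and the reason the statement is quoted from Rockafellar rather than reproved here); the supporting-hyperplane direction and the ``infimum of affine functions'' direction are entirely routine.
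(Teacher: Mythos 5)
The paper never proves this proposition: it is recalled as background from convex analysis, with the gradient characterization \eqref{eq:SDcalc} simply cited from \cite[Theorem 25.6]{rockafellar1970convex}. Your proposal is therefore consistent with the paper's treatment and is correct. For the second bullet you take exactly the paper's route — invoke Rockafellar's theorem for the convex function $-F$ — and your sketch is accurate: the inclusion $\supseteq$ follows by passing to the limit in $F(y)\leq F(x_n)+\sca{\nabla F(x_n)}{y-x_n}$ and using convexity of $\partial^+F(x)$, while the inclusion $\subseteq$ (every extreme point of $\partial^+F(x)$ is a limit of nearby gradients) is precisely the nontrivial content of the cited theorem. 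Your proof of the first bullet, which the paper omits entirely, is also sound: the ``infimum of affine majorants'' direction is immediate, and in the supporting-hyperplane direction the only detail worth making explicit is that $\lambda\geq 0$ is obtained first by letting $t\to-\infty$ inside the hypograph, after which your full-projection argument rules out $\lambda=0$; finiteness of $F$ on all of $\Rsp^N$ (hence local Lipschitz continuity and nonempty interior of the hypograph) is, as you say, where the hypothesis enters.
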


\subsection{Coarea formula}

We consider two Riemannian sub-manifolds $M$ and $N$, respectively of dimensions  $m$ and $n$, of two Euclidean spaces and assume that $n\leq m$. 
Let $\Phi :M\to N$ be a function of class $C^1$ between the two manifolds. The \emph{Jacobian determinant} of $\Phi: E\subseteq M\to N$ at $x$ is defined by
\begin{equation}\label{eq:J}
J_\Phi(x) = \sqrt{\det(\D\Phi(x) \D\Phi(x)^T)}
\end{equation}  
Note that if $M=\Rsp^m$, $N=\Rsp^n$ and $\Phi = (\Phi_1,\hdots,\Phi_n)$, one has 
$$ \D\Phi(x) \D\Phi(x)^T = \left(\sca{\nabla \Phi_i(x)}{\nabla\Phi_j(x)}\right)_{1\leq i,j\leq n}. $$
In particular, for $n=1$, one has $ J\Phi(x) = \nr{\nabla \Phi_1(x)}$,  and for $n=2$ one gets
\begin{align*}
J\Phi(x)^2 
 &= \nr{\nabla\Phi_1(x)}^2\nr{\nabla \Phi_2(x)}^2
 - \sca{\nabla\Phi_1(x)}{\nabla\Phi_2(x)}^2 \end{align*} which by
 Cauchy-Schwarz's inequality is always non-negative and vanishes iff
 $\nabla\Phi_1(x)$ and $\nabla\Phi_2(x)$ are collinear.\\

\begin{theorem}[Coarea formula]  \label{th:coarea} Let $\Phi:M\to N$ be a function of class $C^1$.  For every $\Haus^m$-measurable function $u:M\to \Rsp$, one has
$$ \int_{M} u(x) J_\Phi(x) \dd\Haus^m(x)
= \int_{N} \int_{\Phi^{-1}(y)}
u(x) \dd\Haus^{m-n}(x) \dd \Haus^n(y). $$
If $J_\Phi(x)$ does not vanish on a measurable subset $E\subset M$, then
\begin{equation}\label{eq:coarea2}
\int_{M} u(x)  \dd\Haus^m(x)
= \int_{N} \int_{\Phi^{-1}(y)}
\frac{u(x)}{J_\Phi(x)} \dd\Haus^{m-n}(x) \dd \Haus^n(y).
\end{equation}
\end{theorem}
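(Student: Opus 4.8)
The plan is the classical geometric‑measure‑theory route: reduce to a Euclidean setting, dispose of the linear model case by Fubini and the singular value decomposition, transport this to $\Class^1$ maps on the set where $J_\Phi\neq 0$ by a linearization‑and‑covering argument, and finally show that the critical set $Z:=\{x\in M\mid J_\Phi(x)=0\}$ contributes nothing to either side. For the reductions: both displayed identities are linear and monotone in $u$, so writing $u=u^+-u^-$ and approximating nonnegative functions from below by simple functions reduces everything to $u=\one_A$ with $A\subseteq M$ measurable; once one checks that $y\mapsto\Haus^{m-n}(A\cap\Phi^{-1}(y))$ is $\Haus^n$‑measurable — which will follow from the coarea inequality used in the last step — both sides are countably additive in $A$ and it suffices to compare them on a generating family. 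The second identity is the first applied to $\one_E\,u/J_\Phi$ (a legitimate measurable function, since $J_\Phi$ is continuous and nonvanishing on $E$), so it is enough to prove the first. Using a partition of unity on $M$ subordinate to charts, and decomposing so that each piece of $\Phi$ has image in a single chart of $N$, we reduce to $M\subseteq\Rsp^m$, $N\subseteq\Rsp^n$ open and $\Phi\in\Class^1(M,N)$; here one uses that $J_\Phi$, computed with respect to orthonormal frames, is chart‑independent and transforms correctly under the bi‑Lipschitz coordinate maps, and from now on $J_\Phi=\sqrt{\det(\D\Phi\,\D\Phi^T)}$ is a continuous function.

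Next, the linear case $\Phi=L$, $L:\Rsp^m\to\Rsp^n$ linear. If $L$ is not onto, then $J_L=0$ and $L^{-1}(y)\neq\emptyset$ only for $y$ in the proper subspace $\mathrm{im}\,L$, which is $\Haus^n$‑null, so both sides vanish. If $L$ is onto, write $L=O_2\,\Sigma\,O_1$ with $O_1\in O(m)$, $O_2\in O(n)$ and $\Sigma$ the $n\times m$ diagonal matrix of singular values; orthogonal maps preserve all the Hausdorff measures in play and conjugate fibres to fibres, so the claim reduces to $\Sigma$, i.e.\ (after permuting coordinates) to $(a,b)\in\Rsp^n\times\Rsp^{m-n}\mapsto Da$ with $D$ invertible diagonal, where it is exactly Fubini's theorem together with the one‑dimensional change of variables and yields $J_L=\abs{\det D}=\sqrt{\det(LL^T)}$. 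The same computation, extended to Lipschitz maps by the covering argument below, gives the \emph{coarea inequality} $\int^\ast_{\Rsp^k}\Haus^{m-k}(A\cap g^{-1}(z))\,\dd\Haus^k(z)\le c_{m,k}(\Lip(g))^k\,\Haus^m(A)$ for $\Class^1$ (or Lipschitz) $g:\Rsp^m\to\Rsp^k$, which also settles the measurability point above.

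Now the regular set $U:=\{J_\Phi>0\}$, which is open. There $\D\Phi(x)$ has rank $n$, so by the implicit function theorem the fibres $\Phi^{-1}(y)\cap U$ are $\Class^1$ submanifolds of dimension $m-n$. Cover $U$ by countably many disjoint Borel pieces, each contained in a small cube on which — by uniform continuity of $\D\Phi$ — the map $\Phi$ differs from its linearization at a base point by a map of small Lipschitz norm. On each such piece the linear identity transports to $\Phi$ with a multiplicative error tending to $1$ as the cube shrinks: the error is controlled by comparing $J_\Phi$ with the Jacobian of the linear model, and by comparing $\Haus^{m-n}$ of slices of the piece under $\Phi$ with those under the linear model, both comparisons being two‑sided because $\Phi$ restricted to the piece is bi‑Lipschitz‑close to linear. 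Summing over the cover and refining the mesh gives the coarea identity for $u=\one_A$ with $A\subseteq U$. This linearization‑and‑covering step is the main engine and is the same mechanism that proves the area formula.

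Finally the critical set $Z=M\setminus U$. Its contribution to the left‑hand side, $\int_Z\one_A\,J_\Phi\,\dd\Haus^m$, vanishes because $J_\Phi\equiv 0$ on $Z$, so it remains to prove that $\Haus^{m-n}(Z\cap\Phi^{-1}(y))=0$ for $\Haus^n$‑a.e.\ $y$. This is the genuinely delicate point, and it is \emph{not} a consequence of Sard's theorem, which fails for merely $\Class^1$ maps when $m>n$ (the set of critical values can have positive measure); one must show instead that each individual critical fibre is $\Haus^{m-n}$‑thin. I would argue quantitatively from the coarea inequality: fix $\delta>0$ and cover $Z$ by small cubes on each of which, since $\D\Phi$ has rank $\le n-1$, the map $\Phi$ is $\delta$‑close to an affine map of rank $\le n-1$; then the composition of $\Phi$ with the orthogonal projection $\pi$ onto a direction complementary to that affine image has Lipschitz norm $\le c\,\delta$, so applying the coarea inequality to $\pi\circ\Phi$ and then slicing the $\Phi$‑fibres inside the fibres of $\pi\circ\Phi$ bounds the contribution of that cube to $\int_{\Rsp^n}\Haus^{m-n}(Z\cap\Phi^{-1}(y))\,\dd y$ by $c\,\delta$ times a purely geometric quantity. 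Summing over the countable cover and letting $\delta\to0$ kills the contribution of $Z$, and combining the regular and critical parts proves the theorem. The main obstacle, as flagged, is precisely this critical‑set estimate; the rest is bookkeeping around the linear model.
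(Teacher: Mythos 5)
The paper does not prove this statement: Theorem~\ref{th:coarea} is stated in the appendix as a background fact from geometric measure theory (Federer's coarea formula) and is only \emph{used} (in Lemma~\ref{lemma:Gy:pd}, Lemma~\ref{lemma:bnd-area}, etc.), so there is no in-paper argument to compare yours against. Judged on its own, your sketch follows the classical Federer/Evans--Gariepy route, and the architecture is the right one: reduction to indicator functions, deduction of \eqref{eq:coarea2} from the first identity applied to $\one_E u/J_\Phi$, chart reduction to the Euclidean case, the linear model via SVD and Fubini, the Eilenberg coarea inequality (which also gives measurability of $y\mapsto \Haus^{m-n}(A\cap\Phi^{-1}(y))$), a linearization-and-covering argument on $\{J_\Phi>0\}$, and a separate estimate showing the critical set contributes nothing. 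You are also right, and it is to your credit that you flag it, that the critical-set step is not a Sard-type statement for $\Class^1$ maps and needs a quantitative argument.

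Two places remain genuinely thin. First, in the regular-set step, comparing ``$\Haus^{m-n}$ of slices of the piece under $\Phi$ with those under the linear model'' is not a fibrewise comparison (the fibres of $\Phi$ and of its linearization are different sets over different target points); the standard way to make this two-sided estimate rigorous is to compare the \emph{integrals} over the target via the coarea inequality applied to $\Phi\circ L^{-1}$ and $L\circ \Phi^{-1}$ on pieces where these compositions have Lipschitz constants close to $1$, or alternatively to use the implicit function theorem to factor $\Phi$ locally as a diffeomorphism followed by a coordinate projection and invoke the area formula plus Fubini. Second, your projection argument for the critical set is vaguer than it needs to be; the clean standard device is to perturb to full rank, e.g.\ apply the Eilenberg inequality to $g_\eps(x,w)=\Phi(x)+\eps w$ on $Z\times \B(0,1)\subseteq\Rsp^m\times\Rsp^n$ and use Fubini to bound $\int_{\Rsp^n}\Haus^{m-n}\bigl(Z\cap\Phi^{-1}(y)\bigr)\dd\Haus^n(y)$ by a quantity of order $\eps$, then let $\eps\to 0$. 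With those two steps tightened (both are standard), your outline is a correct proof of the theorem at the level of generality the paper states it.
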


In particular, if $m=n$ and $\Phi:M\to N$ is an homeomorphism of class $C^1$, letting $v=u\circ \Phi^{-1}:N\to \Rsp$, one recovers the change of variable formula
\begin{equation}\label{eq:changeofvariable}
 \int_{M} v(\Phi(x)) J_\Phi(x) \dd\Haus^m(x)
= \int_{N} 
v(y) \dd \Haus^n(y).
\end{equation}

\bibliographystyle{amsplain}
\bibliography{ot}

\providecommand{\bysame}{\leavevmode\hbox to3em{\hrulefill}\thinspace}
\providecommand{\MR}{\relax\ifhmode\unskip\space\fi MR }
\providecommand{\MRhref}[2]{%
  \href{http://www.ams.org/mathscinet-getitem?mr=#1}{#2}
}
\providecommand{\href}[2]{#2}
\begin{thebibliography}{10}

\bibitem{agarwal2014approximation}
Pankaj~K Agarwal and R~Sharathkumar, \emph{Approximation algorithms for
  bipartite matching with metric and geometric costs}, Proceedings of the
  forty-sixth annual ACM symposium on Theory of computing, ACM, 2014,
  pp.~555--564.

\bibitem{agueh2011barycenters}
Martial Agueh and Guillaume Carlier, \emph{Barycenters in the wasserstein
  space}, SIAM Journal on Mathematical Analysis \textbf{43} (2011), no.~2,
  904--924.

\bibitem{altschuler2017near}
Jason Altschuler, Jonathan Weed, and Philippe Rigollet, \emph{Near-linear time
  approximation algorithms for optimal transport via sinkhorn iteration},
  Advances in Neural Information Processing Systems, 2017, pp.~1964--1974.

\bibitem{ambrosio2008gradient}
Luigi Ambrosio, Nicola Gigli, and Giuseppe Savar{\'e}, \emph{Gradient flows: in
  metric spaces and in the space of probability measures}, Springer Science \&
  Business Media, 2008.

\bibitem{ambrosio2019optimal}
Luigi Ambrosio, Federico Glaudo, and Dario Trevisan, \emph{On the optimal map
  in the 2-dimensional random matching problem}, arXiv preprint
  arXiv:1903.12153, 2019.

\bibitem{aurenhammer1998minkowski}
Franz Aurenhammer, Friedrich Hoffmann, and Boris Aronov, \emph{Minkowski-type
  theorems and least-squares clustering}, Algorithmica \textbf{20} (1998),
  no.~1, 61--76.

\bibitem{benamou2002monge}
J-D Benamou, Yann Brenier, and Kevin Guittet, \emph{The monge--kantorovitch
  mass transfer and its computational fluid mechanics formulation},
  International Journal for Numerical methods in fluids \textbf{40} (2002),
  no.~1-2, 21--30.

\bibitem{benamou2001mixed}
Jean-David Benamou and Yann Brenier, \emph{Mixed $\mathrm{L}^2$-{W}asserstein
  optimal mapping between prescribed density functions}, Journal of
  Optimization Theory and Applications \textbf{111} (2001), no.~2, 255--271.

\bibitem{benamou2015augmented}
Jean-David Benamou and Guillaume Carlier, \emph{Augmented lagrangian methods
  for transport optimization, mean field games and degenerate elliptic
  equations}, Journal of Optimization Theory and Applications \textbf{167}
  (2015), no.~1, 1--26.

\bibitem{benamou2015iterative}
Jean-David Benamou, Guillaume Carlier, Marco Cuturi, Luca Nenna, and Gabriel
  Peyr{\'e}, \emph{Iterative bregman projections for regularized transportation
  problems}, SIAM Journal on Scientific Computing \textbf{37} (2015), no.~2,
  A1111--A1138.

\bibitem{benamou2016numerical}
Jean-David Benamou, Guillaume Carlier, and Luca Nenna, \emph{A numerical method
  to solve multi-marginal optimal transport problems with coulomb cost},
  Splitting Methods in Communication, Imaging, Science, and Engineering,
  Springer, 2016, pp.~577--601.

\bibitem{benamou2019minimal}
Jean-David Benamou and Vincent Duval, \emph{Minimal convex extensions and
  finite difference discretisation of the quadratic monge--kantorovich
  problem}, European Journal of Applied Mathematics \textbf{30} (2019), no.~6,
  1041--1078.

\bibitem{benamou2014numerical}
Jean-David Benamou, Brittany~D Froese, and Adam~M Oberman, \emph{Numerical
  solution of the optimal transportation problem using the monge--amp{\`e}re
  equation}, Journal of Computational Physics \textbf{260} (2014), 107--126.

\bibitem{berman2017}
Robert~J. Berman, \emph{The {S}inkhorn algorithm, parabolic optimal transport
  and geometric {M}onge-{A}mpère equations}, arXiv preprint arXiv:1712.03082,
  2017.

\bibitem{berman2018convergence}
Robert~J Berman, \emph{Convergence rates for discretized monge-amp{\'e}re
  equations and quantitative stability of optimal transport}, arXiv preprint
  arXiv:1803.00785, 2018.

\bibitem{bertsekas1981new}
D.P. Bertsekas, \emph{A new algorithm for the assignment problem}, Mathematical
  Programming \textbf{21} (1981), no.~1, 152--171.

\bibitem{bertsekas1988dual}
D.P. Bertsekas and J.~Eckstein, \emph{Dual coordinate step methods for linear
  network flow problems}, Mathematical Programming \textbf{42} (1988), no.~1,
  203--243.

\bibitem{birkhoff1946tres}
Garrett Birkhoff, \emph{Tres observaciones sobre el algebra lineal}, Univ. Nac.
  Tucuman, Ser. A \textbf{5} (1946), 147--154.

\bibitem{brenier1991polar}
Yann Brenier, \emph{Polar factorization and monotone rearrangement of
  vector-valued functions}, Communications on pure and applied mathematics
  \textbf{44} (1991), no.~4, 375--417.

\bibitem{brenier1999minimal}
\bysame, \emph{Minimal geodesics on groups of volume-preserving maps and
  generalized solutions of the euler equations}, Communications on Pure and
  Applied Mathematics: A Journal Issued by the Courant Institute of
  Mathematical Sciences \textbf{52} (1999), no.~4, 411--452.

\bibitem{burkard2009assignment}
R.E. Burkard, M.~Dell'Amico, and S.~Martello, \emph{Assignment problems},
  Society for Industrial Mathematics, 2009.

\bibitem{buttazzo2012optimal}
Giuseppe Buttazzo, Luigi De~Pascale, and Paola Gori-Giorgi,
  \emph{Optimal-transport formulation of electronic density-functional theory},
  Physical Review A \textbf{85} (2012), no.~6, 062502.

\bibitem{buttazzo2009optimization}
Giuseppe Buttazzo, Chlo{\'e} Jimenez, and Edouard Oudet, \emph{An optimization
  problem for mass transportation with congested dynamics}, SIAM Journal on
  Control and Optimization \textbf{48} (2009), no.~3, 1961--1976.

\bibitem{caffarelli2008weak}
LA~Caffarelli and VI~Oliker, \emph{Weak solutions of one inverse problem in
  geometric optics}, Journal of Mathematical Sciences \textbf{154} (2008),
  no.~1, 39--49.

\bibitem{caffarelli2010free}
Luis Caffarelli and Robert~J McCann, \emph{Free boundaries in optimal transport
  and monge-ampere obstacle problems}, Annals of mathematics \textbf{171}
  (2010), no.~2, 673--730.

\bibitem{caffarelli1999problem}
Luis~A Caffarelli, Sergey~A Kochengin, and Vladimir~I Oliker, \emph{Problem of
  reflector design with given far-field scattering data}, Monge Amp{\`e}re
  Equation: Applications to Geometry and Optimization: NSF-CBMS Conference on
  the Monge Amp{\`e}re Equation, Applications to Geometry and Optimization,
  July 9-13, 1997, Florida Atlantic University, vol. 226, American Mathematical
  Soc., 1999, p.~13.

\bibitem{carlier2016vector}
Guillaume Carlier, Victor Chernozhukov, Alfred Galichon, et~al., \emph{Vector
  quantile regression: an optimal transport approach}, The Annals of Statistics
  \textbf{44} (2016), no.~3, 1165--1192.

\bibitem{carrillo2019primal}
Jose~A Carrillo, Katy Craig, Li~Wang, and Chaozhen Wei, \emph{Primal dual
  methods for wasserstein gradient flows}, arXiv preprint arXiv:1901.08081,
  2019.

\bibitem{charlier2017efficient}
Benjamin Charlier, Jean Feydy, Joan~Alexis Glaunes, and Alain Trouv{\'e},
  \emph{An efficient kernel product for automatic differentiation libraries,
  with applications to measure transport}, Working version, 2017.

\bibitem{chernozhukov2017monge}
Victor Chernozhukov, Alfred Galichon, Marc Hallin, Marc Henry, et~al.,
  \emph{Monge--kantorovich depth, quantiles, ranks and signs}, The Annals of
  Statistics \textbf{45} (2017), no.~1, 223--256.

\bibitem{chizat2018interpolating}
Lenaic Chizat, Gabriel Peyr{\'e}, Bernhard Schmitzer, and Fran{\c{c}}ois-Xavier
  Vialard, \emph{An interpolating distance between optimal transport and
  fisher--rao metrics}, Foundations of Computational Mathematics \textbf{18}
  (2018), no.~1, 1--44.

\bibitem{cotar2013density}
Codina Cotar, Gero Friesecke, and Claudia Kl{\"u}ppelberg, \emph{Density
  functional theory and optimal transportation with coulomb cost},
  Communications on Pure and Applied Mathematics \textbf{66} (2013), no.~4,
  548--599.

\bibitem{crane2013geodesics}
Keenan Crane, Clarisse Weischedel, and Max Wardetzky, \emph{Geodesics in heat:
  A new approach to computing distance based on heat flow}, ACM Transactions on
  Graphics (TOG) \textbf{32} (2013), no.~5, 152.

\bibitem{cullen1984extended}
Michael~JP Cullen and R~James Purser, \emph{An extended lagrangian theory of
  semi-geostrophic frontogenesis}, Journal of the atmospheric sciences
  \textbf{41} (1984), no.~9, 1477--1497.

\bibitem{cuturi2013sinkhorn}
Marco Cuturi, \emph{Sinkhorn distances: Lightspeed computation of optimal
  transport}, Advances in neural information processing systems, 2013,
  pp.~2292--2300.

\bibitem{cuturi2018semidual}
Marco Cuturi and Gabriel Peyr{\'e}, \emph{Semidual regularized optimal
  transport}, SIAM Review \textbf{60} (2018), no.~4, 941--965.

\bibitem{de2016far}
Pedro Machado~Manh{\~a}es de~Castro, Quentin M{\'e}rigot, and Boris Thibert,
  \emph{Far-field reflector problem and intersection of paraboloids},
  Numerische Mathematik \textbf{134} (2016), no.~2, 389--411.

\bibitem{de2012blue}
Fernando De~Goes, Katherine Breeden, Victor Ostromoukhov, and Mathieu Desbrun,
  \emph{Blue noise through optimal transport}, ACM Transactions on Graphics
  (TOG) \textbf{31} (2012), no.~6, 171.

\bibitem{de2015power}
Fernando de~Goes, Corentin Wallez, Jin Huang, Dmitry Pavlov, and Mathieu
  Desbrun, \emph{Power particles: an incompressible fluid solver based on power
  diagrams.}, ACM Trans. Graph. \textbf{34} (2015), no.~4, 50--1.

\bibitem{degournay2018discrete}
Fr{\'e}d{\'e}ric De~Gournay, Jonas Kahn, and L{\'e}o Lebrat, \emph{3/4-discrete
  optimal transport}, arXiv preprint arXiv:1806.09537, 2018.

\bibitem{degournay2019differentiation}
\bysame, \emph{Differentiation and regularity of semi-discrete optimal
  transport with respect to the parameters of the discrete measure}, Numerische
  Mathematik \textbf{141} (2019), no.~2, 429--453.

\bibitem{deleo2017numerical}
Roberto De~Leo, Cristian~E Guti{\'e}rrez, and Henok Mawi, \emph{On the
  numerical solution of the far field refractor problem}, Nonlinear Analysis
  \textbf{157} (2017), 123--145.

\bibitem{edmonds1972theoretical}
J.~Edmonds and R.M. Karp, \emph{Theoretical improvements in algorithmic
  efficiency for network flow problems}, Journal of the ACM (JACM) \textbf{19}
  (1972), no.~2, 248--264.

\bibitem{erbar2017computation}
Matthias Erbar, Martin Rumpf, Bernhard Schmitzer, and Stefan Simon,
  \emph{Computation of optimal transport on discrete metric measure spaces},
  arXiv preprint arXiv:1707.06859, 2017.

\bibitem{feydy2019fast}
Jean Feydy, Pierre Roussillon, Alain Trouv{\'e}, and Pietro Gori, \emph{Fast
  and scalable optimal transport for brain tractograms}, International
  Conference on Medical Image Computing and Computer-Assisted Intervention,
  Springer, 2019, pp.~636--644.

\bibitem{froese2012numerical}
Brittany~D Froese, \emph{A numerical method for the elliptic
  {M}onge--{A}mp\`ere equation with transport boundary conditions}, SIAM
  Journal on Scientific Computing \textbf{34} (2012), no.~3, A1432--A1459.

\bibitem{gabow1989faster}
H.N. Gabow and R.E. Tarjan, \emph{Faster scaling algorithms for network
  problems}, SIAM Journal on Computing \textbf{18} (1989), 1013.

\bibitem{galerne2018texture}
Bruno Galerne, Arthur Leclaire, and Julien Rabin, \emph{A texture synthesis
  model based on semi-discrete optimal transport in patch space}, SIAM Journal
  on Imaging Sciences \textbf{11} (2018), no.~4, 2456--2493.

\bibitem{galichon2018optimal}
Alfred Galichon, \emph{Optimal transport methods in economics}, Princeton
  University Press, 2018.

\bibitem{galichon2010matching}
Alfred Galichon and Bernard Salani{\'e}, \emph{Matching with trade-offs:
  Revealed preferences over competing characteristics}, Tech. report, CEPR
  Discussion Papers, 2010.

\bibitem{gangbo1996geometry}
Wilfrid Gangbo and Robert~J McCann, \emph{The geometry of optimal
  transportation}, Acta Mathematica \textbf{177} (1996), no.~2, 113--161.

\bibitem{genevay2016stochastic}
Aude Genevay, Marco Cuturi, Gabriel Peyr{\'e}, and Francis Bach,
  \emph{Stochastic optimization for large-scale optimal transport}, Advances in
  neural information processing systems, 2016, pp.~3440--3448.

\bibitem{gigli2011holder}
Nicola Gigli, \emph{On h{\"o}lder continuity-in-time of the optimal transport
  map towards measures along a curve}, Proceedings of the Edinburgh
  Mathematical Society \textbf{54} (2011), no.~2, 401--409.

\bibitem{goldberg1987efficient}
A.V. Goldberg, \emph{Efficient graph algorithms for sequential and parallel
  computers}, Ph.D. thesis, Massachussetts Institute of Technology, 1987.

\bibitem{gu2013variational}
Xianfeng Gu, Feng Luo, Jian Sun, and Shing-Tung Yau, \emph{Variational
  principles for minkowski type problems, discrete optimal transport, and
  discrete monge--amp{\`e}re equations}, Asian Journal of Mathematics
  \textbf{20} (2016), no.~2, 383--398.

\bibitem{guillen2019primer}
Nestor Guillen, \emph{A primer on generated jacobian equations: Geometry,
  optics, economics}, Notices of the American Mathematical Society \textbf{66}
  (2019), no.~9.

\bibitem{gutierrez2001monge}
Cristian~E Guti{\'e}rrez and Haim Brezis, \emph{The monge-ampere equation},
  vol.~44, Springer, 2001.

\bibitem{hartmann2017semi}
Valentin Hartmann and Dominic Schuhmacher, \emph{Semi-discrete optimal
  transport -- the case $p= 1$}, arXiv preprint arXiv:1706.07650, 2017.

\bibitem{henry2015optimal}
Morgane Henry, Emmanuel Maitre, and Val{\'e}rie Perrier, \emph{Optimal
  transport using helmholtz-hodge decomposition and first-order primal-dual
  algorithms}, 2015 IEEE International Conference on Image Processing (ICIP),
  IEEE, 2015, pp.~4748--4752.

\bibitem{hug2016analyse}
Romain Hug, \emph{Analyse math{\'e}matique et convergence d'un algorithme pour
  le transport optimal dynamique: cas des plans de transports non
  r{\'e}guliers, ou soumis {\`a} des contraintes}, Th\`ese de doctorat de
  l'Universit\'e Grenoble-Alpes, 2016.

\bibitem{hutter2019minimax}
Jan-Christian H{\"u}tter and Philippe Rigollet, \emph{Minimax rates of
  estimation for smooth optimal transport maps}, arXiv preprint
  arXiv:1905.05828, 2019.

\bibitem{jordan1998variational}
Richard Jordan, David Kinderlehrer, and Felix Otto, \emph{The variational
  formulation of the fokker--planck equation}, SIAM journal on mathematical
  analysis \textbf{29} (1998), no.~1, 1--17.

\bibitem{kerber2017geometry}
Michael Kerber, Dmitriy Morozov, and Arnur Nigmetov, \emph{Geometry helps to
  compare persistence diagrams}, Journal of Experimental Algorithmics (JEA)
  \textbf{22} (2017), 1--4.

\bibitem{kitagawa2014iterative}
Jun Kitagawa, \emph{An iterative scheme for solving the optimal transportation
  problem}, Calculus of Variations and Partial Differential Equations
  \textbf{51} (2014), no.~1-2, 243--263.

\bibitem{kitagawa2016newton}
Jun Kitagawa, Quentin M{\'e}rigot, and Boris Thibert, \emph{Convergence of a
  newton algorithm for semi-discrete optimal transport}, Journal of the
  European Mathematical Society (2019), OnlineFirst.

\bibitem{kondratyev2016new}
Stanislav Kondratyev, L{\'e}onard Monsaingeon, Dmitry Vorotnikov, et~al.,
  \emph{A new optimal transport distance on the space of finite radon
  measures}, Advances in Differential Equations \textbf{21} (2016), no.~11/12,
  1117--1164.

\bibitem{lavenant2019unconditional}
Hugo Lavenant, \emph{Unconditional convergence for discretizations of dynamical
  optimal transport}, arXiv preprint arXiv:1909.08790, 2019.

\bibitem{levy2015numerical}
Bruno L{\'e}vy, \emph{A numerical algorithm for l2 semi-discrete optimal
  transport in 3d}, ESAIM: Mathematical Modelling and Numerical Analysis
  \textbf{49} (2015), no.~6, 1693--1715.

\bibitem{lombardi2015eulerian}
Damiano Lombardi and Emmanuel Maitre, \emph{Eulerian models and algorithms for
  unbalanced optimal transport}, ESAIM: Mathematical Modelling and Numerical
  Analysis \textbf{49} (2015), no.~6, 1717--1744.

\bibitem{merigot2011multiscale}
Quentin M{\'e}rigot, \emph{A multiscale approach to optimal transport},
  Computer Graphics Forum \textbf{30} (2011), no.~5, 1583--1592.

\bibitem{merigot2019quantitative}
Quentin M{\'e}rigot, Alex Delalande, and Fr{\'e}d{\'e}ric Chazal,
  \emph{Quantitative stability of optimal transport maps and linearization of
  the 2-wasserstein space}, arXiv preprint arXiv:1910.05954 (2019).

\bibitem{merigot2018algorithm}
Quentin M{\'e}rigot, Jocelyn Meyron, and Boris Thibert, \emph{An algorithm for
  optimal transport between a simplex soup and a point cloud}, SIAM Journal on
  Imaging Sciences \textbf{11} (2018), no.~2, 1363--1389.

\bibitem{merigot2016minimal}
Quentin M{\'e}rigot and Jean-Marie Mirebeau, \emph{Minimal geodesics along
  volume-preserving maps, through semidiscrete optimal transport}, SIAM Journal
  on Numerical Analysis \textbf{54} (2016), no.~6, 3465--3492.

\bibitem{meyron2019light}
Jocelyn Meyron, Quentin M{\'e}rigot, and Boris Thibert, \emph{Light in power: a
  general and parameter-free algorithm for caustic design}, ACM Transactions on
  Graphics (TOG) \textbf{37} (2019), no.~6, 224.

\bibitem{mirebeau2015discretization}
Jean-Marie Mirebeau, \emph{Discretization of the 3d {M}onge-{A}mp\`ere
  operator, between wide stencils and power diagrams}, ESAIM: Mathematical
  Modelling and Numerical Analysis \textbf{49} (2015), no.~5, 1511--1523.

\bibitem{monge1781memoire}
Gaspard Monge, \emph{M{\'e}moire sur la th{\'e}orie des d{\'e}blais et des
  remblais}, 1781.

\bibitem{neilan2019monge}
Michael Neilan, Abner~J Salgado, and Wujun Zhang, \emph{The
  monge-amp$\backslash$$\{$e$\}$ re equation}, arXiv preprint arXiv:1901.05108,
  2019.

\bibitem{oberman2015efficient}
Adam~M Oberman and Yuanlong Ruan, \emph{An efficient linear programming method
  for optimal transportation}, arXiv preprint arXiv:1509.03668, 2015.

\bibitem{oliker1989numerical}
VI~Oliker and LD~Prussner, \emph{On the numerical solution of the equation and
  its discretizations, i}, Numerische Mathematik \textbf{54} (1989), no.~3,
  271--293.

\bibitem{oliker2003mathematical}
Vladimir Oliker, \emph{Mathematical aspects of design of beam shaping surfaces
  in geometrical optics}, Trends in Nonlinear Analysis, Springer, 2003,
  pp.~193--224.

\bibitem{papadakis2014optimal}
Nicolas Papadakis, Gabriel Peyr{\'e}, and Edouard Oudet, \emph{Optimal
  transport with proximal splitting}, SIAM Journal on Imaging Sciences
  \textbf{7} (2014), no.~1, 212--238.

\bibitem{pass2015multi}
Brendan Pass, \emph{Multi-marginal optimal transport: theory and applications},
  ESAIM: Mathematical Modelling and Numerical Analysis \textbf{49} (2015),
  no.~6, 1771--1790.

\bibitem{peyre2019computational}
Gabriel Peyr{\'e} and Marco Cuturi, \emph{Computational optimal transport},
  Foundations and Trends{\textregistered} in Machine Learning \textbf{11}
  (2019), no.~5-6, 355--607.

\bibitem{rachev1998mass}
Svetlozar~T Rachev and Ludger R{\"u}schendorf, \emph{Mass transportation
  problems: Volume i: Theory}, vol.~1, Springer Science \& Business Media,
  1998.

\bibitem{rachev2006mass}
\bysame, \emph{Mass transportation problems: Applications}, Springer Science \&
  Business Media, 2006.

\bibitem{rockafellar1970convex}
R~Tyrrell Rockafellar, \emph{Convex analysis}, vol.~28, Princeton university
  press, 1970.

\bibitem{rubner2000earth}
Yossi Rubner, Carlo Tomasi, and Leonidas~J Guibas, \emph{The earth mover's
  distance as a metric for image retrieval}, International journal of computer
  vision \textbf{40} (2000), no.~2, 99--121.

\bibitem{santambrogio2015optimal}
Filippo Santambrogio, \emph{Optimal transport for applied mathematicians},
  Springer, 2015.

\bibitem{schmitzer2016sparse}
Bernhard Schmitzer, \emph{A sparse multiscale algorithm for dense optimal
  transport}, Journal of Mathematical Imaging and Vision \textbf{56} (2016),
  no.~2, 238--259.

\bibitem{schmitzer2019stabilized}
\bysame, \emph{Stabilized sparse scaling algorithms for entropy regularized
  transport problems}, SIAM Journal on Scientific Computing \textbf{41} (2019),
  no.~3, A1443--A1481.

\bibitem{sinkhorn1964relationship}
Richard Sinkhorn, \emph{A relationship between arbitrary positive matrices and
  doubly stochastic matrices}, The annals of mathematical statistics
  \textbf{35} (1964), no.~2, 876--879.

\bibitem{sinkhorn1967concerning}
Richard Sinkhorn and Paul Knopp, \emph{Concerning nonnegative matrices and
  doubly stochastic matrices}, Pacific Journal of Mathematics \textbf{21}
  (1967), no.~2, 343--348.

\bibitem{solomon2015convolutional}
Justin Solomon, Fernando De~Goes, Gabriel Peyr{\'e}, Marco Cuturi, Adrian
  Butscher, Andy Nguyen, Tao Du, and Leonidas Guibas, \emph{Convolutional
  {W}asserstein distances: Efficient optimal transportation on geometric
  domains}, ACM Transactions on Graphics (TOG) \textbf{34} (2015), no.~4, 66.

\bibitem{solomon2014earth}
Justin Solomon, Raif Rustamov, Leonidas Guibas, and Adrian Butscher,
  \emph{Earth mover's distances on discrete surfaces}, ACM Transactions on
  Graphics (TOG) \textbf{33} (2014), no.~4, 67.

\bibitem{trudinger2014local}
Neil~S Trudinger, \emph{On the local theory of prescribed jacobian equations},
  Discrete \& Continuous Dynamical Systems-A \textbf{34} (2014), no.~4,
  1663--1681.

\bibitem{vialard:hal-02303456}
Fran{\c c}ois-Xavier Vialard, \emph{An elementary introduction to entropic
  regularization and proximal methods for numerical optimal transport},
  Lecture, May 2019.

\bibitem{villani2003topics}
C{\'e}dric Villani, \emph{Topics in optimal transportation}, no.~58, American
  Mathematical Soc., 2003.

\bibitem{villani2008optimal}
\bysame, \emph{Optimal transport: old and new}, vol. 338, Springer Science \&
  Business Media, 2008.

\bibitem{wang2004design}
Xu-Jia Wang, \emph{On the design of a reflector antenna ii}, Calculus of
  Variations and Partial Differential Equations \textbf{20} (2004), no.~3,
  329--341.

\end{thebibliography}

\end{document}